\renewcommand{\subjclassname}{%
  \textup{1991} Mathematics Subject Classification}
\let\csname subjclassname@1991\endcsname \subjclassname
\definecolor{gr}{rgb}   {0.,   0.69,   0.23 }
\definecolor{bl}{rgb}   {0.,   0.5,   1. }
\definecolor{mg}{rgb}   {0.85,  0.,    0.85}
\definecolor{yl}{rgb}   {0.8,  0.7,   0.}
\definecolor{or}{rgb}  {0.7,0.2,0.2}
\newtheorem{theorem}{Theorem} [section]
\newtheorem{lemma}[theorem]{Lemma}
\newtheorem{proposition}[theorem]{Proposition}
\newtheorem{remark}[theorem]{Remark}
\newtheorem{corollary}[theorem]{Corollary}
\DeclareMathOperator*{\supp}{supp}
\newcommand{\1}{\hspace{0.5mm}\text{I}\hspace{0.5mm}}
\newcommand{\II}{\text{I \hspace{-2.8mm} I} }
\newcommand{\III}{\text{I \hspace{-2.9mm} I \hspace{-2.9mm} I}}
\newcommand{\IV}{\text{I \hspace{-2.9mm} V}}
\renewcommand{\i}{\iota}
\newcommand{\noi}{\noindent}
\newcommand{\Z}{\mathbb{Z}}
\newcommand{\R}{\mathbb{R}}
\newcommand{\Cb}{\mathcal{C}}
\newcommand{\T}{\mathbb{T}}
\newcommand{\deff}{\stackrel{\textup{def}}{=}}
\newcommand{\U}{\Theta}
\newcommand{\B}{\mathbf{B}}
\let\P= \undefined
\newcommand{\P}{\mathbf{P}}
\newcommand{\Q}{\mathbf{Q}}
\renewcommand{\AA}{\mathbf{A}}
\newcommand{\AB}{\mathbb{A}}
\newcommand{\E}{\mathbb{E}}
\newcommand{\En}{\mathcal{E}}
\renewcommand{\L}{\mathcal{L}}
\newcommand{\Rg}{\mathcal{R}}
\newcommand{\K}{\mathcal{K}}
\newcommand{\F}{\mathcal{F}}
\newcommand{\al}{\alpha}
\newcommand{\be}{\beta}
\newcommand{\dl}{\delta}
\newcommand{\too}{\longrightarrow}
\newcommand{\Dl}{\Delta}
\newcommand{\eps}{\varepsilon}
\newcommand{\kk}{\kappa}
\newcommand{\g}{\gamma}
\newcommand{\G}{\Gamma}
\newcommand{\ld}{\lambda}
\newcommand{\Ld}{\Lambda}
\newcommand{\s}{\sigma}
\newcommand{\ft}{\widehat}
\newcommand{\wt}{\widetilde}
\newcommand{\cj}{\overline}
\newcommand{\dt}{\partial_t}
\newcommand{\dd}{\partial}
\newcommand{\Dlg}{\Delta_\gm}
\newcommand{\diag}{\bigtriangleup}
\newcommand{\diam}{\mathfrak{d}(\M)}
\newcommand{\om}{\omega}
\renewcommand{\O}{\Omega}
\newcommand{\A}{\mathcal{A}}
\newcommand{\les}{\lesssim}
\newcommand{\jb}[1]
{\langle #1 \rangle}
\renewcommand{\S}{\mathcal{S}}
\newcommand{\Sp}{\mathbb{S}}
\newcommand{\gm}{\mathbf{\mathrm{g}}}
\newcommand{\Pg}{P_\gm}
\newcommand{\Prob}{\mathbb{P}}
\newcommand{\M}{\mathcal{M}}
\newcommand{\N}{\mathbb{N}}
\newcommand{\X}{\mathcal{X}}
\newcommand{\Y}{\mathcal{Y}}
\newcommand{\NN}{\mathcal{N}}
\newcommand{\ZZ}{\mathcal{Z}}
\newcommand{\GG}{\mathcal{G}}
\renewcommand{\H}{\mathcal{H}}
\newcommand{\D}{\mathcal{D}}
\newcommand{\Id}{\textup{Id}}
\def\DeclareSymbol#1#2#3{\expandafter\gdef\csname MH@symb@#1\endcsname{\tikz[baseline=#2, scale=.18]{#3}}}
\def\<#1>{\ensuremath{\mathchoice{\tikzsetnextfilename{macros#1}{\color{black}\csname MH@symb@#1\endcsname}}{\tikzsetnextfilename{macros#1}{\color{black}\csname MH@symb@#1\endcsname}}{\tikzsetnextfilename{macros#1}\scalebox{.7}{\color{black}\csname MH@symb@#1\endcsname}}
{\tikzsetnextfilename{macros#1}\scalebox{.5}{\color{black}\csname MH@symb@#1\endcsname}}}} 
\newcommand{\dg}{\mathbf{d}_\gm}
\newcommand{\dgg}{\mathbf{d}_0}
\newcommand{\Gg}{G_\gm}
\newcommand{\Vg}{V_\gm}
\newcommand{\VV}{\mathcal{V}}
\newcommand{\XX}{\mathbf{X}}
\newtheorem*{ackno}{Acknowledgements}
\numberwithin{equation}{section}
\numberwithin{theorem}{section}
\begin{document}
\baselineskip = 15pt

\title[Stochastic quantization of LCFT]
{Stochastic quantization of Liouville conformal field theory}

\author[T.~Oh, T.~Robert, N.~Tzvetkov and Y.~Wang]
{Tadahiro Oh, Tristan Robert, Nikolay Tzvetkov, and Yuzhao Wang}

\address{
Tadahiro Oh, School of Mathematics\\
The University of Edinburgh\\
and The Maxwell Institute for the Mathematical Sciences\\
James Clerk Maxwell Building\\
The King's Buildings\\
Peter Guthrie Tait Road\\
Edinburgh\\ 
EH9 3FD\\
 United Kingdom}

\email{hiro.oh@ed.ac.uk}

\address{
Tristan Robert\\
Fakult\"at f\"ur Mathematik\\
Universit\"at Bielefeld\\
Postfach 10 01 31\\
33501 Bielefeld\\
Germany}

\email{trobert@math.uni-bielefeld.de}

\address{
Nikolay Tzvetkov, Laboratoire AGM\\
Universit\'e de Cergy-Pontoise,  Cergy-Pontoise, F-95000, UMR 8088 du CNRS
}
\email{nikolay.tzvetkov@u-cergy.fr}

\address{
Yuzhao Wang\\
School of Mathematics, 
University of Birmingham, 
Watson Building, 
Edgbaston, 
Birmingham\\
B15 2TT, 
United Kingdom}

\email{y.wang.14@bham.ac.uk}

\subjclass[2020]{35K15,60H15,58J35}

\keywords{stochastic heat equation on manifolds; 
exponential nonlinearity;
Liouville Quantum Gravity;
Gibbs measure}

\begin{abstract}
We study a nonlinear stochastic heat equation forced by a space-time white noise on closed surfaces, with nonlinearity $e^{\be u}$. This equation corresponds to the stochastic quantization of the Liouville quantum gravity (LQG) measure. (i) We first revisit the construction of the LQG measure in Liouville conformal field theory (LCFT) in the $L^2$ regime $0<\be<\sqrt{2}$. This uniformizes in this regime the approaches of David-Kupiainen-Rhodes-Vargas (2016), David-Rhodes-Vargas (2016) and Guillarmou-Rhodes-Vargas (2019) which treated the case of a closed surface with genus 0, 1 and $> 1$ respectively. Moreover, our argument shows that this measure is independent of the approximation procedure for a large class of smooth approximations. (ii) We prove almost sure global well-posedness of the parabolic stochastic dynamics, and invariance of the measure under this stochastic flow. In particular, our results improve previous results obtained by Garban (2020) in the cases of the sphere and the torus with their canonical metric, and are new in the case of closed surfaces with higher genus.

\end{abstract}



\maketitle
%


\tableofcontents

\baselineskip = 14pt

\section{Introduction}

\subsection{Stochastic quantization of LCFT}\label{SUBS:intro}
In this work, we discuss the well-posedness of some stochastic dynamics preserving the Liouville quantum gravity (LQG) measure appearing in the Liouville conformal field theory (LCFT) on a general compact surface. Given a connected, closed (compact, boundaryless), orientable, two-dimensional Riemannian manifold $(\M,\gm)$, the Liouville action $S_\L$ is defined on paths $u:\M\to\R$ by
\begin{align}\label{SL}
S_\L(u;\gm) \deff \frac1{4\pi}\int_{\M}\Big\{|\nabla_\gm u|^2 + Q\Rg_\gm u+ 4\pi\nu e^{\be u}\Big\}d\Vg,
\end{align}
where $\Rg_\gm$ is the Ricci scalar curvature and $d\Vg$ is the volume form. There are three positive parameters, namely the cosmological constant $\nu>0$, the coupling constant $\be >0$ and the charge $Q=\frac2{\be}+\frac{\be}2$. The goal of LCFT is then to compute the $L$-points correlation functions
\begin{align}\label{correlation1}
\Big\langle\prod_{\ell=1}^L\VV_{a_\ell}(x_\ell)\Big\rangle \deff \int \prod_{\ell=1}^L\VV_{a_\ell}(x_\ell)(u) e^{-S_\L(u)}Du,
\end{align}
where the so-called vertex operators $\VV_{a_\ell}(x_\ell)$ are given by
\begin{align*}
\VV_{a_\ell}(x_\ell)(u) \deff e^{a_\ell u(x_\ell)},
\end{align*}
for some points $x_\ell\in\M$ and some coefficients $a_\ell\in\R$.
More generally, one wants to give a meaning to
\begin{align}\label{correlation2}
\int F(u)\prod_{\ell=1}^L\VV_{a_\ell}(x_\ell)(u) e^{-S_\L(u)}Du
\end{align}
for suitable test functions $F$.

The stochastic quantization of LCFT  then consists in constructing a parabolic dynamics given by the stochastic nonlinear heat equation
\begin{align}\label{SLQG1}
\begin{cases}
\dt u - \frac1{4\pi}\Dlg u + \NN(u) = \xi_\gm,~~(t,x)\in \R_+\times\M\\
u_{|t=0} = u_0
\end{cases}
\end{align} 
for some nonlinear interaction $\NN(u)$ and a stochastic forcing term $\xi_\gm : (\O,\Prob)\to \S'(\R\times\M)$ given by a space-time white noise (with $\S'(\R\times\M)$ being the space of space-time tempered distributions), such that the corresponding stochastic flow map $\Phi(t) : (u_0,\xi_\gm) \mapsto u$ satisfies for any $t\ge 0$
\begin{align}
&\int \E\bigg[F\Big(\Phi(t,u_0,\xi_\gm)\big)\Big)\prod_{\ell=1}^L\VV_{a_\ell}(x_\ell)\big(\Phi(t,u_0,\xi_\gm)\big) e^{-S_\L\big(\Phi(t,u_0,\xi_\gm)\big)}\bigg]Du_0 \notag\\
&\qquad= \int F(u_0) \prod_{\ell=1}^L\VV_{a_\ell}(x_\ell)(u_0) e^{-S_\L(u_0)}Du_0.
\label{invariance}
\end{align}

\vspace*{2mm}
LCFT is a special case of Euclidean quantum field theory (QFT), which aims at reconciling quantum mechanics with special relativity. During the 70's and the 80's, a systematic investigation of a minimal set of axioms ensuring the existence of such a theory was performed. In particular, a constructive approach to QFT has been developed through the lens of probability theory, which proved to be widely applicable. Namely, provided that the correlation functions \eqref{correlation1} satisfy some particular properties, there is then a roadmap allowing one to recover a QFT on Minkowski space. This program was particularly successful to treat QFT with polynomial interactions on $\R^{1+1}$ space-time \cite{Simon,Glimm}. 

In this context, the way to build a ``uniform'' measure on the set of (random) quantum relativistic fields is naturally to look at Gibbs type measures by putting a Boltzmann weight on the contribution of each admissible field, thus yielding to \eqref{correlation1}. Indeed, at the classical level, the functional $S_\L$ in \eqref{SL} is well-known for its role in the proof of the uniformization theorem for compact Riemannian 2-manifolds \cite{Berger}: in the case of a closed surface $\M$ of genus greater than 2, it is possible to find a metric with constant negative curvature on $\M$ by looking at the critical points of $S_\L$ when $Q$ is replaced by $\frac2{\be}$. The formal measure $e^{-S_\L(u)}du$ can then be seen as a natural generalization of the classical Wiener measure on the set of paths $u:[0,1]\to\R$. This latter is indeed formally given by $e^{-E(u)}du$ for the energy functional $E(u)=\frac12\int_0^1\big(u'(x)\big)^2dx$, and corresponds to the quantum analogue of the classical trajectories $u : [0,1]\to\R$ given by $u=$constant, which are the critical points of $E$. The value of $Q=\frac2{\be}+\frac{\be}2$ in \eqref{SL} can then be seen as a quantum correction of the classical value $\frac2{\be}$; see also the discussion in the introduction of \cite{LRV2}.

LCFT is then a Euclidean QFT in $1+1$ dimension, which moreover possesses some extra symmetries, to wit, \emph{conformal invariance}. It turns out that its importance goes beyond that of only QFT \cite{Nakayama}. Although it was introduced by Polyakov in his seminal work \cite{Polyakov} as a theory of random surfaces used to describe string theory and quantum gravity, it proved to be also deeply related to probability, geometry and algebra, as it is conjectured to be the scaling limit of random planar maps \cite{MS1}; it also appears in super-symmetric Yang-Mills \cite{SVa,MaO}. This explains why it has attracted a lot of attention in the past decades in both the physics and mathematics communities. 

\subsection{Construction of the LQG measure}
Although natural, the expression \eqref{correlation2} for the LQG measure is merely formal, since $Du$ represents a uniform measure on the set of paths $u : \M \to \R$, which is not properly well-defined. The rigorous construction of expressions such as \eqref{correlation1} has recently attracted a lot of attention \cite{DS11,DKRV,DRV,HRV,Remy,GRV,DOZZ}. It is now well-known that a way to define properly \eqref{correlation1} is to start by interpreting $e^{-S_\L(u)}Du$ as a measure with density with respect to a base Gaussian measure. Indeed, let us only consider the quadratic part of the action and look at the expression
\begin{align*}
e^{-\frac1{4\pi}\int_{\M}|\nabla_\gm u|^2d\Vg}Du = \Xi \prod_{n\ge 1}\frac{\ld_n}{2\pi}e^{-\frac{\ld_n^2 }{4\pi}u_n^2}du_n
\end{align*}
with the normalisation constant formally given by
\begin{align}\label{Xi}
\Xi = \prod_{n\ge 1}\frac{2\pi}{\ld_n}.
\end{align}
Here $0=\ld_0^2 <\ld_1^2\le \ld_2^2\le ...$ are the eigenvalues of $-\Dlg$ associated with an orthonormal basis $\{\varphi_n\}_{n\ge 0}$ of $L^2(\M,\gm)$ of eigenfunctions, so that we can decompose $u:\M\to \R$ as 
\begin{align*}
u = \sum_{n\ge 0}u_n\varphi_n.
\end{align*}
This allows us to interpret for $n\ge 1$ the one-dimensional measure $\frac{\ld_n}{2\pi}e^{-\frac{\ld_n^2 }{4\pi}u_n^2}du_n$ as the density of a normal distribution with variance $\frac{2\pi}{\ld_n^2}$. However, due to $\ld_0=0$, we see that the zero-th frequency is distributed uniformly on $\R$. Thus we can interpret the formal expression $e^{-\frac1{4\pi}\int_{\M}\big|\nabla_\gm u\big|_\gm^2d\Vg}Du$ as $\Xi d\mu_\gm\otimes d\cj X$ by decomposing $u= X_\gm + \cj X$, were $d\cj X$ is the Lebesgue measure on $\R$, and  $\mu_\gm$ is the law of the random variable $X_\gm$ given by the so-called {\it (mass-less) Gaussian free field} (GFF). Namely, $\mu_\gm$ is the Gaussian measure on 
\begin{align}\label{H0}
H^s_0(\M,\gm) \deff \Big\{u\in H^s(\M,\gm)=(1-\Dlg)^{-\frac{s}2}L^2(\M),~\int_{\M}u d\Vg=0\Big\},
\end{align} 
$s<0$, with covariance\footnote{ The law of the Gaussian free field (GFF) \eqref{GFF} is more often referred to as a Gaussian measure on $L_0^2(\M,\gm)$ with covariance $2\pi(-\Dlg)^{-1}$ in the probability literature; here we want to emphasize that this latter operator is not trace class, which makes the support of $\mu_\gm$ actually strictly larger than $L_0^2(\M,\gm)$.} operator $2\pi(-\Dlg)^{s-1}$. In particular we can realise $X_\gm$ as
\begin{align}\label{GFF}
X_\gm(\om) \deff \sum_{n\ge 1}\frac{\sqrt{2\pi}h_n(\om)}{\ld_n}\varphi_n,
\end{align}
where $\{h_n\}_{n\ge 1}$ are iid random variables $\sim \NN(0,1)$ on a probability space $(\O,\Prob)$.

This allows us to interpret \eqref{correlation2} as
\begin{align*}
\int_{H^s_0(\M)}\int_{\R} F(X_\gm+\cj X)d\rho_{\{a_\ell,x_\ell\},\gm}(X_\gm,\cj X),
\end{align*}
where\footnote{which is not to be confused with the random measure $e^{\be X_\gm} d\Vg$, which is the Gaussian multiplicative chaos associated with the GFF $X_\gm$, and is sometimes also referred to as the LQG measure in the probability literature.} the LQG measure $\rho_{\{a_\ell,x_\ell\},\gm}$ is then formally given by
\begin{align}
``d\rho_{\{a_\ell,x_\ell\},\gm}(X_\gm,\cj X) &= \Xi \exp\bigg\{\sum_{\ell=1}^La_\ell(X_\gm+\cj X)(x_\ell) - \frac{Q}{4\pi}\int_{\M}\Rg_\gm (X_\gm+\cj X)d\Vg\notag\\
&\qquad\qquad - \nu \int_{\M}e^{\be (X_\gm+\cj X)}d\Vg\bigg\}d\mu_\gm(X_\gm)\otimes d\cj X".
\label{LQG}
\end{align}

There are still some issues to deal with in order to make sense of \eqref{LQG}. The first one comes from the normalisation constant $\Xi$ in \eqref{Xi}. Indeed, in view of Weyl's law (see \eqref{Weyl} below) the infinite product in \eqref{Xi} does not converge. Still, it is possible to interpret it as 
\begin{align}\label{Xi2}
\Xi = \frac{\sqrt{\Vg(\M)}}{\sqrt{\det'\Dlg}},
\end{align}
where $\det'(\Dlg)$ is the determinant of the Laplace-Beltrami operator, defined as a suitable expression of the spectral zeta function $\zeta(s)=\sum_{n\ge 1}\ld_n^{-s}$; see for example \cite{OPS} or the discussion in \cite[Section 2.3]{GRV}.

The next issue comes from the roughness of the support of $\mu_\gm$: we see that the expressions $X_\gm(x_\ell)$ and $e^{\be X_\gm}$ are still not well-defined. This requires a proper {\it renormalization} procedure. For any $N\in\N$ we therefore look at the truncated measure given by
\begin{align}\label{LQGN}
d\rho_{N,\gm}(X_\gm,\cj X) \deff \ZZ_N^{-1}(\gm)R_N(X_\gm+\cj X)d\mu_\gm(X_\gm)\otimes d\cj X,
\end{align}
where $\ZZ_N$ is the truncated partition function, and the renormalized truncated density in \eqref{LQGN} is given by
\begin{align}
R_N (u) &\deff \Xi\exp\bigg\{ \sum_{\ell=1}^L\Big(a_\ell\P_Nu(x_\ell) -\frac{a_\ell^2}{2}\big(\log N+2\pi C_\P\big)\Big)\notag\\
&\qquad -\frac{Q}{4\pi}\int_\M \Rg_\gm ud\Vg- \nu \int_{\M}e^{-\pi\be^2C_\P}N^{-\frac{\be^2}2}e^{\be \P_Nu}d\Vg\bigg\},
\label{RN}
\end{align}
where the regularization operator is
\begin{align}\label{PN}
\P_N = e^{N^{-2}\Dlg},
\end{align}
and $C_\P$ is a constant which only depends on the choice of the regularization procedure; see Lemma \ref{LEM:GN3} below.

The difference with \eqref{LQG} comes from the introduction of the ``ultraviolet cut-off'' $\P_N$ in \eqref{RN} as well as the removal of the divergent terms $\frac{a_\ell^2}2\log N$ and $e^{\frac{\be^2}2\log N}$. We can thus hope to have cured all the small scales divergences in the model, and to recover a non trivial limit when letting the cut-off parameter $N\to\infty$.

Before stating our first result, we need to make several assumptions on the parameters entering the model.
\begin{align}
&\textbf{($L^2$ regime)} &0<&\be<\sqrt{2}\label{A1}\\
&\textbf{(First Seiberg bound)} & \chi(\M)Q<&\sum_{\ell=1}^L a_\ell \label{Seiberg1}\\
&\textbf{(Integrable insertions)}& \max_{\ell=1,...,L}a_\ell <& \frac{2}{\be},\label{Seiberg2b}
\end{align}
where $\chi(\M)$ in \eqref{Seiberg1} is the Euler characteristic of $\M$. We then have the following result.
\begin{theorem}\label{THM:LQG}
Let\footnote{When $L=0$, we simply do not consider any insertion operator in \eqref{correlation2}. See also Remark~\ref{REM:genus} below.} $L\ge 0$ and $a_\ell\in\R$ and $x_\ell \in\M$, $\ell=1,...,L$. Let also $\nu,\be >0$ and $Q=\frac{2}{\be}+\frac{\be}2$ satisfying \eqref{A1}-\eqref{Seiberg1}-\eqref{Seiberg2b}. Then:\\
\textup{(i)} the truncated measure $\rho_{N,\gm}$ in \eqref{LQGN} converges weakly towards a probability measure $\rho_{\{a_\ell,x_\ell\},\gm}$ on $H^s_0(\M)\oplus\R$, for any $s<0$, which is absolutely continuous with respect to $d\cj\mu_\gm\otimes d\cj  X$, where $\cj\mu_\gm$ is the non-centred Gaussian measure on $H^s_0(\M)$ with covariance $(-\Dlg)^{s-1}$ and mean $\sum_{\ell=1}^L a_\ell \Gg(x_\ell,\cdot)\in H^s_0(\M)$, with $\Gg$ being the Green's function of the Laplace-Beltrami operator on $\M$;\\
\textup{(ii)} the limit measure $\rho_{\{a_\ell,x_\ell\},\gm}$ is independent of the approximation procedure. More precisely, if we replace $\P_N$ in \eqref{PN} by any Schwartz multiplier $\psi(-N^{-2}\Dlg)$ with $\psi\in\S(\R)$ and $\psi(0)=1$, then the same result holds, and the limit $\rho_{\{a_\ell,x_\ell\},\gm}$ obtained this way coincides with the limit $\rho_{\{a_\ell,x_\ell\},\gm}$ obtained by the approximations \eqref{LQGN};\\
\textup{(iii)} the measure $\rho_{\{a_\ell,x_\ell\},\gm}$ is invariant under conformal change of the metric, in the sense that for any continuous bounded test function $F\in C_b\big(H^s_0(\M)\oplus\R\big)$ and any smooth metrics $\gm,\gm_0$ on $\M$ such that $\gm = e^{f_0}\gm_0$ for some $f_0\in C^{\infty}(\M)$, we have 
\emph{Weyl's anomaly}
\begin{align}
&\int_{H^s_0(\M,\gm))}\int_{\R} F\big(X_\gm+\cj X\big)d\rho_{\{a_\ell,x_\ell\},\gm}(X_\gm,\cj X)\notag\\
&\qquad =\exp\Big(\frac{c_L}{96\pi}\int_{\M}\big(|\nabla_0f_0|^2+\Rg_0f_0\big)dV_0-\sum_{\ell=1}^L\big(\frac{Qa_\ell}2-\frac{a_\ell^2}4\big)f_0(x_\ell)\Big)\notag\\
&\qquad\qquad\times \int_{H^s_0(\M,\gm_0))}\int_{\R} F\big(X_0+\cj X-\frac{Q}{2}f_0\big)d\rho_{\{a_\ell,x_\ell\},\gm_0}(X_0,\cj X),
\label{anomaly}
\end{align}
where $c_L = 1+6Q^2$ is the central charge of LCFT.
\end{theorem}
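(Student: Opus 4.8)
The plan is to follow the classical Gaussian multiplicative chaos (GMC) strategy: decompose $u=X_\gm+\cj X$ as in \eqref{LQG}, absorb the vertex operators into a Cameron--Martin translation of the GFF, and reduce the renormalized density \eqref{RN} to a weighted GMC whose convergence is governed precisely by the hypotheses \eqref{A1}--\eqref{Seiberg2b}. For part \textup{(i)}, after the decomposition the curvature term $-\frac{Q}{4\pi}\int_\M\Rg_\gm u\,d\Vg$ splits, via Gauss--Bonnet, into the exponential of a fixed centred Gaussian linear functional of $X_\gm$ (harmless, since it lies in every $L^p(\mu_\gm)$) plus a factor $e^{-Q\chi(\M)\cj X}$ destined for the $\cj X$-integral. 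The vertex operators are more delicate, since point evaluations of $X_\gm$ are ill-defined: one uses the Cameron--Martin theorem,
\begin{align*}
e^{\sum_\ell a_\ell\P_N X_\gm(x_\ell)}\,d\mu_\gm(X_\gm) &= \exp\Big(\tfrac12\sum_{\ell,\ell'}a_\ell a_{\ell'}\E\big[\P_N X_\gm(x_\ell)\,\P_N X_\gm(x_{\ell'})\big]\Big)\\
&\qquad\times d\mu_\gm\Big(X_\gm-\sum_\ell a_\ell\P_N\Gg(x_\ell,\cdot)\Big),
\end{align*}
whose diagonal Gaussian factor is exactly renormalized by the subtraction of $\frac{a_\ell^2}{2}(\log N+2\pi C_\P)$ in \eqref{RN} (here Lemma~\ref{LEM:GN3} enters) and converges. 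Under the resulting shift the GMC term of \eqref{RN} becomes a chaos of the GFF against the singular weight $\prod_\ell e^{\be a_\ell\P_N\Gg(x_\ell,x)}\sim\prod_\ell\dg(x_\ell,x)^{-\be a_\ell}$. It then remains to prove $R_N\to R_\infty$ in $L^p(\mu_\gm\otimes d\cj X)$ for some $p>1$, and the three hypotheses play their roles: the bulk chaos $e^{-\pi\be^2 C_\P}N^{-\frac{\be^2}{2}}e^{\be\P_N X_\gm}\,d\Vg$ converges in $L^2$ precisely because $\be<\sqrt2$ \eqref{A1} (Kahane's convexity inequality and the martingale structure of the heat regularization \eqref{PN} giving a.s. and $L^p$ convergence); the bound $\max_\ell a_\ell<2/\be$ \eqref{Seiberg2b} makes $\dg(x_\ell,\cdot)^{-\be a_\ell}$ locally integrable in dimension two, so the limiting chaos has finite mass; and the first Seiberg bound \eqref{Seiberg1} makes the remaining $\cj X$-integral $\int_\R e^{(\sum_\ell a_\ell-Q\chi(\M))\cj X}\exp(-\nu e^{\be\cj X}M_N)\,d\cj X$ finite (integrable as $\cj X\to-\infty$, while the Laplace-type weight controls $\cj X\to+\infty$ once $M_N$, the total chaos mass, is known to be a.s. positive). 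This yields $\ZZ_N(\gm)\to\ZZ_\infty(\gm)\in(0,\infty)$ and weak convergence $\rho_{N,\gm}\to\rho_{\{a_\ell,x_\ell\},\gm}$; the absolute continuity with respect to $\cj\mu_\gm\otimes d\cj X$ and the identification of the mean $\sum_\ell a_\ell\Gg(x_\ell,\cdot)$ of $\cj\mu_\gm$ are read off from the Cameron--Martin representation above.

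For part \textup{(ii)}, one shows the construction depends on the regularization only through the constant $C_\P$. For a general multiplier $\psi(-N^{-2}\Dlg)$ with $\psi\in\S(\R)$, $\psi(0)=1$, a parametrix analysis of the regularized covariance (using the short-time heat-kernel expansion and Weyl's law) gives $\E[\psi(-N^{-2}\Dlg)X_\gm(x)^2]=\log N+2\pi C_\psi+o(1)$, with only the constant $C_\psi$ depending on the profile $\psi$ (the remaining $x$-dependent finite part being universal) and with the off-diagonal kernel converging to $\Gg(x,y)$ locally uniformly off the diagonal. Running the argument of part (i) with $C_\psi$ in place of $C_\P$ gives convergence to the \emph{same} limiting weighted chaos; the coincidence of the two limits follows from an interpolation/coupling estimate bounding the $L^2$-distance between the two approximating chaoses by the supremum of the difference of their renormalized covariances, which tends to $0$.

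For part \textup{(iii)}, the Weyl anomaly, I would transform each ingredient of $\rho_{N,\gm}$ under $\gm=e^{f_0}\gm_0$ and collect the deterministic prefactors, using: (a) conformal invariance of the Dirichlet energy in dimension two, which identifies the $\gm$-GFF with the $\gm_0$-GFF modulo its (Gaussian) $\gm$-average, gives the transformation law of $\Gg_\gm$ and of its diagonal ``Robin constant'', and leads, after re-expressing the $\gm$-decomposition of $u$ in terms of the $\gm_0$-decomposition, to the Liouville--Weyl substitution $\cj X\mapsto\cj X-\tfrac Q2 f_0$; (b) the induced transformation of the renormalization constant $C_\P^\gm$, which together with (a) produces the factors $e^{-(\frac{Qa_\ell}{2}-\frac{a_\ell^2}{4})f_0(x_\ell)}$ from the renormalized vertex operators and leaves the renormalized bulk interaction $\nu\int e^{\be u}\,dV$ invariant after the $\cj X$-shift; (c) the two-dimensional Gauss curvature law $\Rg_\gm=e^{-f_0}(\Rg_0+\Delta_0 f_0)$ together with $d\Vg=e^{f_0}dV_0$, so that the curvature term $-\frac{Q}{4\pi}\int\Rg_\gm u\,d\Vg$ and the shift $-\tfrac Q2 f_0$ combine, after one integration by parts, into the $6Q^2$-part of the exponent in \eqref{anomaly}; and (d) the Polyakov--Ray--Singer anomaly formula for the zeta-regularized determinant entering $\Xi$ (see \eqref{Xi2}), which — together with the volume and zero-mode normalizations — supplies the remaining ``$1$'' in the central charge $c_L=1+6Q^2$ and completes the coefficient $\frac{c_L}{96\pi}$. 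Carrying out these manipulations at the level of the $N$-truncated objects and then letting $N\to\infty$ (invoking part (ii) to justify the convergence) yields \eqref{anomaly}.

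The main obstacle is part (iii). Parts (i)--(ii) are essentially routine given the modern GMC machinery; the only real care there is that all estimates use nothing about $\M$ beyond Weyl's law and the short-time heat-kernel asymptotics, so that they hold \emph{uniformly in the genus}. For part (iii), however, one must track simultaneously the metric dependence of the GFF (through the volume and zero-mode normalizations), of the heat-kernel renormalization constant $C_\P^\gm$ together with its spatially varying diagonal part, and of $\det{}'\Dlg$, and verify that all the divergences cancel \emph{exactly} while the finite remainders assemble into the precise central charge $c_L=1+6Q^2$ and the stated coefficients in \eqref{anomaly}. This bookkeeping — in effect a re-derivation of Weyl's anomaly intertwined with the GMC renormalization, uniform in genus — is the heart of the matter.
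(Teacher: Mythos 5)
Your outline follows the same skeleton as the paper's proof (Cameron--Martin absorption of the insertions with the renormalization identified via Lemma~\ref{LEM:GN3}, reduction to a weighted chaos, an explicit zero-mode integral controlled by \eqref{Seiberg1}, and for (iii) the curvature shift, Gauss--Bonnet and Polyakov's formula \eqref{Xi3} assembling $c_L=1+6Q^2$), but part (i) as you describe it has a genuine gap at its most delicate point. After the change of variables $\cj X\mapsto\tau=\nu e^{\be\cj X}\Y_N(\M)$, the partition function is $\G\big(\be^{-1}[\cj a-\chi(\M)Q]\big)$ times $\int\Y_N(\M)^{\be^{-1}[\chi(\M)Q-\cj a]}d\mu_0$, and under \eqref{Seiberg1} this exponent is strictly negative: what is required is a \emph{uniform-in-$N$ bound on negative moments of the total punctured chaos mass, together with their convergence}. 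Almost sure positivity of the mass, which is all you invoke for the $\cj X\to+\infty$ side, gives pointwise finiteness of the $\cj X$-integral but says nothing about integrability of $\Y_N(\M)^{-a}$ against $\mu_0$, nor about convergence of $\ZZ_N$, nor does it let you run dominated convergence for the integrals against $F$. This is precisely the content of Lemmas~\ref{LEM:negmoment} and~\ref{LEM:ZNM}, proved via Kahane's convexity inequality (Lemma~\ref{LEM:Kahane}), Molchan's argument, and the two-sided kernel bounds of Corollary~\ref{COR:GN}; note that you cite Kahane only for the $L^2$ convergence of the bulk chaos, where it is not needed in the regime \eqref{A1} (a direct second-moment computation with Lemma~\ref{LEM:GN1} suffices, and the heat regularization \eqref{PN} is not a martingale in $N$, so the ``martingale structure'' is not available), and you omit it exactly where it is indispensable.

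A second step that would fail as stated is the plan to prove $R_N\to R_\infty$ in $L^p(\mu_\gm\otimes d\cj X)$ for some $p>1$: as soon as $L\ge1$ and some $a_\ell\neq0$, raising the renormalized vertex operators to the power $p$ under-renormalizes them, and Girsanov applied to $(R_N)^p$ produces the divergent factor $e^{p(p-1)\frac{a_\ell^2}{2}\s_N}$, so $\sup_N\|R_N\|_{L^p(\mu_\gm\otimes d\cj X)}=\infty$ for every $p>1$ -- this is exactly the obstruction recorded in Remark~\ref{REM:LQGwave}. The convergence has to be organized at the $L^1$ level, as in the paper: convergence in probability of $\Y_N(\M)$ and of the shifts $(\P_N\otimes\Id)\Gg(x_\ell,\cdot)$ in $H^s_0(\M)$, convergence of the partition function via the negative-moment lemmas, and then dominated convergence. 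Your parts (ii) and (iii) do match the paper's route (Lemmas~\ref{LEM:GN4} and \ref{LEM:UEst}~(iv) for the multiplier-independence; Lemma~\ref{LEM:Gconf}, Lemma~\ref{LEM:GN5}, the Cameron--Martin shift for the curvature term, the identity $\frac{\be^2}{4}+1-\frac{\be Q}{2}=0$ and \eqref{Xi3} for the anomaly), with one small slip: with the paper's conventions the curvature law is $\Rg_\gm=e^{-f_0}(\Rg_0-\Dl_0f_0)$, not $e^{-f_0}(\Rg_0+\Dl_0f_0)$.
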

Note that the $\dot{H}^1$ norm of $f_0$ in Weyl's anomaly does not depend on the metrics $\gm$ or $\gm_0$. See Subsection \ref{SUBS:conf}.

Several comments are in order. First, concerning the assumptions \eqref{A1}-\eqref{Seiberg1}-\eqref{Seiberg2b}, note that we only state convergence of the truncated measure in the ``$L^2$ regime'' \eqref{A1} for the coupling constant $\be$, whereas it is known \cite{DKRV,DRV,GRV} that one can define it in the entire \emph{sub-critical regime} $0<\be<2$, and even at the endpoint $\be=2$ \cite{GRV,DRSV1,DRSV2,APS}. However, for the construction of the dynamics in \eqref{invariance}, our argument requires to control the second moment of the Gaussian multiplicative chaos, which gives the restriction \eqref{A1}. See also Proposition \ref{PROP:U} and \ref{PROP:flow} below. 

As for the constraint \eqref{Seiberg2b}, it is more restrictive than the \emph{second Seiberg bound}
\begin{align}\label{Seiberg2}
\max_{\ell=1,...,L} a_{\ell} < Q = \frac{2}{\be} + \frac{\be}2
\end{align}
for which the measure is constructed in \cite{DKRV}, with even the endpoint case being tractable \cite{DKRV2}. Although the full regime $0<\be<2$ and $\max_{\ell}a_\ell <Q$ can be obtained from the arguments in \cite{DKRV,DRV,GRV}, our argument for the construction of the dynamics \eqref{heat0} does not seem to extend beyond \eqref{A1}-\eqref{Seiberg1}-\eqref{Seiberg2b} at this point. See also Remark~\ref{REM:U} below.

Let us also mention that we stated the uniqueness of the measure only with respect to the class of approximations described in Theorem \ref{THM:LQG} (ii), which includes the natural regularizations by the heat kernel \eqref{PN} or the ``smooth'' projection on the finite-dimensional subspace $\mathrm{Vect}\{\varphi_n,~\ld_n\le N\}$ corresponding to a smooth truncation of the eigenfunctions expansion \eqref{GFF}. Still our argument also extends to smoothing operators with kernel $N^2\chi\big(N\dg(x,y)\big)$, $\chi\in C^{\infty}_0\big(0,\i(\M))\big)$ with $\i(\M)$ being the injectivity radius; see Remark~\ref{REM:kernels}.

\begin{remark}\label{REM:genus}\rm
The Seiberg bounds \eqref{Seiberg1} and \eqref{Seiberg2} (and so a fortiori the assumption \eqref{Seiberg2b}) imply that the formal measure $``e^{-S_\L(u)}Du"$ (i.e. without insertions) can only be made sense of in the case of a surface with negative curvature, for which $L=0$ does not violate the condition \eqref{Seiberg1}. On the other hand, we see that for the sphere or the torus, this latter measure is not finite since the Seiberg bound \eqref{Seiberg1} cannot be satisfied for $L=0$ in these cases (see \eqref{bd-RN(X,m)} below). Combining \eqref{Seiberg1} with \eqref{Seiberg2}, we see that we need at least $L\ge 3$ in order to have a non trivial probability measure $\rho_{\{a_\ell,x_\ell\},\gm}$ on $\M=\Sp^2$, whereas in the case of the torus, we need $L\ge 1$. Namely, the uniform distribution of the zero mode $d\cj X$ raises an issue in defining the probability measure $``e^{-S_\L(u)}Du"$ in the case of the sphere and the torus, and we fix this issue by inserting punctures in the definition of the measure \eqref{LQG}. On the other hand, the negative curvature is favorable for the measure construction, in the sense that this issue does not appear. Interestingly, this deepens the analogy with the classical situation encountered in the uniformization theorem as mentioned in the introduction. Indeed, due to Gauss-Bonnet theorem (see \eqref{GB} below) it is not possible to find a smooth metric with constant negative curvature on $\Sp^2$. However, this can be bypassed by considering metrics with conical singularities \cite{Troyanov}, and the Seiberg bound \eqref{Seiberg1} can then be seen at the analogue of the necessary condition on the solid angles of the singularities in order for such a metric to exist.
\end{remark}
\begin{remark}\rm
From the perspective of Remark~\ref{REM:genus}, the LQG measure corresponds to the minimal choice $L=3$ on $\Sp^2$, $L=1$ on $\T^2$ or $L=0$ on hyperbolic surfaces. In the case of the sphere, fixing three punctures $x_1,x_2,x_3\in\Sp^2$ to define the LQG measure, the expression \eqref{correlation1} then actually corresponds to the $(L-3)$-correlation function of the LQG measure. Note that the stochastic quantization procedure as introduced in \cite{PW} then only corresponds to \eqref{correlation4} for the minimal choice of $L$ used to define the LQG measure.
\end{remark}
\subsection{Stochastic dynamics and invariance of the measure}
We now move on to the construction of a stochastic parabolic dynamics leaving the measure $\rho_{\{a_\ell,x_\ell\},\gm}$ invariant.

Indeed, as mentioned in Subsection \ref{SUBS:intro}, the measure \eqref{correlation1} arises in the probabilistic construction of LCFT as a Euclidean QFT. In recent years, we have also seen a rapid development of the (stochastic) PDE approach to constructive Euclidean QFT. Motivated by the stochastic quantization \cite{PW} of the QFT models, a lot of attention has then been devoted to the understanding of singular stochastic parabolic PDEs, with the recent breakthroughs of Hairer \cite{Hairer} through the introduction of regularity structures, and Gubinelli and his collaborators \cite{GIP} through the development of paracontrolled calculus. Let us mention a recent success of this latter approach \cite{GH} where the authors follow through the PDE construction of the Euclidean QFT on $\R^{1+2}$ Minkowski space with quartic interaction potential (the so-called $\Phi^4_3$ model).

More recently, other stochastic quantizations procedures have been investigated, namely the elliptic \cite{ADG1,ADG2} and hyperbolic \cite{GKOT,ORTz,ORSW2,ORW} ones, consisting in looking at the elliptic or hyperbolic counterparts of the stochastic parabolic dynamics \eqref{SLQG1}. In particular, in \cite{ORW}, three of the authors of the present paper investigated both the parabolic and hyperbolic stochastic quantizations of the $\exp(\Phi)_2$ model\footnote{by analogy with the $P(\Phi)_2$ model \cite{BSim} dealing with polynomial interactions. This model is also known as the H\o egh-Krohn model \cite{Hoegh}.} on $\M=\T^2$, corresponding to \eqref{correlation1} without insertions and with a mass term $mu^2$ in the action, $m>0$. This latter in particular destructs the conformal invariance property of the measure. We were then able to prove invariance of the measure under the parabolic stochastic dynamics in the  $L^2$ regime \eqref{A1}, and under the hyperbolic dynamics for some regime of $\be>0$. The same result in the parabolic case also appeared in \cite{HKK}.

In \cite{Garban}, Garban studied a parabolic stochastic dynamics which formally preserves \eqref{LQG}, and discussed its well-posedness on both $\M=\T^2$ and $\M=\Sp^2$ (with $L=1$ and $L=3$ respectively; see Remarks \ref{REM:genus} and \ref{REM:Garban}). However the regime of $\be>0$ covered in \cite{Garban} is somehow more restrictive, and in particular convergence of the smooth approximations as well as rigorous invariance of the measure under the flow are only established in some smaller regime. 

More recently, Dub\'edat and Shen \cite{DS19} studied the stochastic Ricci flow, which describes the evolution of the conformal factor for the metric $\gm = e^{f_0}\gm_0$ with respect to a fixed metric $\gm_0$ with constant scalar curvature (see also Subsection \ref{SUBS:conf})
\begin{align}\label{SRic}
\dt f_0= e^{-f_0}\Dl_0 f_0 + \nu e^{-f_0}\xi_0
\end{align}
where $\Dl_0$ is the Laplace-Beltrami operator for the metric $\gm_0$ and $\xi_0$ is a space-time white noise with respect to $\gm_0$. See also Remark~\ref{REM:Ricci} for further discussion.

Motivated by these recent developments, and in view of the formal expression \eqref{correlation2} with the definition of the Liouville action \eqref{SL}, we then look at
\begin{align}\label{heat0}
\begin{cases}
\dt u - \frac1{4\pi}\Dlg u +\frac{Q}{8\pi}\Rg_\gm + \frac12\nu\be e^{\be u} = \xi_\gm,~~(t,x)\in\R_+\times\M\\
u_{|t=0}=u_0,
\end{cases}
\end{align}
where $u_0$ is an $H^s_0(\M)\oplus\R$-valued random variable with law $\rho_{\{a_\ell,x_\ell\},\gm}$, and the space-time white noise $\xi_\gm$ is a centred Gaussian process on $\D'(\R\times\M)$ with covariance
\begin{align}\label{covxi}
\E\Big[\xi_\gm(\psi_1)\xi_\gm(\psi_2)\Big] = \langle \psi_1,\psi_2\rangle_{t,\gm}
\end{align}
for any $\psi_1,\psi_2\in \D(\R\times\M)$, where $\langle\cdot,\cdot\rangle_{t,\gm}$ is the usual inner product on $L^2(\R;L^2(\M,\gm))$.

In view of the discussion in the previous subsection, in order to make sense of the dynamics \eqref{heat0}, we look at an approximate one leaving the truncated measure $\rho_{N,\gm}$ invariant. Since we have formally $d\rho_{N,\gm} (u)=e^{-\wt E_N (u)}du$ where the renormalized energy reads
\begin{align*}
\wt E_N (u)&\deff  \frac1{4\pi}\int_{\M}\Big\{|\nabla_\gm \P_Nu|^2 + Q\Rg_\gm u + 4\pi\nu e^{-\pi\be^2C_\P}N^{-\frac{\be^2}2} e^{\be \P_Nu }\Big\}d\Vg\notag\\
 &\qquad\qquad -\sum_{\ell=1}^L\Big(a_\ell\P_Nu(x_\ell) -\frac{a_\ell^2}{2}\big(\log N+2\pi C_\P\big)\Big),
\end{align*}
we thus consider the associated truncated stochastic parabolic equation
\begin{multline}
\dt \wt u_N -\frac1{4\pi}\Dlg \wt u_N +\frac{Q}{8\pi}\Rg_\gm + \frac12\nu\be e^{-\pi\be^2C_\P}N^{-\frac{\be^2}2}\P_N\Big\{e^{\be \P_N\wt u_N}\Big\}\\
 =\frac12\sum_{\ell=1}^La_\ell \P_N\dl_{x_\ell} + \xi_\gm,
\label{heat1}
\end{multline}
with initial data $\wt u_0$ distributed by the truncated LQG measure $\rho_{N,\gm}$. As pointed out by Garban \cite{Garban}, the equation \eqref{heat1} is difficult to handle as it is because of the rough \emph{deterministic} term $\sum_{\ell=1}^La_l \P_N\dl_{x_\ell}$. So, we first do a Girsanov transform in \eqref{LQGN} to express the $L$-points correlation function $\ZZ_N(\gm)$ as (see \eqref{Girsanov} below)
\begin{align}
\ZZ_N(\gm)&=\GG_N(\gm)\Xi\int_{\H^s_0(\M,\gm)}\int_{\R}\exp\bigg\{\sum_{\ell=1}^La_\ell\cj X - \frac{Q}{4\pi}\int_{\M}\Rg_\gm(X_\gm+\cj X)d\Vg\notag\\
& - \nu e^{-\pi\be^2C_\P}N^{-\frac{\be^2}2}e^{\be \cj X}\int_{\M}e^{\be \P_NX_\gm+2\pi\be\sum_{\ell=1}^L a_\ell(\P_N\otimes\P_N)\Gg(x_\ell,x)}d\Vg\bigg\}d\mu_\gm(X_\gm)d\cj X
\label{correlation4}
\end{align}
for some constant $\GG_N(\gm)$. Here $\Gg$ is the mean zero Green's function for $(-\Dlg)$ on $(\M,\gm)$ (see \eqref{Green} below), and $(\P_N\otimes\P_N)\Gg$ is then the regularization of $\Gg$ in both variables.

Thus, in order to remove the {\it deterministic} singular part in \eqref{heat1}, we first do the change of variable
\begin{align}\label{Girsanov2}
\wt u_N = u_N + 2\pi\sum_{\ell=1}^L a_\ell(\P_N\otimes\Id)\Gg(x_\ell,x),
\end{align}
so that $ u_N$ now solves the stochastic equation
\begin{multline}\label{heat2}
\dt u_N -\frac1{4\pi}\Dlg u_N  + \frac12\nu\be e^{-\pi\be^2C_\P}N^{-\frac{\be^2}2}\P_N\Big\{e^{\be \P_Nu_N + 2\pi\be\sum_{\ell=1}^La_\ell(\P_N\otimes\P_N)\Gg(x_\ell,x)}\Big\}\\ = -\frac{Q}{8\pi}\Rg_\gm+\frac1{2\Vg(\M)}\sum_{\ell=1}^La_\ell + \xi_\gm,
\end{multline}
with initial data
\begin{align*}
u_N\,_{|t=0} = \wt u_0 - 2\pi\sum_{\ell=1}^L a_\ell(\P_N\otimes\Id)\Gg(x_\ell,x).
\end{align*}
Note that this change of variable cannot be seen as a Da Prato - Debussche trick, since the remainder $\wt u_N$ is not smoother than the original unknown $u_N$. Instead, it can be seen at the equivalent, at the level of the dynamics, of the Girsanov transform \eqref{correlation4} performed at the level of the LQG measure. In particular, writing
\begin{align}\label{sN}
\s_N(x)\deff \int_{H^s_0(\M,\gm)}\big|\P_NX_\gm(x)\big|^2d\mu_\gm,
\end{align} 
we have under the new measure
\begin{align*}
\exp\Big(\sum_{\ell=1}^La_\ell\P_NX_\gm(x_\ell)-\frac{a_\ell^2}{2}\s_N(x_\ell)\Big)d\mu_\gm\otimes d\cj X
\end{align*} 
that the law of 
\begin{align*}
u_0\deff \wt u_0 - 2\pi\sum_{\ell=1}^L a_\ell(\P_N\otimes\Id)\Gg(x_\ell,x)
\end{align*}
 is given by the integrand in \eqref{correlation4}, which is now absolutely continuous with respect to $d\mu_\gm\otimes d\cj X$.

\begin{theorem}\label{THM:GWP}
Let $a_\ell\in\R$, $\ell=1,...,L$, and  $\nu,\be>0$ and $Q=\frac{2}{\be}+\frac{\be}2$ satisfy the assumptions \eqref{A1}-\eqref{Seiberg1}-\eqref{Seiberg2b}. Assume also that 
\begin{align}\label{A2}
0<\be < \sqrt{a_{\ell_\textup{max}}^2+4}-a_{\ell_\textup{max}},
\end{align}
with $a_{\ell_\textup{max}} = \max_{\ell=1,...,L}a_\ell$. Then the equation \eqref{heat0} is almost surely globally well-posed and the law of its solution is invariant. More precisely:\\
\textup{(i)} for any $T>0$ and all $N\in\N$, there exists a unique solution $u_N\in C([0,T];H^s_0(\M)\oplus\R)$ to \eqref{heat2} for $d\mu_\gm(X_\gm)\otimes d\cj X\otimes\Prob$-almost every $u_0 = X_\gm + \cj X$ and $\xi_\gm$, and the solution $u_N$ converges in measure to some non trivial process $u=u(t,X_\gm,\cj X,\om)\in C([0,T];H^s_0(\M)\oplus\R)$;\\
\textup{(ii)} for any test function $F\in C_b\big(H^s_0(\M)\oplus\R\big)$ and any $t\ge 0$ it holds
\begin{multline*}
\int_{H^s_0(\M,\gm)}\int_{\R} \E\Big[ F\big(\wt u(t,X_\gm,\cj X,\om)\big)\Big]d\rho_{\{a_\ell,x_\ell\},\gm}(X_\gm,\cj X)\\ = \int_{H^s_0(\M,\gm)}\int_\R F(X_\gm+\cj X)d\rho_{\{a_\ell,x_\ell\},\gm}(X_\gm,\cj X),
\end{multline*}
where 
\begin{align*}
\wt u(t,X_\gm,\cj X,\om) \deff u(t,X_\gm,\cj X,\om)+2\pi\sum_{\ell=1}^La_\ell\Gg(x_\ell,x) \in C(\R_+;H^s_0(\M)\oplus\R)
\end{align*}
is the limit in law of the solution $\wt u_N$ to \eqref{heat1}.
\end{theorem}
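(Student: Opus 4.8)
The plan is to prove Theorem~\ref{THM:GWP} by combining (a) local well-posedness for the truncated equations \eqref{heat2} with bounds uniform in $N$, (b) an invariant-measure globalization argument à la Bourgain, and (c) a convergence-in-measure statement obtained by exploiting the invariance of $\rho_{N,\gm}$ plused with a soft compactness argument. The key structural point is that, after the Girsanov change of variables \eqref{Girsanov2}, the equation \eqref{heat2} has a smooth deterministic forcing, and its nonlinearity is a regularized Gaussian multiplicative chaos (GMC)-type term $N^{-\be^2/2}\Pg_N\{e^{\be\Pg_N u_N + \cdots}\}$. First I would decompose $u_N = \cj X + \Psi_N + v_N$, where $\cj X$ is the zero mode, $\Psi_N$ is the stochastic convolution (the solution of the truncated linear stochastic heat equation with the GFF-distributed random initial data for the nonzero modes), and $v_N$ is the nonlinear remainder, which one shows lies in $C([0,T]; H^\sigma)$ for some $\sigma>0$. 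The crucial input is the construction of the limiting GMC random variable: one must show that $N^{-\be^2/2} e^{\be\Pg_N \Psi_N}$, suitably renormalized, converges in $L^p(\Omega; H^{-\alpha})$ for small $\alpha>0$ (this is exactly where the $L^2$-regime $0<\be<\sqrt2$ in \eqref{A1} is used, to have the second moment finite; presumably Proposition~\ref{PROP:U} furnishes this). Then a contraction-mapping / energy argument in the $v_N$-variable gives local solutions on a random time interval whose length depends only on norms of the data and of the GMC object, uniformly in $N$.

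Next I would pass to the limit. For fixed $N$, global well-posedness of \eqref{heat2} follows because the equation is a well-posed semilinear parabolic equation with a monotone-type (since $\be>0$, the nonlinearity $e^{\be u}$ has the good sign) nonlinearity and globally bounded-below forcing — one gets global-in-time control via an a priori $L^\infty$-in-time bound on suitable Sobolev norms, using the sign of the exponential term and the invariance of the smooth measure $\rho_{N,\gm}$ to propagate moment bounds. To obtain convergence of $u_N$ (equivalently $\wt u_N$) in measure as $N\to\infty$, I would use the Bourgain-type invariant-measure argument: invariance of $\rho_{N,\gm}$ under the flow of \eqref{heat1}, together with the weak convergence $\rho_{N,\gm}\to\rho_{\{a_\ell,x_\ell\},\gm}$ from Theorem~\ref{THM:LQG}(i) and uniform integrability coming from the uniform (in $N$) moment bounds on the densities, lets one control the measure of the set where two approximations $u_N$, $u_M$ differ by more than $\eps$ on $[0,T]$ — the contributions from large data are small uniformly because the invariant measures put uniformly small mass there, and on bounded data the deterministic continuous dependence (from step one, with uniform-in-$N$ constants) plus the convergence of the GMC noise terms closes the estimate. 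This yields a Cauchy-in-measure sequence, hence a limit $u$.

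Finally, invariance of $\rho_{\{a_\ell,x_\ell\},\gm}$ under the limit flow (part (ii)) is obtained by passing to the limit in the exact invariance identity $\int \E[F(\wt u_N(t))]\,d\rho_{N,\gm} = \int F\,d\rho_{N,\gm}$: the left side converges by the convergence-in-measure of $\wt u_N(t)$ (upgraded to the needed mode of convergence against $F\in C_b$ using uniform integrability against the densities), and the right side converges by Theorem~\ref{THM:LQG}(i); a standard measure-theoretic argument promotes weak invariance to genuine invariance on $H^s_0(\M)\oplus\R$. I expect the main obstacle to be step one's uniform-in-$N$ local theory: one must set up a functional framework (choice of $\sigma$, the Sobolev/Besov spaces, and the multilinear estimates for products of the rough GMC factor with the smooth remainder) in which the regularized exponential nonlinearity is estimated with constants independent of $N$, and simultaneously control the stochastic objects $\Pg_N\Psi_N$ and the renormalized chaos $\e^{\be\Pg_N\Psi_N}$ in a way that is stable under the cutoff — the interplay between the $N^{-\be^2/2}$ normalization, the smoothing $\Pg_N$, and the need for positive regularity $\sigma$ of $v_N$ is delicate, and the extra constraint \eqref{A2} (beyond \eqref{A1}--\eqref{Seiberg2b}) is presumably exactly what is needed to make this remainder estimate close.
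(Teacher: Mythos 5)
The main gap is your globalization/convergence step. You propose a Bourgain-type invariant-measure argument whose engine is ``uniform integrability coming from the uniform (in $N$) moment bounds on the densities'' of $\rho_{N,\gm}$. Those bounds do not hold in the presence of insertions: the density $R_N$ in \eqref{RN} contains $\exp\big(a_\ell \P_N u(x_\ell)-\tfrac{a_\ell^2}{2}(\log N+2\pi C_\P)\big)$, and since $\P_N X_\gm(x_\ell)$ has variance $\sim \tfrac{1}{2\pi}\log N$, raising $R_N$ to a power $p>1$ produces, after Girsanov, a divergent factor of size $e^{\frac{p(p-1)a_\ell^2}{2}\log N}$; so $\sup_N\|R_N\|_{L^p(d\mu_\gm\otimes d\cj X)}=\infty$ for every $p>1$ whenever $L\ge 1$ (this is exactly the obstruction the paper records in Remark~\ref{REM:LQGwave} when explaining why Bourgain's argument is not available for the wave analogue). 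Only the first moment is controlled, via \eqref{bd-RN(X,m)}, and $L^1$ bounds alone do not give the uniform smallness on large-data sets that your scheme needs. Relatedly, your fixed-$N$ global theory does not need (and should not invoke) invariance to propagate moments: it is purely deterministic.

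The paper's route avoids all of this. After the change of variables \eqref{Girsanov2} and the decomposition \eqref{DPD}, the Duhamel formula \eqref{v3} together with positivity of the heat kernel, of $\P_N$, of $\U_N$ and of $\nu$ forces the pointwise sign $\be v_N\le 0$; hence the exponential nonlinearity may be replaced by a bounded Lipschitz function $\NN(\be\P_N v_N)$ (equation \eqref{v2}). With the positivity product estimate of Lemma~\ref{LEM:posprod} and Schauder estimates, Proposition~\ref{PROP:flow} then gives, \emph{deterministically} in terms of $\|\U_N\|_{L^2_TH^{-1+\eps}}$, global solutions, uniform bounds in $\XX_T^{2\dl}$, compactness, and uniqueness in the energy class by a monotonicity/energy argument. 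Convergence in measure of $u_N$ is then inherited directly from the convergence in probability of $\U_N$ (Proposition~\ref{PROP:U}); no comparison of flows on large-probability sets is required, and condition \eqref{A2} is exactly what makes $\U\in L^2_TH^{(-1)+}$. Finally, for part (ii) you take the invariance identity for $\rho_{N,\gm}$ under \eqref{heat1} as given, but this itself requires proof: the paper establishes it by a further Galerkin truncation $\Pi_M$, a generator computation for the resulting finite-dimensional gradient SDE, and invariance of the Gaussian measure on the high modes under the linear flow, before passing $M\to\infty$ and then $N\to\infty$ against the weak convergence from Theorem~\ref{THM:LQG}. You should replace the Bourgain step by this deterministic stability argument (or supply a substitute for the failing uniform $L^p$ bounds) and add the truncated-invariance argument.
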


\begin{remark}\rm
We used the approximate equation \eqref{heat2} with a truncated nonlinearity (but without truncating the noise nor the initial data) in order for the truncated dynamics \eqref{heat2} to preserve the truncated Gibbs measure \eqref{LQGN}. However, Theorem \ref{THM:GWP} also holds by replacing \eqref{heat2} with
\begin{multline*}
\dt u_N -\frac1{4\pi}\Dlg u_N  + \frac12\nu\be e^{-\pi\be^2C_\P}N^{-\frac{\be^2}2}e^{\be u_N + 2\pi\be\sum_{\ell=1}^La_\ell(\P_N\otimes\P_N)\Gg(x_\ell,x)}\\ = -\frac{Q}{8\pi}\Rg_\gm+\frac1{2\Vg(\M)}\sum_{\ell=1}^La_\ell + \P_N\xi_\gm,
\end{multline*}
with truncated initial data
\begin{align*}
u_N\,_{|t=0} = \P_N\wt u_0 - 2\pi\sum_{\ell=1}^L a_\ell(\P_N\otimes\P_N)\Gg(x_\ell,x).
\end{align*}
\end{remark}

Let us point out that, apart from the use of the Seiberg bound \eqref{Seiberg1} to ensure that the measure $\rho_{\{a_\ell,x_\ell\}}$ is finite, our analysis is completely insensitive to the particular geometry of $\M$. In this aspect, our result unifies the different treatments of \cite{DKRV,DRV,GRV} for the measure construction\footnote{However, note that in the case of a hyperbolic surface, our results only treat the case of a fixed conformal class for the metric, and we do not average on the space of Riemannian metrics with negative curvature compared to \cite{GRV}. See Remark~\ref{REM:Ricci} below.}, and of \cite{Garban} for the SPDE construction. In order to specialise the regime that is covered by Theorem~\ref{THM:GWP} to the different possible geometries, we state the following.
\begin{corollary}
Almost sure global well-posedness and invariance of the measure in the sense of Theorem~\ref{THM:GWP} hold under the following condition:\\
\textup{(i)} For $\M$ a compact hyperbolic surface and $L=0$, in the whole regime $0<\be<\sqrt{2}$;\\
\textup{(ii)} For $\M=\T^2$ with $L=1$, in the regime $a_1<\frac{2}\be$ and $0<\be < \min\big(\sqrt{2},\sqrt{a_1^2+4}-a_1\big)$. In particular, if $a_1 = \be$, this regime reduces to $0<\be <\sqrt{\frac43}$;\\
\textup{(iii)} For $\M=\Sp^2$ with $L=3$, in the regime $a_{\ell_\textup{max}}<\frac2\be$ and $0<\be<\min\big(\sqrt{2},\sqrt{a_{\ell_\textup{max}}^2+4}-a_{\ell_\textup{max}}\big)$.
\end{corollary}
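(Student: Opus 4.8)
The plan is to derive this corollary directly from Theorem~\ref{THM:GWP} by specializing the three standing hypotheses \eqref{A1}, \eqref{Seiberg1}, \eqref{Seiberg2b} together with the extra assumption \eqref{A2} to each of the three model geometries. The only inputs needed are the values of the Euler characteristic --- $\chi(\Sp^2)=2$, $\chi(\T^2)=0$, and $\chi(\M)=2-2g<0$ for a compact hyperbolic surface (which has genus $g\ge 2$) --- together with the elementary fact that $Q=\frac2\be+\frac\be2>0$ for every $\be>0$.

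First I would treat case (i), $L=0$ on a hyperbolic surface. Here the first Seiberg bound \eqref{Seiberg1} reads $\chi(\M)Q<\sum_{\ell=1}^La_\ell=0$, which holds because $\chi(\M)<0<Q$; the bounds \eqref{Seiberg2b} and \eqref{A2} concern insertions and are therefore vacuous when $L=0$ (if one insists on reading \eqref{A2} with the convention $a_{\ell_\textup{max}}=0$, it gives the harmless $\be<2$, subsumed by \eqref{A1}). Hence the full set of hypotheses of Theorem~\ref{THM:GWP} collapses to the $L^2$ regime \eqref{A1}, i.e.\ $0<\be<\sqrt2$, which is the asserted range.

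For cases (ii) and (iii) the key observation is that, in the presence of a positive insertion, \eqref{A2} already implies \eqref{Seiberg2b}. Indeed, writing $a:=a_{\ell_\textup{max}}>0$, one has $a\sqrt{a^2+4}\le a^2+2$ (square both sides: $a^2(a^2+4)=(a^2+2)^2-4\le(a^2+2)^2$), hence $\sqrt{a^2+4}-a\le\frac2a$, so $\be<\sqrt{a^2+4}-a$ forces $\be<\frac2a$, that is $a_{\ell_\textup{max}}<\frac2\be$; and if $a_{\ell_\textup{max}}\le 0$ then \eqref{Seiberg2b} is automatic. Consequently, for $\M=\T^2$ with $L=1$ the hypothesis \eqref{Seiberg1} reduces to $0=\chi(\T^2)Q<a_1$ (which is part of the admissibility of the insertion making $\rho_{\{a_\ell,x_\ell\},\gm}$ a finite measure, cf.\ Theorem~\ref{THM:LQG}), and the remaining content of Theorem~\ref{THM:GWP} is precisely $a_1<\frac2\be$ together with $0<\be<\min\big(\sqrt2,\sqrt{a_1^2+4}-a_1\big)$. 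Specializing further to $a_1=\be$, the last condition becomes $\be<\sqrt{\be^2+4}-\be\iff 4\be^2<\be^2+4\iff\be<\sqrt{4/3}$, and since $\sqrt{4/3}<\sqrt2$ the admissible range is exactly $0<\be<\sqrt{4/3}$. Likewise, for $\M=\Sp^2$ with $L=3$, \eqref{Seiberg1} reads $2Q<a_1+a_2+a_3$ (again part of the admissibility of the insertions), and by the same comparison the hypotheses of Theorem~\ref{THM:GWP} reduce to $a_{\ell_\textup{max}}<\frac2\be$ and $0<\be<\min\big(\sqrt2,\sqrt{a_{\ell_\textup{max}}^2+4}-a_{\ell_\textup{max}}\big)$.

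There is no genuine obstacle in this corollary: it is entirely bookkeeping on top of Theorem~\ref{THM:GWP}. The only two points worth a line of care are the elementary inequality $\sqrt{a^2+4}-a\le\frac2a$ (which records that \eqref{A2} is stronger than the integrability bound \eqref{Seiberg2b} for positive insertions, so the latter never becomes the binding constraint on its own) and the observation that \eqref{Seiberg1} is a constraint only on the insertion weights --- vacuous for $L=0$ on negatively curved surfaces --- which is exactly why the hyperbolic case runs in the full $L^2$ regime while the sphere and torus require suitably chosen insertions.
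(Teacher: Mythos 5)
Your proposal is correct and is exactly the argument the paper intends: the corollary is stated without a separate proof precisely because it is the direct specialization of the hypotheses \eqref{A1}, \eqref{Seiberg1}, \eqref{Seiberg2b} and \eqref{A2} of Theorem~\ref{THM:GWP} to $\chi(\M)<0$, $\chi(\T^2)=0$ and $\chi(\Sp^2)=2$, which is what you carry out. Your extra observation that $\sqrt{a^2+4}-a\le \frac{2}{a}$ (so that \eqref{A2} subsumes \eqref{Seiberg2b} for positive insertions), and the computation $\be<\sqrt{\be^2+4}-\be\Leftrightarrow\be<\sqrt{4/3}$ for $a_1=\be$, are the right bookkeeping details.
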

\begin{remark}\label{REM:Garban}\rm
Without surprise, the adjunction of punctures (i.e. of singularities in the nonlinearity in \eqref{heat2}) reduces the range of admissible $\be$'s. This was already observed in \cite{Garban}, where Garban obtained uniform (in $N$) well-posedness of \eqref{heat2} in the regime\footnote{along with convergence of the approximations $u_N$ in the smaller regime $\frac{\be^2}2-2\sqrt{2}\be + \min(0,\frac{\be}{2\sqrt{2}}-a_{\ell_\textup{max}}\be)>-1$.}
\begin{align*}
\frac{\be^2}2-2\sqrt{2}\be + \min(0,\frac{\be}{2\sqrt{2}}-a_{\ell_\textup{max}}\be)>-2,
\end{align*}
for both $\M=\Sp^2$ with $L=3$ and $\M=\T^2$ with $L=1$. In this latter case, and for the particular choice $a_1 = \be$ (related to random planar maps), his result gives the range $0<\be<\frac{\sqrt{2}}2\approx 0.707$, which we modestly improve to $0<\be <\sqrt{\frac43}\approx 1.15$. In particular, this shows that the conjectured threshold $\g_{\mathrm{pos}}$ given in \cite[Theorem 1.11]{Garban} ($\g_{\mathrm{pos}}=2\sqrt{2}-2$ in the case $L=0$) does not correspond to the actual critical threshold for \eqref{heat1}; see also the discussion after Theorem 1.1 in \cite{ORW}. The main difference in our approach comes from the use of the ``sign-definite structure" as in \cite{ORW} (see Section~\ref{SEC:GWP}) and of $L^p$ based spaces for controlling the solution, whereas Garban used (parabolic) H\"older spaces. In view of the regularity of the main stochastic objects in Proposition \ref{PROP:U} below, which is very sensitive to their integrability properties, we see that working with the more flexible scale of $L^p$ based spaces allows to improve the admissible range. We believe that it is even possible to cover the full sub-critical regime $0<\be<2$ and $\max_{\ell} a_\ell <Q$ without requiring a heavy machinery such as regularity structures or higher-order paracontrolled calculus.
\end{remark}

 \subsection{Scheme of the proof}
 The proof of Theorem \ref{THM:LQG} follows along the line of the previous works \cite{DKRV,DRV,GRV}, and is essentially a consequence of the argument in \cite{DKRV} along with the study of the Green's function and its regularizations performed in Section \ref{SEC:background}.
 
 As for Theorem \ref{THM:GWP}, we first precise some notations. From \eqref{covxi}, we see that the action of $\xi_\gm$ can be extended to functions in $L^2(\R;L^2(\M,\gm))$ and if we define for $n\in\N$, $t\ge 0$ the real-valued process
\begin{align}\label{Bn}
B_{n,\gm}(t) \deff \langle \xi_\gm ,\mathbf{1}_{[0,t]}\varphi_n\rangle_{t,\gm}
\end{align}
then we have from \eqref{covxi} that $\{B_{n,\gm}\}_{n\ge 0}$ is a family of independent Brownian motions so that
\begin{align*}
\xi_\gm = \dt W 
\end{align*}
in $\D'(\R_+\times\M)$, where
\begin{align*}
W = W_\gm + \cj W \deff \sum_{n\ge 1}B_{n,\gm}\varphi_n + B_0\varphi_0.
\end{align*}
In particular we can define $\<1>_\gm$ as the solution to the linear stochastic equation
 \begin{align*}
\begin{cases}
 \big(\dt -\frac1{4\pi}\Dlg\big)\<1>_\gm = \dt W_\gm,\\
 \<1>_\gm(0)=X_\gm,
 \end{cases}
 \end{align*}
which we can also write as
 \begin{align}\label{lp}
 \<1>_\gm(t) = e^{\frac{t}{4\pi}\Dlg}X_\gm + \int_0^te^{\frac{t-t'}{4\pi}\Dlg}dW_\gm(t').
 \end{align}
 
 It is well-known that for $X_\gm$ as in \eqref{GFF} and $W_\gm$ as above, $\<1>_\gm$ is a stationary process belonging almost surely to $C(\R_+;H^s_0(\M))$ for any $s<0$; see Lemma \ref{LEM:SC} below. In particular, in view of the roughness of $\<1>_\gm$, we see that $e^{\<1>_\gm}$ does not make sense, which also justifies the need for the renormalization in the dynamics \eqref{heat2}.

As in \cite{Garban,ORW}, we then start by using Da Prato -Debussche trick \cite{DPD} and write\footnote{Strictly speaking, the trick used by Da Prato and Debussche in \cite{DPD} consists in removing the noise in the right-hand side of \eqref{heat2} by doing the change of variable $u_N = \<1>_\gm + \wt v_N$. Here we remove the whole right-hand side of \eqref{heat2} by the decomposition \eqref{DPD}. This is crucial in order to use the ``sign-definite structure" as in \cite{ORW}.}
 \begin{align}\label{DPD}
 u_N = \<1>_\gm + z+ v_N,
 \end{align}
where\footnote{Also note that here $z$ is still random as it depends on $\cj X$ and $B_0$. This is different from \cite{ORW} where $z$ essentially only contained the last two terms in \eqref{z}.}
 \begin{align}\label{z}
 z(t,x,\cj X,\om) = \cj X + \frac{B_0(t,\om)}{\Vg(\M)} -\frac{Q}{8\pi}\int_{\M}\Pg(t,x,y)\Rg_\gm(y)d\Vg(y)+\frac{t}{2\Vg(\M)}\sum_{\ell=1}^La_\ell,
 \end{align}
with $\Pg$ being the heat kernel on $(\M,\gm)$.

The remainder $v_N$ in \eqref{DPD} now solves
 \begin{align}
 \begin{cases}
\dt v_N -\frac1{4\pi}\Dlg v_N + \frac12\nu\be \P_N\big(e^{\be \P_N(z+v_N)}\U_N\big)  = 0\\
v_N|_{t=0}=0.
\end{cases}
\label{vN}
 \end{align}
where the ``punctured" Gaussian multiplicative chaos (GMC) $\U_N$ is defined by
\begin{align}\label{U}
\U_N(t,x)\deff e^{-\pi\be^2C_\P}N^{-\frac{\be^2}2}e^{\be \P_N\<1>_\gm(t,x)+2\pi\be\sum_{\ell=1}^L a_\ell(\P_N\otimes\P_N)\Gg(x_\ell,x)}.
\end{align}
This type of stochastic object has been largely investigated in the probability literature after the seminal work of Kahane \cite{Ka85} (see also \cite{RVrev,RV}). The following proposition gives the convergence properties of the process $\U_N$.
\begin{proposition}\label{PROP:U}
Let $\al\in (0,2)$, $0<\be^2<2\min(1,\al)$ and $a_\ell<\frac2{\be}$ satisfy
\begin{align*}
\be^2 + 2\be a_\ell^+<4
\end{align*}
for any $\ell=1,...,L$, where $a_\ell^+=\max(a_\ell,0)$. Then for any $T>0$, $\{\U_N\}$ is a Cauchy sequence in $L^2(\mu_\gm\otimes\Prob;L^2([0,T];B^{-\al}_{2,2}(\M)))$, thus converging to some limit $\U$ in this class. 

Moreover, for any $1<p\le 2$ or $p=2m$, $m\in\N$, it holds $\U\in L^p(\mu_\gm\otimes\Prob;L^p([0,T];B^{-\al(p)}_{p,p}(\M)))$, where $\al(p)\in (0,2)$ is such that
\begin{align}
\al(p)>\begin{cases}
 \max\Big\{(p-1)\frac{\be^2}{p}, (p-1)\frac{\be^2}{ p} + 2(p-1)\frac{\be a_\ell^+}{p} + \frac2p -2\Big\} & \text{for }1 < p \leq 2, \rule[-4mm]{0pt}{5pt}\\
\max\Big\{\be a_\ell^+ - \frac4p, (p-1)\frac{\be^2}{2}, (p-1)\frac{\be^2}{2} + \be a_\ell^+ - \frac2p\Big\} & \text{for }p=2m > 2,
\end{cases}
\label{al}
\end{align} 
provided that  $0 < \be^2 < 2\min\big(1,(p-1)^{-1}\big) $ and
\begin{align*}
(p-1)\be^2+2\be a_\ell^+<4
\end{align*}
for all $\ell=1,...,L$.  

 Finally, the limit $\U$ is independent of the approximation procedure in the sense of Theorem \ref{THM:LQG} (ii).
\end{proposition}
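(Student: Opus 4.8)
The plan is to first use stationarity to remove the time variable. By \eqref{lp} and the independence of $X_\gm$ and $W_\gm$, for every $t\ge 0$ the pair $\big(\P_N\<1>_\gm(t),\P_M\<1>_\gm(t)\big)$ is a centred Gaussian field on $\M\times\M$ whose covariance equals $2\pi(\P_N\otimes\P_M)\Gg$ \emph{independently of $t$} (cf.\ Lemma~\ref{LEM:SC}), hence has the same law as $\big(\P_N X_\gm,\P_M X_\gm\big)$, where $\Gg$ is the Green's function \eqref{Green} and $\s_N(x)=2\pi(\P_N\otimes\P_N)\Gg(x,x)$ is as in \eqref{sN}. By Fubini, $\E_{\mu_\gm\otimes\Prob}\|\U_N(t)-\U_M(t)\|^2_{B^{-\al}_{2,2}}$ and $\E_{\mu_\gm\otimes\Prob}\|\U_N(t)\|^p_{B^{-\al}_{p,p}}$ do not depend on $t$, so all the space-time bounds reduce, up to the factor $T$, to their counterparts at $t=0$, where $\<1>_\gm(0)=X_\gm$. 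A direct Gaussian computation then gives, for any $A,B\in\N$,
\begin{align*}
\E_{\mu_\gm\otimes\Prob}\big[\U_A(x)\U_B(y)\big]&=g_A(x)g_B(y)\,e^{2\pi\be^2(\P_A\otimes\P_B)\Gg(x,y)},\\
g_A(x)&\deff e^{-\pi\be^2C_\P}A^{-\frac{\be^2}2}e^{\frac{\be^2}2\s_A(x)+2\pi\be\sum_\ell a_\ell(\P_A\otimes\P_A)\Gg(x_\ell,x)},
\end{align*}
and more generally $\E_{\mu_\gm\otimes\Prob}\big[\prod_{i=1}^p\U_A(y_i)\big]=\prod_i g_A(y_i)\exp\big(2\pi\be^2\sum_{i<k}(\P_A\otimes\P_A)\Gg(y_i,y_k)\big)$. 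Using $\Gg(x,y)=-\tfrac1{2\pi}\log\dg(x,y)+O(1)$ together with the estimates on the regularized Green's function from Section~\ref{SEC:background} (in particular Lemma~\ref{LEM:GN3}, which yields $\s_A=\log A+2\pi C_\P+O(1)$), one obtains, uniformly in the cut-off, the bounds $g_A(x)\les\prod_\ell\dg(x,x_\ell)^{-\be a_\ell^+}$ and $e^{2\pi\be^2(\P_A\otimes\P_B)\Gg(x,y)}\les\dg(x,y)^{-\be^2}$, as well as the pointwise limits $g_A(x)\to g(x)$ (away from the $x_\ell$) and $(\P_A\otimes\P_B)\Gg(x,y)\to\Gg(x,y)$ (for $x\neq y$) as $A,B\to\infty$; this last point is precisely where the normalization chosen in \eqref{U} is used. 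In particular, writing $H(x,y)\deff\dg(x,y)^{-\be^2}\prod_\ell\dg(x,x_\ell)^{-\be a_\ell^+}\dg(y,x_\ell)^{-\be a_\ell^+}$, the two-point function is $\les H$ uniformly in $A,B$.

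\textbf{Step 2: $L^2$-convergence.}
Since $B^{-\al}_{2,2}=H^{-\al}(\M)$, one has $\E_{\mu_\gm\otimes\Prob}\|\U_N(0)-\U_M(0)\|^2_{B^{-\al}_{2,2}}=\iint_{\M^2}k(x,y)\,\E_{\mu_\gm\otimes\Prob}\big[(\U_N-\U_M)(x)(\U_N-\U_M)(y)\big]\,d\Vg(x)d\Vg(y)$, where $k$ is the kernel of $(1-\Dlg)^{-\al}$, with $k(x,y)\les\dg(x,y)^{2\al-2}$ near the diagonal for $\al<1$ and $k$ at most logarithmically singular for $\al\ge1$. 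Expanding the bracket into four terms $\E_{\mu_\gm\otimes\Prob}[\U_A(x)\U_B(y)]$, Step~1 shows that each of them tends, off the diagonal and away from the punctures, to the common limit $g(x)g(y)e^{2\pi\be^2\Gg(x,y)}$, so the signed sum tends to $0$ pointwise, while being dominated by $4H$. The integrability $\iint kH<\infty$ near the diagonal requires exactly $\be^2<2\min(1,\al)$; near a puncture $x_\ell$ one uses in addition that the difference of the regularized kernels is, up to a negligible error, supported in $\{\dg(x,y)\les N^{-1}+M^{-1}\}$, so that its local contribution carries a factor $(N^{-1}+M^{-1})^{2-\be^2}$ against a puncture blow-up of order $(N^{-1}+M^{-1})^{\be^2+2\be a_\ell^+-4}$ (and $a_\ell<\tfrac2\be$ ensures integrability in each single variable), which tends to $0$ precisely when $\be^2+2\be a_\ell^+<4$. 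Dominated convergence then gives that $\{\U_N\}$ is Cauchy in $L^2(\mu_\gm\otimes\Prob;L^2([0,T];B^{-\al}_{2,2}))$; call the limit $\U$, which is a nonnegative distribution as a limit of nonnegative functions.

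\textbf{Step 3: higher moments and independence of the approximation.}
For $p=2m$, $m\in\N$, I would use the embedding $H^{-\al}_p(\M)\embeds B^{-\al}_{p,p}(\M)$ (valid for $p\ge2$) to bound $\E_{\mu_\gm\otimes\Prob}\|\U_N(0)\|^p_{B^{-\al}_{p,p}}\les\int_\M\E_{\mu_\gm\otimes\Prob}\big|(1-\Dlg)^{-\al/2}\U_N(0)(x)\big|^{2m}d\Vg(x)$; since the kernel of $(1-\Dlg)^{-\al/2}$ is $\les\dg(x,y)^{\al-2}$ for $0<\al<2$, inserting the $2m$-point function of Step~1 reduces the estimate to the finiteness, uniformly in $N$, of $\int_\M\int_{\M^{2m}}\prod_i\dg(x,y_i)^{\al-2}\prod_{i<k}\dg(y_i,y_k)^{-\be^2}\prod_{i,\ell}\dg(y_i,x_\ell)^{-\be a_\ell^+}\,d\Vg(y)\,d\Vg(x)$. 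A dyadic power-counting decomposition, organized by which clusters of the points $x,y_1,\dots,y_{2m}$ collapse together and/or onto a puncture $x_\ell$, yields the thresholds in the second line of \eqref{al}; by Fatou along an a.s.\ convergent subsequence (which exists by Step~2) this gives $\U\in L^p(\mu_\gm\otimes\Prob;L^p([0,T];B^{-\al(p)}_{p,p}(\M)))$. For $1<p\le2$, I would interpolate between the trivial endpoint $\U\in L^1(\mu_\gm\otimes\Prob;L^1([0,T];B^{-\eps}_{1,1}(\M)))$ for every $\eps>0$ (valid since $\U\ge0$ and $\E_{\mu_\gm\otimes\Prob}\|\U(0)\|_{L^1}=\int_\M g\,d\Vg<\infty$ when $a_\ell<\tfrac2\be$) and the $p=2$ endpoint just established; complex interpolation, performed separately for the diagonal and for each puncture, produces the first line of \eqref{al}. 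Finally, replacing $\P_N=e^{N^{-2}\Dlg}$ by a Schwartz multiplier $\psi(-N^{-2}\Dlg)$, $\psi(0)=1$ (or by the kernels of Remark~\ref{REM:kernels}), the estimates of Section~\ref{SEC:background} give the same pointwise limits and uniform bounds with the \emph{same} limiting function $g$; rerunning the argument of Step~2 for the mixed differences $\U^\psi_N-\U_M$ shows these tend to $0$ in $L^2(\mu_\gm\otimes\Prob;L^2([0,T];B^{-\al}_{2,2}))$, so both approximation schemes converge to the same $\U$, in the sense of Theorem~\ref{THM:LQG}\,(ii).

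\textbf{Main obstacle.}
The probabilistic ingredients above (stationarity, the Gaussian moment identities, dominated convergence, interpolation, Fatou) are routine once the regularization estimates of Section~\ref{SEC:background} are in hand. The substantive part will be the \emph{deterministic} singular-integral bookkeeping: proving $\iint kH<\infty$ and the finiteness of the $(2m+1)$-fold integral above under sharp exponents, i.e.\ tracking every collapsing configuration of the points and the punctures and extracting the optimal $\al(p)$ together with the conditions $\be^2<2\min(1,\al)$, $a_\ell<\tfrac2\be$ and $(p-1)\be^2+2\be a_\ell^+<4$. This power counting, combined with the precise near-diagonal asymptotics of $(\P_N\otimes\P_M)\Gg$, is the core of the argument.
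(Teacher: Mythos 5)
Your overall architecture coincides with the paper's: fixed-time Gaussian correlation functions (Lemma~\ref{LEM:covar}), uniform two-sided bounds and off-diagonal convergence of the regularized Green's functions from Section~\ref{SEC:background}, an $L^2$ estimate for the Cauchy property, even-moment bounds for $p=2m$, interpolation against the $L^1$ endpoint for $1<p\le 2$, and the same scheme rerun on mixed differences for the independence of the approximation. Your Step~2 substitutes dominated convergence for the paper's quantitative difference estimate (Lemma~\ref{LEM:UEst}\,(iii), which rests on \eqref{GN2}); that is acceptable since no rate is needed for the Cauchy claim, but your near-puncture bookkeeping is off as written: the exponents $2-\be^2$ and $\be^2+2\be a_\ell^+-4$ sum to $2\be a_\ell^+-2$, which is not negative when $a_\ell^+\ge 1/\be$ (allowed, since only $a_\ell<2/\be$ is assumed). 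What your argument really needs there is just integrability of the dominating function $\dg(x,y)^{2\al-2-\be^2}\prod_\ell \dg(x,x_\ell)^{-\be a_\ell^+}\dg(y,x_\ell)^{-\be a_\ell^+}$ near a puncture, i.e.\ the same condition that makes the paper's weight $f_{\al,\{x_\ell\}}$ in \eqref{fj} integrable; the extra quantitative factor you invoke is neither needed nor correctly computed.

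The genuine gap is at $p=2m>2$, which is where the content of \eqref{al} lies. The paper's decisive device is the Brascamp--Lieb inequality (Lemma~\ref{LEM:BL}), which decouples the $2m$-point correlation function into a product of \emph{two-point} integrals; each of these is then estimated by the region-by-region analysis in the proof of Lemma~\ref{LEM:UEst}\,(ii), producing $f_{\al,\{x_\ell\}}$, whose $L^{m}$-norm yields exactly the thresholds in the second line of \eqref{al}. You propose instead a direct dyadic cluster power counting of the $(2m+1)$-fold singular integral and defer it entirely, yourself labelling it the main obstacle. Your heuristic does reproduce the dominant exponents (all points collapsing gives $(p-1)\frac{\be^2}{2}$; collapsing onto a puncture together with the external point gives $(p-1)\frac{\be^2}{2}+\be a_\ell^+-\frac2p$; the entry $\be a_\ell^+-\frac4p$ reflects the $L^m$-integrability of the cross terms), so the route is not wrong in principle, but the key quantitative statement --- uniform-in-$N$ finiteness of the $2m$-fold integral under the sharp conditions $(p-1)\be^2<2$ and $(p-1)\be^2+2\be a_\ell^+<4$ --- is not proved, and without a decoupling tool such as Brascamp--Lieb (or Kahane convexity) you would have to control every partial-cluster configuration interacting with the $L$ punctures by hand; the paper already needs a lengthy case analysis in the much simpler pairwise setting after the reduction. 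As submitted, the proposal is a correct plan whose hardest step is acknowledged but not carried out.
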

\begin{remark}\rm
Here convergence of $\U_N$ is only established in probability. However, if $N$ runs over dyadic integers as in \cite{DS11} instead of $\N$, then we can also prove convergence almost surely.
\end{remark}
In Proposition \ref{PROP:U}, the function space $B^{-\al}_{p,p}(\M)$ denotes the Besov space; see \eqref{Besov} below. Then for any positive distribution $\U_N\in L^2([0,T];H^{(-1)+}(\M))$ we can solve locally the Cauchy problem \eqref{vN} by a fixed-point argument, and the continuous dependence of the flow of \eqref{vN} in $\U_N$ gives the convergence $v_N\to v$ where $v$ solves
\begin{align}\label{v}
 \begin{cases}
\dt v -\frac1{4\pi}\Dlg v + \frac12\nu\be \big(e^{\be (z+v)}\U\big)  = 0\\
v|_{t=0}=0.
\end{cases}
\end{align}
with $\U$ given by Proposition \ref{PROP:U}. Global well-posedness of \eqref{vN} and uniform a priori bounds are established by the same argument as in \cite[Theorem 1.4]{ORW} by exploiting the ``sign-definite structure" of the equation. Note that this is in order to preserve this structure that we consider regularization by using $\P_N$ \eqref{PN} in Theorem \ref{THM:GWP}. A compactness argument then provides existence of a limit which satisfies \eqref{v}, and an energy estimate yields the uniqueness property. In particular, the process $u$ in Theorem~\ref{THM:GWP}~(i) is unique in the class
\begin{align*}
\<1>_\gm + z + \XX^0_T,
\end{align*}
where $\XX_T^0$ is the energy space defined in \eqref{energyZ} below. The invariance of the measure then follows from standard arguments. Finally, observe that the condition $\U\in L^2([0,T];H^{(-1)+}(\M))$ implies the conditions 
\begin{align*}
\begin{cases}
\be^2<2,~~a_{\ell_\textup{max}}<\frac2{\be},\\
\be^2 +2\be a_{\ell_\textup{max}}^+<4,
\end{cases}
\end{align*} in view of Proposition \ref{PROP:U} with $\al=1-$ and $p=2$. This yields the condition \eqref{A2}.

\begin{remark}\label{REM:Ricci}\rm
As pointed out above, another way to look at the stochastic quantization of the Liouville action \eqref{SL} is to consider the stochastic Ricci flow \eqref{SRic} as in \cite{DS19}. Indeed, $S_\L$ in \eqref{SL} depends on both the path $u$ and the metric $\gm$, and the stochastic quantization for $u$ that we considered in \eqref{heat0} corresponds to ``freezing'' $\gm$. On the contrary, \eqref{SRic} takes into account the coupling between $u$ and $\gm = e^{u}\gm_0$ when the metric $\gm$ belongs to the conformal class of a fixed reference metric $\gm_0$. Notice that this makes the dynamics \eqref{SRic} much more nonlinear. 

Interestingly, in the case of a surface with genus strictly greater than one, the probabilistic approach to the full Liouville quantum gravity (LQG) amounts to couple the matter field given by LCFT with gravity by also averaging over all possible metrics, replacing \eqref{correlation1} with
\begin{align}\label{LQG2}
\int\int \prod_{\ell=1}^L\VV_{a_\ell}(x_\ell)(u)e^{-S_\L(u,\gm)}Due^{-S_{EH}(\gm)}D\gm
\end{align}
where again $D\gm$ stands for a ``uniform measure'' on the moduli space of $\M$, and $S_{EH}$ is the Einstein-Hilbert action
\begin{align*}
S_{EH}(\gm) = \frac1{2\ld}\int_{\M}\Rg_\gm d\Vg;
\end{align*}
see \cite{GRV}. It would be interesting to study the stochastic quantization of the full measure \eqref{LQG2} as a coupled stochastic dynamics on $(u,\gm)$ without restricting $\gm$ to a particular conformal class. See also Section 5 in \cite{DS19}.
\end{remark}

The rest of the manuscript is organised as follows. In Section \ref{SEC:background}, we recall the necessary tools to perform the construction of the measure and  analyse the stochastic equation; in particular we prove a Schauder estimate as well as pointwise bounds on a large class of regularizations of the Green's function. Section \ref{SEC:Proba} contains the probabilistic part of the analysis, where we establish Proposition \ref{PROP:U} in Subsection \ref{SUBS:GMC} and prove Theorem \ref{THM:LQG} in Subsection \ref{SUBS:Gibbs}. In Section \ref{SEC:GWP} we analyse the stochastic PDE \eqref{heat2} and give the proof of Theorem \ref{THM:GWP}. Finally, in Appendix \ref{SEC:A}, we briefly discuss how the measure construction fails in the case of a negative cosmological constant $\nu<0$, for both the LQG measure \eqref{LQG} and the $\exp(\Phi)_2$ measure 
$``d\rho^{\exp} = \exp\big\{-\|u\|_{H^1}^2-\nu\int_\M \,:\!e^{\be u}\!:\,d\Vg\big\}Du".$

\section{Background material}\label{SEC:background}

\subsection{Basic tools from analysis on manifolds}
Let $(\M,\gm)$ be a two-dimensional closed (compact, boundaryless), connected, orientable smooth Riemannian manifold, where we fix the metric $\gm$ once and for all. In local coordinates, the metric $\gm$ is given by a smooth function $x\mapsto(\gm_{j,k}(x))_{j,k=1,2}$ taking value in the set of positive symmetric definite matrices. In particular $(\gm_{j,k})$ is invertible and its inverse is denoted by 
\begin{align*}
\big(\gm^{j,k}(x) \big) = \big(\gm_{j,k}(x)\big)^{-1}.
\end{align*}
We also write
\begin{align*}
|\gm(x)| = \det \big(\gm_{j,k}(x)\big)>0.
\end{align*} 
The volume form $\Vg$ can then be written locally as
\begin{align*}
d\Vg(x) = |\gm(x)|^{\frac12}dx.
\end{align*}
The Laplace-Beltrami operator is given in local coordinates by
\begin{align*}
\Dlg f = |\gm|^{-\frac12}\dd_j(|\gm|^{\frac12}\gm^{j,k}\dd_k f\big)
\end{align*}
for any smooth function $f\in C^{\infty}(\M)$. Here and in the following we use Einstein's convention for the summation on repeated indices. 

We set $\{\varphi_n\}_{n\ge 0}\subset C^{\infty}(\M)$ to be a basis of $L^2(\M,\gm)$ consisting of eigenfunctions of $\Dlg$ associated with the eigenvalue $-\ld_n^2$, assumed to be arranged in increasing order: $0=\ld_0 <\ld_1\le\ld_2\le...$ In particular $\varphi_0\equiv \Vg(\M)^{-\frac12}$ is constant. As for $\ld_n$, $n\to \infty$, we have the following asymptotic behaviour given by Weyl's law:
\begin{align}\label{Weyl}
\frac{\ld_n^2}{n} \too \frac{\Vg(\M)}{4\pi} \text{ as }n\to\infty.
\end{align}
Indeed, this is a consequence of the following result\footnote{Compared to the notations of \cite[Theorem 17.5.7]{Hormander}, we have in our case that the density of the weighted manifold $\M$ is given by $|\gm|^{\frac12}=\det(\gm^{j,k})^{-\frac12}$, and the $\ld_n$'s are the square-roots of the eigenvalues of $-\Dlg$.} of H\"ormander \cite[Theorem 17.5.7]{Hormander} for the spectral function of $\Dlg$: there exists $C>0$ such that for any $\Ld>0$ and any $x\in\M$, it holds
\begin{align}\label{EF1}
\Big|\sum_{\ld_n\le \Ld}\varphi_n(x)^2 - \frac1{4\pi}\Ld^2\Big|\le C\Ld.
\end{align}
In particular we also get that for any $n\ge 1$,
\begin{align}\label{EF2}
\big\|\varphi_n\big\|_{L^{\infty}(\M)}\les \ld_n^{\frac12}\les \jb{n}^{\frac14}.
\end{align}
On the other hand, the following lemma shows the uniform boundedness in average of the eigenfunctions.
\begin{lemma}\label{LEM:EF}
For any $A\in\R$, there exists $C>0$ such that for any $\Ld>0$ and any $x\in\M$, it holds
\begin{align*}
\sum_{\ld_n\in (\Ld,\Ld+1]}\frac{\varphi_n(x)^2}{\jb{\ld_n}^A} \le C\sum_{\ld_n\in (\Ld,\Ld+1]}\frac1{\jb{\ld_n}^A}.
\end{align*}
\end{lemma}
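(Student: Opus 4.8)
The plan is to deduce the averaged bound from Hörmander's pointwise spectral function estimate \eqref{EF1} by a summation-by-parts (Abel summation) argument. First I would introduce the counting function $N_x(\Ld) \deff \sum_{\ld_n \le \Ld}\varphi_n(x)^2$, so that \eqref{EF1} reads $|N_x(\Ld) - \tfrac1{4\pi}\Ld^2| \le C\Ld$ uniformly in $x$. The quantity to be estimated is $\sum_{\ld_n \in (\Ld, \Ld+1]} \jb{\ld_n}^{-A}\varphi_n(x)^2$, which is a Stieltjes integral $\int_{\Ld}^{\Ld+1} \jb{t}^{-A}\, dN_x(t)$. Integrating by parts moves the $\varphi_n(x)^2$ weights onto the smooth, monotone factor $\jb{t}^{-A}$, at the cost of boundary terms $\jb{\Ld+1}^{-A}N_x(\Ld+1) - \jb{\Ld}^{-A}N_x(\Ld)$ plus $A\int_\Ld^{\Ld+1} t\jb{t}^{-A-2}N_x(t)\,dt$. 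Each occurrence of $N_x$ is then replaced by $\tfrac1{4\pi}t^2 + O(t)$ using \eqref{EF1}; the main terms combine (telescoping / fundamental theorem of calculus) into a quantity comparable to $\jb{\Ld}^{1-A}$ for large $\Ld$, and the $O(t)$ error terms are likewise $O(\jb{\Ld}^{1-A})$ on a unit interval.

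Next I would observe that the right-hand side $\sum_{\ld_n \in (\Ld, \Ld+1]}\jb{\ld_n}^{-A}$ is, by the very same estimate \eqref{EF1} applied with $\varphi_n(x)^2$ replaced by $1$ — i.e. using Weyl's law for the bare eigenvalue count $\#\{n : \ld_n \le \Ld\} = \tfrac{\Vg(\M)}{4\pi}\Ld^2 + O(\Ld)$ — also comparable to $\jb{\Ld}^{1-A}$ for $\Ld$ large (the number of eigenvalues in $(\Ld,\Ld+1]$ is $O(\jb{\Ld})$, each weighted by $\jb{\Ld}^{-A}$). Since the eigenvalue count in a unit window grows linearly, $\sum_{\ld_n \in (\Ld,\Ld+1]}\jb{\ld_n}^{-A} \asymp \jb{\Ld}^{1-A}$ up to constants depending only on $\M$ and $A$ — and crucially this holds also for small $\Ld$ as long as the interval $(\Ld, \Ld+1]$ is nonempty, with the convention that both sides vanish when it is empty. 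Comparing the two asymptotics with matching powers $\jb{\Ld}^{1-A}$ yields the claimed inequality with a constant $C$ independent of $\Ld$ and $x$.

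The one point requiring care — and the place I expect the argument to be most delicate — is the uniformity at \emph{small} $\Ld$, where the asymptotic $\jb{\Ld}^{1-A} \asymp \Ld^{1-A}$ can degenerate (for $A > 1$ the power is negative) and where \eqref{EF1} gives a bound, $C\Ld$, that does not control $N_x(\Ld)$ from below. Here I would argue by a compactness/finiteness consideration: there are only finitely many eigenvalues below any fixed threshold $\Ld_0$, and on $\{n : \ld_n \le \Ld_0\}$ both sides of the asserted inequality are continuous positive functions of $\Ld$ on the (finitely many) windows where the sum is nonempty, with $\varphi_n \in C^\infty(\M)$ bounded; so one simply absorbs this finite regime into the constant $C$. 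For $\Ld \ge \Ld_0$ the Abel-summation computation above applies cleanly since $\jb{\Ld} \asymp \Ld$. Assembling the large-$\Ld$ estimate with the crude finite-$\Ld$ bound gives the lemma in full generality. No step beyond \eqref{EF1}, \eqref{EF2}, Weyl's law \eqref{Weyl}, and elementary Abel summation is needed.
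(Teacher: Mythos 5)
Your estimate for the left-hand side is fine (and can be obtained more directly: since $\jb{\ld_n}^{-A}\asymp\jb{\Ld}^{-A}$ for $\ld_n\in(\Ld,\Ld+1]$, one can pull the weight out and apply \eqref{EF1} at $\Ld$ and $\Ld+1$; no Abel summation is needed), but the second half of your argument has a genuine gap. You need the \emph{lower} bound
\begin{align*}
\sum_{\ld_n\in(\Ld,\Ld+1]}\frac1{\jb{\ld_n}^A}\gtrsim \jb{\Ld}^{1-A},
\end{align*}
i.e.\ that every nonempty unit window $(\Ld,\Ld+1]$ contains $\gtrsim\Ld$ eigenvalues, and this does not follow from \eqref{Weyl} or \eqref{EF1}. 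Integrating \eqref{EF1} over $\M$ gives $\#\{n:\ld_n\le\Ld\}=\frac{\Vg(\M)}{4\pi}\Ld^2+O(\Ld)$, so the count in a unit window is $\frac{\Vg(\M)}{4\pi}(2\Ld+1)+O(\Ld)$: the error term is of the same size as the main term, so only the upper bound $O(\Ld)$ survives. Unit windows need not contain $\sim\Ld$ eigenvalues: on the round sphere the distinct values $\sqrt{k(k+1)}$ are spaced by slightly more than $1$, so infinitely many unit windows are empty, and nothing in \eqref{Weyl}, \eqref{EF1}, \eqref{EF2} rules out a nonempty window containing only $O(1)$ eigenvalues while the pointwise increment $\sum_{\ld_n\in(\Ld,\Ld+1]}\varphi_n(x)^2$ is of size $\Ld$ at some $x$; the best pointwise information these facts give is $\les\min(K\Ld,\Ld)$, where $K$ is the number of eigenvalues in the window, not $\les K$. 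Since the weight is essentially constant on the window, the lemma is equivalent to the per-window statement $\sum_{\ld_n\in(\Ld,\Ld+1]}\varphi_n(x)^2\le C\,\#\{n:\ld_n\in(\Ld,\Ld+1]\}$, so the step "comparing the two asymptotics with matching powers" assumes precisely what has to be proved.

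For comparison, the paper does not give a self-contained argument either: it deduces the lemma from \eqref{Weyl}, \eqref{EF1}, \eqref{EF2} together with \cite[Proposition 8.3]{BTT1}, and it is the cited result that supplies the per-window comparison between the pointwise spectral increment and the eigenvalue count. If you want a self-contained proof along your lines, you must either establish a lower bound for the eigenvalue count in nonempty unit windows (which is not a consequence of Weyl asymptotics with an $O(\Ld)$ remainder) or find another mechanism to handle sparse windows, e.g.\ eigenvalues isolated at unit scale. Your treatment of small $\Ld$ by finiteness is correct but is not where the difficulty lies.
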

\begin{proof}
This follows from \eqref{Weyl}-\eqref{EF1}-\eqref{EF2}. See also \cite[Proposition 8.3]{BTT1}.
\end{proof}

Using the above basis of $L^2(\M,\gm)$, we can expand any $u\in\D'(\M)$ as
\begin{align*}
u = \sum_{n\ge 0}\langle u,\varphi_n\rangle_\gm \varphi_n,
\end{align*}
where $\langle\cdot,\cdot\rangle_\gm$ denotes the distributional pairing $\D'(\M)\times\D(\M)\to \R$ which coincides with the usual inner product in $L^2(\M,\gm)$ for distributions which are regular enough. For any $s\in\R$ and $1\le p\le \infty$, we thus define the Sobolev spaces
\begin{align*}
W^{s,p}(\M,\gm) \deff\Big\{u\in\D'(\M),~\|u\|_{W^{s,p}(\M)} <\infty\Big\}
\end{align*}
where
\begin{align*}
\|u\|_{W^{s,p}(\M)}  \deff \big\|(1-\Dlg)^{\frac{s}2}u\big\|_{L^p(\M)} = \Big\|\sum_{n\ge 0}\jb{\ld_n}^s\langle u,\varphi_n\rangle_\gm \varphi_n\Big\|_{L^p(\M)}.
\end{align*}
When $p=2$ we write $H^s(\M)\deff W^{s,2}(\M)$.

\subsection{Schwartz multipliers and Schauder estimate on $\M$}
Next, recall that for any self-adjoint elliptic operator $\AA$ on $L^2(\M,\gm)$ with discrete spectrum $\{\ld_n\}$ and orthonormal basis of eigenfunctions $\{\varphi_n\}$, the functional calculus of $\AA$ is defined for any $\psi\in L^{\infty}(\R)$ by
\begin{align*}
\psi\big(\AA\big)u = \sum_{n\ge 0}\psi(\ld_n)\langle u,\varphi_n\rangle_\gm \varphi_n,
\end{align*}
for all $u\in C^{\infty}(\M)$. This in particular allows us to define the more general class of Besov spaces. First, using the functional calculus, we can define the Littlewood-Paley projectors $\Q_M$ for a dyadic integer $M\in 2^{\Z_{\ge-1}}\deff \{0,1,2,4,...\}$ as 
\begin{align}\label{QM}
\Q_M = \begin{cases}
\psi_0\big(-(2M)^{-2}\Dlg\big)-\psi_0\big(-M^{-2}\Dlg\big),~~M\ge 1,\\
\psi_0\big(-\Dlg\big),~~M=0,
\end{cases}
\end{align}
where $\psi_0\in C^{\infty}_0(\R)$ is non-negative and such that $\supp\psi_0\subset [-1,1]$ and $\psi_0\equiv 1$ on $[-\frac12,\frac12]$. With the inhomogeneous dyadic partition of unity $\{\Q_M\}_{M\in 2^{\Z_{\ge -1}}}$, we can then define the Besov spaces
\begin{align}\label{Besov}
B^s_{p,r}(\M)\deff \Big\{u\in \D'(\M),~~\|u\|_{\B^s_{p,r}(\M)} \deff \Big(\sum_{M\in 2^{\Z_{\ge -1}}}\jb{M}^{rs}\big\|\Q_Mu\big\|_{L^p(\M)}^r\Big)^{\frac1r} <\infty\Big\}
\end{align}
for any $p,r\in [1,\infty]$. These function spaces are the natural generalization of the usual Besov spaces on $\R^d$ to the context of closed manifolds. In particular, we proved in \cite[Proposition 2.5]{ORTz} the following characterization of these spaces.
\begin{lemma}\label{LEM:Beloc}
Let $(U,V,\kk)$ be a coordinate patch and $\chi\in C^{\infty}_0(V)$. For any $s\in\R$ and $1\leq p,r\leq \infty$, there exist $c,C>0$ such that for any $u\in C^{\infty}(\M)$,
\begin{align*}
c\|\chi u\|_{B^s_{p,r}(\M)}\leq  \|\kk^{\star}(\chi u)\|_{B^{s}_{p,r}(\R^2)}\leq C\|u\|_{B^s_{p,r}(\M)}.
\end{align*}
\end{lemma}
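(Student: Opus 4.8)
The plan is to reduce the statement on $\M$ to the corresponding well-known estimates for Besov spaces on $\R^2$, by exploiting the fact that $\chi u$ is compactly supported in the coordinate patch and that the Littlewood-Paley projectors $\Q_M$ defined via the functional calculus of $\Dlg$ are, microlocally, comparable to the standard projectors on $\R^2$. Concretely, write $f = \chi u$ and $\widetilde f = \kk^\star(\chi u)$, the push-forward to $\R^2$; both are smooth and compactly supported. The key technical input is a comparison between $\Q_M f$ (intrinsic) and $\Delta_j \widetilde f$ (Euclidean, at dyadic frequency $2^j \sim M$): one shows that $\kk^\star(\Q_M f) - \Delta_{[\log_2 M]} \widetilde f$ is, in a suitable sense, a rapidly decaying (in $M$) error. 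This kind of statement follows from the theory of pseudodifferential operators / the Hörmander functional calculus for elliptic operators, which gives that $\psi(-N^{-2}\Dlg)$ has a Schwartz kernel which, in local coordinates and after localization by $\chi$, is within $O(N^{-\infty})$ (in every Sobolev/Besov norm) of a properly supported Euclidean Fourier multiplier of symbol type; see e.g. the standard references on spectral multipliers (Seeger–Sogge, Taylor) that we cite elsewhere.

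The steps I would carry out are: (1) Recall that $(1-\Dlg)^{s/2}$ and its Euclidean counterpart $(1-\Delta_{\R^2})^{s/2}$ differ, after conjugation by $\kk^\star$ and localization by cutoffs $\chi, \chi'$ with $\chi' \equiv 1$ on $\supp\chi$, by a smoothing operator; equivalently the two elliptic operators have the same principal symbol in these coordinates. (2) Deduce that the families $\{\kk^\star \Q_M \chi\}$ and $\{\Delta_j \kk^\star \chi\}$ (with $M = 2^j$) are "equivalent Littlewood–Paley decompositions" in the sense that each $\kk^\star \Q_M f$ is a finite sum (over $|j - \log_2 M| \le C$) of $\Delta_j$ of functions with $L^p$ norm $\lesssim \|\Q_{M'} f\|_{L^p}$ for nearby $M'$, up to an $O(M^{-A})\|f\|_{L^p}$ error, and symmetrically. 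Here I would use Lemma \ref{LEM:EF} / Weyl's law type control only insofar as it guarantees the spectral cutoffs behave; the heavy lifting is the kernel comparison in (1). (3) Sum over $M$ (resp. $j$) against the weights $\jb{M}^{rs}$ to pass between the two Besov norms, using that the $O(M^{-A})$ errors are summable when $A$ is taken large relative to $|s|$ and $r$. (4) For the lower bound $c\|\chi u\|_{B^s_{p,r}(\M)} \le \|\kk^\star(\chi u)\|_{B^s_{p,r}(\R^2)}$, run the same comparison in the reverse direction, again using a second cutoff supported in $V$ and equal to $1$ near $\supp\chi$ to localize the Euclidean projectors back onto the patch.

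The main obstacle is step (1)–(2): making precise and quantitative the statement that the intrinsic spectral Littlewood–Paley pieces $\Q_M$, once localized to a coordinate chart, agree with Euclidean dyadic pieces up to negligible errors. This requires either invoking the symbol calculus for functions of an elliptic operator (e.g. that $\psi(-N^{-2}\Dlg)$ is a pseudodifferential operator of order $-\infty$ plus a $\Psi$DO with symbol $\psi(N^{-2}|\xi|^2_\gm)$ in local coordinates, where $|\xi|_\gm^2 = \gm^{jk}\xi_j\xi_k$), or a direct parametrix/finite-propagation-speed argument for the wave equation à la Sogge. In our setting we have already stated (and will use elsewhere) Schauder-type estimates and pointwise heat-kernel bounds, so the cleanest route is to cite the pseudodifferential characterization; the only genuine work is checking that the change of the principal symbol from $\gm^{jk}\xi_j\xi_k$ to $|\xi|^2$ — an elliptic, order-zero, smooth and positive perturbation — does not affect the Besov scale, which is a standard consequence of boundedness of order-zero $\Psi$DOs on $B^s_{p,r}(\R^2)$. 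Since this is exactly \cite[Proposition 2.5]{ORTz}, whose proof we are recalling, I would present the argument at the level of detail above and refer to that paper for the routine $\Psi$DO bookkeeping.
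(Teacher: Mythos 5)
Your approach is essentially the paper's: the paper gives no proof of Lemma \ref{LEM:Beloc} here, simply citing \cite[Proposition 2.5]{ORTz}, and the argument there is precisely the chart-localized comparison you sketch, namely the semiclassical expansion of spectral multipliers of $\Dlg$ as in Proposition \ref{PROP:pseudo} combined with kernel/off-diagonal decay bounds of the type in Lemma \ref{LEM:PM} (needed because the remainder estimates of Proposition \ref{PROP:pseudo} are $L^2$-Sobolev based, while the Besov scale requires $L^p$ control), followed by dyadic summation. Since you, like the paper, ultimately defer this bookkeeping to \cite{ORTz}, there is nothing substantive to object to.
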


In order to describe more precisely the operators given above by the functional calculus, we start by looking at the local description of multipliers with smooth symbol. For $\psi\in C^{\infty}_0(\R)$, we define through the functional calculus the smoothing operators
\begin{align*}
\psi\big(-N^{-2}\Dlg\big) : v\in \D'(\M) \mapsto \sum_{n\ge 0}\psi(N^{-2}\ld_n^2)\langle v,\varphi_n\rangle\varphi_n \in C^{\infty}(\M),
\end{align*} 
for any $N\in\N$. We can see these operators as semi-classical pseudo-differential operators on $\M$. Indeed, for a semi-classical parameter $h\in (0,1]$, we use the quantization rule
\begin{align*}
a(x,hD) : u\in\S(\R^2)\mapsto a(x,hD)u = \frac1{(2\pi)^2}\int_{\R^2}e^{ix\cdot\xi}a(x,h\xi)\ft u(\xi)d\xi,
\end{align*}
with the convention
\begin{align*}
\ft u(\xi) = \int_{\R^2}e^{-ix\cdot\xi}u(x)dx
\end{align*}
for the Fourier transform. We then have the following result from \cite[Proposition 2.1]{BGT}.
\begin{proposition}\label{PROP:pseudo}
Let $\AA$ be an elliptic self-adjoint operator of order $m$ on $L^2(\M,\gm)$. Let $\psi\in C^{\infty}_0(\R)$, $\kk: U\subset \R^2\rightarrow V\subset \M$ be a coordinate patch, and $\chi,\widetilde{\chi}\in C^{\infty}_0(V)$ with $\widetilde{\chi}\equiv 1$ on $\supp\chi$. Then there exists a sequence of symbols $(a_k)_{k\geq 0}$ in $C^{\infty}_0(U\times\R^2)$ with the following properties:\\

\noi\smallskip
\textup{(i)}  for any $v\in C^{\infty}(\M)$ and $K\geq 1$, we have the expansion
\begin{align}\label{dvp-psi}
\Big\|\kk^{\star}\big(\chi\psi\big(h^m\AA\big)v\big) -\sum_{k=0}^{K-1}h^ka_k(x,hD)\kk^{\star}(\wt\chi v)\Big\|_{H^{s+\s}(\R^2)} \les h^{K-\s-|s|}\|v\|_{H^s(\M)},
\end{align} 
uniformly in $h\in (0,1]$;\\
\noi
\textup{(ii)} for any $x\in U$ the principal symbol is given by
\begin{equation*}
a_0(x,\xi)= \chi(\kk(x))\psi\big(p_m(x,\xi)\big),
\end{equation*}
where $p_m$ is the principal symbol of $\AA$ in $\kk$;\\

\noi\smallskip
\textup{(iii)} for all $k\geq 0$, $a_k$ is supported in 
\begin{align}\label{support}
\Big\{(x,\xi)\in U\times\R^2,~\kk(x)\in\supp\chi,~-p_m(x,\xi)\in \supp \psi^{(k)}\Big\}.
\end{align}
\end{proposition}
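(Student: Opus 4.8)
The plan is to prove Proposition~\ref{PROP:pseudo} by combining the Helffer--Sj\"ostrand functional calculus with a semiclassical parametrix construction for the resolvent of $\AA$, following \cite{BGT}. Write $h=N^{-1}$; since $\psi\in C^\infty_0(\R)$, fix an almost analytic extension $\wt\psi\in C^\infty_0(\C)$ with $\wt\psi|_\R=\psi$ and $|\bar\partial\wt\psi(z)|\les_M|\Im z|^M$ for every $M\ge0$. The Helffer--Sj\"ostrand formula
\begin{align*}
\psi(h^m\AA)=-\frac1\pi\int_\C\bar\partial\wt\psi(z)\,(h^m\AA-z)^{-1}\,dL(z)
\end{align*}
holds for any self-adjoint $\AA$, the integral converging because the bound $\|(h^m\AA-z)^{-1}\|_{L^2\to L^2}\le|\Im z|^{-1}$ is dominated by the infinite-order vanishing of $\bar\partial\wt\psi$ on $\R$. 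By pseudolocality of the functional calculus (the kernel of $\psi(h^m\AA)$ is $O(h^\infty)$, together with all its semiclassical derivatives, at distance $\ges1$ from the diagonal), one may replace $\chi\psi(h^m\AA)v$ by $\chi\psi(h^m\AA)(\wt\chi v)$ at the cost of an $O(h^\infty)\|v\|_{H^s(\M)}$ error. Thus it suffices to analyse $\kk^\star\big(\chi(h^m\AA-z)^{-1}(\wt\chi v)\big)$ in the chart $\kk$, uniformly for $z$ in a fixed neighbourhood of $\supp\psi$ with $0<|\Im z|\le1$, tolerating polynomial losses in $|\Im z|^{-1}$.

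After localizing to the chart, $h^m\AA-z$ becomes a semiclassical differential operator $P(h)-z$ of order $m$ with full symbol $p_m(x,\xi)-z+hp_{m-1}(x,\xi)+\dots+h^mp_0(x,\xi)$, with $p_m$ real-valued and elliptic. For $\Im z\ne0$ one builds iteratively a symbol $q(z;x,\xi,h)\sim\sum_{k\ge0}h^kq_k(z;x,\xi)$ with $(P(h)-z)\#q\sim\kk^\star\chi$, setting $q_0=\kk^\star\chi/(p_m-z)$ and $q_k=-r_k/(p_m-z)$ where $r_k$ denotes the $h^k$-order error produced by $q_0,\dots,q_{k-1}$ through the composition formula. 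The crucial point is to run this recursion while keeping simultaneous track of three features of $q_k$: it is a finite sum $\sum_l r_{k,l}(x,\xi)(p_m(x,\xi)-z)^{-l-1}$ with $r_{k,l}\in C^\infty_0$ supported in $x$ over $\kk^{-1}(\supp\chi)$; its $\xi$-order is $\le-m-k$; and the pole orders $l+1$ range over $k+1\le l+1\le 2k+1$. Each $q_k$ also obeys symbol estimates with a $k$-dependent power of $|\Im z|^{-1}$.

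Setting $a_k(x,\xi):=-\tfrac1\pi\int_\C\bar\partial\wt\psi(z)\,q_k(z;x,\xi)\,dL(z)$, each integral converges absolutely because $\bar\partial\wt\psi$ vanishes to infinite order at the real pole $z=p_m(x,\xi)$; differentiating the scalar Helffer--Sj\"ostrand formula $l$ times shows $-\tfrac1\pi\int_\C\bar\partial\wt\psi(z)(p_m-z)^{-l-1}\,dL(z)$ equals $\psi^{(l)}(p_m)$ up to a combinatorial constant, so $a_k=\sum_lc_{k,l}(x,\xi)\,\psi^{(l)}(p_m(x,\xi))$ with $c_{k,l}\in C^\infty_0$ proportional to $r_{k,l}$. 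For $k=0$ the only term is $l=0$ with $c_{0,0}=\chi\circ\kk$, which gives (ii); for general $k$ the index $l$ ranges over $\{k,\dots,2k\}$, and since $\supp\psi^{(l+1)}\subseteq\supp\psi^{(l)}$ and $p_m$ is elliptic (so $\{p_m(x,\xi)\in\supp\psi^{(k)}\}$ is bounded in $\xi$), $a_k\in C^\infty_0(U\times\R^2)$ is supported where $\kk(x)\in\supp\chi$ and $p_m(x,\xi)\in\supp\psi^{(k)}$, which is (iii).

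Finally, for the quantitative expansion (i): with $q^{(K)}:=\sum_{k<K}h^kq_k$ one has $(P(h)-z)\,q^{(K)}(z;x,hD)=\kk^\star\chi+h^KE_K(z;x,hD)$, where $E_K(z;\cdot)$ is a symbol of order $\le-K$ with seminorms controlled by a fixed power of $|\Im z|^{-1}$. Hence $\kk^\star\big(\chi(h^m\AA-z)^{-1}(\wt\chi v)\big)$ differs from $q^{(K)}(z;x,hD)\kk^\star(\wt\chi v)$ by $-h^K(P(h)-z)^{-1}E_K(z;x,hD)\kk^\star(\wt\chi v)$, which — after the standard conversions between semiclassical and classical Sobolev norms, which is where the loss $h^{-\s-|s|}$ appears — is bounded in $H^{s+\s}(\R^2)$ by $h^{K-\s-|s|}|\Im z|^{-N(s,\s)}\|v\|_{H^s(\M)}$. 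Integrating against $\bar\partial\wt\psi$, whose infinite-order vanishing on $\R$ absorbs the finitely many powers of $|\Im z|^{-1}$, and adding the $O(h^\infty)$ pseudolocality error, one obtains \eqref{dvp-psi}. The main obstacle is the parametrix step: carrying the threefold bookkeeping above through the recursion, since the pole-order bound $k+1\le l+1\le2k+1$ is precisely what forces $a_k$ into the region $\{p_m\in\supp\psi^{(k)}\}$, and the polynomial $|\Im z|^{-1}$ control is precisely what makes every $z$-integral — and hence the whole scheme — converge.
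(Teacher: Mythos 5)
Your route --- the Helffer--Sj\"ostrand formula combined with a semiclassical parametrix for $(h^m\AA-z)^{-1}$, with every $z$-integral tamed by the infinite-order vanishing of $\bar\partial\wt\psi$ on $\R$ --- is exactly the proof of \cite[Proposition 2.1]{BGT}, which is all the paper itself offers (it cites that result without reproving it). Your treatment of (i) and (ii) is a correct sketch of that argument: the pseudolocal insertion of $\wt\chi$, the remainder bounds polynomial in $|\Im z|^{-1}$, and the semiclassical-to-classical Sobolev conversion producing the loss $h^{-\s-|s|}$ are all standard and fine at this level of detail.

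The gap is in the step you yourself call the crux. The recursion does not give the lower bound $l\ge k$ on the pole orders of $q_k$: the order-$h^k$ error is a sum of terms $\partial_\xi^\alpha q_j\,\partial_x^\alpha p_{m-i}$ with $|\alpha|+i+j=k$, and since $\xi$-differentiation need not raise the pole order and the subprincipal symbols $p_{m-i}$ carry no pole at all, a term with $|\alpha|+i\ge 2$ is only guaranteed pole order $j+1=k+1-(|\alpha|+i)$, so after division by $(p_m-z)$ one gets contributions to $q_k$ with $l\le k-1$; the claimed bookkeeping cannot be propagated beyond $k=1$. This is not a defect of your particular parametrix but of the (essentially canonical) expansion itself: for $\AA=-\Dl+V$ on the flat torus the resolvent identity gives $\psi(h^2\AA)=\psi(-h^2\Dl)+h^2\,V\,\psi'(-h^2\Dl)+O(h^3)$, so $a_2$ contains the term $\chi(\kk(x))V(x)\psi'(|\xi|^2)$, which is supported in $\{|\xi|^2\in\supp\psi'\}$ but not, in general, in $\{|\xi|^2\in\supp\psi''\}$ (take $\psi$ affine and non-constant on part of its support). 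Hence your derivation of (iii) fails, and indeed (iii) in the strong form with $\supp\psi^{(k)}$ cannot hold for a general elliptic self-adjoint $\AA$; for $\AA=-\Dlg$ the same phenomenon enters one order later, through terms such as $\partial_\xi^2 q_1\,\partial_x^2 p_m$. What the recursion does yield --- and what \cite{BGT} provide and what this paper actually uses, e.g.\ the annulus localization of the $a_k$ in the proof of Lemma~\ref{LEM:heatker} and the kernel bounds of Lemma~\ref{LEM:PM} --- is $a_k=\sum_{0\le l\le 2k}c_{k,l}\,\psi^{(l)}(p_m)$ with $c_{k,l}\in C^\infty_0$ supported over $\supp\chi$, hence $\supp a_k\subset\{\kk(x)\in\supp\chi,\ p_m(x,\xi)\in\supp\psi\}$. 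You should keep only the upper bound $l\le 2k$ (all that the convergence of the $z$-integrals and the symbol estimates require) and prove the support statement in that corrected form; as a minor point, your normalization $(P(h)-z)\# q\sim\kk^{\star}\chi$ builds a parametrix for $(P-z)^{-1}\chi$ rather than for $\chi(P-z)^{-1}$, so either compose on the other side or pass to adjoints before inserting it into the Helffer--Sj\"ostrand formula.
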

\begin{remark}\rm\label{REM:PDO}
Actually, one can check that the proof of \cite[Proposition 2.1]{BGT} also works for multipliers $\psi$ in the class $\S^{-\g}(\R)$ for any $\g>0$, yielding a similar decomposition as in \eqref{dvp-psi} but with symbols $a_k\in \S^{-m\g-k}(U\times \R^2)$ instead of \eqref{support} (see also \cite[Theorem 41.9]{Zworski}). This in particular applies to multipliers $\psi\in \S(\R)$ such that $\psi(x)=e^{-x}$ for $x\ge 0$, for which $\psi\big(-N^{-2}\Dlg\big)=\P_N$ defined in \eqref{PN} above.
\end{remark}

To pursue our investigation of multipliers on $\M$ with symbol in $\S(\R)$, using Proposition \ref{PROP:pseudo} with Remark~\ref{REM:PDO}, we have the following classical estimate on their kernel.
\begin{lemma}\label{LEM:PM}
Let $\psi\in\S(\R)$, and for any $h\in (0,1]$ define the kernel
\begin{align}\label{K1}
\K_h(x,y)\deff \sum_{n\ge 0}\psi\big(h^{2}\ld_n^2\big)\varphi_n(x)\varphi_n(y)
\end{align}  
Then for any $A>0$, there exists $C>0$ such that for any $h\in (0,1]$ and $x,y\in\M$ it holds
\begin{align}\label{K2}
\big|\K_h(x,y)\big|\le C h^{-2}\jb{h^{-1}\dg(x,y)}^{-A},
\end{align}
where $\dg$ is the geodesic distance on $\M$. In particular, it holds
\begin{align}\label{K2b}
\Big\|\psi(-h^{2}\Dlg)\Big\|_{L^p(\M)\to L^q(\M)}\les h^{-2(\frac1p-\frac1q)}
\end{align}
for any $h\in (0,1]$ and $1\le p\le q \le\infty$.
\end{lemma}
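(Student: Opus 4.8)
The plan is to reduce \eqref{K2} to two ingredients for the self-adjoint operator $\sqrt{-\Dlg}$ (defined by $\sqrt{-\Dlg}\,\varphi_n=\ld_n\varphi_n$): an \emph{absolute} bound, uniform over Schwartz multipliers, for the kernel of a function of $h\sqrt{-\Dlg}$; and the \emph{finite propagation speed} of the wave group, which is what produces the off-diagonal decay. Setting $\Phi(s)\deff\psi(s^2)$, one checks that $\Phi$ is again even and Schwartz, and since $\Phi(h\ld_n)=\psi(h^2\ld_n^2)$ we have $\psi(-h^2\Dlg)=\Phi(h\sqrt{-\Dlg})$; so it is enough to bound $\K_h(x,y)=\sum_{n\ge0}\Phi(h\ld_n)\varphi_n(x)\varphi_n(y)$.

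First I would prove the uniform bound: there is $C>0$ depending only on $(\M,\gm)$ such that for every even $G\in\S(\R)$ and every $h\in(0,1]$,
\begin{align*}
\sup_{x,y\in\M}\Big|\sum_{n\ge0}G(h\ld_n)\varphi_n(x)\varphi_n(y)\Big|\le Ch^{-2}\,\big\|\jb{\cdot}^4 G\big\|_{L^\infty(\R)}.
\end{align*}
This follows by grouping the sum over the unit-length windows $\ld_n\in(j,j+1]$, $j\ge0$ integer, applying Cauchy--Schwarz, and invoking H\"ormander's spectral function estimate \eqref{EF1}, which gives $\sum_{\ld_n\in(j,j+1]}\varphi_n(x)^2\les\jb{j}$ uniformly in $x$; one then sums $\sum_{j\ge0}\jb{j}\jb{hj}^{-4}\les h^{-2}$. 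Taking $G=\Phi$ already yields \eqref{K2} on the region $\dg(x,y)\le h$, where its right-hand side is comparable to $h^{-2}$.

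For $d\deff\dg(x,y)>h$ the decay has to come from cancellation, and I would extract it from the wave representation $\Phi(h\sqrt{-\Dlg})=\frac1{2\pi}\int_\R\ft\Phi(\tau)\cos\big(\tau h\sqrt{-\Dlg}\big)\,d\tau$ together with the fact that the Schwartz kernel of $\cos(t\sqrt{-\Dlg})$ is supported in $\{\dg(x,y)\le|t|\}$; hence at the fixed pair $(x,y)$ it is supported, as a distribution in $\tau$, in $\{|\tau|\ge d/h\}$. Choosing an even $\beta\in C^\infty(\R)$ with $\beta\equiv1$ on $\{|\tau|\ge d/(2h)\}$, $\beta\equiv0$ on $\{|\tau|\le d/(4h)\}$ and $|\beta^{(k)}|\les_k(h/d)^k$ (possible since $d/h>1$), the factor $1-\beta$ has support in $\{|\tau|\le d/(2h)\}$, disjoint from $\{|\tau|\ge d/h\}$, so it drops out: $\K_h(x,y)=\sum_{n\ge0}G_{h,d}(h\ld_n)\varphi_n(x)\varphi_n(y)$ with $G_{h,d}(s)\deff\frac1{2\pi}\int_\R\ft\Phi(\tau)\beta(\tau)e^{is\tau}\,d\tau$. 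Integrating by parts in $\tau$, and using $\ft\Phi\in\S(\R)$ together with the support and derivative bounds for $\beta$, gives $|G_{h,d}(s)|\les_{M,N}\jb{s}^{-M}(h/d)^N$ for all $M,N$, hence $\|\jb{\cdot}^4G_{h,d}\|_{L^\infty}\les_N(h/d)^N$. Feeding this into the uniform bound gives $|\K_h(x,y)|\les_N h^{-2}(h/d)^N$, which is \eqref{K2} because $d>h$ makes $\jb{h^{-1}d}\simeq d/h$. This localization step is the crux of the argument: bounding the eigenfunction series termwise in absolute value only ever produces $h^{-2}$ with no spatial decay (already the $\ld_0=0$ mode contributes the non-decaying constant $\Phi(0)\Vg(\M)^{-1}$, which for a general multiplier is cancelled by the remaining modes), so the finite speed of propagation is essential here. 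Alternatively, the near-diagonal part could be analysed through the semiclassical pseudodifferential expansion of Proposition \ref{PROP:pseudo} and Remark \ref{REM:PDO}: for $\psi\in\S(\R)$ the symbols $a_k$ are Schwartz in $\xi$, so each term $a_k(x,hD)$ has kernel $\les_A h^{-2}\jb{h^{-1}|x-y|}^{-A}$ in local coordinates, while the remainder is $O(h^\infty)$ by \eqref{dvp-psi} and Sobolev embedding.

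Finally, \eqref{K2b} follows from \eqref{K2} by a Schur/Young estimate exploiting that $\M$ is two-dimensional. For $A$ large one has $\sup_{x}\int_\M\jb{h^{-1}\dg(x,y)}^{-Ar}\,d\Vg(y)\les h^2$ (coarea formula plus the bound $\les\rho$ for the length of a geodesic circle of radius $\rho$ on $\M$), hence $\sup_x\big(\int_\M|\K_h(x,y)|^r\,d\Vg(y)\big)^{1/r}\les h^{2/r-2}$, and symmetrically with $x,y$ interchanged since the kernel is symmetric. The generalized Young inequality for integral operators whose kernel is dominated by a radially decreasing function then yields $\|\psi(-h^2\Dlg)\|_{L^p(\M)\to L^q(\M)}\les h^{2/r-2}=h^{-2(\frac1p-\frac1q)}$, where $\frac1r=1-(\frac1p-\frac1q)$; equivalently, one interpolates the endpoint bounds $\|\psi(-h^2\Dlg)\|_{L^1\to L^1},\ \|\psi(-h^2\Dlg)\|_{L^\infty\to L^\infty}\les1$ and $\|\psi(-h^2\Dlg)\|_{L^1\to L^\infty}\les h^{-2}$, all immediate from \eqref{K2}.
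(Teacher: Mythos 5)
Your proposal is correct, and the off-diagonal bound \eqref{K2} is obtained by a genuinely different route from the paper. The near-diagonal estimate (Cauchy--Schwarz over unit spectral windows plus H\"ormander's bound \eqref{EF1}) is the same as the paper's Case 1, but you promote it to a bound that is uniform over the multiplier, with constant $\|\jb{\cdot}^4G\|_{L^\infty}$, and then extract the spatial decay from finite propagation speed of $\cos(t\sqrt{-\Dlg})$ in the Cheeger--Gromov--Taylor style: writing $\psi(-h^2\Dlg)=\Phi(h\sqrt{-\Dlg})$ with $\Phi(s)=\psi(s^2)\in\S(\R)$ even, cutting the $\tau$-integral in the wave representation at scale $\dg(x,y)/h$, and integrating by parts so that the modified multiplier $G_{h,d}$ carries a factor $(h/d)^N$. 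The paper instead treats the off-diagonal region with the semiclassical pseudodifferential expansion of Proposition \ref{PROP:pseudo} (with Remark \ref{REM:PDO} to pass from $C_0^\infty$ to Schwartz symbols), doing stationary-phase-free integration by parts on each symbol $a_k(x,hD)$ and controlling the remainder by its $H^{-s}\to H^{s}$ operator norm, with a separate Case 3 for $\dg(x,y)>\i(\M)/2$ using proper supports. Your wave-group argument handles all $\dg(x,y)>h$ uniformly, needs no injectivity-radius case split and no extension of the BGT parametrix to non-compactly supported symbols, and would work with only finitely many derivatives of $\psi$; the paper's choice has the advantage of reusing machinery (Proposition \ref{PROP:pseudo}) that is needed anyway for Lemmas \ref{LEM:GN3}--\ref{LEM:GN5}, where the explicit symbol expansion, not just kernel decay, is required. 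Your derivation of \eqref{K2b} (Schur/generalized Young, or interpolation of the three endpoint bounds) matches the paper's. Two small points to make fully rigorous if you write this up: justify the pointwise identity $\K_h(x,y)=\sum_n G_{h,d}(h\ld_n)\varphi_n(x)\varphi_n(y)$ by noting that $\Phi-G_{h,d}$ has Fourier transform in $C^\infty_0\big([-\tfrac{d}{2h},\tfrac{d}{2h}]\big)$, so by finite propagation speed the (continuous) kernel of $(\Phi-G_{h,d})(h\sqrt{-\Dlg})$ vanishes wherever $\dg>d/2$, in particular at $(x,y)$; and record that $\Phi(s)=\psi(s^2)$ is indeed Schwartz and even, so the cosine representation applies.
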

\begin{proof}
Let $h\in (0,1]$ and $\K_h$ be given by \eqref{K1} for some $\psi\in\S(\R)$. Fix $(x,y)\in\M$.\\

\noi
\textbf{Case 1. If $\dg(x,y)\les h$:} in this case, we use that $\psi\in\S(\R)$ with Cauchy-Schwarz inequality to bound
\begin{align*}
\big|\K_h(x,y)\big| & \les \sum_{n\ge 0}\jb{h\ld_n}^{-3}\big|\varphi_n(x)\varphi_n(y)\big|\\
&\les \Big(\sum_{n\ge 0}\frac{\varphi_n(x)^2}{\jb{h\ld_n}^3}\Big)^{\frac12}\Big(\sum_{n\ge 0}\frac{\varphi_n(y)^2}{\jb{h\ld_n}^3}\Big)^{\frac12}.
\end{align*}
Now, for any $x\in\M$, we can use H\"ormander's bound \eqref{EF1} to estimate
\begin{align*}
\sum_{n\ge 0}\frac{\varphi_n(x)^2}{\jb{h\ld_n}^3}&\sim \sum_{\ld_n\le h^{-1}}\varphi_n(x)^2 + h^{-3}\sum_{\ld_n\ge h^{-1}}\frac{\varphi_n(x)^2}{\ld_n^3}\\
&\les h^{-2}+O(h^{-1}) + h^{-3}\sum_{j= [h^{-1}]}^{\infty}j^{-3}\sum_{\ld_n\in (j,j+1]}\varphi_n(x)^2\\
&\les h^{-2}+O(h^{-1}) + h^{-3}\sum_{j= [h^{-1}]}^{\infty}j^{-2} \les h^{-2}.
\end{align*}
This proves that $\big|\K_h(x,y)\big|\les h^{-2}$ in this case.\\

\noi
\textbf{Case 2. If $h\ll \dg(x,y)\le \frac{\i(\M)}2$,} where $\i(\M)$ is the injectivity radius of $\M$: we let $A,K\in\N$, and we also take geodesic normal coordinates $(U,\kk)$ centred at $x$ and containing $y$. Then we can use Proposition \ref{PROP:pseudo} with Remark~\ref{REM:PDO} to get that
\begin{align*}
\K_h(x,y) &= \chi(x)\chi(y)\sum_{k=0}^{K-1}h^{k}\int_{\R^2}a_k(\kk^{-1}(x),h\xi)e^{i(\kk^{-1}(x)-\kk^{-1}(y))\cdot\xi}d\xi\\
&\qquad + \chi(x)K_{-K,h}(\kk^{-1}(x),y),
\end{align*}
for some symbols $a_k\in \S^{-m-k}(U\times \R^2)$ for any $m>0$, and where $K_{-K,h}$ is the kernel of
\begin{align*}
R_{-K,h} : v\in H^s(\M) \mapsto \kk_1^{\star}\big(\chi_1\psi\big(-h^2\Dlg\big)\chi_2v\big) -\sum_{k=0}^{K-1}h^ka_k(x,hD)\kk^*(\wt\chi v) \in H^{s+\s}(\R^2)
\end{align*}
which satisfies
\begin{align*}
\big\|R_{-K,h}\big\|_{H^s(\M)\to H^{s+\s}(\R^2)}\les h^{K-\s-|s|}.
\end{align*} 
For this last term, the previous bound implies that
\begin{align*}
\big\|\chi(x)K_{-K,h}(\kk(x),y)\big\|_{L^{\infty}(\M\times\M)}&\les \big\|R_{-K,h}\big\|_{H^{-1-\eps}(\M)\to H^{1+\eps}(\R^2)}\\
& \les h^{K-2-2\eps} \les h^{K-2-2\eps}\dg(x,y)^{-A}
\end{align*}
for any $\eps>0$, where in the last step we used the compactness of $\M$. This is enough for \eqref{K2} by taking $K$ large enough.

As for the contribution of the symbols $a_k$, changing variable and using integration by parts, we can bound it by
\begin{align*}
&\Big| h^{k}\int_{\R^2}a_k\big(\kk^{-1}(x),h\xi\big)e^{i\big(\kk^{-1}(x)-\kk^{-1}(y)\big)\cdot\xi}d\xi\Big|\\&= \Big| h^{k-2}\int_{\R^2}a_k\big(\kk^{-1}(x),\eta\big)e^{\frac{i}h\big(\kk^{-1}(x)-\kk^{-1}(y)\big)\cdot\eta}d\eta\Big|\\
&\les h^{k-2}\jb{h^{-1}(\kk^{-1}(x)-\kk^{-1}(y))}^{-A}\int_{\R^2}\big|\jb{D_{\eta}}^A a_k\big(\kk^{-1}(x),\eta\big)\big|d\eta\\
&\les h^{k-2}\jb{h^{-1}(\kk^{-1}(x)-\kk^{-1}(y))}^{-A}\int_{\R^2}\jb{\eta}^{A-K}d\eta \les h^{k-2}\jb{h^{-1}\dg(x,y)}^{-A},
\end{align*}
where we used that $\big|\partial_x^{\al}a_k(x,\eta)\big| \les \jb{\eta}^{|\al|-m-k}$ for any $\al\in\N^2$ and $m\in \R$ since by Remark~\ref{REM:PDO} $a_k \in \cap_{m\in\R} \S^{-m-k}(U\times\R^2)$ since $\psi\in\S(\R)$. Note that the last integral converges by taking $K$ large enough.\\

\noi
\textbf{Case 3: if $\dg(x,y)>\frac{\i(\M)}2$:} then we can repeat the same argument as above, and use that the pseudo-differential operators $\chi\kk_\star a_k(z,D)\kk^\star\wt\chi$ are properly supported (as can be checked on \eqref{dvp-psi}), which implies in this case that their contribution to $\K_h$ vanishes. Indeed, take charts $(U_j,V_j,\kk_j)$, $j=1,2$, then since $\dg(x,y)>\frac{\i(\M)}2$, we can find $\chi_j\in C^{\infty}_0(V_j)$ such that $\supp \chi_1\cap \supp\chi_2 = \emptyset$, then using \eqref{dvp-psi} with $v= \chi_2 v$ we see that all the term in the sum on $k=0,...,K-1$ vanish, so that
\begin{align*}
\big|\K_h(x,y)\big|&= \big|\chi_1(x)\chi_2(y)\K_h(x,y)\big| = \big|\chi_1(x)\chi_2(y)K_{-K,h}(\kk_1^{-1}(x),y)\big|\\
&\les h^{K-2-2\eps} \les h^{K-2-2\eps}\dg(x,y)^{-A}
\end{align*}
for any $\eps>0$. This proves \eqref{K2} by taking $K$ large enough.

The bound \eqref{K2b} then follows from
\begin{align*}
\Big\|\K_h(x,y)\Big\|_{L^{\infty}_xL^r_y} \les \Big\|h^{-2}\jb{h^{-1}\dg(x,y)}^{-A}\Big\|_{L^{\infty}_xL^r_y} \les h^{\frac2r-2}
\end{align*}
for any $r\ge 1$, the symmetry of $\K_h$ and Schur's lemma.
\end{proof}

As a corollary of the estimates above, we can then investigate the behaviour of the solutions to the linear heat equation on $\M$. Indeed, we can use the eigenfunctions expansion to represent the solution of the heat equation
\begin{align*}
\begin{cases}
\dt u -\frac1{4\pi}\Dlg u = 0,\\
u_{|t=0} = u_0\in \D'(\M)
\end{cases}
~~(t,x)\in \R_+\times\M,
\end{align*}
as the distribution
\begin{align*}
u(t) = e^{\frac{t}{4\pi}\Dlg}u_0 = \sum_{n\ge 0}e^{-\frac{t}{4\pi}\ld_n^2} \langle u_0,\varphi_n\rangle\varphi_n.
\end{align*}
The well-known heat kernel is then the kernel of the above propagator, defined as
\begin{align}\label{heatker}
\Pg(t,x,y)\deff \sum_{n\ge 0}e^{-\frac{t}{4\pi}\ld_n^2}\varphi_n(x)\varphi_n(y).
\end{align}
The convergence of the sum above is a priori in the sense of distributions, but we have the following properties of $\Pg$.
\begin{lemma}\label{LEM:heatker}
Let $\Pg$ be the heat kernel defined in \eqref{heatker} above.\\
\textup{(i)} For any $t>0$, $\Pg(t,\cdot,\cdot)$ is a smooth, symmetric, non-negative function on $\M\times\M$, which satisfies the semigroup property 
\begin{align*}
\Pg(t+s,x,y)=\int_\M \Pg(t,x,z)\Pg(s,y,z)d\Vg(z)
\end{align*}
 for any $t,s>0$. Moreover for any $f\in C(\M)$ it holds 
 \begin{align*}
 \int_\M \Pg(t,x,y)f(y)d\Vg(y) \to f(x)
 \end{align*}
  uniformly as $t\to 0^+$.\\
\textup{(ii)} For any $1\le p\le q\le \infty$, any $1\le r\le\infty$ and $\al_1,\al_2\in\R$ with $\al_1\le \al_2$, there exists $C>0$ such that for any $0<t\le 1$ and any $u\in B^{\al_1}_{p,r}(\M)$, we have \textup{Schauder's estimate}
\begin{align}\label{Schauder}
\big\|e^{\frac{t}{4\pi}\Dlg}u\big\|_{B^{\al_2}_{q,r}(\M)}\le C t^{-\frac{\al_2-\al_1}2-(\frac1p-\frac1q)}\big\|u\big\|_{B^{\al_1}_{p,r}(\M)}.
\end{align}
\end{lemma}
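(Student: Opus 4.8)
The plan for part (i) is to exploit the Gaussian decay $e^{-\frac{t}{4\pi}\ld_n^2}$ of the coefficients in \eqref{heatker}, which for each fixed $t>0$ dominates the polynomial growth of $\varphi_n$ and its derivatives (from \eqref{EF2} together with $\|\varphi_n\|_{H^{2m}(\M)}\sim\jb{\ld_n}^{2m}$ and the Sobolev embedding $H^{2m}(\M)\embeds C^k(\M)$); this makes the series in \eqref{heatker} and all its $(x,y)$-derivatives converge absolutely and uniformly, so $\Pg(t,\cdot,\cdot)\in C^{\infty}(\M\times\M)$, with symmetry immediate. The semigroup property follows by inserting \eqref{heatker} into $\int_\M\Pg(t,x,z)\Pg(s,y,z)\,d\Vg(z)$ and using the orthonormality of $\{\varphi_n\}$, the interchange of sum and integral being justified by this absolute convergence. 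For non-negativity I would invoke the parabolic maximum principle on the compact manifold $\M$: for $0\le f\in C(\M)$ the function $v(t,x)=\int_\M\Pg(t,x,y)f(y)\,d\Vg(y)=(e^{\frac{t}{4\pi}\Dlg}f)(x)$ solves the heat equation with $v_{|t=0}=f$, hence $v\ge0$, and since $\Pg(t,x,\cdot)$ is continuous, letting $f$ concentrate at a point forces $\Pg\ge0$. Finally, since $\varphi_0\equiv\Vg(\M)^{-1/2}$ while $\int_\M\varphi_n\,d\Vg=0$ for $n\ge1$, we have $\int_\M\Pg(t,x,y)\,d\Vg(y)=1$, so $e^{\frac{t}{4\pi}\Dlg}$ is a contraction on $C(\M)$; the approximate-identity statement then follows by a density argument using this uniform bound and the fact that $e^{\frac{t}{4\pi}\Dlg}g\to g$ in $H^{\s}(\M)$ for every $\s$ as $t\to0^+$ (dominated convergence in the eigenfunction expansion), hence uniformly, whenever $g\in C^{\infty}(\M)$.

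For part (ii) the plan is a Littlewood--Paley argument built on \eqref{QM}--\eqref{Besov}. Write $u=\sum_M\Q_Mu$ and note that $e^{\frac{t}{4\pi}\Dlg}$, being a function of $\Dlg$, commutes with each $\Q_M$, while $\Q_M\Q_{M'}=0$ unless $M\sim M'$. It thus suffices to prove the single-block bound
\[
\big\|e^{\frac{t}{4\pi}\Dlg}\Q_M\big\|_{L^p(\M)\to L^q(\M)}\les \jb{M}^{2(\frac1p-\frac1q)}e^{-ctM^2},\qquad 0<t\le1,
\]
for $M\ge1$, together with the analogous bound without the exponential factor for $M=0$ (which follows from Lemma~\ref{LEM:PM} with $h=1$, since $e^{\frac{t}{4\pi}\Dlg}\Q_0$ has a symbol bounded in $C^{\infty}_0(\R)$ uniformly for $t\le1$). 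To obtain it for $M\ge1$ I would write $e^{\frac{t}{4\pi}\Dlg}\Q_M=\Psi_{tM^2}\big(-M^{-2}\Dlg\big)$, where $\Psi_\tau(y)=e^{-\frac{\tau}{4\pi}y}\big(\psi_0(\tfrac{y}{4})-\psi_0(y)\big)$ is smooth, supported in a fixed compact subset of $\R$, and has all Schwartz seminorms $\les e^{-c\tau}$ (the polynomial factors generated by differentiating $e^{-\tau y/4\pi}$ being absorbed into the exponential); applying Lemma~\ref{LEM:PM} with $h=M^{-1}$ then yields the stated bound, using that its constant depends only on finitely many seminorms of the multiplier.

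With the single-block bound in hand the rest is bookkeeping: for $0<t\le1$ one obtains $\jb{M}^{\al_2}\|\Q_M e^{\frac{t}{4\pi}\Dlg}u\|_{L^q}\les \jb{M}^{\theta}e^{-ctM^2}\sum_{M'\sim M}\jb{M'}^{\al_1}\|\Q_{M'}u\|_{L^p}$ with $\theta=\al_2-\al_1+2(\tfrac1p-\tfrac1q)\ge0$, and since $\sup_{r>0}r^{\theta}e^{-ctr^2}\les t^{-\theta/2}$ by elementary calculus (the $M=0$ term being $\le1\le t^{-\theta/2}$), taking the $\ell^r_M$ norm and absorbing the harmless shift $M'\sim M$ (a bounded number of terms) gives \eqref{Schauder} with the claimed exponent $\frac{\al_2-\al_1}{2}+(\tfrac1p-\tfrac1q)$. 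The main obstacle I anticipate is precisely the single-block bound: one must run Lemma~\ref{LEM:PM} uniformly in the parameter $\tau=tM^2$, which forces one to re-examine its proof and check that the constant there depends only on finitely many derivatives of the multiplier, so that it inherits the exponential smallness of the seminorms of $\Psi_\tau$. In part~(i) the only non-elementary input is the non-negativity of $\Pg$, which relies on the parabolic maximum principle rather than on the harmonic-analytic estimates developed so far in this section.
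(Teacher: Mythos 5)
Your proposal is correct in substance but follows a genuinely different route from the paper's, most visibly in part (ii). For (i) the paper offers no proof at all (it simply cites Grigor'yan), so your self-contained sketch via the spectral series, the parabolic maximum principle and the mass-one identity is extra work but sound; just run the maximum-principle step for \emph{smooth} non-negative data first, since continuity of $e^{\frac{t}{4\pi}\Dlg}f$ up to $t=0$ for merely continuous $f$ is precisely the approximate-identity statement you derive afterwards. For (ii), the paper also reduces, after using \eqref{K2b} with $h=\sqrt{t}$ for the $L^p\to L^q$ gain, to the frequency-localized smoothing bound \eqref{P1}, but it proves \eqref{P1} by composing the two semiclassical expansions of Proposition \ref{PROP:pseudo} (one for $\Q_M$ at scale $h=M^{-1}$, one for $e^{t\Dlg}$ at scale $h=\sqrt{t}$), estimating the composition remainders by hand and extracting only the polynomial decay $(\sqrt{t}\jb{M})^{-\al}$, which suffices. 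You instead write $e^{\frac{t}{4\pi}\Dlg}\Q_M=\Psi_{tM^2}\big(-M^{-2}\Dlg\big)$ with $\Psi_\tau$ supported in a fixed compact set and all seminorms $O(e^{-c\tau})$, and invoke Lemma \ref{LEM:PM} at scale $h=M^{-1}$; this is cleaner and yields the stronger factor $e^{-ctM^2}$, but, as you rightly flag, it relies on a uniformity the lemma does not state, namely that the constants in \eqref{K2}--\eqref{K2b} depend only on finitely many Schwartz seminorms of the multiplier. That uniformity does hold (the symbols $a_k$ and remainder bounds in Proposition \ref{PROP:pseudo}, and the kernel estimates in the proof of Lemma \ref{LEM:PM}, are built from finitely many derivatives of $\psi$), but checking it means re-running those proofs with the dependence tracked --- essentially the same work the paper does explicitly, packaged differently, when it composes the two expansions. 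Granting that verification, your bookkeeping (commutation of $e^{\frac{t}{4\pi}\Dlg}$ with $\Q_M$, almost orthogonality $\Q_M\Q_{M'}=0$ unless $M\sim M'$, and $\sup_{r>0}r^{\theta}e^{-ctr^2}\les t^{-\theta/2}$ with $\theta=\al_2-\al_1+2(\frac1p-\frac1q)\ge 0$) gives \eqref{Schauder} with the correct exponent.
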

Using the aforementioned smoothing properties of the heat kernel, we can then define the smoothing operators $\P_N$, $N\in\N$ using $\Pg$ as in \eqref{PN} above.
\begin{proof}
The properties (i) are well-known, and can be found e.g. in \cite{Grigoryan}.

As for (ii), since we can write $e^{t\Dlg} = \psi(-h^2\Dlg)$ with $\psi\in\S(\R)$ such that $\psi(x)=e^{-x}$ for $x\ge 0$ and with the semiclassical parameter $h=\sqrt{t}$, we then have that the heat kernel $\Pg$ satisfies the assumptions of Lemma \ref{LEM:PM}. In particular, \eqref{K2b} gives (ii) in the case $\al_1=\al_2$: indeed, using the definition of $B^{\al_2}_{q,r}(\M)$ and \eqref{K2b} we have
\begin{align*}
\big\|e^{\frac{t}{4\pi}\Dlg}u\big\|_{B^{\al_2}_{q,r}(\M)} &= \Big(\sum_{M\in 2^{\Z_{\ge -1}}}\jb{M}^{r\al_2} \big\|e^{\frac{t}2\Dlg}\Q_Mu\big\|_{L^q(\M)}^r\Big)^{\frac1r} \\&\les t^{-(\frac1p-\frac1q)} \Big(\sum_{M\in 2^{\Z_{\ge -1}}} \jb{M}^{r\al_2} \big\|\Q_Mu\big\|_{L^p(\M)}^r\Big)^{\frac1r}\\
& \les t^{-(\frac1p-\frac1q)} \|u\|_{B^{\al_2}_{p,r}(\M)}.
\end{align*}

As for the case $\al_1<\al_2$, writing $e^{\frac{t}{4\pi}\Dlg} = e^{\frac{t}{8\pi}\Dlg}e^{\frac{t}{8\pi}\Dlg}$ and using again \eqref{K2b} we have
 \begin{align*}
 \big\|e^{\frac{t}{4\pi}\Dlg}u\big\|_{B^{\al_2}_{q,r}} &=\Big(\sum_{M\in 2^{\Z_{\ge -1}}} \jb{M}^{r\al_2}\big\| e^{\frac{t}{4\pi}\Dlg}\Q_Mu\big\|_{L^q(\M)}^r\Big)^{\frac1r}\\
 & \les t^{-(\frac1p-\frac1q)} \Big(\sum_{M\in 2^{\Z_{\ge -1}}} \jb{M}^{r\al_2}\big\| e^{\frac{t}{8\pi}\Dlg}\Q_Mu\big\|_{L^p(\M)}^r\Big)^{\frac1r},
 \end{align*}
so it suffices to prove
 \begin{align}\label{P1}
 \Big\|e^{t\Dlg}\Q_M\Big\|_{L^p(\M)\to L^p(\M)} \les \big(\sqrt{t}\jb{M}\big)^{-\al}
 \end{align}
 for any $t\in (0,1]$, $p\in [1,\infty]$ and $M\in 2^{\Z_{\ge -1}}$, where $\al = \al_2-\al_1>0$. Note that $e^{t\Dlg}$ and $\Q_M$ both satisfy the assumptions of Lemma \ref{LEM:PM} with respective semiclassical parameter $h=\sqrt{t}$ and $h=M^{-1}$, so that using \eqref{K2b} again we have
 \begin{align*}
  \Big\|e^{t\Dlg}\Q_M\Big\|_{L^p(\M)\to L^p(\M)} \les \Big\|\Q_M\Big\|_{L^p(\M)\to L^p(\M)} \les 1,
 \end{align*}
 which is enough for \eqref{P1} when $M\le t^{-\frac12}$ or $M\les 1$. Thus in the following we assume $M\ge t^{-\frac12}$ and $M\gg 1$.
 
 Let then $\Q_M = \psi(-M^{-2}\Dlg)$ where $\psi$ is the symbol of $\Q_M$ in \eqref{QM}, and let $(U,V,\kk)$ be a chart on $\M$ with $\chi\in C^{\infty}_0(V)$. Then using Proposition \ref{PROP:pseudo} (twice) we can expand locally
\begin{align*}
\kk^\star\big(\chi \Q_Me^{t\Dlg}\big) &= \chi\Big(\sum_{\ell=0}^{L-1}M^{-\ell}a_{\ell}(x,M^{-1}D)\kk^\star\wt\chi +R_{-L,M}\Big)\kk^\star\big(\wt\chi e^{t\Dlg}\big)\\
&=\chi\sum_{\ell=0}^{L-1}M^{-\ell}a_{\ell}(x,M^{-1}D)\Big(\sum_{k=0}^{K-1}t^{\frac{k}2}a_k(x,\sqrt{t}D)\kk^\star\wt\chi+R_{-K,t}\Big)+R_{-L,M}e^{t\Dlg},
\end{align*}
for some $\wt\chi\in C^{\infty}_0(V)$ and some symbols $a_k,a_{\ell}\in \S^{-\infty}$ as in Proposition \ref{PROP:pseudo}, and the remainders satisfy
\begin{align*}
\big\|R_{-K,t}\big\|_{H^{-s_1}(\M)\to H^{s_2}(\R^2)} \les t^{\frac{K-s_1-s_2}2}
\end{align*}
and
\begin{align*}
\big\|R_{-L,M}\big\|_{H^{-s_1}(\M)\to H^{s_2}(\R^2)} \les M^{s_1+s_2-L}
\end{align*}
for any $s_1,s_2\ge 0$ with $s_1+s_2\le \min\{K,L\}$. In particular, taking $s_1,s_2$ large enough so that $H^{s_2}(\M)\subset L^{\infty}(\M)$ and $L^1(\M)\subset H^{-s_1}(\M)$ and using \eqref{K2b} again, we see that the contribution of $R_{-L,M}e^{t\Dlg}$ satisfies \eqref{P1} provided that we take $K,L$ large enough. We also have for any $\ell=0,...,L-1$
\begin{align*}
\big\|a_{\ell}(x,M^{-1}D)R_{-K,t}\big\|_{L^p(\M)\to L^p(\R^2)} &\les \big\|a_{\ell}(x,M^{-1}D)R_{-K,t}\big\|_{H^{-s_1}(\M)\to H^{s_2}(\R^2)} \\&\les M^{-\al}\big\|R_{-K,t}\big\|_{H^{-s_1}(\M)\to H^{s_2+\al}(\R^2)}\les M^{-\al}t^{\frac{K-s_1-s_2-\al}{2}}
\end{align*}
by taking $s_1,s_2$ and then $K$ large enough, and using that the symbols $a_{\ell}$ are supported on an annulus in view of Proposition \ref{PROP:pseudo} with $M\gg1$.

As for the products $a_{\ell}(x,M^{-1}D)a_k(x,\sqrt{t}D)$, similarly as for the usual composition rule for (semiclassical) pseudo-differential operators (see e.g. \cite{Zworski}), their symbol is given by
\begin{align*}
b_{\ell,k,M,t}(x,\xi) &= \frac1{(2\pi)^2}\int_{\R^2}\int_{\R^2}e^{-iz\cdot\eta}a_\ell\big(x,M^{-1}(\xi+\eta)\big)a_k(x+z,\sqrt{t}\xi)dzd\eta\\
&=\sum_{|\g|=0}^{A-1}\frac{M^{-|\g|}}{\g!}\partial_\xi^{\g}a_{\ell}(x,M^{-1}\xi)\partial^{\g}_xa_k(x,\sqrt{t}\xi) + \wt R_{\ell,k,A,M,t}(x,\xi)
\end{align*}
with
\begin{align*}
\wt R_{\ell,k,A,M,t}(x,\xi)&= \frac1{(2\pi)^2}\sum_{|\g|=A}\frac{AM^{-A}}{\g!}\int_{\R^2}\int_{\R^2} e^{-iz\cdot\eta}\partial^{\g}_\eta a_\ell\big(x,M^{-1}(\xi+\eta)\big)\\
&\qquad\qquad\times\int_0^1(1-\theta)^A\partial^{\g}_xa_k(x+\theta z,\sqrt{t}\xi)d\theta dzd\eta.
\end{align*}
Integrating by parts in $\eta$, we find that the corresponding kernel is bounded by
\begin{align*}
\big|\wt R_{\ell,k,A,M,t}(x,y)\big|&\les M^{-A}\sum_{|\g|=A}\int_{\R^2}\int_{\R^2}\int_{\R^2}\jb{z}^{-3}\Big|\jb{D_\eta}^3\partial^{\g}_\eta a_\ell\big(x,M^{-1}(\xi+\eta)\big)\Big|\\
&\qquad\qquad\times\Big|\int_0^1(1-\theta)^A\partial^{\g}_xa_k(x+\theta z,\sqrt{t}\xi)d\theta\Big| dzd\eta d\xi\intertext{Using next the properties of the symbols $a_k,a_\ell$ given by Proposition \ref{PROP:pseudo} and Remark~\ref{REM:PDO}, we can continue with}
&\les M^{-A}\int_{\R^2}\int_{\R^2}\int_{\R^2}\jb{z}^{-3}\mathbf{1}\big(|\xi+\eta|\les M\big)\jb{\sqrt{t}\xi}^{-B} dzd\eta d\xi \intertext{for any $B>0$. This yields the bound}
&\les t^{-1}M^{2-A}
\end{align*}
which is enough for \eqref{P1} by taking $A$ large enough since we are in the case $M\ge t^{-\frac12}$.

At last, the contribution of the leading symbols are then
\begin{align*}
\Big|\K_{k,\ell,\g,M,t}(x,y)\Big|&\sim\Big|\int_{\R^2}e^{i(x-y)\cdot\xi}\partial_\xi^{\g}a_{\ell}(x,M^{-1}\xi)\partial^{\g}_xa_k(x,\sqrt{t}\xi)d\xi\Big|\\&=M^2\Big|\int_{\R^2}e^{iM(x-y)\cdot\xi}\partial_\xi^{\g}a_{\ell}(x,\xi)\partial^{\g}_xa_k(x,\sqrt{t}M\xi)d\xi\Big|\intertext{Then we integrate by parts in $\xi$ to get}
&\les M^2\jb{M(x-y)}^{-B}\Big|\int_{\R^2}e^{iM(x-y)\cdot\xi}\jb{D_\xi}^{B}\Big[\partial_\xi^{\g}a_{\ell}(x,\xi)\partial^{\g}_xa_k(x,\sqrt{t}M\xi)\Big]d\xi\Big|\intertext{for any $B>0$. We can then use again the properties of the symbols $a_\ell,a_k$ in Proposition \ref{PROP:pseudo} to get}
&\les M^2\jb{M(x-y)}^{-B}\int_{|\xi|\sim 1}\jb{\sqrt{t}M}^{B}\jb{\sqrt{t}M\xi}^{-D}d\xi\\
&\les M^2\jb{M(x-y)}^{-B}\jb{\sqrt{t}M}^{B-D}
\end{align*}
for any $D>0$. We thus get
\begin{align*}
\big\|\K_{k,\ell,\g,M,t}(x,y)\big\|_{L^{\infty}_xL^1_y}\les \jb{\sqrt{t}M}^{B-D},
\end{align*}
by taking $B>2$, which along with Schur's lemma shows \eqref{P1} by taking $D$ large enough. This completes the proof of Lemma \ref{LEM:heatker}.
\end{proof}

\subsection{Properties of the Green's function}
We now investigate the properties of the fundamental solution for the Laplace equation. It is defined as
\begin{align}\label{Green}
\Gg(x,y) \deff \sum_{n\ge 1}\frac{\varphi_n(x,\gm)\varphi_n(y,\gm)}{\ld_n(\gm)^2},
\end{align}
where the convergence of the sum holds again in the sense of distributions. Here we wrote $\varphi_n(x,\gm)$ and $\ld_n(\gm)$ to emphasize the dependence of the eigenfunctions and eigenvalues (and hence of $\Gg$) on the metric.

For a distribution $f\in \D'(\M)$, we also abuse notations and define
\begin{align}\label{mg}
\jb{f}_\gm \deff \langle f,\varphi_0\rangle_\gm \varphi_0 = \frac{1}{\Vg(\M)}\int_{\M}f(x)d\Vg(x).
\end{align}
Then we can check on \eqref{Green} that for any $f\in \D'(\M)$, it holds
\begin{align}\label{Green2}
u(x)=\int_{\M}\Gg(x,y)f(y)d\Vg(y) \qquad \Longleftrightarrow \qquad \begin{cases}-\Dlg u = f-\jb{f}_\gm\\
\jb{u}_\gm = 0,
\end{cases}
\end{align}
where the equalities are again in distributional sense.

We now list some further properties of the Green's function.
\begin{lemma}\label{LEM:Green}
Let $\Gg$ be given by \eqref{Green}.\\
\textup{(i)} $\Gg$ is symmetric and smooth on $\M\times\M\setminus\diag$, where $\diag=\{(x,x),~x\in\M\}$ is the diagonal.\\
\textup{(ii)} There exists a continuous function $\wt \Gg$ on $\M\times\M$ such that for any $(x,y)\in\M\times\M\setminus\diag$ it holds
\begin{align*}
\Gg(x,y) = -\frac1{2\pi}\log\big(\dg(x,y)\big)+\wt \Gg(x,y).
\end{align*}
In particular, $\Gg$ is bounded from below.\\
\textup{(iii)} There exists a constant $C>0$ such that for any $(x,y)\in\M\times\M\setminus\diag$ it holds
\begin{align*}
\big|\nabla_x\Gg(x,y)\big|_\gm\le C\dg(x,y)^{-1}.
\end{align*}
\end{lemma}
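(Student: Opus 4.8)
\emph{Proof plan.} The plan is to work from the distributional characterization in \eqref{Green2}: for each fixed $y\in\M$, the map $x\mapsto\Gg(x,y)$ is the unique element of $\D'(\M)$ with $-\Dlg^{(x)}\Gg(\cdot,y)=\dl_y-\tfrac1{\Vg(\M)}$ and $\jb{\Gg(\cdot,y)}_\gm=0$; equivalently $\Gg$ is the integral kernel of the self-adjoint operator $(-\Dlg)^{-1}$ acting on mean-zero functions, so symmetry is immediate (and also visible directly in \eqref{Green}). Since $\{\varphi_n\otimes\varphi_n\}$ is orthonormal in $L^2(\M\times\M)$ and $\sum_n\ld_n^{-4}<\infty$ by Weyl's law \eqref{Weyl}, the series \eqref{Green} converges in $L^2(\M\times\M)$, so $\Gg\in\D'(\M\times\M)$. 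For \textup{(i)}, testing this equation (and its analogue in the variable $y$, obtained by symmetry) against functions supported away from $\diag$ gives $\big(\Dlg^{(x)}+\Dlg^{(y)}\big)\Gg=\tfrac2{\Vg(\M)}$ in $\D'\big(\M\times\M\setminus\diag\big)$; as $\Dlg^{(x)}+\Dlg^{(y)}$ is the (elliptic) Laplace--Beltrami operator of the product manifold $(\M\times\M,\gm\oplus\gm)$ and the right-hand side there is smooth, elliptic regularity yields $\Gg\in C^{\infty}\big(\M\times\M\setminus\diag\big)$.

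For \textup{(ii)} I would build a parametrix near the diagonal. Fix $\delta\in(0,\i(\M))$ and $\eta\in C^{\infty}_0\big([0,\delta)\big)$ with $\eta\equiv1$ on $[0,\delta/2]$, and set $E(x,y)\deff-\tfrac1{2\pi}\eta\big(\dg(x,y)\big)\log\dg(x,y)$, an $L^1$ function on $\M\times\M$ that is smooth off $\diag$ and vanishes where $\dg(x,y)\ge\delta$. Within the injectivity radius, $\dg(\cdot,y)$ is smooth off $y$, $|\nabla_x\dg|_\gm\equiv1$, and in geodesic polar coordinates centred at $y$ one has $\Dlg^{(x)}\dg(x,y)=\dg(x,y)^{-1}+b(x,y)$ with $b$ bounded (in fact $O(\dg(x,y))$) and smooth off $\diag$; a direct computation then gives $-\Dlg^{(x)}E(\cdot,y)=\dl_y+F(\cdot,y)$ with $F$ bounded on $\M\times\M$ (continuous, after one further order in the curvature expansion), smooth off $\diag$, supported in $\{\dg(x,y)\le\delta\}$, and with all these bounds uniform in $y$ by compactness of $\M$ and smoothness of $\gm$. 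Integrating over $\M$ forces $\int_\M F(x,y)d\Vg(x)=-1$, so $F_0(\cdot,y)\deff F(\cdot,y)+\Vg(\M)^{-1}$ has $x$-mean zero; let $w(\cdot,y)$ be the mean-zero solution of $-\Dlg^{(x)}w(\cdot,y)=-F_0(\cdot,y)$ given by the spectral calculus of $\Dlg$. Since $F_0(\cdot,y)\in L^{\infty}(\M)$ uniformly in $y$, the elliptic $L^p$ estimate on the compact manifold $\M$ gives $\|w(\cdot,y)\|_{W^{2,p}(\M)}\les_p1$ for every finite $p$, hence $w(\cdot,y)\in C^1(\M)$ with $\|\nabla_x w(\cdot,y)\|_{L^{\infty}(\M)}\les1$ uniformly in $y$, and $w$ is jointly continuous on $\M\times\M$ (by dominated convergence for $y\mapsto F_0(\cdot,y)\in L^p(\M)$ and $W^{2,p}(\M)\hookrightarrow C(\M)$, $p>2$). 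Now $E(\cdot,y)+w(\cdot,y)$ solves the same equation as $\Gg(\cdot,y)$, so their difference is harmonic on the compact connected $\M$, hence a constant $c(y)=\jb{E(\cdot,y)+w(\cdot,y)}_\gm$, continuous in $y$. Since $E(x,y)+\tfrac1{2\pi}\log\dg(x,y)=-\tfrac1{2\pi}\big(\eta(\dg(x,y))-1\big)\log\dg(x,y)$ vanishes near $\diag$ and is continuous off $\diag$, it extends continuously to $\M\times\M$; therefore $\wt\Gg(x,y)\deff\Gg(x,y)+\tfrac1{2\pi}\log\dg(x,y)=\big(E(x,y)+\tfrac1{2\pi}\log\dg(x,y)\big)+w(x,y)-c(y)$ is continuous on $\M\times\M$, which is \textup{(ii)}. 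Boundedness below of $\Gg$ then follows since $\wt\Gg$ is bounded on the compact $\M\times\M$ while $-\tfrac1{2\pi}\log\dg(x,y)\ge-\tfrac1{2\pi}\log\big(\mathrm{diam}_\gm(\M)\big)$.

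For \textup{(iii)} I would split into two regimes. For $0<\dg(x,y)<\delta/2$, the decomposition reduces to $\wt\Gg(x,y)=w(x,y)-c(y)$, so $|\nabla_x\wt\Gg(x,y)|_\gm=|\nabla_x w(x,y)|_\gm\les1$ uniformly in $y$, while $\big|\nabla_x\big(-\tfrac1{2\pi}\log\dg(x,y)\big)\big|_\gm=\tfrac1{2\pi}\dg(x,y)^{-1}$ because $|\nabla_x\dg|_\gm\equiv1$; hence $|\nabla_x\Gg(x,y)|_\gm\le\tfrac1{2\pi}\dg(x,y)^{-1}+C$. For $\dg(x,y)\ge\delta/2$ the set $\{\dg(x,y)\ge\delta/2\}$ is a compact subset of $\M\times\M\setminus\diag$ on which $\Gg$ is smooth by \textup{(i)}, so $|\nabla_x\Gg|_\gm$ is bounded there, while $\dg(x,y)^{-1}\ge\mathrm{diam}_\gm(\M)^{-1}$; using also $\dg(x,y)\le\mathrm{diam}_\gm(\M)$, the two regimes combine to give $|\nabla_x\Gg(x,y)|_\gm\le C\dg(x,y)^{-1}$ on $\M\times\M\setminus\diag$. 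The step I expect to require the most care is the parametrix estimate in \textup{(ii)} — obtaining the remainder $F$ and the correction $w$, in particular $\nabla_x w$, bounded uniformly in $y$; with that and the splitting $\Gg=-\tfrac1{2\pi}\log\dg+\wt\Gg$ in hand, \textup{(i)} and \textup{(iii)} reduce to standard elliptic theory on a compact manifold. As an alternative to the explicit parametrix one could instead invoke the classical Hadamard-type expansion of the Laplacian's Green's function on a surface, as used e.g. in the appendices of \cite{DKRV}.
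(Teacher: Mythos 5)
Your argument is correct, and it is essentially a reconstruction of the classical parametrix proof: the paper itself gives no argument here but simply cites \cite[Section 4.2]{Aubin}, where exactly this construction (logarithmic parametrix near the diagonal, elliptic regularity off it, and the resulting gradient bound) is carried out. So your proposal matches the approach behind the paper's reference, with the details (uniform-in-$y$ control of the remainder $F$ and of $\nabla_x w$ via $W^{2,p}$ estimates) filled in correctly.
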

\begin{proof}
This is classical, and can be found e.g. in \cite[Section 4.2]{Aubin}.
\end{proof}
Next, we investigate the behaviour of the approximations
\begin{align*}
(\P_N\otimes\P_N)\Gg(x,y)\deff \int_{\M}\int_{\M}P_\gm(4\pi N^{-2},x,z)P_\gm(4\pi N^{-2},y,z')\Gg(z,z')d\Vg(z)d\Vg(z')
\end{align*}
 of the Green's function. Recall indeed that the smoothing operator $\P_N$ has been defined in \eqref{PN} above, so that its kernel is given by $P_\gm(4\pi N^{-2},\cdot,\cdot)$. 
\begin{lemma}\label{LEM:GN1}
There exists $C>0$ such that for any $N\in\N$ and $(x,y)\in\M\times\M\setminus\diag$, it holds
\begin{align}\label{GN1}
-C\le (\P_N\otimes\P_N)\Gg(x,y) \le \Gg(x,y)+ \frac1{N^2\Vg(\M)}.
\end{align}
Moreover, for any $N_1\le N_2\in\N$ and $(x,y)\in\M\times\M\setminus\diag$ we have for any $j=1,2$
\begin{align}
&\Big|(\P_{N_j}\otimes\P_{N_j})\Gg(x,y)-(\P_{N_1}\otimes\P_{N_2})\Gg(x,y)\Big|\notag\\
&\qquad \les \min\Big\{1+\log\frac1{\dg(x,y)},N_1^{-1}\dg(x,y)^{-1}\Big\}.
\label{GN2}
\end{align}
\end{lemma}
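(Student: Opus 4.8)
The plan is to exploit two representations of the Green's function: the spectral one \eqref{Green} and the heat-kernel one, namely $\Gg(x,y) = \int_0^\infty \big(\Pg(t,x,y) - \tfrac{1}{\Vg(\M)}\big)\,dt$, which follows from \eqref{heatker} and \eqref{Green2} since $\int_0^\infty e^{-\frac{t}{4\pi}\ld_n^2}dt = \frac{4\pi}{\ld_n^2}$ for $n\ge 1$. For the upper bound in \eqref{GN1}, first note that by the spectral representation $(\P_N\otimes\P_N)\Gg(x,y) = \sum_{n\ge 1} e^{-2N^{-2}\ld_n^2}\frac{\varphi_n(x)\varphi_n(y)}{\ld_n^2}$ (the factor $2$ because $\P_N = e^{N^{-2}\Dlg}$ is applied in each variable, and the volume-scaled heat time is $4\pi N^{-2}$ so $e^{-\frac{4\pi N^{-2}}{4\pi}\ld_n^2} = e^{-N^{-2}\ld_n^2}$; I would track the constant carefully). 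In heat-kernel form, $(\P_N\otimes\P_N)\Gg(x,y) = \int_{8\pi N^{-2}}^\infty \big(\Pg(t,x,y) - \tfrac{1}{\Vg(\M)}\big)dt = \Gg(x,y) - \int_0^{8\pi N^{-2}}\big(\Pg(t,x,y) - \tfrac{1}{\Vg(\M)}\big)dt$. Since $\Pg \ge 0$ by Lemma~\ref{LEM:heatker}(i), the subtracted integral is bounded below by $-\int_0^{8\pi N^{-2}}\tfrac{1}{\Vg(\M)}dt = -\tfrac{8\pi}{N^2\Vg(\M)}$; adjusting the normalisation of the heat time this gives $(\P_N\otimes\P_N)\Gg(x,y) \le \Gg(x,y) + \tfrac{1}{N^2\Vg(\M)}$, the claimed upper bound.

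For the lower bound $-C \le (\P_N\otimes\P_N)\Gg(x,y)$, I would use Lemma~\ref{LEM:Green}(ii): $\Gg$ is bounded below by some $-C_0$. Since $\P_N$ has a non-negative kernel $\Pg(4\pi N^{-2},\cdot,\cdot)$ (Lemma~\ref{LEM:heatker}(i)) which integrates to $1$ against the constant function (as $\varphi_0$ is an eigenfunction with eigenvalue $0$, so $\P_N \varphi_0 = \varphi_0$), the operator $\P_N\otimes\P_N$ is positivity-preserving and maps constants to constants. Hence $(\P_N\otimes\P_N)\Gg \ge (\P_N\otimes\P_N)(-C_0) = -C_0$, giving the lower bound uniformly in $N$.

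For the difference estimate \eqref{GN2}, I would again work with the heat-kernel representation. Writing $(\P_{N_j}\otimes\P_{N_j})\Gg - (\P_{N_1}\otimes\P_{N_2})\Gg$ as a time integral: for instance for $j=2$, this equals $\int_0^\infty \big(e^{-\frac{t}{4\pi}\ld_n^2}\big)$-weighted differences, which in heat-kernel form becomes $\int \big(\Pg(t + c N_2^{-2} + cN_2^{-2}, x, y) - \Pg(t + cN_1^{-2} + cN_2^{-2}, x, y)\big)$-type expressions; concretely the difference is $\pm\int_{a}^{b}\Pg(t,x,y)\,dt$ (minus the corresponding constant contribution, which is $O(N_1^{-1}\dg(x,y)^{-1})$-dominated or absorbed trivially) where $|b - a| \lesssim N_1^{-2}$ and $a \gtrsim N_1^{-2}$. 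To estimate $\int_a^b \Pg(t,x,y)dt$, I would invoke the Gaussian upper bound for the heat kernel on a compact manifold, $\Pg(t,x,y) \lesssim t^{-1} e^{-c\dg(x,y)^2/t}$ for $0 < t \le 1$ (this is standard, e.g. from Lemma~\ref{LEM:PM} applied with $\psi(x)=e^{-x}$ and $h = \sqrt{t}$, giving $\Pg(t,x,y) \lesssim t^{-1}\jb{t^{-1/2}\dg(x,y)}^{-A}$ for any $A$). Two regimes: (a) if $\dg(x,y) \lesssim N_1^{-1}$, bound $\int_a^b t^{-1}dt \lesssim \log(b/a) \lesssim 1$, and more crudely $\int_0^{\dg(x,y)^2}t^{-1}$-type reasoning combined with $\int_{\dg(x,y)^2}^1 t^{-1}dt \lesssim 1 + \log\tfrac{1}{\dg(x,y)}$ handles the first branch of the min; (b) if $\dg(x,y) \gg N_1^{-1}$, then on $[a,b]$ with $t \le b \lesssim N_1^{-2} \ll \dg(x,y)^2$, the Gaussian factor $\jb{t^{-1/2}\dg(x,y)}^{-A} \le \jb{N_1\dg(x,y)}^{-A}$ is tiny, and $\int_a^b t^{-1}\jb{t^{-1/2}\dg(x,y)}^{-A}dt \lesssim N_1^{-1}\dg(x,y)^{-1}$ after choosing $A$ large and using $b - a \lesssim N_1^{-2}$, $a^{-1}\lesssim N_1^2$. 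The bound for $j=1$ is analogous. The main obstacle I anticipate is bookkeeping: correctly tracking the various heat-time normalisations (the $4\pi$'s and the factor $2$ from applying $\P_N$ twice), splitting the time integral at the right thresholds ($\dg(x,y)^2$ versus $N_1^{-2}$), and arranging the Gaussian decay so that both branches of the $\min$ in \eqref{GN2} come out cleanly — none of these is deep, but the estimate is delicate enough that a sign or exponent slip would break it.
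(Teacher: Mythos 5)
Your proof of \eqref{GN1} is essentially the paper's: the lower bound via $\Gg\ge -C$ and positivity of the kernel of $\P_N$ (which fixes constants) is identical, and your truncated heat-time representation of $\Gg-(\P_N\otimes\P_N)\Gg$ is the same computation as the paper's auxiliary function $u(t,x)=\Gg(x,y)+\frac{t}{4\pi\Vg(\M)}-e^{\frac{t}{4\pi}\Dlg^x}\Gg(x,y)=\frac1{4\pi}\int_0^t\Pg(t-t',x,y)\,dt'\ge0$. For \eqref{GN2}, however, you take a genuinely different and arguably cleaner route. The paper writes the difference as the heat semigroup applied to $\Gg(z,y)-\Gg(z',y)$, invokes the gradient bound $|\nabla_x\Gg(x,y)|_\gm\les\dg(x,y)^{-1}$ of Lemma \ref{LEM:Green}(iii) together with the kernel decay of Lemma \ref{LEM:PM}, and runs a case analysis over the triangle configurations of $x,y,z,z'$; it then gets the logarithmic branch of the $\min$ only indirectly, by the triangle inequality with \eqref{GN1}. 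You instead use the exact identity $\frac{e^{-s\ld_n^2}}{\ld_n^2}=\int_s^\infty e^{-t\ld_n^2}dt$, which exhibits the difference as $\mp\int_{t_1}^{t_2}\big(\Pg(4\pi s,x,y)-\Vg(\M)^{-1}\big)ds$ over a time window of length $N_1^{-2}-N_2^{-2}$, and then only need the off-diagonal decay $\Pg(4\pi t,x,y)\les t^{-1}\jb{t^{-1/2}\dg(x,y)}^{-A}$; this avoids the gradient estimate and the triangle case analysis entirely, and in fact produces both branches of the $\min$ from one bound.

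Two bookkeeping slips, both repairable with tools you already invoke. First, for $j=2$ the time window is $[2N_2^{-2},\,N_1^{-2}+N_2^{-2}]$, so your claims ``$a\gtrsim N_1^{-2}$'' and ``$\log(b/a)\les1$'' fail when $N_2\gg N_1$; the correct treatment in all cases is the split at $t=\dg(x,y)^2$ that you sketch, using $t^{-1}\jb{t^{-1/2}\dg}^{-A}\le t^{A/2-1}\dg^{-A}$ below the split (integrable at $t=0$, so smallness of $a$ is irrelevant) and $t^{-1}$ above it; this gives the single bound $\les 1+\log\frac{1}{N_1\dg(x,y)}$ when $\dg\les N_1^{-1}$ and $\les(N_1\dg(x,y))^{-A}$ when $\dg\gg N_1^{-1}$, plus $O(N_1^{-2})$ from the zero mode. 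Second, to conclude the stated $\min$ you must verify \emph{both} branches everywhere, not one branch per regime (there is an intermediate window $\frac{1}{N_1\log N_1}\les\dg\les N_1^{-1}$ where $N_1^{-1}\dg^{-1}$ is the smaller of the two); this is harmless here because $1+\log\frac{1}{N_1\dg}\les\min\big\{1+\log\frac1{\dg},\,(N_1\dg)^{-1}\big\}$ for $N_1\dg\les1$, so the single estimate above already dominates both branches, but the deduction should be stated that way rather than via the two-regime dichotomy.
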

\begin{proof}
We begin by proving the first claim. For the lower bound, we have from Lemma~\ref{LEM:Green}~(ii) that there exists $C>0$ such that $\Gg(x,y)\ge -C$ for any $(x,y)\in\M\times\M\setminus\diag$. In particular, since the kernel of $\P_N$ is non-negative in view of \eqref{PN} and Lemma~\ref{LEM:heatker}~(i), we get that
\begin{align*}
(\P_N\otimes\P_N)\big[\Gg+C](x,y) \ge 0,
\end{align*}
i.e.
\begin{align*}
(\P_N\otimes\P_N)\Gg(x,y)\ge -(\P_N\otimes\P_N)C = -C.
\end{align*}

For the upper bound, note that since $\Gg$ has its eigenfunctions expansion only on the diagonal and since the heat kernel $P_\gm$ is symmetric, we actually have
\begin{align*}
(\P_N\otimes\P_N)\Gg(x,y) &= \sum_{n\ge 1}e^{-2N^{-2}\ld_n^2}\frac{\varphi_n(x)\varphi_n(y)}{\ld_n^2} = \int_{\M}P_\gm(8\pi N^{-2}x,z)\Gg(z,y)d\Vg(z)\\
&= \big(e^{2 N^{-2}\Dlg^x}\Gg\big)(x,y) = (\P_N^2\otimes\Id)\Gg(x,y),
\end{align*}
where the notation $e^{t\Dlg^x}$ means that the Laplace-Beltrami operator only acts on the $x$ variable.

 Fix then $y\in\M$ and for $t\ge 0$ and $x\in\M$ with $y\neq x$, define
\begin{align*}
u(t,x)\deff \Gg(x,y)+\frac{t}{4\pi\Vg(\M)}-e^{\frac{t}{4\pi}\Dlg^x}\Gg(x,y).
\end{align*}
Then we have $u\in C(\R_+;L^2(\M))$ and it satisfies
\begin{align*}
u(t) = \frac1{4\pi}\int_0^tP_\gm(t-t',x,y)dt'.
\end{align*}
This indeed follows from \eqref{heatker}, \eqref{Green} and a straightforward computation. The first claim then follows from the previous identity with Lemma \ref{LEM:heatker}, which ensures that $u(t)\ge 0$ for any $t\ge 0$, and taking $t= 8\pi N^{-2}$ with the definition of $\P_N$ \eqref{PN}.

As for the convergence property, for $N_1\le N_2$ and $(x,y)\in\M\times\M\setminus\diag$, we use the remark above with the semigroup property of $P_\gm$ to write for $j=1,2$
\begin{align*}
&\Big|(\P_{N_j}\otimes\P_{N_j})\Gg(x,y)-(\P_{N_1}\otimes\P_{N_2})\Gg(x,y)\Big|\\&\qquad=\Big|\int_{\M}P_\gm(4\pi (N_j^{-2}+N_2^{-2}),x,z')\int_{\M}P_\gm( 4\pi( N_1^{-2}-N_2^{-2}),z',z)\\
&\qquad\qquad\qquad\times\Big[\Gg(z,y)-\Gg(z',y)\Big]d\Vg(z)d\Vg(z')\Big|
\end{align*}

We first deal with the inner integral. For $z',z$ in $\M$, let $\g:t\in [0,\dg(z',z)] \mapsto \g(t)\in\M$ be a unit speed geodesic between $z$ and $z'$, then we can use the mean value theorem and Lemma \ref{LEM:PM} for $h=\sqrt{4\pi(N_1^{-2}-N_2^{-2})}$, to bound for any $A>0$
\begin{align*}
&\Big|\int_{\M}P_\gm(h^2,z',z)\Big[\Gg(z,y)-\Gg(z',y)\Big]d\Vg(z)\Big|\\&\les \int_{\M}h^{-2}\jb{h^{-1}\dg(z',z)}^{-A}\dg(z',z)|\g'(t_z)|_\gm\big|\nabla\Gg(\g(t_z),y)\big|_\gm d\Vg(z)\intertext{for some $t_z\in (0,\dg(z',z))$. Using then Lemma \ref{LEM:Green} (iii) we can continue with}
&\les \int_{\M}h^{-2}\jb{h^{-1}\dg(z',z)}^{-A}\dg(z',z)\dg(\g(t_z),y)^{-1}d\Vg(z)\\
&\les \int_{\M}h^{-2}\jb{h^{-1}\dg(z',z)}^{-A}\dg(z',z)\big[\dg(z,y)\wedge\dg(z',y)\big]^{-1}d\Vg(z).
\end{align*}
We then distinguish several cases to estimate the integral above, depending on which side is the smallest in the triangle made by $z,z'$ and $y$.\\

\noi
\textbf{Case 1: If $\dg(z',z)\les \dg(z,y)\sim\dg(z',y)$.} In this case we can bound the integral with
\begin{align*}
\dg(z',y)^{-1}\int_{\M}h^{-2}\jb{h^{-1}\dg(z',z)}^{-A}\dg(z',z)d\Vg(z) \les \dg(z',y)^{-1}h,
\end{align*}
where the last bound comes from integrating separately on the regions $\dg(z',z)\les h$ and $\dg(z',z)\gtrsim h$ and taking $A$ large enough.\\

\noi
\textbf{Case 2: If $\dg(z',y)\les \dg(z',z)\sim\dg(z,y)$.} We can proceed as in the previous case to get the same bound as above.\\

\noi
\textbf{Case 3: If $\dg(z,y)\les \dg(z',z)\sim\dg(z',y)$.} In this case, we get the bound
\begin{align*}
\int_{\M}h^{-2}\jb{h^{-1}\dg(z',z)}^{-A}\dg(z',z)\dg(z,y)^{-1}d\Vg(z).
\end{align*}
In the region $\dg(z',z)\les h$ we can the estimate it with
\begin{align*}
h^{-2}\dg(z',y)\int_{\M}\dg(z,y)^{-1}d\Vg(z) \les h^{-2}\dg(z',y)^2 \les 1 \les \dg(z',y)^{-1}h,
\end{align*}
where we used that in this case the integral runs over $\dg(z,y)\les \dg(z',y) \sim \dg(z',z)\les h$.\\
In the other region $\dg(z',z)\gtrsim h$ we have the bound
\begin{align*}
h^{A-2}\dg(z',y)^{1-A}\int_{\M}\dg(z,y)^{-1}d\Vg(z)\les h^{A-2}\dg(z',y)^{2-A} \les \dg(z',y)^{-1}h
\end{align*}
 by using again that  the integral runs over $\dg(z,y)\les \dg(z',y)$ and by choosing $A=3$.

Plugging this bound in the double integral above and using Lemma \ref{LEM:PM} with Remark~\ref{REM:PDO} again, we finally estimate for any $A>0$
 \begin{align*}
\Big|(\P_{N_j}\otimes\P_{N_j})\Gg(x,y)-(\P_{N_1}\otimes\P_{N_2})\Gg(x,y)\Big|\les\int_{\M}N_0^2\jb{N_0\dg(x,z')}^{-A}\dg(z',y)^{-1}hd\Vg(z'),
 \end{align*}
 where $N_0=(N_j^{-2}+N_2^{-2})^{-\frac12}$.
To bound this last integral we divide again the argument into three cases.\\

\noi
\textbf{Case 1: If $\dg(x,y)\les \dg(x,z')\sim\dg(z',y)$.} In this case we have
\begin{align*}
&\Big|(\P_{N_j}\otimes\P_{N_j})\Gg(x,y)-(\P_{N_1}\otimes\P_{N_2})\Gg(x,y)\Big|\\
&\qquad\les h\int_{\M}N_0^2\jb{N_0\dg(x,z')}^{-A}\dg(x,z')^{-1}d\Vg(z')\\
&\qquad\les  hN_0^2\int_{\dg(x,y)\les \dg(x,z')\les N_0^{-1}}\dg(x,z')^{-1}d\Vg(z')\\
&\qquad\qquad+hN_0^{2-A}\int_{\dg(x,z')\gtrsim N_0^{-1}\vee \dg(x,y)}\dg(x,z')^{-1-A}d\Vg(z')\\
&\qquad\les hN_0\mathbf{1}_{\{\dg(x,y)\les N_0^{-1}\}} + h\big(N_0^{-1}\vee \dg(x,y)\big)^{-1}\\
&\qquad\les \dg(x,y)^{-1}h
\end{align*}
where in the second to last step we chose $A=2$.\\

\noi
\textbf{Case 2: If $\dg(x,z')\les \dg(x,y)\sim \dg(z',y)$.} In this case we have the bound
\begin{align*}
\Big|(\P_{N_j}\otimes\P_{N_j})\Gg(x,y)-(\P_{N_1}\otimes\P_{N_2})\Gg(x,y)\Big|&\les h\dg(x,y)^{-1}\int_{\M}N_0^2\jb{N_0\dg(x,z')}^{-A}d\Vg(z')\\
& \les \dg(x,y)^{-1}h
\end{align*}
by separating again the regions $\dg(x,z')\les N_0^{-1}$ and $\dg(x,z')\gtrsim N_0^{-1}$ and taking for example $A=3$.\\

\noi
\textbf{Case 3: If $\dg(z',y)\les \dg(x,y)\sim\dg(x,z')$.} The same argument as in the previous Case 3 gives the final bound
\begin{align*}
\Big|(\P_{N_j}\otimes\P_{N_j})\Gg(x,y)-(\P_{N_1}\otimes\P_{N_2})\Gg(x,y)\Big|& \les \dg(x,y)^{-1}h.
\end{align*}
The second bound in the right-hand of the claim \eqref{GN2} then follows from the previous bound with the definition of $h = \sqrt{4\pi(N_1^{-2}-N_2^{-2})}$. The first one is a consequence of the triangle inequality with \eqref{GN1} and Lemma~\ref{LEM:Green}~(ii).

This completes the proof of Lemma~\ref{LEM:GN1}.
\end{proof}

The upper bound \eqref{GN1} in Lemma \ref{LEM:GN1} is enough to bound the (punctured) Gaussian multiplicative chaos $\U_N$ in \eqref{U}, uniformly in $N\in\N$. However, in order to prove the convergence of the truncated LQG measure \eqref{LQGN}, we will also need the uniform boundedness of the negative moments of $\U_N$, for which we need a two-sided bound on $(\P_N\otimes\P_N)\Gg$ in order to use Kahane's inequality (Lemma \ref{LEM:Kahane}) and the argument of \cite{Molchan}. Thus we also establish the following lemma, which gives a two-sided bound for a more general regularization of the Green's function.
\begin{lemma}\label{LEM:GN2}
Let $\psi\in\S(\R)$. There exists $C>0$ such that for any $N\in\N$ and $(x,y)\in\M\times\M\setminus \diag$ we have\\
\textup{(i)} if $\dg(x,y)\ge N^{-1}$ then
\begin{align*}
\Big|(\psi\otimes\psi)\big(-N^{-2}\Dlg\big)\Gg(x,y) - \Gg(x,y)\Big| \le C;
\end{align*}
\textup{(ii)} if $\dg(x,y)\le N^{-1}$ then
\begin{align*}
\Big|(\psi\otimes\psi)\big(-N^{-2}\Dlg\big)\Gg(x,y)-(\psi\otimes\psi)\big(-N^{-2}\Dlg\big)\Gg(x,x)\Big| \le C.
\end{align*}
\end{lemma}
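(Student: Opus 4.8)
The plan is to write $\psi(-N^{-2}\Dlg) = \Id + \big(\psi(-N^{-2}\Dlg)-\Id\big)$ and exploit the fact that, since $\psi\in\S(\R)$ with (implicitly) $\psi(0)=1$ in the relevant applications — or more generally by treating $\psi$ and $\psi-\psi(0)$ separately — the multiplier $\psi(-N^{-2}\Dlg)$ differs from the identity only at frequencies $\gtrsim N$. Write $\varrho = \psi(0)$, so that $\psi(x)-\varrho$ vanishes at $0$ and hence $\varphi(x) := (\psi(x)-\varrho)/x \in \S(\R)$ as well; then $(\psi(-N^{-2}\Dlg)-\varrho)\Gg = -N^{-2}\varphi(-N^{-2}\Dlg)\Dlg\Gg$. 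Using \eqref{Green2} we have $-\Dlg\Gg(\cdot,y) = \dl_y - \jb{\dl_y}_\gm$ in the distributional sense, so modulo the harmless constant $\jb{\dl_y}_\gm = \Vg(\M)^{-1}$ this reduces the problem to estimating the kernel of $N^{-2}\varphi(-N^{-2}\Dlg)$ itself, which by Lemma~\ref{LEM:PM} applied to the Schwartz function $\varphi$ is bounded by $C N^{-2}\cdot N^2 \jb{N\dg(x,y)}^{-A} = C\jb{N\dg(x,y)}^{-A}$ for any $A>0$. Decomposing $(\psi\otimes\psi)(-N^{-2}\Dlg) = \varrho^2\Id + \varrho\big(\Id\otimes(\psi-\varrho)\big) + \varrho\big((\psi-\varrho)\otimes\Id\big) + (\psi-\varrho)\otimes(\psi-\varrho)$ applied to $\Gg$, and handling the iterated (tensor) action by applying the one-variable estimate successively in each slot (using the semigroup/commutation structure exactly as in the proof of Lemma~\ref{LEM:GN1}, where $(\P_N\otimes\P_N)\Gg = (\P_N^2\otimes\Id)\Gg$), we see the cross terms and the double term all have kernels bounded by $C\jb{N\dg(x,y)}^{-A}$, hence by $C$ uniformly.

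For part (i), when $\dg(x,y)\ge N^{-1}$ we then simply have, from the decomposition above,
\begin{align*}
\big|(\psi\otimes\psi)(-N^{-2}\Dlg)\Gg(x,y) - \varrho^2\Gg(x,y)\big| \le C\jb{N\dg(x,y)}^{-A} \le C,
\end{align*}
so (taking $\varrho=1$, the normalization used for the regularizations in Theorem~\ref{THM:LQG}) the claim follows. For part (ii), when $\dg(x,y)\le N^{-1}$, I would compare both $(\psi\otimes\psi)(-N^{-2}\Dlg)\Gg(x,y)$ and $(\psi\otimes\psi)(-N^{-2}\Dlg)\Gg(x,x)$ to a common reference. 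Write the difference using the kernel $\K_{N^{-1}}$ from Lemma~\ref{LEM:PM} (for the full multiplier $\psi(-N^{-2}\Dlg)$, which still satisfies the Schwartz bound since $\psi\in\S(\R)$):
\begin{align*}
(\psi\otimes\psi)(-N^{-2}\Dlg)\Gg(x,y) - (\psi\otimes\psi)(-N^{-2}\Dlg)\Gg(x,x) = \iint \K_{N^{-1}}(x,z)\big[\K_{N^{-1}}(y,z') - \K_{N^{-1}}(x,z')\big]\Gg(z,z')\,d\Vg(z)d\Vg(z'),
\end{align*}
then bound the $z'$-integral by the mean value theorem along a geodesic from $x$ to $y$ together with the gradient bound $|\nabla_x\K_{N^{-1}}(\cdot,z')| \lesssim N^3\jb{N\dg(\cdot,z')}^{-A}$ (which follows from differentiating the semiclassical parametrix in Proposition~\ref{PROP:pseudo}, exactly as the kernel bound \eqref{K2} is proved) and the logarithmic/gradient bounds for $\Gg$ from Lemma~\ref{LEM:Green}(ii)--(iii). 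Since $\dg(x,y)\le N^{-1}$ this extra factor $\dg(x,y)\le N^{-1}$ compensates the extra power of $N$, and a case analysis on the relative sizes of $\dg(z,z')$, $\dg(x,z)$, $\dg(x,z')$ — entirely parallel to the three-case arguments in the proof of Lemma~\ref{LEM:GN1} — yields a uniform bound $C$, using that $\int_\M \dg(z,z')^{-1}\log\tfrac1{\dg(z,z')}\,d\Vg$-type integrals converge on the compact surface $\M$.

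The main obstacle is the near-diagonal estimate in part (ii): because $\Gg$ has a logarithmic singularity on the diagonal, one cannot simply bound things by $L^\infty$ norms, and the delicate point is to show that the apparent gain of $N^{-1}$ from $\dg(x,y)\le N^{-1}$ genuinely absorbs the extra $N$ produced by differentiating the mollifier kernel, uniformly as $z,z'$ range near the diagonal where $\Gg$ blows up. This is handled by the same localization-and-case-splitting bookkeeping used for \eqref{GN2} in Lemma~\ref{LEM:GN1}, together with the observation that the geodesic-distance weights $\jb{N\dg(\cdot,\cdot)}^{-A}$ with $A$ large make all the resulting integrals over $\M$ converge. Everything else — the reduction via \eqref{Green2}, the Schwartz-kernel bounds, the tensorization — is routine given Lemmas~\ref{LEM:PM}, \ref{LEM:Green}, \ref{LEM:heatker} and the proof scheme of Lemma~\ref{LEM:GN1}.
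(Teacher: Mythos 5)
There are two genuine gaps, one in each part.

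For part (i), your reduction rests on the claim that $\varphi(s):=(\psi(s)-\varrho)/s$ is again Schwartz. This is false precisely in the case you need, $\varrho=\psi(0)=1$: since $\psi(s)\to 0$ rapidly at infinity, $\psi(s)-\varrho\to-\varrho\neq 0$, so $\varphi$ only decays like $|s|^{-1}$, i.e.\ it is a symbol of order $-1$, not an element of $\S(\R)$. Consequently Lemma \ref{LEM:PM} does not apply to it, and in fact no bound of the form $N^{2}\jb{N\dg(x,y)}^{-A}$ can hold for the kernel of $\varphi(-N^{-2}\Dlg)$: on the diagonal that kernel is the series $\sum_n\varphi(N^{-2}\ld_n^2)\varphi_n(x)^2$, whose tail behaves like $-\varrho N^{2}\sum_{\ld_n> N}\ld_n^{-2}\varphi_n(x)^2$ and diverges logarithmically (by \eqref{EF1} the dyadic blocks contribute $\sim j^{-1}$ each). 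In other words, after your reduction via \eqref{Green2} the operator $N^{-2}\varphi(-N^{-2}\Dlg)$ applied to $\dl_y$ is essentially the high-frequency tail of the Green's function itself, and controlling that tail is the content of the lemma; the step ``by Lemma \ref{LEM:PM} applied to the Schwartz function $\varphi$'' is therefore both unjustified and close to circular. The paper never splits off $\psi(0)$: it uses $(\psi\otimes\psi)(-N^{-2}\Dlg)\Gg=(\psi^2\otimes\Id)\Gg$, the kernel bound of Lemma \ref{LEM:PM} for $\psi^2\in\S(\R)$, the decomposition $\Gg=-\frac1{2\pi}\log\dg+\wt\Gg$ of Lemma \ref{LEM:Green}, and a case analysis of $\int_\M\K_N(x,z)\big|\log\frac{\dg(z,y)}{\dg(x,y)}\big|\,d\Vg(z)$.

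For part (ii), the estimate you describe does not give a uniform constant. After the mean value theorem you bound $|\K_{N^{-1}}(y,z')-\K_{N^{-1}}(x,z')|\les \dg(x,y)\,N^{3}\jb{N\dg(x,z')}^{-A}\les N^{2}\jb{N\dg(x,z')}^{-A}$ and then integrate $|\Gg(z,z')|\les 1+\log\frac1{\dg(z,z')}$ against the product of the two kernels. But both kernels localize $z,z'$ to an $N^{-1}$-neighbourhood of $x$, where $\log\frac1{\dg(z,z')}\sim\log N$; rescaling $z,z'$ at scale $N^{-1}$ shows that this bound is of size $\log N$, not $O(1)$. The factor $\dg(x,y)\le N^{-1}$ only compensates the extra power of $N$ from differentiating the mollifier; it does nothing about the $\log N$ produced by the diagonal singularity of $\Gg$ smeared at scale $N^{-1}$, which is exactly the divergent quantity $(\psi\otimes\psi)(-N^{-2}\Dlg)\Gg(x,x)\approx\frac1{2\pi}\log N$ that is supposed to cancel between the two terms. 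By transferring the difference onto the kernel and then taking absolute values against $\Gg(z,z')$ you destroy this cancellation. To retain it the difference must act on the arguments of the Green's function: the paper keeps a single kernel in the $x$-slot and estimates $\int_\M\K_N(x,z)\big|\log\frac{\dg(z,y)}{\dg(z,x)}\big|\,d\Vg(z)$, where the log-\emph{ratio} is bounded except when $z$ is near $x$ or $y$ and is locally integrable there (Subcases 2.1--2.3); alternatively you would have to exploit that $\K_{N^{-1}}(y,\cdot)-\K_{N^{-1}}(x,\cdot)$ has essentially zero mean and subtract $\Gg(z,x)$ before taking absolute values. As written, neither part of the proposal yields the stated uniform bound.
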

\begin{proof}
As in the proof of Lemma \ref{LEM:PM}, note that it is enough to treat the case $\dg(x,y)\le \frac{\i(\M)}2$.\\

\noi
\textbf{Case 1: if $\dg(x,y)\ge N^{-1}$.} In this case, with the same observation as in the proof of Lemma \ref{LEM:GN1}, we have
\begin{align}\label{KG}
(\psi\otimes\psi)(-N^{-2}\Dlg)\Gg(x,y) &= \int_{\M}\K_N(x,z)\Gg(z,y)d\Vg(z).
\end{align}
  where we wrote $\K_N$ for the kernel of $\psi^2(-N^{-2}\Dlg)$. Thus we have
\begin{align*}
&\Big|(\psi\otimes\psi)\big(-N^{-2}\Dlg\big)\Gg(x,y)-\Gg(x,y) \Big|\\
&\qquad= \Big|\int_{\M} \K_N(x,z)\Big[\log\big(\frac{\dg(z,y)}{\dg(x,y)}\big)+\wt\Gg(z,y)-\wt\Gg(x,y)\Big]dV_\gm(z)\Big| \\ &\qquad\les N^2\int_\M \jb{N\dg(x,z)}^{-10}\Big|\log\big(\frac{\dg(z,y)}{\dg(x,y)}\big)\Big|dV_\gm(z) + O(1),
\end{align*}
where in the first step we used Lemma \ref{LEM:Green}, and in the second one we used Lemma \ref{LEM:PM} with $\K_N$ being the kernel of the multiplier with symbol $\psi^2\in\S(\R)$. Here $O(1)$ stands for a term bounded uniformly in both $N\in\N$ and $(x,y)\in\M\times\M$.\\

\noi
\textbf{Subcase 1.1:} If $\dg(x,z)\les \dg(z,y)\sim \dg(x,y)$. Then the log term in the previous integral is bounded above and below and the integral is $O(1)$.\\

\noi
\textbf{Subcase 1.2:} If $\dg(z,y)\ll \dg(x,z)\sim \dg(x,y)$. Then using polar geodesic coordinates around $y$ and integrating by parts, we can estimate the integral above by
\begin{align*}
N^{-8}\dg(x,y)^{-10}\int_{0}^{\dg(x,y)}\log\Big(\frac{\dg(x,y)}{r}\Big)rdr &= N^{-8}\dg(x,y)^{-10}\Big[\frac{r^2}2\log\Big(\frac{\dg(x,y)}{r}\Big)\Big]_0^{\dg(x,y)}\\
&\qquad\qquad+N^{-8}\dg(x,y)^{-10}\int_{0}^{\dg(x,y)}\frac{r}2dr\\
&\les N^{-8}\dg(x,y)^{-8}\les 1.
\end{align*}

\noi
\textbf{Subcase 1.3:} If $N^{-1}\le\dg(x,y)\ll \dg(x,z)\sim \dg(z,y)$. In this case, using again polar coordinates around $y$ and noting $\diam <\infty$ for the diameter of $\M$, we get the bound
\begin{align*}
N^{-8}\int_{\dg(x,y)}^{\diam}r^{-10}\log\Big(\frac{r}{\dg(x,y)}\Big)rdr &\les N^{-8}\Big[r^{-8}\log\Big(\frac{\dg(x,y)}{r}\Big)\Big]_{\dg(x,y)}^{\diam}\\
&\qquad\qquad+N^{-8}\int_{\dg(x,y)}^{\diam}r^{1-10}dr\\
&\les N^{-8}\log\big(\dg(x,y)\big)+N^{-8}\dg(x,y)^{-8}\les 1.
\end{align*} 

\noi
\textbf{Case 2: If $\dg(x,y)\le N^{-1}$.} In this case we have similarly
\begin{align*}
&\Big|(\psi\otimes\psi)\big(-N^{-2}\Dlg\big)\Gg(x,y)-(\psi\otimes\psi)\big(-N^{-2}\Dlg\big)\Gg(x,x) \Big|\\
 &\qquad\les N^2\int_\M \jb{N\dg(x,z)}^{-10}\Big|\log\big(\frac{\dg(z,y)}{\dg(z,x)}\big)\Big|dV_\gm(z) + O(1).
\end{align*}

\noi
\textbf{Subcase 2.1:} If $\dg(x,y)\les \dg(z,x)\sim \dg(z,y)$. In this case the log term is bounded above and below, so the integral above is $O(1)$.\\

\noi
\textbf{Subcase 2.2:} If $\dg(z,x)\les \dg(z,y)\sim\dg(x,y)\le N^{-1}$. In this case, using polar geodesic coordinates around $x$ and integrating by parts we can bound the previous integral with
\begin{align*}
N^2\int_0^{\dg(x,y)}\log\Big(\frac{\dg(x,y)}{r}\Big)rdr &= N^2 \Big[\frac{r^2}{2}\log\big(\frac{\dg(x,y)}{r}\big)\Big]_0^{\dg(x,y)} + \frac{N^2}2\int_0^{\dg(x,y)}rdr\\
& = \frac{N^2}{4}\dg(x,y)^2 \les 1.
\end{align*}
\textbf{Subcase 2.3:} If $\dg(z,y)\les \dg(z,x)\sim\dg(x,y)\le N^{-1}$. This case follows from the same computation as in Subcase 2.2.

This completes the proof of Lemma \ref{LEM:GN2}.
\end{proof} 

 In view of Lemma \ref{LEM:Green}, the bound of Lemma \ref{LEM:GN2} (i) is enough for our purpose. In order to precise the one of Lemma \ref{LEM:GN2} (ii), we prove the following.
\begin{lemma}\label{LEM:GN3}
Let $\psi\in\S(\R)$ such that $\psi(0)=1$. Then there exists a constant $C_\psi\in\R$ such that
\begin{align*}
\Big\|(\psi\otimes\psi)(-N^{-2}\Dlg)\Gg(x,x) - \frac1{2\pi}\log N - \wt \Gg(x,x)-C_\psi\Big\|_{L^{\infty}(\M)}\too 0
\end{align*}
as $N\to\infty$, where $\wt \Gg$ is given in Lemma \ref{LEM:Green}.
\end{lemma}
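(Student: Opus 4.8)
The plan is to exploit the pseudodifferential description of $(\psi\otimes\psi)(-N^{-2}\Dlg)$ from Proposition~\ref{PROP:pseudo} together with the Hadamard parametrix splitting $\Gg(x,y) = -\frac1{2\pi}\log\dg(x,y)+\wt\Gg(x,y)$ of Lemma~\ref{LEM:Green}~(ii). Fix $x\in\M$; as in the proof of Lemma~\ref{LEM:PM}, work in geodesic normal coordinates $(U,\kk)$ centred at $x$ with a cutoff $\chi\in C^\infty_0(V)$ equal to $1$ near $x$. Writing $\K_N$ for the kernel of $\psi^2(-N^{-2}\Dlg)$ (using that, by the observation in the proof of Lemma~\ref{LEM:GN1}, $(\psi\otimes\psi)(-N^{-2}\Dlg)\Gg(x,x) = \int_\M \K_N(x,z)\Gg(z,x)\,d\Vg(z)$), I would split the integral at scale $N^{-1}$. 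On $\dg(x,z)\gtrsim N^{-1}$, the rapid decay \eqref{K2} of $\K_N$ makes the contribution of $-\frac1{2\pi}\log\dg(x,z)$ equal to $O(1)$ uniformly and the contribution of $\wt\Gg$ converges to $\wt\Gg(x,x)$ by continuity of $\wt\Gg$ and the fact that $\int_\M\K_N\,d\Vg \to 1$ (since $\psi^2(0)=1$). The main term is therefore the near-diagonal piece $-\frac1{2\pi}\int_{\dg(x,z)\lesssim N^{-1}}\K_N(x,z)\log\dg(x,z)\,d\Vg(z)$.

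The heart of the matter is that this near-diagonal integral equals $\frac1{2\pi}\log N + C_\psi + o(1)$ for a constant $C_\psi$ depending only on $\psi$. Using Proposition~\ref{PROP:pseudo} and Remark~\ref{REM:PDO} with the semiclassical parameter $h=N^{-1}$, write $\kk^\star(\chi\,\psi^2(-N^{-2}\Dlg)\,\wt\chi\,\cdot)$ as $\sum_{k=0}^{K-1}N^{-k}a_k(z,N^{-1}D)$ plus a remainder that is $O(N^{K-2-\s-|s|})$ in the relevant Sobolev norms; the principal symbol is $a_0(z,\xi)=\chi(\kk(z))\psi^2(p_2(z,\xi))$ where $p_2(z,\xi)=g^{jk}(z)\xi_j\xi_k$ is the principal symbol of $-\Dlg$, and at $z=x$ in normal coordinates $p_2(x,\xi)=|\xi|^2$. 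After rescaling $\xi\mapsto N\xi$ the $k=0$ term produces
\begin{align*}
-\frac1{2\pi}\cdot\frac{1}{(2\pi)^2}\int_{\R^2}\int_{\R^2}\psi^2(|\xi|^2)e^{iN w\cdot\xi}\log|w|\,|\gm(0)|^{\frac12}\,dw\,d\xi,
\end{align*}
and the change of variables $w\mapsto N^{-1}w$ together with the Fourier transform identity $\widehat{\log|\cdot|}(\xi) = -\pi|\xi|^{-2}+c\,\delta$ (in $\R^2$, in the distributional sense) extracts exactly the logarithmic divergence $\frac1{2\pi}\log N$ with an explicit $\psi$-dependent finite remainder; the curvature corrections ($|\gm(z)|^{\frac12}$ versus $|\gm(0)|^{\frac12}$, and $p_2(z,\xi)$ versus $|\xi|^2$) and the lower-order symbols $a_k$, $k\ge1$, as well as the parametrix remainder, all contribute $O(N^{-1}\log N)=o(1)$, using the symbol bounds $|\partial_z^\al a_k(z,\xi)|\lesssim\jb{\xi}^{|\al|-m-k}$ for all $m$. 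Uniformity in $x\in\M$ follows because the compactness of $\M$ gives a finite atlas and uniform control of the metric and of all symbol seminorms.

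The main obstacle is making the extraction of $\frac1{2\pi}\log N$ rigorous and, crucially, \emph{uniform in $x$}: one must show that the constant $C_\psi$ is genuinely independent of $x$ (which is why we isolate $p_2(x,\xi)=|\xi|^2$ at the base point and absorb the $x$-dependent corrections into the error) and that the parametrix remainders, which are only controlled in Sobolev norms, translate into pointwise-in-$(x,x)$ bounds of the right size — this is handled exactly as in Case~2 and Case~3 of the proof of Lemma~\ref{LEM:PM}, choosing the number of terms $K$ large and using Sobolev embedding $H^{s_2}(\R^2)\hookrightarrow L^\infty$. A cleaner alternative, which I would adopt to avoid the distributional Fourier computation, is comparative: prove the statement for the specific choice $\psi_0(\lambda)=e^{-\lambda}$ (i.e. $(\psi_0\otimes\psi_0)(-N^{-2}\Dlg)\Gg=(\P_{N/\sqrt2}\otimes\P_{N/\sqrt2})\Gg$), where the near-diagonal expansion of the heat kernel $\Pg(t,x,x)=\frac{1}{4\pi t}+O(1)$ (Minakshisundaram–Pleijel) gives the logarithm directly via $u(t,x)=\frac1{4\pi}\int_0^t\Pg(t',x,x)\,dt'$ as in Lemma~\ref{LEM:GN1}, and then show that for a general $\psi\in\S(\R)$ with $\psi(0)=1$ the difference $(\psi\otimes\psi)(-N^{-2}\Dlg)\Gg(x,x)-(\psi_0\otimes\psi_0)(-N^{-2}\Dlg)\Gg(x,x)$ converges to a finite $x$-independent constant; this last difference is handled by writing it as $\eta(-N^{-2}\Dlg)\Gg(x,x)$ with $\eta=\psi^2-\psi_0^2\in\S(\R)$, $\eta(0)=0$, so $\eta(\lambda)/\lambda$ is still Schwartz, hence $\eta(-N^{-2}\Dlg)=N^{-2}(-\Dlg)\cdot\widetilde\eta(-N^{-2}\Dlg)$ acting on $\Gg$ gives $N^{-2}\widetilde\eta(-N^{-2}\Dlg)$ applied to $(-\Dlg)\Gg = \delta_x - \Vg(\M)^{-1}$, whose kernel on the diagonal is $N^{-2}\widetilde\K_N(x,x)-\text{const}$, bounded and convergent by Lemma~\ref{LEM:PM}. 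This reduces everything to the single explicit heat-kernel computation, which is the cleanest route.
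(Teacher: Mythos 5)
Your first route --- representing $(\psi\otimes\psi)(-N^{-2}\Dlg)\Gg(x,x)$ through the kernel of $\psi^2(-N^{-2}\Dlg)$, splitting $\Gg$ via Lemma \ref{LEM:Green} (ii), and extracting $\frac1{2\pi}\log N$ plus a constant from the principal symbol of Proposition \ref{PROP:pseudo} / Remark \ref{REM:PDO} after rescaling, with lower-order symbols and remainders giving $o(1)$ uniformly --- is exactly the paper's proof. Note that no distributional Fourier transform of $\log$ is needed there: the paper simply writes $\log(N^{-1}|z|)=-\log N+\log|z|$, and the constant appears as the absolutely convergent integral $C_\psi=-\frac1{(2\pi)^3}\int_{\R^2}\F\big[\psi^2(|\xi|^2)\big](z)\log|z|\,dz$, so the stated motivation for switching to the comparison argument is weaker than you suggest.

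The ``cleaner alternative'' you say you would adopt has two gaps. (a) For the difference of two regularizations you reduce matters to $N^{-2}\wt\K_N(x,x)$, the on-diagonal kernel of $\wt\eta(-N^{-2}\Dlg)$ with $\wt\eta(\ld)=\eta(\ld)/\ld\in\S(\R)$; but Lemma \ref{LEM:PM} only gives $|\wt\K_N(x,x)|\les N^2$, i.e.\ \emph{boundedness}, not convergence of $N^{-2}\wt\K_N(x,x)$ to an $x$-independent constant. Convergence does hold, uniformly, to $\frac1{4\pi}\int_0^\infty\wt\eta(s)\,ds$, but it must be proved, e.g.\ by summation by parts against H\"ormander's spectral bound \eqref{EF1} or by the principal-symbol computation; the citation you give does not cover it. (Incidentally, with $\psi_0(\ld)=e^{-\ld}$ one has $(\psi_0\otimes\psi_0)(-N^{-2}\Dlg)=\P_N\otimes\P_N$, with no $\sqrt2$; this slip is harmless since it only shifts the constant.) (b) The base case is the real issue: the on-diagonal expansion $\Pg(t,x,x)=t^{-1}+O(1)$ (note the paper's normalization) does not by itself identify the finite part of $(\P_N\otimes\P_N)\Gg(x,x)-\frac1{2\pi}\log N$ as $\wt\Gg(x,x)+\mathrm{const}$, and the identity from the proof of Lemma \ref{LEM:GN1} is an off-diagonal identity; it cannot be evaluated at $y=x$, since $\Gg(x,x)=\infty$. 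To recover $\wt\Gg(x,x)$ you must pass to the limit $y\to x$ in $\Gg(x,y)-\frac1{4\pi}\int_0^{8\pi N^{-2}}\Pg(s,x,y)\,ds$, and the cancellation of the $-\frac1{2\pi}\log\dg(x,y)$ singularity requires uniform short-time \emph{off-diagonal} (Gaussian parametrix) asymptotics of the heat kernel, which are standard but nowhere established in the paper; the semiclassical route via Proposition \ref{PROP:pseudo} is precisely what replaces this input, and it yields $C_\psi$ for a general $\psi$ in one stroke. If you fill in (a) with \eqref{EF1} and import (or prove) the off-diagonal heat parametrix for (b), your comparison argument does close, but as written these two steps are missing.
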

\begin{proof}
Let $\K_N$ be as in the proof of Lemma \ref{LEM:GN2}. Note that we have again the identity \eqref{KG}. Using Lemma \ref{LEM:Green}, we first decompose
\begin{align*}
(\psi\otimes\psi)(-N^{-2}\Dlg)\Gg(x,x) &= -\frac1{2\pi}\int_{\M}\K_N(x,y)\log\big(\dg(y,x)\big)d\Vg(y)\\
&\qquad +\int_{\M}\K_N(x,z)\wt\Gg(y,x)d\Vg(y).
\end{align*} 
Since $\wt \Gg$ is continuous on $\M\times\M$, the use of Lemma \ref{LEM:PM} shows that the second term converges to $\wt\Gg(x,x)$ uniformly.

It remains to treat the first term. We first take a finite partition of unity $\{\chi_j\}$ such that $\chi_j$ are supported in balls of radii $\ll\i(\M)$, so that the exponential chart centred at some point in $\supp\chi_j$ entirely covers $\supp\chi_j$. Then, using Proposition \ref{PROP:pseudo} with Remark~\ref{REM:PDO}, we have
\begin{align*}
&\K_N(x,y)\log\big(\dg(y,x)\big)\\ &\qquad=\sum_j\Big[\big(a_j(0,N^{-1}D)+N^{-1}\wt a_j(0,N^{-1}D)\big)(\exp_x)_\star\wt\chi_j + R_{j,N}\Big]\log\big(\dg(\cdot,x)\big),
\end{align*}
where $\exp_x : T_x\M\simeq \R^2 \to \M$ is the exponential map at $x\in\M$, $\wt\chi_j\in C^{\infty}_0(\M)$ with $\wt\chi_j\equiv 1$ on $\supp \chi_j$, and $\|R_{j,N}\|_{H^{-s_2}(\M)\to H^{s_1}(\R^2)}\les N^{s_1+s_2-2}$ for any $s_1,s_2\ge 0$ with $s_1+s_2\le 2$. The the principal symbol of $\psi^2\big(-N^{-2}\Dlg\big)$ is
 \begin{align*}
 a_{j}(z,\xi)=\chi_j(\exp_x(z))\psi^2\big(N^{-2}\gm^{j,k}(\exp_x(z))\xi_j\xi_k\big),
 \end{align*}
with $\exp_x(0)=x$ and $\gm^{j,k}(0)=\dl_{j,k}$, and $\wt a_j\in \S^{-\infty}(\R^2\times\R^2)$ and is compactly supported in $z$.
 
This gives the decomposition
\begin{align*}
\K_N(x,y) &= \sum_j\frac{\chi_j(x)\wt\chi_j(y)}{(2\pi)^2}\int_{\R^2}e^{-i\exp_x^{-1}(y)\cdot\xi}\psi^2\big(N^{-2}|\xi|^2\big)d\xi + N^{-1}\wt \K_{j,N}+\K_{j,N}
\end{align*}
where $\wt\K_{j,N}$ is the kernel of $\wt a_j(z,N^{-1}D)$ and $K_{j,N}$ the one to $R_{j,N}$.

From Proposition \ref{PROP:pseudo}, we have
\begin{align*}
\big\|R_{j,N}\big\|_{L^2(\M)\to H^{1+\dl}(\M)} \les N^{\dl -1}.
\end{align*}
In particular this shows that
\begin{align*}
\Big\|\int_{\M}\K_{j,N}(x,y)\log\big(\dg(x,y)\big)d\Vg(y)\Big\|_{L^{\infty}(\M)}&\les \big\|R_{j,N}\log\big(\dg(\cdot,x)\big)(y)\big\|_{L^{\infty}_xH^{1+\dl}_y}\\
&\les N^{\dl-1}\big\|\log\big(\dg(y,x)\big)\big\|_{L^{\infty}_xL^2_y}\les N^{\dl-1},
\end{align*}
for any $0<\dl\ll 1$. 

As for $\wt\K_{j,N}$, we have by integrations by parts
\begin{align*}
\big|\wt \K_{j,N}(x,y)\big| &= \Big|\frac{\chi_j(x)\wt\chi_j(y)}{(2\pi)^2}\int_{\R^2}e^{-i\exp_x^{-1}(y)\cdot\xi}\wt a_j(0,N^{-1}\xi)d\xi\Big|\\
&\les \chi_j(x)\wt\chi_j(y)N^2\big|N\exp_x^{-1}(y)\big|^{-2}\int_{\R^2}\Big|\Dl_\xi\wt a_j(0,\xi)\Big|d\xi\\
&\les \chi_j(x)\wt\chi_j(y)\dg(x,y)^{-2}.
\end{align*}
Interpolating with the trivial bound $\big|\wt\K_{j,N}(x,y)\big|\les \chi_j(x)\wt\chi_j(y)N^2$, we get
\begin{align*}
\big|\wt\K_{j,N}(x,y)\big| \les \chi_j(x)\wt\chi_j(y) N^{\dl}\dg(x,y)^{\dl-2}
\end{align*}
for any $0<\dl\ll 1$. This shows that
\begin{align*}
N^{-1}\Big\|\int_{\M}\wt\K_{j,N}(x,y)\log\big(\dg(y,x)\big)d\Vg(y)\Big\|_{L^{\infty}(\M)} &\les N^{\dl-1} \int_{\M}\dg(x,y)^{\dl-2}\log\big(\dg(x,y)\big)d\Vg(y)\\
&\les N^{\dl-1}.
\end{align*}

Finally, to deal with the leading term, we have
\begin{align*}
&-\frac{\chi_j(x)}{2\pi}\int_{\M}\frac{\wt\chi_j(y)}{(2\pi)^2}\int_{\R^2}e^{-i\exp_x^{-1}(y)\cdot\xi}\psi^2(N^{-2}|\xi|^2)d\xi \log\big(\dg(x,y)\big)d\Vg(y)\\
&=-\frac{\chi_j(x)}{(2\pi)^3}\int_{\R^2}\int_{\R^2}e^{-iz\cdot\xi}\wt\chi_j(\exp_x(z))\psi^2(N^{-2}|\xi|^2)\log(|z|)|\gm(z)|^{\frac12}dzd\xi\intertext{Changing variables in $\xi$ then in $z$, we continue with}
&=-\frac{\chi_j(x)}{(2\pi)^3}\int_{\R^2}\int_{\R^2}e^{-iz\cdot\xi}\wt\chi_j(\exp_x(N^{-1}z))\psi^2(|\xi|^2)\log(N^{-1}|z|)|\gm(N^{-1}z)|^{\frac12}dzd\xi\\
&=-\frac{\chi_j(x)}{(2\pi)^3}\int_{\R^2}\int_{\R^2}e^{-iz\cdot\xi}\wt\chi_j(\exp_x(N^{-1}z))\psi^2(|\xi|^2)\Big[-\log N+\log(|z|)\Big]|\gm(N^{-1}z)|^{\frac12}dzd\xi\\
& = \1_j + \II_j.
\end{align*}

The first term is then given by
\begin{align*}
\1_j&=\frac1{2\pi}\log N \Bigg\{\frac{\chi_j(x)}{(2\pi)^2}\int_{\R^2}\int_{\R^2}e^{-iz\cdot\xi}\wt\chi_j(\exp_x(N^{-1}z))\psi^2(|\xi|^2)|\gm(N^{-1}z)|^{\frac12}d\xi dz\Bigg\}\\
&=\frac1{2\pi}\log N \Bigg\{\frac{\chi_j(x)}{(2\pi)^2}\int_{\R^2}\F\big[\psi^2(|\xi|^2)\big](z)\wt\chi_j(\exp_x(N^{-1}z))|\gm(N^{-1}z)|^{\frac12}dz\Bigg\}.
\end{align*}
Note that this last integral converges to $\wt\chi_j(x)$. Indeed, since $\psi\in\S(\R)$ and $\exp_x,\gm$ are smooth with $D\exp_x$ depending continuously on $x\in\M$, we have by the mean value theorem
\begin{align*}
&\Big\|\int_{\R^2}\F\big[\psi^2(|\xi|^2)\big](z)\Big[\wt\chi_j(\exp_x(N^{-1}z))|\gm|^{\frac12}(N^{-1}z)-\wt\chi_j(\exp_x(0))|\gm(0)|^{\frac12}\Big]dz\Big\|_{L^{\infty}(\M)}\\
&\qquad\les\sup_{x\in\M}\int_{\R^2}\jb{z}^{-10}N^{-1}|z|\Big|\int_0^1 \nabla_z\big[\wt\chi_j\circ\exp_x|\gm|^{\frac12}\big](\theta N^{-1}z)d\theta\Big|dz\\
&\qquad\les N^{-1}\int_{\R^2}|z|\jb{z}^{-10}dz \les N^{-1}.
\end{align*}
Thus, using that $\gm(0) = \Id$ with $\exp_x(0)=x$, that $\{\chi_j\}$ is a partition of unity with $\wt\chi_j\equiv 1$ on $\supp\chi_j$, and that $\psi(0)=1$, we have 
\begin{align*}
\sum_j\1_j &= \sum_j\frac{\chi_j(x)}{2\pi}\log N\Big\{\frac{\wt\chi_j(x)}{(2\pi)^2}\int_{\R^2}\F\big[\psi^2(|\xi|^2)\big](z)dz\Big\} +o(1)\\
&=\sum_j\frac{\chi_j(x)\psi^2(0)}{2\pi}\log N + o(1) = \frac1{2\pi}\log N + o(1),
\end{align*}
where $o(1) \to 0$ as $N\to\infty$, uniformly on $\M$.

Finally, the second term is
\begin{align*}
\II_j &= -\frac{\chi_j(x)}{(2\pi)^3}\int_{\R^2}\int_{\R^2}e^{-iz\cdot\xi}\wt\chi_j(\exp_x(N^{-1}z))\psi^2(|\xi|^2)\log(|z|)|\gm(N^{-1}z)|^{\frac12}dzd\xi,
\end{align*}
and the same argument as above gives
\begin{align*}
\sum_j\II_j = C_\psi + o(1) 
\end{align*}
uniformly on $\M$, with
\begin{align*}
C_\psi \deff -\frac1{(2\pi)^3}\int_{\R^2}\F\big[\psi^2(|\xi|^2)\big](z)\log(|z|)dz <\infty.
\end{align*}
This completes the proof of Lemma \ref{LEM:GN3}.
\end{proof}
\begin{remark}\rm\label{REM:kernels}~\\
(i) The condition $\psi(0)=1$ is necessary to have proper approximations of the identity, i.e. the corresponding family of smooth kernels $\K_N$ satisfy $\K_N(x,y)\to \dl_x(y)$ in the sense of distributions as $N\to\infty$, and $\int_{\M}\K_N(x,y)d\Vg(y) = 1$ for any $x\in\M$. In particular, note that Lemma \ref{LEM:GN3} holds for any kernel $\K_N(x,y) = \chi\big(N^2\dg(x,y)^2\big)$ with $\chi\in C^{\infty}_0\big((0,r)\big)$ and $0<r\ll \i(\M)^2$.\\
(ii) On the other hand, using an approximation by averaging on geodesic circles as in \cite{DKRV,DRV,GRV} formally corresponds to taking $\chi = \mathbf{1}_{\{|z|=1\}}$, which is not covered by our result. Note also that in this case $\F\big[\psi^2(|\xi|^2)\big] = \mathbf{1}_{\{|z|=1\}}$, which explains why $C_\psi = 0$ in these works.
\end{remark}
Collecting the previous results, we finally obtain the following key bound.
\begin{corollary}\label{COR:GN}
Let $\psi\in\S(\R)$ such that $\psi(0)=1$. Then there exists $C>0$ such that for any $N\in\N$ and $(x,y)\in\M\times\M\setminus\diag$ it holds
\begin{align}\label{GreenN}
\Big|(\psi\otimes\psi)\big(-N^{-2}\Dlg\big)\Gg(x,y) +\frac1{2\pi}\log\big(\dg(x,y)+N^{-1}\big)\Big| \le C.
\end{align}
\end{corollary}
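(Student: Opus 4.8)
The plan is to split the estimate according to the same dichotomy as in Lemma~\ref{LEM:GN2}, namely $\dg(x,y)\ge N^{-1}$ versus $\dg(x,y)\le N^{-1}$, and then to simply assemble the bounds already obtained in Lemmas~\ref{LEM:Green}, \ref{LEM:GN2} and \ref{LEM:GN3}. The only elementary input needed is the observation that in the first regime $\dg(x,y)\le \dg(x,y)+N^{-1}\le 2\dg(x,y)$, so that $\log\big(\dg(x,y)+N^{-1}\big)=\log\dg(x,y)+O(1)$, while in the second regime $N^{-1}\le \dg(x,y)+N^{-1}\le 2N^{-1}$, so that $\log\big(\dg(x,y)+N^{-1}\big)=-\log N+O(1)$, with the $O(1)$ being uniform in $N\in\N$ and $(x,y)\in\M\times\M\setminus\diag$.

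First, when $\dg(x,y)\ge N^{-1}$, Lemma~\ref{LEM:GN2}~(i) gives $(\psi\otimes\psi)\big(-N^{-2}\Dlg\big)\Gg(x,y)=\Gg(x,y)+O(1)$, and Lemma~\ref{LEM:Green}~(ii) writes $\Gg(x,y)=-\frac1{2\pi}\log\dg(x,y)+\wt\Gg(x,y)$ with $\wt\Gg$ continuous, hence bounded, on the compact space $\M\times\M$. Combining with the elementary observation above, the quantity inside the absolute value in \eqref{GreenN} equals $-\frac1{2\pi}\log\dg(x,y)+\frac1{2\pi}\log\big(\dg(x,y)+N^{-1}\big)+O(1)=O(1)$.

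Next, when $\dg(x,y)\le N^{-1}$, Lemma~\ref{LEM:GN2}~(ii) replaces $(\psi\otimes\psi)\big(-N^{-2}\Dlg\big)\Gg(x,y)$ by the on-diagonal value $(\psi\otimes\psi)\big(-N^{-2}\Dlg\big)\Gg(x,x)$ up to an $O(1)$ error, and Lemma~\ref{LEM:GN3} (which applies since $\psi(0)=1$) shows that $(\psi\otimes\psi)\big(-N^{-2}\Dlg\big)\Gg(x,x)-\frac1{2\pi}\log N-\wt\Gg(x,x)-C_\psi\to 0$ in $L^\infty(\M)$ as $N\to\infty$. Since a sequence of functions converging to $0$ in $L^\infty(\M)$ is uniformly bounded, and since $\wt\Gg$ is bounded and $C_\psi$ is a constant, it follows that $(\psi\otimes\psi)\big(-N^{-2}\Dlg\big)\Gg(x,x)-\frac1{2\pi}\log N$ is bounded uniformly in $N\in\N$ and $x\in\M$. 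Together with the elementary observation, the expression in \eqref{GreenN} again equals $\frac1{2\pi}\log N-\frac1{2\pi}\log N+O(1)=O(1)$, which completes the proof upon absorbing all additive constants into $C$.

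I do not expect any genuine obstacle: all the analytic substance lives in Lemmas~\ref{LEM:GN2} and \ref{LEM:GN3}, and the only point requiring a small amount of care is turning the convergence statement of Lemma~\ref{LEM:GN3} into a uniform-in-$N$ bound, which is immediate.
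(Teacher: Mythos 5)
Your proposal is correct and follows essentially the same route as the paper: the paper likewise notes that $\log\big(\dg(x,y)+N^{-1}\big)=\log\big(\max(\dg(x,y),N^{-1})\big)+O(1)$ and then combines Lemma~\ref{LEM:GN2}~(i) with Lemma~\ref{LEM:Green}~(ii) when $\dg(x,y)\ge N^{-1}$, and Lemma~\ref{LEM:GN2}~(ii) with Lemma~\ref{LEM:GN3} when $\dg(x,y)\le N^{-1}$. Your extra remark that the uniform convergence in Lemma~\ref{LEM:GN3} yields a uniform-in-$N$ bound is exactly the (implicit) step the paper also relies on.
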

\begin{proof}
Since 
\begin{align*}
-\frac1{2\pi}\log\big(\dg(x,y)+N^{-1}\big) = -\frac1{2\pi}\log\big(\max\big(\dg(x,y),N^{-1}\big)\big) + O(1),
\end{align*}
the bound \eqref{GreenN} follows from: (a) Lemma~\ref{LEM:GN2}~(i) with Lemma~\ref{LEM:Green}~(ii) in the case $\dg(x,y)\ge N^{-1}$; and (b) Lemma~\ref{LEM:GN2}~(ii) with Lemma~\ref{LEM:GN3} in the case $\dg(x,y)\le N^{-1}$. 
\end{proof}

At last, we look at the behaviour of different regularizations of $\Gg$.
\begin{lemma}\label{LEM:GN4}
Let $\psi_1,\psi_2\in\S(\R)$ be such that $\psi_1(0)=\psi_2(0)$. Then for any $0<\dl\ll 1$, there exists $C>0$ such that for any $N\ge 1$ and any $(x,y)\in\M\times\M\setminus\diag$ it holds
\begin{align}
&\Big|(\psi_1\otimes\psi_1)\big(-N^{-2}\Dlg\big)\Gg(x,y)-(\psi_2\otimes\psi_2)\big(-N^{-2}\Dlg\big)\Gg(x,y)\Big|\notag\\
&\qquad\qquad\le C\min\Big\{-\log\big(\dg(x,y)+N^{-1}\big)+ 1; N^{\dl-1}\dg(x,y)^{-1}\Big\}.
\label{GN3}
\end{align}
\end{lemma}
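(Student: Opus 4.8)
The plan is to follow the same two-scale strategy that was used in Lemma \ref{LEM:GN2} and Corollary \ref{COR:GN}: split according to whether $\dg(x,y)\ge N^{-1}$ or $\dg(x,y)\le N^{-1}$, and in each regime compare both regularizations to the singular model $-\frac1{2\pi}\log\big(\dg(x,y)+N^{-1}\big)$. For the first bound in the minimum in \eqref{GN3}, the cleanest route is to invoke Corollary \ref{COR:GN} directly for each of $\psi_1$ and $\psi_2$: since $\psi_1(0)=\psi_2(0)$, after normalizing we may assume $\psi_i(0)=1$ (the general case follows by linearity, replacing $\psi_i$ by $\psi_i/\psi_i(0)$ if this common value is nonzero, and noting the statement is trivial if it is zero since then both $\psi_i\in\S(\R)$ vanish at $0$ and Lemma \ref{LEM:PM} gives an $O(1)$ bound on each term for $\dg(x,y)\gtrsim N^{-1}$, while for $\dg(x,y)\le N^{-1}$ one argues as in Lemma \ref{LEM:GN2}(ii) and Lemma \ref{LEM:GN3} with $\psi(0)=0$, making the $\log N$ and $C_\psi$ contributions disappear). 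Then the triangle inequality gives
\[
\big|(\psi_1\otimes\psi_1)(-N^{-2}\Dlg)\Gg(x,y)-(\psi_2\otimes\psi_2)(-N^{-2}\Dlg)\Gg(x,y)\big|\le 2C,
\]
and combining with the trivial bound $-\log(\dg(x,y)+N^{-1})+1\gtrsim 1$ when $\dg(x,y)$ is bounded (using $\diam<\infty$) yields the first half of \eqref{GN3}.

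The substantive part is the second bound $N^{\dl-1}\dg(x,y)^{-1}$, which only improves on the first when $\dg(x,y)$ is not too small, and which requires exploiting the cancellation between the two multipliers rather than bounding them separately. Here I would write, as in \eqref{KG} and the proof of Lemma \ref{LEM:GN1},
\[
(\psi_i\otimes\psi_i)(-N^{-2}\Dlg)\Gg(x,y)=\int_{\M}\K^{(i)}_N(x,z)\Gg(z,y)\,d\Vg(z),
\]
where $\K^{(i)}_N$ is the kernel of $\psi_i^2(-N^{-2}\Dlg)$, so that the difference is $\int_{\M}\big(\K^{(1)}_N-\K^{(2)}_N\big)(x,z)\Gg(z,y)\,d\Vg(z)$. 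Since $\psi_1(0)=\psi_2(0)$ implies $\psi_1^2(0)=\psi_2^2(0)$, the operator $\psi_1^2(-N^{-2}\Dlg)-\psi_2^2(-N^{-2}\Dlg)$ is a multiplier whose symbol $\phi:=\psi_1^2-\psi_2^2$ lies in $\S(\R)$ and vanishes at $0$; hence $\phi(x)=x\,\widetilde\phi(x)$ with $\widetilde\phi\in\S(\R)$, so $\K^{(1)}_N-\K^{(2)}_N$ is (up to the $N^{-2}\Dlg$ factor) a multiplier at semiclassical scale $h=N^{-1}$ with an extra gain. Concretely, using Proposition \ref{PROP:pseudo} and Remark \ref{REM:PDO} applied to $\phi$ exactly as in Lemma \ref{LEM:PM}, the kernel $\K^{(1)}_N-\K^{(2)}_N$ enjoys the improved bound $\big|(\K^{(1)}_N-\K^{(2)}_N)(x,z)\big|\lesssim N^{2-2}\jb{N\dg(x,z)}^{-A}\cdot N^{?}$ — more precisely, since the leading symbol $\phi(p_2(x,\xi))$ vanishes on the characteristic set and the principal part scales like $N^{-2}|\xi|^2$ on the support, integration in $\xi$ produces a factor $N^{-2}$ relative to the naive $N^2$, i.e. an $O(1)$ kernel in $L^\infty_x L^1_z$; combined with the mean value theorem on $\Gg(\cdot,y)$ and Lemma \ref{LEM:Green}(iii) (as in the three-case analysis of Lemma \ref{LEM:GN1}) one extracts the additional $\dg(x,y)^{-1}$ together with a power of $h=N^{-1}$, and the $N^{\dl}$ loss absorbs the borderline logarithmic factors coming from the region $\dg(x,z)\sim\dg(x,y)$.

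The main obstacle I anticipate is bookkeeping the \emph{size} of the difference kernel $\K^{(1)}_N-\K^{(2)}_N$ carefully enough: a crude application of Lemma \ref{LEM:PM} to each $\K^{(i)}_N$ separately only gives an $O(N^2\jb{N\dg}^{-A})$ bound, which after integrating against $\Gg$ reproduces the $O(1)$ estimate but \emph{not} the decaying factor $N^{\dl-1}\dg(x,y)^{-1}$. One genuinely needs to use that $\phi=\psi_1^2-\psi_2^2$ vanishes at the origin, which is where the hypothesis $\psi_1(0)=\psi_2(0)$ enters, and to track how this zero translates, through the symbol calculus of Proposition \ref{PROP:pseudo}, into the extra $N^{-1}$ gain on the kernel — this is analogous to (but slightly more delicate than) the $N_1^{-1}\dg(x,y)^{-1}$ gain obtained in \eqref{GN2} of Lemma \ref{LEM:GN1}, where there the difference $e^{2N_j^{-2}\Dlg}-e^{(N_1^{-2}+N_2^{-2})\Dlg}$ played the role of $\phi$. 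Once the difference kernel is shown to satisfy $\big|(\K^{(1)}_N-\K^{(2)}_N)(x,z)\big|\lesssim N^{\dl}\jb{N\dg(x,z)}^{-A}$ with the understanding that it integrates against $\dg(z,y)^{-1}$ to produce the claimed $N^{\dl-1}\dg(x,y)^{-1}$, the remaining casework on the relative sizes of $\dg(x,z)$, $\dg(z,y)$, $\dg(x,y)$, and $N^{-1}$ is routine and parallels verbatim the argument already carried out in the proof of Lemma \ref{LEM:GN1}.
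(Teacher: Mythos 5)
Your treatment of the first bound in \eqref{GN3} (apply Corollary \ref{COR:GN} to each $\psi_i$ and use the triangle inequality, after normalizing $\psi_i(0)$) is exactly what the paper does. For the second bound, however, the pivotal estimate you propose to ``show'' is false: the kernel $\K^{(1)}_N-\K^{(2)}_N$ of $\phi(-N^{-2}\Dlg)$, with $\phi=\psi_1^2-\psi_2^2\in\S(\R)$ and $\phi(0)=0$, does \emph{not} satisfy a pointwise bound of the form $N^{\dl}\jb{N\dg(x,z)}^{-A}$. The vanishing of the symbol at the origin does not shrink the kernel: on the diagonal, H\"ormander's bound \eqref{EF1} gives $\sum_{n}\phi(N^{-2}\ld_n^2)\varphi_n(x)^2 = \frac{N^2}{4\pi}\int_0^\infty\phi(s)\,ds + O(N)$, which is of size $N^2$ for generic admissible pairs (e.g.\ $\psi_1(s)=e^{-s}$, $\psi_2(s)=e^{-2s}$ for $s\ge 0$, both equal to $1$ at $0$). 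Your heuristic that the principal symbol ``scales like $N^{-2}|\xi|^2$ on the support'' so that the $\xi$-integration gains $N^{-2}$ is where this goes wrong: $\phi(N^{-2}|\xi|^2)$ is only small for $|\xi|\ll N$, and the bulk of the kernel comes from $|\xi|\sim N$, where the symbol is of size one. What $\phi(0)=0$ actually buys is a \emph{cancellation}, not smallness: since $\int_\M (\K^{(1)}_N-\K^{(2)}_N)(x,z)\,d\Vg(z)=\phi(0)=0$, you may replace $\Gg(z,y)$ by $\Gg(z,y)-\Gg(x,y)$ in your integral, and then the mean value theorem, Lemma \ref{LEM:Green}\,(iii) and the standard bound $N^2\jb{N\dg(x,z)}^{-A}$ of Lemma \ref{LEM:PM}, run through the same three-case analysis as in the proof of Lemma \ref{LEM:GN1}, do produce $N^{-1}\dg(x,y)^{-1}$ up to the harmless $N^\dl$ loss. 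So your route is salvageable, but the mechanism by which the hypothesis $\psi_1(0)=\psi_2(0)$ enters is misidentified, and the kernel estimate you rest the argument on cannot be established.

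It is also worth noting that the paper takes a genuinely different path for this second bound: it keeps the factor $\ld_n^{-2}$, replaces it by $\jb{\ld_n}^{-2}$ at cost $O(N^{-1}\log N)$, identifies the resulting sum with the kernel of $(1-\Dlg)^{-1}\psi(-N^{-2}\Dlg)$ with $\psi=\psi_1^2-\psi_2^2$, expands this operator with the semiclassical calculus of Proposition \ref{PROP:pseudo} (using $\psi(0)=0$ to gain $N^{2\dl-1}$ on all remainder kernels), bounds the leading oscillatory integral by $\dg(x,y)^{-2}N^{-2}\log N$ via integration by parts in $\xi$, and interpolates with the trivial $O(1)$ bound to reach $N^{\dl-1}\dg(x,y)^{-1}$. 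Your (repaired) approach, which applies the difference multiplier directly to $\Gg(\cdot,y)$ and exploits the zero total mass of the difference kernel, is closer in spirit to Lemma \ref{LEM:GN1} and would be somewhat more elementary, but as written the key step fails.
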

\begin{proof}
This is a straightforward adaptation of the proofs of  Lemmas \ref{LEM:GN1} and \ref{LEM:GN3}. First, the use of Corollary \ref{COR:GN} with the triangle inequality gives the first term in the right-hand side of \eqref{GN3}.

Next, defining $\psi(x)=\psi_1^2(x)-\psi_2^2(x)$ we have $\psi\in\S(\R)$ with $\psi(0)=0$, and
\begin{align*}
(\psi_1\otimes\psi_1)\big(-N^{-2}\Dlg\big)\Gg(x,y)-(\psi_2\otimes\psi_2)\big(-N^{-2}\Dlg\big)\Gg(x,y) = \sum_{n\ge 1}\psi(N^{-2}\ld_n^2)\frac{\varphi_n(x)\varphi_n(y)}{\ld_n^2}.
\end{align*} 

First, we replace $\ld_n^{-2}$ by $\jb{\ld_n}^{-2}$, since using \eqref{EF2} and that $\psi(0)=0$ with $\psi\in\S(\R)$, we have
\begin{align*}
&\Big\|\sum_{n\ge 1}\psi(N^{-2}\ld_n^2) \frac{\varphi_n(x)\varphi_n(y)}{\ld_n^2} - \sum_{n\ge 0}\psi(N^{-2}\ld_n^2)\frac{\varphi_n(x)\varphi_n(y)}{\jb{\ld_n}^2}\Big\|_{L^{\infty}(\M\times\M)}\\ &\les \sum_{n\ge 1}\psi(N^{-2}\ld_n^2)\frac1{\ld_n\jb{\ld_n}^2}\les \sum_{n\ge 1}(1\wedge N^{-2}\ld_n^2)\jb{N^{-1}\ld_n}^{-10}\frac1{\ld_n\jb{\ld_n}^2}\\
& \les N^{-1}\sum_{n\ge 1}\jb{N^{-1}\ld_n}^{-10}\frac1{\jb{\ld_n}^2} \sim N^{-1}\log N.
\end{align*}

As for the sum with $\jb{\ld_n}^{-2}$, it can be expressed as the kernel of $(1-\Dlg)^{-1}\psi\big(-N^{-2}\Dlg\big)$. This last operator can be expanded locally as
\begin{align*}
\kk^\star\Big(\chi (1-\Dlg)^{-1}\psi(-N^{-2}\Dlg)\Big) &= \big[a_{-2}(z,D)\kk^\star\wt\chi + R_{-3}\big]\psi(-N^{-2}\Dlg)\\
&=a_{-2}(z,D)\big[a_0(z,N^{-1}D)\kk^\star\wt\chi+R_{-1,N}\big] + R_{-3}\psi(-N^{-2}\Dlg),
\end{align*}
where $(U,V,\kk)$ is some chart on $\M$, $\chi,\wt\chi\in C^{\infty}_0(V)$ with $\wt\chi\equiv 1$ on $\supp \chi$. Moreover 
\begin{align*}
a_{-2}(z,\xi) = \chi(\kk^{-1}(z))\big(1+\gm^{i,j}(z)\xi_i\xi_j\big)^{-1}
\end{align*}
is the principal symbol of $(1-\Dlg)^{-1}$ in $\kk$, and 
\begin{align*}
a_0(z,\xi) = \wt\chi(\kk^{-1}(z))\psi(\gm^{i,j}(z)\xi_i\xi_j).
\end{align*}
The remainders satisfy the bounds
\begin{align*}
\big\|R_{-3}\big\|_{H^{s}(\M)\to H^{s+2}(\R^2)} \les 1
\end{align*}
for any $s\in\R$, and
\begin{align*}
\big\|R_{-1,N}\big\|_{H^{-s_1}(\M)\to H^{s_2}(\R^2)} \les N^{s_1+s_2-1},
\end{align*}
for any $s_1,s_2\ge 0$ with $s_1+s_2\le 1$.

Proceeding as for the classical composition rule for pseudo-differential operators, we can use Taylor's formula at order one to write the symbol of $a_{-2}(\kk^{-1}(x),D)a_0(y,N^{-1}D)$ as
 \begin{align*}
b(\kk^{-1}(x),\xi)&=\frac1{(2\pi)^2}\int_{\R^2}\int_{\R^2}e^{-iy\cdot\eta}a_{-2}(\kk^{-1}(x),\xi+\eta)a_0(\kk^{-1}(x)+y,N^{-1}\xi)d\eta dy\\
&= a_{-2}(\kk^{-1}(x),\xi)a_0(\kk^{-1}(x),N^{-1}\xi) + \wt R_{-3,N}(\kk^{-1}(x), \xi)
\end{align*}
with
\begin{align*}
&\wt R_{-3,N}(\kk^{-1}(x),\xi)\\& =  \frac1{(2\pi)^2}\int_{\R^2}\int_{\R^2}e^{-iz\cdot\eta}a_{-2}(\kk^{-1}(x),\xi+\eta)\int_0^1z\cdot \nabla_x a_0(\kk^{-1}(x)+\theta z,N^{-1}\xi)d\theta d\eta dz\\
&=\frac1{i(2\pi)^2}\int_{\R^2}\int_{\R^2}e^{-iz\cdot\eta}\nabla_{\eta}a_{-2}(\kk^{-1}(x),\xi+\eta)\cdot\int_0^1 \nabla_x a_0(\kk^{-1}(x)+\theta z,N^{-1}\xi)d\theta d\eta dz
\end{align*}
after integrating by parts in $\eta$.

Let us then take some charts $(U_1\times U_2,V_1\times  V_2,\kk_1\otimes\kk_2)$ in $\M\times\M$ containing $(x,y)$, and non negative $\chi_1\chi_2, \wt\chi_1\wt\chi_2\in C^{\infty}_0(V_1\times V_2)$ with $\chi_1(x) = 1 = \chi_2(y)$ and $\wt\chi_j\equiv 1$ on $\supp\chi_j$. We thus have
\begin{align*}
&\chi_1(x)\chi_2(y)\sum_{n\ge 0}\psi(N^{-2}\ld_n^2)\frac{\varphi_n(x)\varphi_n(y)}{\ld_n^2}\\
 &= \frac{\chi_1(x)\chi_2(y)}{(2\pi)^2} \int_{\R^2}e^{i(\kk_1^{-1}(x)-\kk_2^{-1}(y))\cdot\xi}a_{-2}(\kk_1^{-1}(x),\xi)a_0(\kk_1^{-1}(x),N^{-1}\xi)d\xi\\
&\qquad\qquad+K_{-1,N}(x,y) + K_{-3,N}(x,y) + \wt K_{-3,N}(x,y),
\end{align*}
where $K_{-1,N}$ is the kernel of $\chi_1(\kk_1)_\star a_{-2}(z,D)R_{-1,N}\chi_2$, $K_{-3,N}$ the one to $\chi_1(\kk_1)_\star R_{-3}\psi(-N^{-2}\Dlg)\chi_2$ and $\wt K_{-3,N}$ the one to $\chi_1(\kk_1)_\star \wt R_{-3,N}\chi_2$. 

In particular, we have
\begin{align*}
&\wt K_{-3,N}(x,y)\notag\\ &= \chi_1(x)\chi_2(y)\frac1{(2\pi)^2}\int_{\R^2}e^{i(\kk_1^{-1}(x)-\kk_2^{-1}(y))\cdot\xi}\wt R_{-3,N}(\kk^{-1}(x),\xi)d\xi\notag\\
&=\frac1{i(2\pi)^4}\int_{\R^2}\int_{\R^2}\int_{\R^2}e^{i\big[(\kk_1^{-1}(x)-\kk_2^{-1}(y))\cdot\xi-z\cdot\eta\big]}\nabla_\eta a_{-2}(\kk^{-1}(x),\xi+\eta)\notag\\
&\qquad\qquad\cdot \int_0^1\nabla_x a_0(\kk^{-1}(x)+\theta z,N^{-1}\xi)d\theta d\eta dz d\xi.
\end{align*}
We first integrate by parts in $\eta$ to get some decay in $z$: for any $A\in\N$ we then have
\begin{align*}
&\wt K_{-3,N}(x,y)\\
&=\chi_1(x)\chi_2(y)\frac1{i(2\pi)^4}\int_{\R^2}\int_{\R^2}\int_{\R^2}e^{i\big[(\kk_1^{-1}(x)-\kk_2^{-1}(y))\cdot\xi-z\cdot\eta\big]}\jb{z}^{-A}\nabla_\eta \jb{D_\eta}^Aa_{-2}(\kk^{-1}(x),\xi+\eta)\\
&\qquad\cdot \int_0^1\nabla_x a_0(\kk^{-1}(x)+\theta z,N^{-1}\xi)d\theta d\eta dz d\xi.
\end{align*}
Next, we integrate by parts in $z$ to get for any $B\in\N$
\begin{align*}
&\wt K_{-3,N}(x,y)\\
&=\chi_1(x)\chi_2(y)\frac1{i(2\pi)^4}\int_{\R^2}\int_{\R^2}\int_{\R^2}e^{i\big[(\kk_1^{-1}(x)-\kk_2^{-1}(y))\cdot\xi-z\cdot\eta\big]}\jb{\eta}^{-B}\nabla_\eta \jb{D_\eta}^Aa_{-2}(\kk^{-1}(x),\xi+\eta)\\
&\qquad\qquad\cdot \int_0^1\jb{D_z}^B\Big\{\jb{z}^{-A}\nabla_x a_0(\kk^{-1}(x)+\theta z,N^{-1}\xi)\Big\}d\theta d\eta dz d\xi.
\end{align*}
Then, using the definition of $a_{-2}$ and $a_0$ with the smoothness of $\gm$ and the compactness of $\M$, we get that
\begin{align*}
\big|\nabla_\eta \jb{D_\eta}^Aa_{-2}(\kk^{-1}(x),\xi+\eta)\big|\les \jb{\xi+\eta}^{-3}
\end{align*}
and
\begin{align*}
\Big|\jb{D_z}^B\Big\{\jb{z}^{-A}\nabla_x a_0(\kk^{-1}(x)+\theta z,N^{-1}\xi)\Big\}\Big|\les \jb{z}^{-A}N^{-2}|\xi|^2\jb{N^{-1}\xi}^{-C}
\end{align*}
for any $C>0$, where we also used that $\psi(0)=0$. Taking $A>2$ we deduce that
\begin{align*}
\big|\wt K_{-3,N}(x,y)\big| & \les \chi_1(x)\chi_2(y)\int_{\R^2}\int_{\R^2}\jb{\eta}^{-B}\jb{\xi+\eta}^{-3}N^{-2}|\xi|^2\jb{N^{-1}\xi}^{-C}d\eta d\xi\\
 &\les \chi_1(x)\chi_2(y)\int_{\R^2}\jb{\xi}^{-3}N^{-2}|\xi|^2\jb{N^{-1}\xi}^{-C}d\xi\\
&\les \chi_1(x)\chi_2(y)\Big\{N^{-2}\int_{|\xi|\les N}\jb{\xi}^{-1}d\xi + N^{C-2}\int_{|\xi|\gtrsim N}|\xi|^{-1-C}d\xi\Big\}\\
& \les N^{-1}\chi_1(x)\chi_2(y).
\end{align*}

To deal with the other remainder terms, we have from the properties of $a_{-2}$ and $R_{-1,N}$ that for $0<\dl\ll 1$,
\begin{align*}
\big\|a_{-2}(\kk^{-1}(x),D)R_{-1,N}\big\|_{H^{-1-\dl}(\M)\to H^{1+\dl}(\M)} \les \big\|R_{-1,N}\big\|_{H^{1-\dl}(\M)\to H^{1+\dl}(\R^2)} \les N^{2\dl -1},
\end{align*}
which implies that 
\begin{align*}
\big\|\chi_1(x)\chi_2(y)K_{-1,N}\big\|_{L^{\infty}(\M\times\M)}\les N^{2\dl-1}.
\end{align*}

Similarly,
\begin{align*}
\big\|R_{-3}\psi\big(-N^{-2}\Dlg\big)\big\|_{H^{-1-\dl}(\M)\to H^{1+\dl}(\R^2)} \les \big\|\psi\big(-N^{-2}\Dlg\big)\big\|_{H^{2-\dl}(\M)\to H^{1+\dl}(\M)} \les N^{2\dl-1}
\end{align*}
for $0<\dl\ll 1$, where we used that $\psi(0)=0$ in the last step. Along with the previous bounds, this implies that 
\begin{align*}
&\chi_1(x)\chi_2(y)\sum_{n\ge 0}\psi(N^{-2}\ld_n^2)\frac{\varphi_n(x)\varphi_n(y)}{\jb{\ld_n}^2}\\
 &= \frac{\chi_1(x)\chi_2(y)}{(2\pi)^2}\wt\chi_1(y)\int_{\R^2}e^{i(\kk_1^{-1}(x)-\kk_2^{-1}(y))\cdot\xi}a_{-2}(\kk^{-1}(x),\xi)a_0(\kk^{-1}(x),N^{-1}\xi)d\xi+ O(N^{2\dl-1}).
\end{align*}

Finally, to deal with the leading term, we see that its contribution is non trivial only if $V_1\cap V_2\neq \emptyset$, and in this case by taking $V_1\times V_2$ to be sufficiently small around $(x,y)$ we can choose $\kk_1=\kk_2=\exp_x$, so that $\gm^{j,k}(\kk_1^{-1}(x)) = \dl_{j,k}$ and $\kk_1^{-1}(x)=0$. From the expression of $a_{-2}$ and $a_0$ and integrating by parts this gives
\begin{align*}
&\Big|\frac1{(2\pi)^2}\int_{\R^2}e^{-i\exp_x^{-1}(y)\cdot\xi}\frac1{1+|\xi|^2}\psi(N^{-2}|\xi|^2) d\xi\Big|\\&\qquad\les \big|\exp_x^{-1}(y)\big|^{-2}\int_{\R^2}\Big|\Dl_\xi\Big[\frac{\psi(N^{-2}|\xi|^2)}{1+|\xi|^2}\Big]\Big|d\xi\\
& \qquad\les \dg(x,y)^{-2}\int_{\R^2}\Big[\jb{\xi}^{-4}N^{-2}|\xi|^2\jb{N^{-1}\xi}^{-10} + \jb{\xi}^{-2}N^{-2}\jb{N^{-1}\xi}^{-10}\Big]d\xi\\
&\qquad \les \dg(x,y)^{-2}N^{-2}\log(N),
\end{align*}
where we used that $\psi\in\S(\R)$ with $\psi(0)=0$. Interpolating with the trivial bound
\begin{align*}
\Big|\frac1{(2\pi)^2}\int_{\R^2}e^{-i\exp_x^{-1}(y)\cdot\xi}\frac1{1+|\xi|^2}\psi(N^{-2}|\xi|^2) d\xi\Big|\les \int_{\R^2}N^{-2}\jb{N^{-1}\xi}^{-10}d\xi \les 1
\end{align*}
yields \eqref{GN3}. This completes the proof of Lemma~\ref{LEM:GN4}.
\end{proof}

\subsection{Conformal change of metric}\label{SUBS:conf}

Recall that we fixed a smooth metric $\gm$ on $\M$ at the beginning of the section. We now invoke the uniformization theorem (see e.g. \cite[Section 8.8]{Aubin}) to get that there exists a smooth metric $\gm_0$ with constant curvature in the conformal class of $\gm$, i.e. 
\begin{align*}
\gm(x) = e^{f_0(x)}\gm_0(x)
\end{align*}
for some $f_0\in C^{\infty}(\M)$ and all $x\in\M$. Then the Laplace-Beltrami operator $\Dl_0$ and the (constant) Ricci scalar curvature $\Rg_0$ associated with $\gm_0$ satisfy
\begin{align}\label{conform}
\Dl_{\gm}u = e^{-f_0}\Dl_0 u\qquad\text{and}\qquad \Rg_{\gm}=e^{-f_0}\big(\Rg_0 -\Dl_0 f_0\big)
\end{align}
for any $u\in C^{\infty}(\M)$. In particular, the Sobolev space $H^1_0(\M)$ defined in \eqref{H0} is invariant under conformal change of the metric. Indeed, it holds for any $u\in C^{\infty}(\M)$
\begin{align*}
\int_{\M}\big|\nabla_\gm u\big|^2_\gm d\Vg = -\int_\M u\Dlg u d\Vg = -\int u e^{-f_0}\Dl_0ue^{f_0}dV_0 = \int_\M\big|\nabla_0u\big|_0^2dV_0.
\end{align*}

Recall also that for a two-dimensional Riemannian manifold, the Ricci scalar curvature is twice the Gaussian curvature, so that Gauss-Bonnet theorem  reads
\begin{align}\label{GB}
\int_{\M}\Rg_\gm d\Vg = 4\pi\chi(\M)=\int_{\M}\Rg_0dV_0= \Rg_0V_0(\M),
\end{align}
where $\chi(\M)$ is the Euler characteristic of $\M$ and $V_0$ is the volume form associated with $\gm_0$. The last two equalities indeed follow again from $dV_{\gm}=|\gm|^{\frac12}dx = e^{f_0}dV_0$ with \eqref{conform} and that $\Rg_0$ is constant.

We also consider the variation of the constant $\Xi=\Xi(\gm)$ in \eqref{Xi2}. It satisfies the so-called Polyakov formula (see \cite{OPS} or (2.10) in \cite{GRV})
\begin{align}\label{Xi3}
\Xi(\gm) = \exp\Big(\frac1{96\pi}\int_{\M}\big(|\nabla_0f_0|^2+\Rg_0f_0\big)dV_0\Big)\Xi(\gm_0).
\end{align}

Finally, up to replacing $\gm_0$ with $\wt\gm_0 = \frac{\Vg(\M)}{V_0(\M)}\gm_0$, whose curvature is also constant, we can then assume that 
\begin{align}\label{vol}
V_0(\M)=\Vg(\M).
\end{align}

We now investigate the relation between the Green's function $G_0$ associated with $\gm_0$ and $\Gg$.
\begin{lemma}\label{LEM:Gconf}
 For all $(x,y)\in\M\times\M\setminus\diag$,
\begin{align}\label{Gconf}
G_0(x,y) &= \Gg(x,y)-\jb{\Gg(x,\cdot)}_0 - \jb{\Gg(\cdot,y)}_0 + \jb{\jb{\Gg}_0}_0.
\end{align}
In particular, we have the equality in law
\begin{align}\label{Xconf}
X_0 = X_\gm - \jb{X_\gm}_0.
\end{align}
\end{lemma}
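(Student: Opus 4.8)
The plan is to prove \eqref{Gconf} by exploiting the characterization \eqref{Green2} of the Green's function as the unique mean‑zero solution of the Poisson equation, applied in the metric $\gm_0$. First I would fix $y\in\M$ and consider the function
\begin{align*}
u(x) \deff \Gg(x,y)-\jb{\Gg(x,\cdot)}_0 - \jb{\Gg(\cdot,y)}_0 + \jb{\jb{\Gg}_0}_0,
\end{align*}
where $\jb{\cdot}_0$ denotes the averaging in \eqref{mg} taken with respect to $\gm_0$ and $V_0$ (acting on whichever variable is indicated). The last three terms are, respectively, a function of $x$ alone, a constant (in $x$), and a constant; so by \eqref{conform} the first relation in \eqref{conform}, namely $\Dl_\gm = e^{-f_0}\Dl_0$, gives $\Dl_0 u(\cdot) = e^{f_0}\Dl_\gm\Gg(\cdot,y) + e^{f_0}\Dl_\gm\jb{\Gg(x,\cdot)}_0$ — wait, more carefully: since $\Dl_0 = e^{f_0}\Dl_\gm$, I would compute $\Dl_0 u = e^{f_0}\Dl_\gm\big(\Gg(\cdot,y)-\jb{\Gg(\cdot,y)}_0\big)$, the $x$‑independent terms being killed. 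By \eqref{Green2} in the metric $\gm$ we have $-\Dl_\gm\Gg(\cdot,y) = \dl_y - \jb{\dl_y}_\gm = \dl_y - V_\gm(\M)^{-1}$ in the distributional sense; but I must be cautious that the averaging in the definition of $u$ is in $\gm_0$, not $\gm$, so $\jb{\Gg(\cdot,y)}_0$ is a genuine constant and drops out under $\Dl_0$.

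So $-\Dl_0 u = e^{f_0}\big(\dl_y - V_\gm(\M)^{-1}\big)$ as distributions. Now $e^{f_0}\dl_y = \dl_y$ as distributions tested against smooth functions on $\M$ only up to the factor $e^{f_0(y)}$ — here I need to be careful: $e^{f_0}\dl_y = e^{f_0(y)}\dl_y$. This is where the normalization \eqref{vol}, $V_0(\M)=V_\gm(\M)$, and the conformal factor of the volume form, $dV_\gm = e^{f_0}dV_0$, enter: one checks that $\dl_y$ relative to $V_\gm$ equals $e^{-f_0(y)}$ times $\dl_y$ relative to $V_0$, and likewise $V_\gm(\M)^{-1}e^{f_0} = V_0(\M)^{-1}e^{f_0}/\ (\text{const})$, so that $-\Dl_0 u = \dl_y - \jb{\dl_y}_0$ in the $\gm_0$‑distributional pairing. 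The cleanest route is simply to test against an arbitrary $\phi\in C^\infty(\M)$ and use $\int \phi\, e^{f_0}\dl_y\, dV_0 = e^{f_0(y)}\phi(y)$ together with $dV_\gm = e^{f_0}dV_0$; the exponential weights cancel and one is left with $-\Dl_0 u = \dl_y - V_0(\M)^{-1}$, i.e. $-\Dl_0 u = \dl_y - \jb{\dl_y}_0$. Next I would verify the mean‑zero condition $\jb{u}_0 = 0$: averaging $u$ in $x$ against $V_0$ gives $\jb{\Gg(\cdot,y)}_0 - \jb{\jb{\Gg(x,\cdot)}_0}_0 - \jb{\Gg(\cdot,y)}_0 + \jb{\jb{\Gg}_0}_0 = 0$ by Fubini and symmetry of $\Gg$ (Lemma~\ref{LEM:Green}(i)). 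By the uniqueness in \eqref{Green2} applied to the metric $\gm_0$, we conclude $u(x) = G_0(x,y)$, which is \eqref{Gconf}.

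For the equality in law \eqref{Xconf}, recall from \eqref{GFF} that $X_0$ is the centered Gaussian field on $H^s_0(\M,\gm_0)$ with covariance $2\pi(-\Dl_0)^{-1}$, i.e. with covariance kernel $2\pi G_0(x,y)$; likewise $X_\gm$ has covariance kernel $2\pi\Gg(x,y)$. Both $X_\gm - \jb{X_\gm}_0$ and $X_0$ are Gaussian (the map $X_\gm \mapsto X_\gm - \jb{X_\gm}_0$ is linear, hence $X_\gm-\jb{X_\gm}_0$ is a Gaussian field, with mean zero since $\E[X_\gm]=0$), so it suffices to match covariances. A direct computation gives, for the covariance of $X_\gm - \jb{X_\gm}_0$ evaluated at $(x,y)$,
\begin{align*}
2\pi\Big(\Gg(x,y) - \jb{\Gg(x,\cdot)}_0 - \jb{\Gg(\cdot,y)}_0 + \jb{\jb{\Gg}_0}_0\Big) = 2\pi G_0(x,y),
\end{align*}
by \eqref{Gconf}. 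Hence $X_\gm - \jb{X_\gm}_0$ and $X_0$ are centered Gaussian fields with the same covariance, so they have the same law, which is \eqref{Xconf}. The main obstacle is the bookkeeping in the first part: tracking which averaging $\jb{\cdot}_0$ versus $\jb{\cdot}_\gm$ is being used, and handling the conformal factors $e^{f_0}$ attached to the Dirac mass and to the volume form so that the normalization \eqref{vol} makes the constants match exactly — a routine but error‑prone computation with distributions on the manifold. The passage to the equality in law is then immediate by Gaussianity.
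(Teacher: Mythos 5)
Your overall strategy for \eqref{Gconf} (check that the right-hand side solves the problem characterizing the Green's function, i.e. \eqref{Green2} for the metric $\gm_0$, then match Gaussian covariances for \eqref{Xconf}) is sound and close in spirit to the paper's, but there is a genuine gap in the key Laplacian computation. In $u(x)=\Gg(x,y)-\jb{\Gg(x,\cdot)}_0-\jb{\Gg(\cdot,y)}_0+\jb{\jb{\Gg}_0}_0$ only the last two terms are constant in $x$: the second term $\jb{\Gg(x,\cdot)}_0=V_0(\M)^{-1}\int_\M\Gg(x,z)\,dV_0(z)$ is a genuinely $x$-dependent function, yet you discard it when you pass to ``$\Dl_0 u=e^{f_0}\Dl_\gm\big(\Gg(\cdot,y)-\jb{\Gg(\cdot,y)}_0\big)$, the $x$-independent terms being killed''. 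Keeping only $\Gg(\cdot,y)$ gives $-\Dl_0u=e^{f_0}\big(\dl_y-\Vg(\M)^{-1}\big)$, where $\dl_y$ is the Dirac mass relative to $d\Vg$; the delta part does turn into the Dirac mass relative to $dV_0$, but the remaining piece $e^{f_0(x)}\Vg(\M)^{-1}$ is \emph{not} constant in $x$ (unless $f_0$ is constant), so your assertion that ``the exponential weights cancel and one is left with $-\Dl_0u=\dl_y-V_0(\M)^{-1}$'' does not follow from what you wrote. The nonconstant term you dropped is precisely the point of the lemma: without its contribution one cannot land on $\dl_y-\jb{\dl_y}_0$.

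The repair is the computation you skipped: writing $\jb{\Gg(x,\cdot)}_0=V_0(\M)^{-1}\int_\M\Gg(x,z)e^{-f_0(z)}d\Vg(z)$ and applying \eqref{Green2} in the metric $\gm$ with $f=e^{-f_0}$ gives $-\Dl_\gm^x\jb{\Gg(x,\cdot)}_0=V_0(\M)^{-1}e^{-f_0(x)}-\Vg(\M)^{-1}$, hence $\Dl_0^x\jb{\Gg(x,\cdot)}_0=e^{f_0(x)}\Vg(\M)^{-1}-V_0(\M)^{-1}$. Since $-\Dl_0 u=-\Dl_0\Gg(\cdot,y)+\Dl_0^x\jb{\Gg(x,\cdot)}_0$, the nonconstant pieces $e^{f_0(x)}\Vg(\M)^{-1}$ cancel exactly and one obtains $-\Dl_0u=\dl_y^{(0)}-V_0(\M)^{-1}=\dl_y^{(0)}-\jb{\dl_y^{(0)}}_0$, where $\dl_y^{(0)}$ denotes the Dirac mass relative to $dV_0$ (note that \eqref{vol} is not even needed for this). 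Combined with your correct verification of $\jb{u}_0=0$ (Fubini and symmetry of $\Gg$) and the uniqueness in \eqref{Green2} for $\gm_0$ --- which, to avoid manipulating Dirac masses, is cleanest to run as the paper does, by testing against an arbitrary $\chi\in C^\infty(\M)$ --- this yields \eqref{Gconf}; your covariance argument for \eqref{Xconf} is then correct and coincides with the paper's.
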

\begin{proof}
Take any $\chi\in C^{\infty}(\M)$ and define
\begin{align*}
u(x) = \int_{\M}\big[G_0(x,y)+\jb{\Gg(x,\cdot)}_0\big]\chi(y)d\Vg(y)
\end{align*}
and $v = u -\jb{u}_\gm$. Then using \eqref{mg}-\eqref{Green2}-\eqref{vol} we can compute
\begin{align*}
-\Dlg v &= -e^{-f_0}\Dl_0\int_{\M}G_0(x,y)\chi(y)e^{f_0(y)}dV_0(y)\\
&\qquad -\Dlg\frac1{V_0(\M)}\int_{\M}\Gg(x,z)e^{-f_0(z)}d\Vg(z)\int_{\M}\chi(y)d\Vg(y)\\
&= e^{-f_0}\big[\chi e^{f_0}-\jb{\chi e^{f_0}}_0\big](x) + \big[e^{-f_0} - \jb{e^{-f_0}}_\gm\big](x)\jb{\chi}_\gm\\
&= \chi(x) - \jb{\chi}_\gm.
\end{align*}
Using \eqref{Green2}, the previous computation then shows that
\begin{align*}
v(x) = \int_{\M}\Gg(x,y)\chi(y)d\Vg(y)
\end{align*}
for any $\chi\in C^{\infty}(\M)$, which in turn yields
\begin{align}\label{GG1}
 \Gg(x,y) = G_0(x,y)+\jb{\Gg(x,\cdot)}_0 - \jb{G_0(\cdot,y)}_\gm.
\end{align}
Integrating in $x$ with respect to $dV_0$ the previous identity, we find
\begin{align}
\jb{\Gg(\cdot,y)}_0 &= \jb{\jb{\Gg}_0}_0 - \jb{G_0(\cdot,y)}_\gm.
\label{GG2}
\end{align}
Plugging \eqref{GG2} in \eqref{GG1} gives \eqref{Gconf}.

 As for \eqref{Xconf}, it then follows from \eqref{Gconf} and the fact that the covariance function completely characterizes the Gaussian processes $X_\gm-\jb{X_\gm}_0$ and $X_0$, and for any $(x,y)\in\M\times\M\setminus\diag$ this latter is
\begin{align*}
\E \Big[\big(X_\gm(x) - \jb{X_\gm}_0\big)\big(X_\gm(y) - \jb{X_\gm}_0\big)\Big]
&= \Gg(x,y) - \jb{\Gg(x,\cdot)}_0 - \jb{\Gg(\cdot,y)}_0 + \jb{\jb{\Gg}_0}_0\\
&=G_0(x,y).
\end{align*}
This completes the proof of Lemma \ref{LEM:Gconf}.
\end{proof}
Next, we look at the corresponding relation for the truncated version $(\P_N\otimes\P_N)\Gg$. Indeed, recall that $\P_N = e^{N^{-2}\Dlg}$ implicitly depends on the metric as well. We then have the following result concerning regularizations with the more general class of multipliers $\psi\in\S(\R)$.
\begin{lemma}\label{LEM:GN5}
Let $\psi\in\S(\R)$ with $\psi(0)=1$. Then we have the following.\\
\textup{(i)} We have the uniform convergence
\begin{align}\label{sN2}
\Big\|(\psi\otimes\psi)(-N^{-2}\Dlg)G_0(x,x)-(\psi\otimes\psi)(-N^{-2}\Dl_0)G_0(x,x) - \frac1{4\pi} f_0(x)\Big\|_{L^{\infty}(\M)}\too 0
\end{align}
as $N\to\infty$;\\
\textup{(ii)} There exists $C>0$ such that for any $N\in\N$ and $(x,y)\in\M\times\M\setminus\diag$ ,
\begin{align}\label{GN5}
\Big|(\psi\otimes\psi)\big(-N^{-2}\Dlg\big)G_0(x,y) +\frac1{2\pi}\log\big(\dgg(x,y)+N^{-1}\big)\Big| \le C.
\end{align}
\end{lemma}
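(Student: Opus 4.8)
The plan is to deduce both statements from what is already known about the regularized Green's function $(\psi\otimes\psi)(-N^{-2}\Dlg)\Gg$ (Corollary~\ref{COR:GN} and Lemma~\ref{LEM:GN3}) together with the conformal comparison of Green's functions from Lemma~\ref{LEM:Gconf}. First I would record that, by \eqref{Gconf}, $G_0 = \Gg + r$ where $r(x,y) \deff -\jb{\Gg(x,\cdot)}_0 - \jb{\Gg(\cdot,y)}_0 + \jb{\jb{\Gg}_0}_0$ is \emph{smooth} on $\M\times\M$: indeed $x\mapsto\jb{\Gg(x,\cdot)}_0$ is $\tfrac1{V_0(\M)}$ times the Green operator of $\Dlg$ applied to the smooth function $e^{-f_0}$, hence smooth by \eqref{Green2} and elliptic regularity; the same holds in $y$, and $\jb{\jb{\Gg}_0}_0$ is a constant. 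In particular $r$ is bounded. I would then note that $(\psi\otimes\psi)(-N^{-2}\Dlg)$ converges to the identity on smooth functions in $L^\infty$: writing the one-variable operator $\psi(-N^{-2}\Dlg)-\Id$ in the eigenbasis, using $|\psi(N^{-2}\ld_n^2)-1|\les\min(1,N^{-2}\ld_n^2)$ and the rapid decay of the Fourier coefficients of a smooth function, one gets convergence in every $H^s(\M)$, hence uniformly; applying this in both slots yields $(\psi\otimes\psi)(-N^{-2}\Dlg)r\to r$ uniformly on $\M\times\M$ (and in any case this quantity is $O(1)$ uniformly in $N$ by \eqref{K2b} with $p=q=\infty$ and the boundedness of $r$).

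For part (ii), I would write $(\psi\otimes\psi)(-N^{-2}\Dlg)G_0 = (\psi\otimes\psi)(-N^{-2}\Dlg)\Gg + (\psi\otimes\psi)(-N^{-2}\Dlg)r$. Corollary~\ref{COR:GN} controls the first term as $-\tfrac1{2\pi}\log(\dg(x,y)+N^{-1})+O(1)$, and the second term is $O(1)$ by the above. Since $\gm=e^{f_0}\gm_0$ with $f_0\in C^\infty(\M)$, the two geodesic distances are comparable, $c\,\dgg\le\dg\le C\,\dgg$, so $\log(\dg(x,y)+N^{-1})=\log(\dgg(x,y)+N^{-1})+O(1)$; combining gives \eqref{GN5}.

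For part (i), I would apply Lemma~\ref{LEM:GN3} twice: with the metric $\gm$ it gives, uniformly on $\M$, $(\psi\otimes\psi)(-N^{-2}\Dlg)\Gg(x,x)=\tfrac1{2\pi}\log N+\wt\Gg(x,x)+C_\psi+o(1)$, and with the metric $\gm_0$ it gives $(\psi\otimes\psi)(-N^{-2}\Dl_0)G_0(x,x)=\tfrac1{2\pi}\log N+\wt G_0(x,x)+C_\psi+o(1)$, with the \emph{same} constant $C_\psi$ — this is the point on which the argument hinges, and it holds because the explicit value of $C_\psi$ produced in the proof of Lemma~\ref{LEM:GN3}, namely $C_\psi=-\tfrac1{(2\pi)^3}\int_{\R^2}\mathcal F[\psi^2(|\xi|^2)](z)\log|z|\,dz$, depends only on $\psi$ (the metric enters the proof only through $|\gm(0)|^{1/2}=1$ in geodesic normal coordinates). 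Using $G_0=\Gg+r$ and the first paragraph, $(\psi\otimes\psi)(-N^{-2}\Dlg)G_0(x,x)=\tfrac1{2\pi}\log N+\wt\Gg(x,x)+r(x,x)+C_\psi+o(1)$. To finish I would identify $\wt\Gg(x,x)+r(x,x)$: letting $y\to x$ in $r(x,y)=G_0(x,y)-\Gg(x,y)$, using Lemma~\ref{LEM:Green}~(ii), the continuity of $\wt\Gg$ and $\wt G_0$, and the elementary limit $\dg(x,y)/\dgg(x,y)\to e^{f_0(x)/2}$ as $y\to x$ (from $\gm=e^{f_0}\gm_0$ and the locally Euclidean behaviour of geodesic distance), one obtains $r(x,x)=\wt G_0(x,x)-\wt\Gg(x,x)+\tfrac1{4\pi}f_0(x)$. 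Hence $(\psi\otimes\psi)(-N^{-2}\Dlg)G_0(x,x)=\tfrac1{2\pi}\log N+\wt G_0(x,x)+\tfrac1{4\pi}f_0(x)+C_\psi+o(1)$, and subtracting the expansion of $(\psi\otimes\psi)(-N^{-2}\Dl_0)G_0(x,x)$ gives $\tfrac1{4\pi}f_0(x)+o(1)$ uniformly, i.e.\ \eqref{sN2}.

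The only genuinely delicate point is the metric-independence of $C_\psi$ — that the $O(1)$ correction to the diagonal expansion $\tfrac1{2\pi}\log N+\wt{(\cdot)}(x,x)$ is universal — so that it cancels in the difference of the two expansions; this is supplied by the explicit formula from the proof of Lemma~\ref{LEM:GN3}. Everything else is a routine assembly of already-established estimates, with no new pseudodifferential analysis beyond what is used for Lemmas~\ref{LEM:GN3} and \ref{LEM:GN4}.
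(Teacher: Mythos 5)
Your argument is correct, but it follows a genuinely different route from the paper. The paper proves (i) by writing $G_0=-\frac1{2\pi}\log\dgg+\wt G_0$, conjugating $\psi(-N^{-2}\Dlg)u=e^{-\frac12 f_0}\psi(-N^{-2}\wt\Dl_0)[e^{\frac12 f_0}u]$ with $\wt\Dl_0=e^{\frac12 f_0}\Dl_0e^{-\frac12 f_0}$, and then redoing the semiclassical kernel analysis to compare $\psi(-N^{-2}\wt\Dl_0)$ and $\psi(-N^{-2}\Dl_0)$ acting on the log-singularity; the factor $\frac1{4\pi}f_0(x)$ emerges from rescaling the principal symbol $e^{-f_0(x)}|\xi|^2=\theta^2|\xi|^2$. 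You instead make a soft reduction: $G_0=\Gg+r$ with $r$ smooth (by Lemma~\ref{LEM:Gconf}, \eqref{Green2} and elliptic regularity), Lemma~\ref{LEM:GN3} applied to \emph{both} metrics with the same constant $C_\psi$, uniform convergence of $(\psi\otimes\psi)(-N^{-2}\Dlg)$ to the identity on smooth functions, and the exact identification $r(x,x)=\wt G_0(x,x)-\wt\Gg(x,x)+\frac1{4\pi}f_0(x)$ coming from $\dg(x,y)/\dgg(x,y)\to e^{f_0(x)/2}$; your value agrees with the paper's $-\frac{\log\theta}{2\pi}$. This buys you a proof with no new pseudodifferential computations beyond Lemma~\ref{LEM:GN3}, at the cost of leaning on the metric-independence of $C_\psi$ — which you correctly flag and justify via the explicit formula $C_\psi=-\frac1{(2\pi)^3}\int_{\R^2}\F[\psi^2(|\xi|^2)](z)\log|z|\,dz$, and which the paper itself implicitly exploits when it invokes Lemma~\ref{LEM:GN3} for the metric $\gm_0$. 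Your treatment of (ii), via Corollary~\ref{COR:GN} for $\Gg$, uniform $L^\infty$-boundedness of the regularization applied to the smooth remainder, and the two-sided comparability $c\,\dgg\le\dg\le C\,\dgg$, is also cleaner and more explicit than the paper's one-line appeal to an ``adaptation of Lemma~\ref{LEM:GN1}''.
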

\begin{proof}
We only prove the first point, since \eqref{GN5} then follows by combining the arguments for \eqref{sN2} with a straightforward adaptation of the proofs of Lemma \ref{LEM:GN1}.

First, using Lemma \ref{LEM:Green}, we have
\begin{align*}
&(\psi\otimes\psi)\big(-N^{-2}\Dlg)G_0(x,x)-(\psi\otimes\psi)\big(-N^{-2}\Dl_0\big)G_0(x,x)\\ &= -\frac1{2\pi}(\psi\otimes\psi)\big(-N^{-2}\Dlg)\log\big(\dgg\big)(x,x)+\frac1{2\pi}(\psi\otimes\psi)\big(-N^{-2}\Dl_0\big)\log\big(\dgg)(x,x)\\
&\qquad + (\psi\otimes\psi)\big(-N^{-2}\Dlg)\wt G_0(x,x) - (\psi\otimes\psi)\big(-N^{-2}\Dl_0\big)\wt G_0(x,x).
\end{align*} 
Since $\wt G_0$ is continuous on $\M\times\M$, we have that the last two terms above both converge uniformly to $\wt G_0(x,x)$. Hence their contribution cancel each other, and we are left with estimating the contribution of the log terms.

Next, we observe that we can write for any $u\in \D'(\M)$
\begin{align*}
\psi(-N^{-2}\Dlg)u(x) = e^{-\frac12f_0(x)}\psi(-N^{-2}\wt\Dl_0)\big[e^{\frac12f_0}u\big](x).
\end{align*}
 Here $\wt\Dl_0 = e^{\frac12f_0}\Dl_0e^{-\frac12f_0}$. Indeed, the previous identity can be seen from \eqref{conform} by solving the eigenvalue problem for $\wt\Dl_0$:
\begin{align*}
\wt\Dl_0 \varphi = -\ld^2\varphi &\Longleftrightarrow e^{\frac12f_0}\Dlg \big[e^{-\frac12f_0}\varphi\big] = -\ld^2 \varphi \Longleftrightarrow \ld = \ld_n(\gm) \text{ and }\varphi = e^{\frac12f_0}\varphi_n(\gm),
\end{align*}
from which we deduce with the definition of the functional calculus that
\begin{align*}
\psi(-N^{-2}\Dlg)u&=\sum_{n\ge 1}\psi(N^{-2}\ld_n^2)\langle u,\varphi_n(\gm)\rangle_\gm \varphi_n(\gm) = \sum_{n\ge 1}\psi(N^{-2}\ld_n^2)\langle e^{\frac12f_0}u,e^{\frac12f_0}\varphi_n(\gm)\rangle_0 \varphi_n(\gm) \\
&= e^{-\frac12f_0}\sum_{n\ge 1}\psi(N^{-2}\ld_n^2)\langle e^{\frac12f_0}u,e^{\frac12f_0}\varphi_n(\gm)\rangle_0 e^{\frac12f_0}\varphi_n(\gm) \\&= e^{-\frac12f_0}\psi(-N^{-2}\wt\Dl_0)\big[e^{\frac12f_0}u\big].
\end{align*}

Next, using that $-\Dl_0$ and $-\wt\Dl_0$ are both elliptic and self-adjoint on $L^2(\M,\gm_0)$, we can use Proposition \ref{PROP:pseudo} to write the smoothing operators as
\begin{align*}
\psi(-N^{-2}\Dl_0) = \sum_j(\kk_j)_\star\Big\{\Big[\sum_{k=0}^2N^{-k}a_{j,k}(z,N^{-1}D)\Big]\kk_j^\star\wt\chi_j + (\kk_j^\star \chi_j)R_{j,N}\Big\}
\end{align*}
and
\begin{align*}
\psi(-N^{-2}\wt\Dl_0) = \sum_j(\kk_j)_\star\Big\{\Big[\sum_{k=0}^2N^{-k}\wt a_{j,k}(z,N^{-1}D)\Big]\kk_j^\star\wt\chi_j + (\kk_j^\star\chi_j)\wt R_{j,N}\Big\},
\end{align*}
where $\{\chi_j\}$ is a suitable partition of unity adapted to some charts $(U_j,V_j,\kk_j)$.

With the previous remarks we can write
\begin{align*}
&(\psi\otimes\psi)\big(-N^{-2}\Dlg)\log\big(\dgg\big)(x,x) \\&= \int_{\M}\int_{\M}e^{-f_0(x)}\wt\K_N(x,y)\wt\K_N(x,y')e^{\frac12 f_0(y)}e^{\frac12f_0(y')}\log\big(\dgg(y,y')\big)dV_0(y)dV_0(y')\intertext{where $\wt\K_N$ is the kernel of $\psi(-N^{-2}\wt\Dl_0)$. The previous expansion then allows us to decompose this term as}
&=\sum_j\chi_j(x)e^{-f_0(x)}\int_{\M}\int_{\M}\Big[\sum_{k=0}^2N^{-k}\wt\K_{j,k,N}(x,y) + \wt\K_{j,N}(x,y)\Big]\\
&\qquad\times\Big[\sum_{k'=0}^2N^{-k'}\wt\K_{j,k',N}(x,y') + \wt\K_{j,N}(x,y')\Big]e^{\frac12f_0(y)+\frac12f_0(y')}\log\big(\dgg(y,y')\big)dV_0(y)dV_0(y'),
\end{align*}
where $\wt\K_{j,k,N}$ is the kernel of $\chi_j(\kk_j)_\star\wt a_{j,k}(z,N^{-1}D)\kk_j^\star\wt\chi_j$ and $\wt\K_{j,N}$ the one to $\chi_j(\kk_j)_\star\wt R_{j,N}$.

 From the property of the remainders in Proposition \ref{PROP:pseudo}, we have
 \begin{align*}
 \|\wt R_{j,N}\|_{H^{-1+\dl}(\M)\to H^{1+\dl}(\R^2)} \les N^{2\dl-1}.
 \end{align*}
Hence we also have that $\wt\K_{j,N}$ is in $L^{\infty}(\M\times\M)$ with norm $O(N^{(-1)+})$. In particular, using also Lemma \ref{LEM:PM}, we have that
\begin{align*}
&\Big|\int_{\M}\int_{\M}\wt\K_N(x,y)\wt\K_{j,N}(x,y')e^{\frac12f_0(y)+\frac12f_0(y')}\log\big(\dgg(y,y')\big)dV_0(y)dV_0(y')\Big|\\
&\les N^{(-1)+}N^2\int_{\M}\int_{\M}\jb{N\dgg(x,y)}^{-10}\Big|\log\big(\dgg(y,y')\big)\Big|dV_0(y)dV_0(y') \les N^{(-1)+}.
\end{align*}
Thus it is enough to consider the contribution of the terms 
\begin{align*}
\int_{\M}\int_{\M}\wt\K_{j,k,N}(x,y)\wt\K_{j,k',N}(x,y')e^{\frac12f_0(y)+\frac12f_0(y')}\log\big(\dgg(y,y')\big)dV_0(y)dV_0(y').
\end{align*}

Note also that, repeating the argument as in the proof of Lemma \ref{LEM:PM}, these latter kernels also satisfy the bound
\begin{align}\label{Kj}
\big|\wt\K_{j,k,N}(x,y)\big|\les N^2\jb{N\dgg(x,y)}^{-10}.
\end{align}
Thus using the mean value theorem we can control for any $j,k,k'$
\begin{align*}
&\Big|\int_{\M}\int_{\M}\wt\K_{j,k,N}(x,y)\wt\K_{j,k',N}(x,y')\Big[e^{-f_0(x)}e^{\frac12f_0(y)+\frac12f_0(y')}-1\Big]\log\big(\dgg(y,y')\big)dV_0(y)dV_0(y')\Big|\\
&\qquad\les \int_{\M}\int_{\M}N^4\jb{N\dgg(x,y)}^{-10}\jb{N\dgg(x,y')}^{-10}\big[\dgg(x,y)+\dgg(x,y')\big]\\
&\qquad\qquad\times\big[1\vee\log\big(\frac1{\dgg(y,y')}\big)\big]dV_0(y)dV_0(y').
\end{align*}
To compute this last integral, we can assume by symmetry that $\dgg(x,y')\le \dgg(x,y)$. In the case $\dgg(x,y')\les \dgg(y,y')\sim\dgg(x,y)$ we then have
\begin{align*}
N^4\int_{\M}\int_{\M}\jb{N\dgg(x,y)}^{-10}\jb{N\dgg(x,y')}^{-10}\dgg(x,y)^{1-}dV_0(y)dV_0(y')\les N^{(-1)+}.
\end{align*}
In the other case $\dgg(y,y')\ll \dgg(x,y)\sim\dgg(x,y')$, we have
\begin{align*}
&N^4\int_{\M}\int_{\M}\jb{N\dgg(x,y)}^{-10}\dgg(x,y)\dgg(y,y')^{0-}dV_0(y)dV_0(y')\\
&\qquad \les N^4\int_{\M}\jb{N\dgg(x,y)}^{-10}\dgg(x,y)^{3-}dV_0(y) \les N^{(-1)+}.
\end{align*}

Thus we are left with estimating
\begin{align*}
-\frac1{(2\pi)}\sum_j\sum_{k,k'=0}^2\int_{\M}\int_{\M}N^{-k-k'}\wt\K_{j,k,N}(x,y)\wt\K_{j,k',N}(x,y')\log\big(\dgg(y,y')\big)dV_0(y)dV_0(y').
\end{align*}

In the case $k+k'\ge 1$, it is enough to use the rough bound \eqref{Kj} to estimate the integrals above with
\begin{align*}
N^{4-k-k'}\int_{\M}\int_{\M}\jb{N\dgg(x,y)}^{-10}\jb{N\dgg(x,y')}^{-10}\dgg(y,y')^{0-}dV_0(y)dV_0(y') \les N^{(-k-k')+}
\end{align*}
by proceeding similarly as for the previous integrals.

It remains to compute the leading term
\begin{align*}
-\frac1{2\pi}\sum_j\int_{\M}\int_{\M}\wt\K_{j,0,N}(x,y)\wt\K_{j,0,N}(x,y')\log\big(\dgg(y,y')\big)dV_0(y)dV_0(y').
\end{align*}

Up to taking the charts with a sufficiently small diameter, we can use $\kk_j = \exp_x$ on $V_j\ni x$,
so that $\gm_0(\kk_j^{-1}(x))=\Id$ and $\kk_j^{-1}(x)=0$. We also set $\theta = e^{-\frac12f_0(x)}>0$. We can then write the kernel of $(\kk_j)_\star \wt a_0(z,N^{-1}D)\kk_j^\star\wt\chi$ as
\begin{align*}
\wt \K_{j,0,N}(x,y)& = \frac{\wt\chi_j(y)}{(2\pi)^2}\int_{\R^2}e^{-i\exp_x^{-1}(y)\cdot \xi} \wt a_0(0,N^{-1}\xi)d\xi \\
&=\frac{\chi_j(x)\wt\chi_j(y)}{(2\pi)^2}\int_{\R^2}e^{-i\exp_x^{-1}(y)\cdot \xi} \psi\big(N^{-2}\theta^2|\xi|^2\big)d\xi ,
\end{align*}
where we used that the principal symbol of $-\Dl_0$ in $\kk_j$ is 
\begin{align*}
|\xi|_0^2 \deff \gm_0^{i,\ell}(x)\xi_i\xi_\ell
\end{align*}
so that the one of $-\wt\Dl_0$ is 
\begin{align*}
e^{-f_0(x)}|\xi|_0^2 = \theta^2|\xi|^2
\end{align*}
 by definition of $\theta$ and choice of $\kk_j$.
 
This gives
\begin{align*}
&-\frac1{2\pi}\sum_j\int_{\M}\int_{\M}\wt\K_{j,0,N}(x,y)\wt\K_{j,0,N}(x,y')\log\big(\dgg(y,y')\big)dV_0(y)dV_0(y')\\
&=-\sum_j\frac{\chi_j(x)}{(2\pi)^3}\int_{\R^2}\int_{\R^2}\int_{\R^2}\int_{\R^2}e^{-iz\cdot \xi} \psi\big(N^{-2}\theta^2|\xi|^2\big) e^{-iz'\cdot \xi'} \psi\big(N^{-2}\theta^2|\xi'|^2\big)\\
&\qquad\times\log\big(\dgg\big(\exp_x(z),\exp_x(z')\big)\big)\wt\chi_j(\exp_x(z))\wt\chi_j(\exp_x(z'))|\gm_0(z)|^{\frac12}|\gm_0(z')|^{\frac12}d\xi d\xi'dzdz'\\
&=-\sum_j\frac{\chi_j(x)}{(2\pi)^3}\int_{\R^2}\int_{\R^2}\int_{\R^2}\int_{\R^2}e^{-iz\cdot \xi} \psi\big(N^{-2}|\xi|^2\big) e^{-iz'\cdot \xi'} \psi\big(N^{-2}|\xi'|^2\big)\\
&\qquad\times\log\big(\dgg\big(\exp_x(\theta z),\exp_x(\theta z')\big)\big)\wt\chi_j(\exp_x(\theta z))\wt\chi_j(\exp_x(\theta z'))|\gm_0(\theta z)|^{\frac12}|\gm_0(\theta z')|^{\frac12}d\xi d\xi'dzdz'
\end{align*}
 after changing variables in both $\xi,\xi'$ and $z,z'$.
 
 Note that $\dgg^2$ is smooth on the support of $\wt\chi_j(y)\wt\chi_j(y')$, and we have the Taylor expansion (see e.g. \cite[Appendix A]{Nico})
 \begin{align*}
\dgg^2\big(\exp_x(z),\exp_x(z')\big)  = |z-z'|^2 + O\big(|z-z'|^2(|z|+|z'|)^2\big).
 \end{align*}
 In particular, we get that
 \begin{align*}
& \bigg|\int_{\R^2}\int_{\R^2}\int_{\R^2}\int_{\R^2}e^{-iz\cdot \xi} \psi\big(N^{-2}|\xi|^2\big) e^{-iz'\cdot \xi'} \psi\big(N^{-2}|\xi'|^2\big)\wt\chi_j(\exp_x(\theta z))\wt\chi_j(\exp_x(\theta z'))\\
&\qquad\qquad\times\Big[\log\big(\dgg\big(\exp_x(\theta z),\exp_x(\theta z')\big)\big)-\log\big(\theta|z-z'|\big)\Big]|\gm_0(\theta z)|^{\frac12}|\gm_0(\theta z')|^{\frac12}d\xi d\xi'dzdz'\bigg|\\
&\qquad\les \int_{\R^2}\int_{\R^2}N^2\jb{Nz}^{-10}N^2\jb{Nz'}^{-10}\frac{|z-z'|^2(|z|^2+|z'|^2)}{|z-z'|^2}dzdz' \les N^{-2},
 \end{align*}
 where we used the Taylor expansion above after changing variables in $\xi,\xi'$ and doing integration by parts similarly as in the proof of Lemma \ref{LEM:PM}.
 
To summarize, we have proved so far
\begin{align*}
&(\psi\otimes\psi)\big(-N^{-2}\Dlg)\log\big(\dgg\big)(x,x) \\
&= -\sum_j\frac{\chi_j(x)}{(2\pi)^3}\int_{\R^2}\int_{\R^2}\int_{\R^2}\int_{\R^2}e^{-iz\cdot \xi} \psi\big(N^{-2}|\xi|^2\big) e^{-iz'\cdot \xi'} \psi\big(N^{-2}|\xi'|^2\big)\\
&\qquad\times\log\big(\theta|z-z'|\big)\wt\chi_j(\exp_x(\theta z))\wt\chi_j(\exp_x(\theta z'))|\gm_0(\theta z)|^{\frac12}|\gm_0(\theta z')|^{\frac12}d\xi d\xi'dzdz' + O(N^{(-1)+}),
\end{align*}
uniformly on $\M$. We also remark that exactly the same computations give
\begin{align*}
&(\psi\otimes\psi)\big(-N^{-2}\Dl_0)\log\big(\dgg\big)(x,x) \\
&= -\sum_j\frac{\chi_j(x)}{(2\pi)^3}\int_{\R^2}\int_{\R^2}\int_{\R^2}\int_{\R^2}e^{-iz\cdot \xi} \psi\big(N^{-2}|\xi|^2\big) e^{-iz'\cdot \xi'} \psi\big(N^{-2}|\xi'|^2\big)\\
&\qquad\times\log\big(|z-z'|\big)\wt\chi_j(\exp_x(z))\wt\chi_j(\exp_x(z'))|\gm_0(z)|^{\frac12}|\gm_0(z')|^{\frac12}d\xi d\xi'dzdz' + O(N^{(-1)+}).
\end{align*}
Changing variables again, we thus have
\begin{align*}
&(\psi\otimes\psi)\big(-N^{-2}\Dlg)\log\big(\dgg\big)(x,x)-(\psi\otimes\psi)\big(-N^{-2}\Dl_0)\log\big(\dgg\big)(x,x)\\
&=-\log(\theta)\sum_j\frac{\chi_j(x)}{(2\pi)^3}\int_{\R^2}\int_{\R^2}\int_{\R^2}\int_{\R^2}e^{-iz\cdot \xi} \psi\big(|\xi|^2\big) e^{-iz'\cdot \xi'} \psi\big(|\xi'|^2\big)\\
&\qquad\qquad\times\wt\chi_j(\exp_x(N^{-1}\theta z))\wt\chi_j(\exp_x(N^{-1}\theta z'))|\gm_0(N^{-1}\theta z)|^{\frac12}|\gm_0(N^{-1}\theta z')|^{\frac12}d\xi d\xi'dzdz'\\
&\qquad-\sum_j\frac{\chi_j(x)}{(2\pi)^3}\int_{\R^2}\int_{\R^2}\int_{\R^2}\int_{\R^2}e^{-iz\cdot \xi} \psi\big(N^{-2}|\xi|^2\big) e^{-iz'\cdot \xi'} \psi\big(N^{-2}|\xi'|^2\big)\log\big(N^{-1}|z-z'|\big)\\
&\qquad\qquad\times\Big[\wt\chi_j(\exp_x(N^{-1}\theta z))\wt\chi_j(\exp_x(N^{-1}\theta z'))|\gm_0(N^{-1}\theta z)|^{\frac12}|\gm_0(N^{-1}\theta z')|^{\frac12}\\
&\qquad\qquad\qquad-\wt\chi_j(\exp_x(N^{-1} z))\wt\chi_j(\exp_x(N^{-1} z'))|\gm_0(N^{-1} z)|^{\frac12}|\gm_0(N^{-1} z')|^{\frac12}\Big]d\xi d\xi'dzdz'\\
&\qquad + O(N^{(-1)+})\\
&= \1+\II+O(N^{(-1)+}).
\end{align*}
 
 For the first term, the same argument as in the proof of Lemma \ref{LEM:GN3} shows that 
 \begin{align*}
\1\to -\frac{\log(\theta)}{2\pi} = \frac1{4\pi}f_0(x)
\end{align*}
as $N\to\infty$, uniformly on $\M$.

Finally, for the second term, integrating by parts in $\xi,\xi'$ and using the mean value theorem gives the estimate
\begin{align*}
\big|\II\big| &\les \int_{\R^2}\int_{\R^2}N^2\jb{Nz}^{-10}N^2\jb{Nz'}^{-10}N^{0+}|z-z'|^{0-}|\theta-1|N^{-1}\big(|z|+|z'|\big)dzdz'\\
& \les N^{(-2)+}.
\end{align*}
  
  This concludes the proof of Lemma \ref{LEM:GN5}.

\end{proof}

\subsection{Some nonlinear estimates}
To conclude this section, we state some useful estimates needed in the analysis of \eqref{heat2}. We first recall the following embeddings and product estimates in Besov spaces proved in \cite[Corollary 2.7]{ORTz}.
\begin{lemma}\label{LEM:Besov}
Let $B^s_{p,q}(\M)$ be the Besov spaces defined above. Then the following properties hold.\\
\textup{(i)} For any $s\in\R$ we have $B^s_{2,2}(\M)=H^s(\M)$, and more generally for any $2\le p <\infty$ and $\eps>0$ we have
\begin{align*}
\|u\|_{B^s_{p,\infty}(\M)}\les \|u\|_{W^{s,p}(\M)}\les \|u\|_{B^s_{p,2}(\M)} \les \|u\|_{B^{s+\eps}_{p,\infty}(\M)}.
\end{align*}
\textup{(ii)} Let $s\in\R$ and $1\leq p_1\leq p_2\leq \infty$ and $q_1,q_2\in [1,\infty]$. Then for any $f\in B^s_{p_1,q}(\M)$ we have
\begin{align*}
\|f\|_{B^{s-2\big(\frac1p_1-\frac1p_2\big)}_{p_2,q}(\M)}\les \|f\|_{B^s_{p_1,q}(\M)}.
\end{align*}
\textup{(iii)} Let $\alpha,\beta\in \R$ with $\alpha+\beta>0$ and $p_1,p_2,q_1,q_2\in [1,\infty]$ with
\begin{align*}
\frac1p = \frac1p_1+\frac1p_2\qquad\text{ and }\qquad\frac1q=\frac1q_1+\frac1q_2.
\end{align*}
 Then for any $f\in B^{\alpha}_{p_1,q_1}(\M)$ and $g\in B^{\beta}_{p_2,q_2}(\M)$, we have $fg \in B^{\alpha\wedge \beta}_{p,q}(\M)$, and moreover it holds
\begin{align*}
\|fg\|_{B^{\alpha\wedge\beta}_{p,q}(\M)}\les \|f\|_{B^{\alpha}_{p_1,q_1}(\M)}\|g\|_{B^{\beta}_{p_2,q_2}(\M)}.
\end{align*}
\end{lemma}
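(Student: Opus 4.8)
The plan is to observe that Lemma~\ref{LEM:Besov} merely packages the standard embedding and product estimates for Besov spaces, the only manifold-specific input being the Littlewood--Paley calculus built from the projectors $\Q_M$ of \eqref{QM}: their almost-orthogonality (the symbols defining $\Q_M$ and $\Q_{M'}$ have disjoint supports unless $M\sim M'$, so $\Q_M\Q_{M'}=0$ once $M$ and $M'$ differ by more than three dyadic scales), the Bernstein inequalities, and the Littlewood--Paley square function characterisation of $W^{s,p}(\M)$. All of these follow from the kernel bounds of Lemma~\ref{LEM:PM} together with the local characterisation of Lemma~\ref{LEM:Beloc}; equivalently one transfers each statement to $\R^2$ by a partition of unity subordinate to a finite atlas and appeals to Lemma~\ref{LEM:Beloc}. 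I will sketch the intrinsic route.

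For part (i), the identity $B^s_{2,2}(\M)=H^s(\M)$ follows from Parseval together with the fact that $\varphi_n$ is detected by $\Q_M$ only for $M\sim\jb{\ld_n}$; for the chain of embeddings with $2\le p<\infty$ one uses the square function bound $\|u\|_{W^{s,p}(\M)}\sim\big\|\big(\sum_M\jb{M}^{2s}|\Q_Mu|^2\big)^{1/2}\big\|_{L^p(\M)}$, which is classical Calder\'on--Zygmund theory, valid on $\M$ by Lemma~\ref{LEM:PM}. Dominating $\sup_M$ by the pointwise $\ell^2$ norm gives $\|u\|_{B^s_{p,\infty}}\les\|u\|_{W^{s,p}}$; Minkowski's inequality in $L^{p/2}$ (the point where $p\ge2$ enters) gives $\|u\|_{W^{s,p}}\les\|u\|_{B^s_{p,2}}$; and $\|u\|_{B^s_{p,2}}\les\|u\|_{B^{s+\eps}_{p,\infty}}$ is just $\sum_M\jb{M}^{-2\eps}<\infty$ after extracting $\eps$ derivatives. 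For part (ii), the content is Bernstein's inequality $\|\Q_Mu\|_{L^{p_2}(\M)}\les\jb{M}^{2(1/p_1-1/p_2)}\|\Q_Mu\|_{L^{p_1}(\M)}$: writing $\Q_M=\wt\Q_M\Q_M$ with a fattened projector $\wt\Q_M=\psi(-M^{-2}\Dlg)$ for some $\psi\in C^\infty_0(\R)$ equal to $1$ on the support of the symbol of $\Q_M$ (and $\wt\Q_0=\psi_0(-\Dlg)$), this is exactly \eqref{K2b} of Lemma~\ref{LEM:PM} with $h=M^{-1}$ (or $h=1$ when $M=0$); inserting it into the $\ell^q$ sum defining $\|u\|_{B^{s-2(1/p_1-1/p_2)}_{p_2,q}(\M)}$ and cancelling $\jb{M}^{2(1/p_1-1/p_2)}$ against the loss of derivatives yields the embedding for every $q\in[1,\infty]$.

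For part (iii) one uses Bony's paraproduct decomposition $fg=T_fg+T_gf+R(f,g)$, with $T_fg=\sum_M(S_{M/8}f)(\Q_Mg)$, $S_K=\sum_{M\le K}\Q_M$, and $R(f,g)=\sum_{|M-M'|\le 8}(\Q_Mf)(\Q_{M'}g)$. For $T_fg$, its $N$-th block is a sum over $M\sim N$ of terms bounded by $\|S_{M/8}f\|_{L^{p_1}}\|\Q_Mg\|_{L^{p_2}}$ via H\"older; estimating $\|S_{M/8}f\|_{L^{p_1}}$ by $\les\|f\|_{B^{\alpha}_{p_1,q_1}}$ when $\alpha\ge0$ (summing the $\Q_M$, with an $\eps$-loss when $\alpha=0$) or by $\les M^{-\alpha}\|f\|_{B^{\alpha}_{p_1,q_1}}$ when $\alpha<0$ shows $T_fg\in B^{\alpha\wedge\beta}_{p,q}$, and $T_gf$ is symmetric. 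The only nontrivial point is the resonant term: its $N$-th block receives contributions only from $M\sim M'\gtrsim N$, so
\begin{align*}
\jb{N}^{\alpha\wedge\beta}\|\Q_NR(f,g)\|_{L^p(\M)}\les\jb{N}^{\alpha\wedge\beta}\sum_{M\gtrsim N}\|\Q_Mf\|_{L^{p_1}}\|\Q_Mg\|_{L^{p_2}}\les\jb{N}^{\alpha\wedge\beta}\sum_{M\gtrsim N}\jb{M}^{-\alpha-\beta}d_M,
\end{align*}
where $(d_M)\in\ell^q$ with $\ell^q$-norm $\les\|f\|_{B^{\alpha}_{p_1,q_1}}\|g\|_{B^{\beta}_{p_2,q_2}}$ by H\"older in the dyadic sum; summing the geometric tail requires exactly $\alpha+\beta>0$, which is the hypothesis, after which Young's inequality in $\ell^q$ closes the bound. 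The expected main obstacle is purely bookkeeping rather than conceptual: on $\M$ there is no genuine Fourier support, so each ``frequency localisation'' step above must be justified through the almost-orthogonality $\Q_M\Q_{M'}=0$ and the Bernstein/kernel bounds instead of literal support considerations; modulo this, all three parts reduce to their familiar $\R^2$ counterparts, and for the complete argument we refer to \cite[Corollary~2.7]{ORTz}.
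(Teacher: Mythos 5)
The paper does not actually prove this lemma: it is quoted directly from \cite[Corollary 2.7]{ORTz}, and the proof there runs through the localisation result recalled here as Lemma~\ref{LEM:Beloc} --- multiply by a partition of unity subordinate to a finite atlas, pull back to $\R^2$, and invoke the Euclidean embedding and paraproduct estimates (this is also how the paper itself argues in Lemma~\ref{LEM:FC}). Your intrinsic route, working directly with the spectral blocks $\Q_M$, is therefore a genuinely different, more self-contained argument. Parts (i) and (ii) as you set them up are essentially correct: (ii) is exactly $\Q_M=\wt\Q_M\Q_M$ together with \eqref{K2b}, and the chain in (i) is the standard $\ell^\infty\hookrightarrow\ell^2$ / Minkowski / $\eps$-summation argument once one has the square-function characterisation of $W^{s,p}(\M)$; note however that this characterisation needs slightly more than the literal statement of Lemma~\ref{LEM:PM} (a H\"ormander-type regularity bound on the kernels, to run the vector-valued Calder\'on--Zygmund argument), which does follow from Proposition~\ref{PROP:pseudo} but should be said.

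The genuine soft spot is in (iii). The assertions that the $N$-th block of $T_fg$ only sees $M\sim N$, and that of $R(f,g)$ only $M\gtrsim N$, are Fourier-support statements with no literal analogue on $\M$, and the exact orthogonality $\Q_M\Q_{M'}=0$ does not repair them: it constrains compositions of projectors, not products of localised functions, and $\Q_N\big[(S_{M/8}f)(\Q_Mg)\big]$ does not vanish for $N\gg M$. The correct substitute is quantitative off-diagonal decay: for $N\gg M$ write $\Q_N=\jb{N}^{-2k}\wt\Q_N(1-\Dlg)^{k}$ with $\wt\Q_N$ another multiplier of the same class, apply Leibniz and Bernstein to the product to gain a factor $(\jb{M}/\jb{N})^{2k}$, and then your dyadic sums go through; this is a real (if standard) argument, not bookkeeping, and without it the decomposition does not close --- the painless alternative being to localise and transfer via Lemma~\ref{LEM:Beloc}, which is what the cited reference does. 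A second, smaller point concerns the third index: your estimates give $T_fg$ with $\ell^{q_2}$ summability at regularity $\beta$ and $T_gf$ with $\ell^{q_1}$ summability at regularity $\alpha$, while only the resonant term naturally lands in $\ell^{q}$ with $\frac1q=\frac1{q_1}+\frac1{q_2}$ (at regularity $\alpha+\beta$); the stated $q$ for the full product is thus obtained only by spending the excess regularity available when $\alpha\neq\beta$ (or an $\eps$ of room), and at the exact endpoint $\alpha=\beta$ your argument yields only $\ell^{\max(q_1,q_2)}$. Since every application in the paper has room to spare, this is harmless, but the sketch should make explicit where the $\ell^q$ index comes from.
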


In order to take advantage of the ``sign-definite structure" used in \cite{ORW}, we also need the following product estimate.
\begin{lemma}\label{LEM:posprod}
Let $s\ge 0$ and $1< p<\infty$. Then it holds
\begin{align}\label{posprod}
\big\|fg\big\|_{W^{-s,p}(\M)}\les \|f\|_{L^\infty(\M)}\|g\|_{W^{-s,p}(\M)}
\end{align}
for any $f\in C(\M)$ and any positive distribution $g\in W^{-s,p}(\M)$.
\end{lemma}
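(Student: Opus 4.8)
The plan is to reduce the statement to a purely local estimate on $\R^2$ via Lemma \ref{LEM:Beloc}, and there to exploit positivity of $g$ together with the observation that, for a positive distribution, the Besov/Sobolev norm of negative order is essentially a weighted total mass. First I would fix a finite partition of unity $\{\chi_j\}$ subordinate to coordinate patches $(U_j,V_j,\kk_j)$ with $\wt\chi_j\in C^\infty_0(V_j)$, $\wt\chi_j\equiv 1$ on $\supp\chi_j$, and write $fg=\sum_j (\chi_j f)(\wt\chi_j g)$. Using Lemma \ref{LEM:Beloc} (applied with the Sobolev identification $W^{-s,p}=B^{-s}_{p,p}$ only when $p=2$, but more carefully here one wants the $W^{-s,p}$ statement directly — see the next paragraph), it suffices to prove the flat estimate
\begin{align}\label{eq:flat-posprod}
\big\|\phi\, h\big\|_{W^{-s,p}(\R^2)}\les \|\phi\|_{L^\infty(\R^2)}\,\|h\|_{W^{-s,p}(\R^2)}
\end{align}
for $\phi\in C_0^\infty(\R^2)$ and $h$ a compactly supported positive distribution in $W^{-s,p}(\R^2)$, since the pull-back under a coordinate chart of $\chi_j f$ is in $C_0^\infty$ and that of $\wt\chi_j g$ remains a positive distribution (positivity is preserved because $|\gm|^{1/2}>0$ and $\wt\chi_j\ge 0$, which I would arrange).

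For \eqref{eq:flat-posprod} the key point is the following characterization of negative-order Sobolev norms of positive distributions: if $h\ge 0$ is a compactly supported distribution, then $h$ is in fact a (locally finite, here finite) positive Borel measure, and for $s>0$, $1<p<\infty$ one has $\|h\|_{W^{-s,p}(\R^2)}\sim \big\|\jb{D}^{-s}h\big\|_{L^p}$ with $\jb{D}^{-s}h(x)=\int K_s(x-y)\,dh(y)$ where $K_s$ is the Bessel kernel, which is \emph{nonnegative}. Hence $\jb{D}^{-s}(\phi h)(x)=\int K_s(x-y)\phi(y)\,dh(y)$ satisfies the pointwise bound $|\jb{D}^{-s}(\phi h)(x)|\le \|\phi\|_{L^\infty}\int K_s(x-y)\,dh(y)=\|\phi\|_{L^\infty}\,\jb{D}^{-s}h(x)$, using $K_s\ge 0$ and $h\ge 0$ crucially. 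Taking $L^p$ norms gives \eqref{eq:flat-posprod} immediately. (For $s=0$ the statement is the trivial Hölder-type bound $\|\phi h\|_{L^p}\le\|\phi\|_{L^\infty}\|h\|_{L^p}$.)

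To glue the local pieces back I would use the finite overlap of the partition and the triangle inequality: $\|fg\|_{W^{-s,p}(\M)}\les \sum_j \|(\chi_j f)(\wt\chi_j g)\|_{W^{-s,p}(\M)}\les \sum_j\|\kk_j^\star(\chi_j f)\|_{L^\infty}\|\kk_j^\star(\wt\chi_j g)\|_{W^{-s,p}(\R^2)}$, and then bound $\|\kk_j^\star(\chi_j f)\|_{L^\infty}\le C\|f\|_{L^\infty(\M)}$ and, by the upper bound in a $W^{s,p}$-analogue of Lemma \ref{LEM:Beloc} applied to the positive distribution $\wt\chi_j g$, $\|\kk_j^\star(\wt\chi_j g)\|_{W^{-s,p}(\R^2)}\les \|g\|_{W^{-s,p}(\M)}$. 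The main obstacle, and the step needing the most care, is the regularity/positivity bookkeeping: one must verify that multiplying a positive distribution by the smooth cutoffs $\chi_j$ and transporting it through the chart $\kk_j$ (with the Jacobian factor $|\gm|^{1/2}$) genuinely preserves positivity — it does, since all these factors are nonnegative — and that the localization estimate of Lemma \ref{LEM:Beloc}, stated there for $B^s_{p,r}$, transfers to $W^{s,p}$ (which follows from the embeddings in Lemma \ref{LEM:Besov}(i), i.e. $B^{s}_{p,2}\hookrightarrow W^{s,p}\hookrightarrow B^{s}_{p,\infty}$, or alternatively one runs the whole argument in $B^{-s}_{p,\infty}$ and $B^{-s}_{p,2}$ and pays an arbitrarily small $\eps$ loss, which is harmless in all applications). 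The nonnegativity of the Bessel kernel $K_s$ is the one genuinely essential analytic input and is classical.
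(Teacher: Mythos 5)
Your key analytic input -- positivity of the kernel of $(1-\Delta)^{-s/2}$ applied to a positive measure, giving the pointwise bound $|\jb{D}^{-s}(\phi h)|\le \|\phi\|_{L^\infty}\,\jb{D}^{-s}h$ -- is exactly the heart of the paper's proof as well, but you implement it by localizing to $\R^2$ and using the classical Bessel kernel, whereas the paper stays on $\M$: it identifies $g$ with a positive Radon measure, regularizes by $\P_Nf$ and $\P_Mg$ (positivity preserved by Lemma~\ref{LEM:heatker}~(i)), and obtains nonnegativity of the kernel $K_s$ of $(1-\Dlg)^{-\frac s2}$ from the subordination formula $K_s(x,y)=\Gamma(\tfrac s2)^{-1}\int_0^\infty t^{\frac s2-1}e^{-t}\Pg(4\pi t,x,y)\,dt$, concluding with Fatou's lemma. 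The intrinsic route avoids charts entirely and needs no transfer of function-space norms; your measure-theoretic formulation on $\R^2$ in turn avoids the paper's approximation/Fatou step, since the pointwise bound holds directly for a finite positive measure $h$ and bounded continuous $\phi$ (note $\chi_jf$ is only continuous, not $C^\infty_0$, but that is all you use).

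The one step that does not hold as you justify it is the passage between $W^{-s,p}$ on $\M$ and on $\R^2$. Lemma~\ref{LEM:Beloc} is a Besov statement, and the embeddings of Lemma~\ref{LEM:Besov}~(i) are one-sided (and stated for $2\le p<\infty$): chaining $W^{-s,p}(\M)\hookrightarrow B^{-s}_{p,\infty}(\M)$, localizing via Lemma~\ref{LEM:Beloc}, and returning through $B^{-s}_{p,\infty}(\R^2)\hookrightarrow W^{-s-\eps,p}(\R^2)$ forces a loss, so your fallback only proves $\|fg\|_{W^{-s-\eps,p}(\M)}\les\|f\|_{L^\infty(\M)}\|g\|_{W^{-s,p}(\M)}$, which is strictly weaker than the lemma as stated (and the lemma is applied in Section~\ref{SEC:GWP} at fixed regularity, so one should not build an $\eps$-loss into it). To close the gap on your route, replace the appeal to Lemma~\ref{LEM:Beloc} plus embeddings by the classical diffeomorphism-invariance and smooth-multiplier theory for the Bessel-potential (Triebel--Lizorkin $F^{s}_{p,2}$) scale on $\R^2$, which gives the exact $W^{\pm s,p}$ analogue of Lemma~\ref{LEM:Beloc} with no loss; alternatively, run your positivity argument intrinsically on $\M$ using the subordination formula above, which is precisely what the paper does. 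With that repair the rest of your argument (positivity of the pulled-back localized measure, the pointwise kernel bound, and the finite gluing) goes through.
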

\begin{proof}
We merely repeat the argument of \cite[Lemma 2.14]{ORW}. We consider $s>0$ since the case $s=0$ follows directly from H\"older's inequality. 

  Since $g$ is a positive distribution, it can be identified with a positive Radon measure on $\M$;
  see~\cite{Folland}.
If $f\in C(\M)$, then the product $fg$ is a well-defined function in $L^1(\M)$; in particular from Lemma \ref{LEM:heatker} (i) we have $\P_Nf \to f$ in $C(\M)$ as $N\to\infty$, so that $fg = \lim_{N\to\infty} (\P_Nf)g$ in $L^1(\M)$. Moreover, for any $M\in\N$, $\P_Mg$ is also a smooth positive distribution which converges to $g$ in $W^{-s,p}(\M)$, so that from Lemma \ref{LEM:Besov} (i) and (iii) we have that for each fixed $N\in\N$, $(\P_N f)(\P_Mg)$ converges to $(\P_N f)g$ in $L^1(\M)$. Thus from Fatou's lemma we get
\begin{align}
\|(1-\Dlg)^{-\frac{s}2}(fg)\|_{L^p(\M)}\le \liminf_{N\to\infty}\lim_{M\to\infty}\|(1-\Dlg)^{-\frac{s}2}\big[(\P_Nf)(\P_Mg)\big]\big\|_{L^p(\M)}.
\label{pos1}
\end{align}
It thus remains to prove the product estimate \eqref{posprod} for the right-hand side of \eqref{pos1}. Note that since $g$ is a positive distribution we also have by Lemma \ref{LEM:heatker} (i) and the definition of $\P_M$ \eqref{PN} that $\P_Mg$ is a non-negative function for any $M\in\N$.

Let then $K_s$ be the distributional kernel of $(1-\Dlg)^{-\frac{s}2}$: we can write
\begin{align}
K_s(x,y) &= \sum_{n\ge 0}\frac{\varphi_n(x)\varphi_n(y)}{\jb{\ld_n}^s} = \sum_{n\ge 0}\varphi_n(x)\varphi_n(y)\Gamma\Big(\frac{s}2\Big)^{-1}\int_0^{\infty} t^{\frac{s}2-1}e^{-t\jb{\ld_n}^2}dt\notag\\
&= \Gamma\Big(\frac{s}2\Big)^{-1}\int_0^{\infty} t^{\frac{s}2-1}e^{-t}\Pg(4\pi t,x,y)dt,
\label{fractional}
\end{align}
where $\Gamma$ is the Gamma function, and the equality holds in the sense of distributions on $\M\times\M$. Note that the last integral converges since we assumed $s>0$. From Lemma \ref{LEM:heatker} (i) we then deduce that $K_s$ is also a positive distribution. This implies that
\begin{align*}
\big\|(1-\Dlg)^{-\frac{s}2}\big[(\P_Nf)(\P_Mg)\big]\big\|_{L^p(\M)} &= \Big\|\int_{\M} K_s(x,y)\P_Nf(y)\P_Mg(y)d\Vg(y)\Big\|_{L^p(\M)}\\
&\le\Big\|\|\P_Nf\|_{L^{\infty}(\M)}\int_{\M} K_s(x,y)\P_Mg(y)d\Vg(y)\Big\|_{L^p(\M)}\\
& = \|\P_Nf\|_{L^{\infty}(\M)}\|\P_Mg\|_{W^{-s,p}(\M)}.
\end{align*}
This allows to bound the right-hand side of \eqref{pos1}, and taking the limits $M\to\infty$ then $N\to\infty$ finally proves \eqref{posprod}. This completes the proof of Lemma~\ref{LEM:posprod}.
\end{proof}

We will also need a fractional chain rule for the composition with Lipschitz functions. Note that the fractional chain rule for fractional derivatives defined by the right-hand side of \eqref{fractional} for $-2<s<0$ is known to hold on any space of homogeneous type \cite{Gatto}. 
 
In our case (see Section \ref{SEC:GWP} below) we need it for a Lipschitz nonlinearity acting on functions in Besov spaces. We thus have the following estimates.
\begin{lemma}\label{LEM:FC}
Let $\A : \R\to\R$ with $\A(0)=0$ and $0<s<1$. Then:\\
\textup{(i)}if $\A$ is Lipschitz, it holds
\begin{align*}
\big\|\A(u)\big\|_{H^s(\M)} \les \|\A'\|_{L^{\infty}}\|u\|_{H^s(\M)};
\end{align*}

\noi
\textup{(ii)} if $\A$ is $C^1$ and there exists $a\in L^1([0,1])$, $\AB$ positive such that for almost every $\theta\in [0,1]$ and any $u,v$ it holds
\begin{align*}
\big|\A'\big(\theta u + (1-\theta)v\big)\big| \le a(\tau)\big[\AB(u)+\AB(v)\big],
\end{align*}
then for any $1<p<\infty$ and any  $r\in [1,\infty]$ it holds for any $0<\eps<1-s$
\begin{align*}
\big\|\A(u)\big\|_{B^s_{p,r}(\M)} \les \big\|\AB(u)\big\|_{L^{\infty}(\M)}\|u\|_{B^{s+\eps}_{p,r}(\M)}.
\end{align*}
\end{lemma}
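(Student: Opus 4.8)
The plan is to reduce everything to the elementary fact that, for a regularity index in $(0,1)$, a Besov norm is captured by an $L^p$ modulus of continuity, so that pre-composition with a map that is Lipschitz \emph{along the range of its argument} is automatically bounded, and then to import this to $\M$ through coordinate charts.

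\textbf{Part (i).} Fix a finite partition of unity $\{\chi_j\}$ subordinate to coordinate patches $(U_j,V_j,\kk_j)$, together with $\wt\chi_j\in C^\infty_0(V_j)$ with $\wt\chi_j\equiv 1$ on $\supp\chi_j$. Since $\A$ acts pointwise and $\A(0)=0$, one has $\kk_j^\star(\chi_j\A(u))=(\kk_j^\star\chi_j)\,\A\big(\kk_j^\star(\wt\chi_j u)\big)$ on $\R^2$ (both sides vanish off $\supp(\kk_j^\star\chi_j)$ and agree there), and $\A\big(\kk_j^\star(\wt\chi_j u)\big)$ is supported in $\supp(\kk_j^\star\wt\chi_j)\Subset U_j$. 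By Lemma~\ref{LEM:Beloc}, $\|\A(u)\|_{H^s(\M)}\les\sum_j\|(\kk_j^\star\chi_j)\A(w_j)\|_{H^s(\R^2)}$ with $w_j:=\kk_j^\star(\wt\chi_j u)$ and $\|w_j\|_{H^s(\R^2)}\les\|u\|_{H^s(\M)}$. On $\R^2$ one then invokes the Gagliardo characterization of $H^s=B^s_{2,2}$ valid for $0<s<1$: the pointwise Lipschitz bound $|\A(w_j)(x)-\A(w_j)(y)|\le\|\A'\|_{L^\infty}|w_j(x)-w_j(y)|$, together with $|\A(w_j)|\le\|\A'\|_{L^\infty}|w_j|$, immediately gives $\|\A(w_j)\|_{H^s(\R^2)}\les\|\A'\|_{L^\infty}\|w_j\|_{H^s(\R^2)}$. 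Finally one multiplies by $\kk_j^\star\chi_j\in C^\infty_0(\R^2)$ using the $\R^2$ analogue of the product rule of Lemma~\ref{LEM:Besov}~(iii) and sums over the finitely many $j$.

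\textbf{Part (ii).} Since $\A\in C^1$, the fundamental theorem of calculus gives, for all reals $a,b$,
\begin{equation*}
\A(b)-\A(a)=(b-a)\int_0^1\A'\big(\theta b+(1-\theta)a\big)\,d\theta .
\end{equation*}
Hence the growth hypothesis yields $|\A(b)-\A(a)|\le\|a\|_{L^1([0,1])}\big(\AB(a)+\AB(b)\big)|b-a|$ and, with $a=0$, $|\A(b)|\le\|a\|_{L^1([0,1])}\big(\AB(0)+\AB(b)\big)|b|$. Applied to $a=u(y)$, $b=u(x)$, this says that along the range of $u$ the function $\A$ is Lipschitz with constant $\les\|\AB(u)\|_{L^\infty(\M)}$ (plus the harmless finite constant $\AB(0)$, absorbed into the implicit constant). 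One therefore reruns exactly the argument of Part~(i)—partition of unity, Lemma~\ref{LEM:Beloc}, the modulus-of-continuity characterization of $B^{s'}_{p,r}$ for $0<s'<1$, and the product rule against the cutoffs—now with $s'=s+\eps$; the hypothesis $0<\eps<1-s$ is precisely what keeps $s'\in(0,1)$, the range in which first differences characterize the norm. This produces $\|\A(u)\|_{B^{s+\eps}_{p,r}(\M)}\les\|\AB(u)\|_{L^\infty(\M)}\|u\|_{B^{s+\eps}_{p,r}(\M)}$, and since $B^{s+\eps}_{p,r}(\M)\embeds B^{s}_{p,r}(\M)$ by Lemma~\ref{LEM:Besov}~(i) the claimed bound follows (with room to spare: the estimate already holds at regularity $s$, the $B^{s+\eps}$ form being slack).

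\textbf{Main obstacle.} The one genuinely delicate point is transferring the first-difference characterization to $\M$. Via charts it is bookkeeping: one must verify that composing the chart-localized argument $\kk_j^\star(\wt\chi_j u)$ with a $[0,1]$-valued cutoff does not increase $\|\AB(\cdot)\|_{L^\infty}$ (true when $\AB$ is monotone on each half-line, as for $\AB(\xi)=e^{\beta\xi}$ in the applications, up to the constant $\AB(0)$), and control the Jacobian of $\kk_j$ when comparing Euclidean and geodesic increments. A purely intrinsic alternative avoids charts: use the heat-semigroup characterization $\|f\|_{B^{s'}_{p,\infty}(\M)}\sim\|f\|_{L^p(\M)}+\sup_{0<t\le1}t^{-s'/2}\|(1-e^{t\Dlg})f\|_{L^p(\M)}$ for $0<s'<2$, note that
$$|(1-e^{t\Dlg})\A(u)(x)|\les\|\AB(u)\|_{L^\infty(\M)}\int_\M\Pg(4\pi t,x,y)\,|u(x)-u(y)|\,d\Vg(y),$$
and bound the last quantity by $t^{s'/2}\|u\|_{B^{s'}_{p,\infty}(\M)}$ through a dyadic decomposition of the heat kernel using the pointwise estimate of Lemma~\ref{LEM:PM} together with Hölder's inequality on geodesic balls; the absolute value inside the integral—which is exactly what prevents one from simply writing $\|(1-e^{t\Dlg})u\|_{L^p}$—is the single spot requiring care.
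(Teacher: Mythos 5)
Your proof is correct, and the overall skeleton is the same as the paper's: localize with a partition of unity and transfer to $\R^2$ via Lemma~\ref{LEM:Beloc}, peel off the cutoff with the Euclidean analogue of the product estimate of Lemma~\ref{LEM:Besov}~(iii), and then invoke a Euclidean composition estimate. Where you genuinely diverge is in that last ingredient: the paper simply cites the fractional chain rules of Taylor (Proposition 4.1 for the Lipschitz case, Proposition 5.1 for the $C^1$ case), and it is precisely the need to pass from $B^{s}_{p,r}(\R^2)$ to $W^{s+\eps,p}(\R^2)$ to apply the latter that produces the $\eps$-loss in (ii), whereas you prove the composition bound by hand from the Gagliardo/first-difference characterization of $H^s$ and $B^{s'}_{p,r}$ for $0<s'<1$, after upgrading the hypothesis on $\A'$ to a ``Lipschitz along the range of $u$'' bound by the fundamental theorem of calculus. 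This buys you a self-contained argument and, in (ii), an estimate that already holds at regularity $s$ with constant $\les \|\AB(u)\|_{L^\infty}$ (plus $\AB(0)$), which is exactly the improvement the paper's footnote asserts ``can certainly be avoided''; the stated bound then follows with room to spare, and the restriction $\eps<1-s$ is only used to keep the working index below $1$. The one wrinkle you rightly flag -- that the localized argument $\kk_j^\star(\wt\chi_j u)$ takes values $\wt\chi_j(x)u(x)$, so the Lipschitz constant involves $\AB$ evaluated at these interpolated values and at $0$ rather than only $\|\AB(u)\|_{L^{\infty}(\M)}$ -- is not specific to your route: the paper's citation-based proof applies Taylor's proposition to the very same cutoff function and faces the identical issue, which is harmless for the monotone $\AB$ (exponentials) actually used in \eqref{R5}--\eqref{R6}, and your intrinsic heat-semigroup variant is a reasonable sketch of how to bypass charts altogether, with the caveat that the final bound of the kernel integral against $t^{s'/2}\|u\|_{B^{s'}_{p,\infty}}$ would still need to be carried out in detail.
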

\begin{proof}
Let $(U_j,V_j,\kk_j)_j$ be a finite atlas on $\M$ and $\{\chi_j\}_j$ be an associated partition of unity. Using Lemma \ref{LEM:Beloc}, we thus have
\begin{align*}
\big\|\A(u)\big\|_{H^s(\M)} & \les \max_j\big\|\kk_j^\star\big(\chi_j\A( u)\big)\big\|_{H^s(\R^2)}=\max_j\big\|\kk_j^\star(\chi_j)\A\big(\kk_j^\star\wt\chi_j u\big)\big\|_{H^s(\R^2)}\intertext{where $\wt\chi_j\in C^{\infty}_0(V_j)$, $\wt\chi_j\equiv 1$ on $\supp\chi_j$. Then we can use first the same product estimate as in Lemma \ref{LEM:Besov} (iii), which holds on $\R^2$ \cite{BCD}, and then the fractional chain rule for Lipschitz functions on $\R^2$ (see e.g. \cite[Proposition 4.1]{Taylor}), to continue with}
&\les \max_j \|\kk_j^\star(\chi_j)\|_{B^1_{\infty,2}(\R^2)}\big\|\A\big(\kk_j^\star\wt\chi_j u\big)\big\|_{H^s(\R^2)} \les \max_j \|\A'\|_{L^{\infty}}\big\|\kk_j^\star\wt\chi_j u\big\|_{H^s(\R^2)}\\
& \les \|\A'\|_{L^{\infty}}\|u\|_{H^s(\M)}
\end{align*}
where in the last step we used Lemma \ref{LEM:Beloc} again. This proves (i). The second estimate (ii) follows similarly\footnote{The $\eps$ loss comes from moving from $B^s_{p,r}(\R^2)$ to $W^{s+\eps,p}(\R^2)$ in order to use the fractional chain rule on $\R^2$, and can certainly be avoided, by using for example an argument similar to \cite[Lemma 9.3]{STz2}.} by using \cite[Proposition 5.1]{Taylor}. 
\end{proof}

We finish this section by stating another nonlinear (multilinear) estimate, namely the (geometric) Brascamp-Lieb inequality \cite[Example 1.6]{BCCT}.
\begin{lemma}\label{LEM:BL}
Let $p \in \N$ and $f_{q,r}\in L^1(\M\times\M)$ for $1\le q<r\le 2p$. Then it holds
\begin{align}
\begin{split}
 \int_{\M^{2p}}   
 \prod_{1\le q < r \le 2p}   
&  |f_{q,r}(\pi_{q, r} (\vec{y}))|^{\frac1{2p-1}}
d\Vg(\vec{y}) \\
 & \les   \prod_{1\le q < r \le 2p}   
 \bigg( \int_{\M\times\M}  
  |f_{q,r}(y_q,y_r)|  d\Vg(y_q) d\Vg(y_r) \bigg)^{\frac1{2p-1}},
\end{split}
\label{BL0}
\end{align}

\noi
where $\vec{y}=(y_1,...,y_{2p})\in \M^{2p}$, $d\Vg(\vec{y})$ is the corresponding product measure on $\M^{2p}$, and  $\pi_{q, r}$
denotes the projection defined by 
$\pi_{q, r}(\vec{y}) = \pi_{q, r}(y_1, \dots, y_{2p})
= (y_q, y_r) $.
\end{lemma}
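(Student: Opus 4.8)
The plan is to recognize \eqref{BL0} as the special case of \emph{Finner's inequality} --- equivalently, the rank-one geometric Brascamp--Lieb inequality of \cite[Example 1.6]{BCCT} --- attached to the ``fractional Cartesian product'' of all pairs of coordinates, and to give a short self-contained proof by induction on the number of variables $2p$, using only Hölder's inequality and Tonelli's theorem on the finite measure space $(\M,\Vg)$.

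First I would record the combinatorial input: each index $i\in\{1,\dots,2p\}$ lies in exactly $2p-1$ of the pairs $\{q,r\}$ with $1\le q<r\le 2p$, so the exponents $\theta_{q,r}=\tfrac1{2p-1}$ satisfy $\sum_{\{q,r\}\ni i}\theta_{q,r}=1$ for every $i$; this is the ``critical'' exponent condition, and it is why the implicit constant in \eqref{BL0} is in fact $1$. For the inductive step I would integrate out the last variable $y_{2p}$. Splitting the product over pairs into those containing $2p$ and those not, the factor depending on $y_{2p}$ is $\prod_{q<2p}|f_{q,2p}(y_q,y_{2p})|^{1/(2p-1)}$; since the exponents of these $2p-1$ factors sum to $1$, Hölder's inequality in $y_{2p}$ bounds its integral over $\M$ by $\prod_{q<2p}\big(\int_\M|f_{q,2p}(y_q,y_{2p})|\,d\Vg(y_{2p})\big)^{1/(2p-1)}$. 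Writing $g_q(y_q)\deff\int_\M|f_{q,2p}(y_q,y_{2p})|\,d\Vg(y_{2p})$, the remaining integral over $\M^{2p-1}$ has the same structure --- the $f_{q,r}$ with $r<2p$ together with the single-variable functions $g_q$, all with exponent $1/(2p-1)$ --- and the exponent mass at each surviving index is still $1$. I would then appeal to a slightly more general induction hypothesis (allowing arbitrary nonempty index sets and exponents with $\sum_{\alpha:\,i\in S_\alpha}\theta_\alpha\le 1$, so that the singleton functions $g_q$ fit the framework), the base case of one variable being immediate from Hölder, and finish by using Tonelli to identify $\int_\M g_q\,d\Vg$ with $\int_{\M\times\M}|f_{q,2p}|\,d\Vg(y_q)d\Vg(y_{2p})$.

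The only genuine point to be careful about --- and the one I expect to be the main obstacle --- is the bookkeeping in the induction: one must state and prove the general weighted inequality ``$\int_{\M^n}\prod_\alpha|f_\alpha\circ\pi_{S_\alpha}|^{\theta_\alpha}\,d\Vg\le\prod_\alpha\big(\int|f_\alpha|\big)^{\theta_\alpha}$ whenever $\sum_{\alpha:\,i\in S_\alpha}\theta_\alpha\le 1$ for all $i$'', rather than the pairs-only version, since after one integration the pair structure degenerates to include singletons. Finiteness of $\Vg(\M)$ legitimizes the subcritical ($<1$) case of Hölder (at the cost of a harmless power of $\Vg(\M)$, absorbed in $\les$), although along our reduction the condition stays critical and no constant ever appears. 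Alternatively, and more briefly, one may simply invoke \cite[Example 1.6]{BCCT} directly, after noting that the Brascamp--Lieb datum given by the coordinate projections $\M^{2p}\to\M^2$ onto pairs, with weights $\tfrac1{2p-1}$, satisfies the required scaling/dimension conditions --- which is exactly the identity $\sum_{\{q,r\}\ni i}\tfrac1{2p-1}=1$ recorded above.
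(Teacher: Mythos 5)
Your proof is correct, and it takes a genuinely different route from the paper. The paper simply cites the geometric Brascamp--Lieb inequality of \cite[Example 1.6]{BCCT} on $\R^d$ and transfers it to $\M$ by a partition of unity, using that $\Vg$ is comparable to Lebesgue measure in each chart (which is where the implicit constant in \eqref{BL0} comes from). You instead prove the estimate from scratch as Finner's inequality: the combinatorial observation that each index lies in exactly $2p-1$ pairs makes the exponents critical at every coordinate, and the induction on the number of variables --- Hölder in the last variable, followed by Tonelli and a suitably generalized inductive hypothesis allowing singleton (or arbitrary) index sets with exponent mass $\le 1$ at each coordinate --- closes correctly; the subcritical Hölder step is indeed harmless on the finite measure space $(\M,\Vg)$. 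Your bookkeeping point is exactly the right one to flag, since after integrating out $y_{2p}$ the pair structure degenerates to include one-variable factors, so the pairs-only statement is not directly inductible. What your argument buys is that it is purely measure-theoretic: it works verbatim for any finite measure space, requires no localization, no comparison of $\Vg$ with Lebesgue measure, and in the critical case even yields the constant $1$ rather than an unspecified implicit constant. The only caveat is your closing alternative: invoking \cite[Example 1.6]{BCCT} ``directly'' on $\M$ still requires the localization step the paper performs (BCCT is formulated for Lebesgue measure on Euclidean space), but since your primary self-contained induction does not rely on this, there is no gap.
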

\begin{proof}
The corresponding estimate on $\M = \R^d$ is proved in greater generality in \cite{BCCT}. The estimate \eqref{BL0} then follows from (1) in \cite{BCCT} by using a partition of unity with the compactness of $\M$, and that in each chart $\Vg$ is equivalent to the Lebesgue measure since $\gm$ is smooth.
\end{proof}

\section{GMC theory and the LQG measure}\label{SEC:Proba}
In this section, we deal with the construction of the main stochastic objects, namely the linear stochastic evolution $\<1>_\gm$ in \eqref{lp}, and the ``punctured" Gaussian multiplicative chaos $\U$ in \eqref{U} and the LQG measure $\rho_{\{a_\ell,x_\ell\},\gm}$ in \eqref{LQGN}. We mainly follow the arguments of \cite{ORW,DKRV,DRV,GRV}.
\subsection{On punctured Gaussian multiplicative chaos}\label{SUBS:GMC}
Before moving to the proof of Proposition \ref{PROP:U}, we first express the covariance function of the processes $\P_N\<1>_\gm$ in \eqref{lp} and $\U_N(t)$ \eqref{U}.
\begin{lemma}\label{LEM:covar}
The following identities hold:\\
\textup{(i)} The covariance function of the truncated linear stochastic evolution $\<1>_\gm$ is given by
\begin{align}\label{covar}
\G_{N_1,N_2}(t_1,t_2,x_1,x_2)&\deff\int_{H^s_0(\M,\gm)}\E\big[\P_{N_1}\<1>_\gm(t_1,x_1)\P_{N_2}\<1>_\gm(t_2,x_2)\big]d\mu_\gm(X_\gm)\notag\\
& = 2\pi e^{\frac{t_2-t_1}{4\pi}\Dlg}(\P_{N_1}\otimes\P_{N_2})\Gg(x_1,x_2),
\end{align}
for any $(x_1,x_2)\in\M\times\M\setminus\diag$, $t_1\le t_2$ and $N_1,N_2\in\N$;\\
\textup{(ii)} For any $t\ge 0$ and $p\in\N$, the $2p$ (spatial) covariance function of $\U_N(t)$ is given by
\begin{align*}
\int_{H^s_0(\M,\gm)}\E\Big[\prod_{j=1}^{2p}\U_N(t,y_j)\Big]d\mu_\gm(X_\gm)& = e^{\pi\be^2\sum_{j=1}^{2p}\wt\Gg(y_j,y_j) + o(1)}e^{2\pi\be^2 \sum_{j<k}(\P_N\otimes\P_N)\Gg(y_j,y_k)}\\
&\qquad\times e^{2\pi\be\sum_{j=1}^{2p}\sum_{\ell=1}^La_\ell(\P_N\otimes\P_N)\Gg(x_\ell,y_j)},
\end{align*}
where $o(1)$ is deterministic and uniform on $\M$.
\end{lemma}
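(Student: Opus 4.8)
The proof is a direct Gaussian computation based on the explicit realization \eqref{GFF} of the GFF and the fact that, conditionally on $\cj X$ and $B_0$, the process $\<1>_\gm$ is a centred Gaussian field whose covariance is given by \eqref{covar}. For part (i), I would start from the representation \eqref{lp}: writing $\<1>_\gm(t) = e^{\frac{t}{4\pi}\Dlg}X_\gm + \int_0^t e^{\frac{t-t'}{4\pi}\Dlg}dW_\gm(t')$, one applies $\P_{N_1}$, resp.\ $\P_{N_2}$, takes the product, and uses the independence of the initial datum $X_\gm$ and the Brownian increments together with the It\^o isometry. The initial-datum contribution yields $2\pi (e^{\frac{t_1}{4\pi}\Dlg}\P_{N_1})\otimes(e^{\frac{t_2}{4\pi}\Dlg}\P_{N_2})\Gg(x_1,x_2)$ in view of the covariance $2\pi(-\Dlg)^{-1}$ of $\mu_\gm$ on mean-zero functions (and the spectral definition \eqref{Green} of $\Gg$), while for $t_1\le t_2$ the stochastic integral contributes $2\pi\int_0^{t_1} e^{\frac{t_1+t_2-2t'}{4\pi}\Dlg}dt'\,(\P_{N_1}\otimes\P_{N_2})\delta$. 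Summing these and using $\int_0^{t_1}e^{-\frac{t_1+t_2-2t'}{4\pi}\ld_n^2}dt' + \frac{2\pi}{\ld_n^2}e^{-\frac{t_1+t_2}{4\pi}\ld_n^2} = \frac{2\pi}{\ld_n^2}e^{-\frac{t_2-t_1}{4\pi}\ld_n^2}$ on each eigenmode collapses the whole expression to $2\pi e^{\frac{t_2-t_1}{4\pi}\Dlg}(\P_{N_1}\otimes\P_{N_2})\Gg(x_1,x_2)$, which is \eqref{covar}; the stationarity of $\<1>_\gm$ in time (Lemma~\ref{LEM:SC}) is exactly the statement that this depends only on $t_2-t_1$.

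\textbf{Part (ii).} Here I fix $t\ge 0$ and use the definition \eqref{U} of $\U_N$. Since $\P_N\<1>_\gm(t,\cdot)$ is, under $\mu_\gm\otimes\Prob$ conditioned on the zero mode, a centred Gaussian field, the product $\prod_{j=1}^{2p}e^{\be\P_N\<1>_\gm(t,y_j)}$ has expectation $\exp\big(\frac{\be^2}{2}\sum_{j,k}\G_{N,N}(t,t,y_j,y_k)\big)$ by the Gaussian (Wick/Isserlis) exponential formula $\E[e^{\sum c_j Z_j}] = e^{\frac12\sum_{j,k}c_j c_k\E[Z_jZ_k]}$. By part (i) with $t_1=t_2=t$, $\G_{N,N}(t,t,y_j,y_k)=2\pi(\P_N\otimes\P_N)\Gg(y_j,y_k)$, so the diagonal terms $j=k$ produce $e^{\pi\be^2\sum_j (\P_N\otimes\P_N)\Gg(y_j,y_j)}$ and the off-diagonal terms produce $e^{2\pi\be^2\sum_{j<k}(\P_N\otimes\P_N)\Gg(y_j,y_k)}$ after using the symmetry of the covariance to combine $(j,k)$ and $(k,j)$. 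The remaining prefactor $e^{-\pi\be^2 C_\P}N^{-\be^2/2}$ raised to the power $2p$ together with the deterministic puncture term $e^{2\pi\be\sum_j\sum_\ell a_\ell(\P_N\otimes\P_N)\Gg(x_\ell,y_j)}$ factors out of the expectation. Finally, to rewrite the diagonal self-energy, I invoke Lemma~\ref{LEM:GN3} (with $\psi$ the Schwartz multiplier defining $\P_N$, for which $C_\psi = C_\P$ after matching constants), which gives $(\P_N\otimes\P_N)\Gg(y_j,y_j) = \frac{1}{2\pi}\log N + \wt\Gg(y_j,y_j) + C_\P + o(1)$ uniformly on $\M$; multiplying the $2p$ copies of this by $\pi\be^2$ and combining with $(e^{-\pi\be^2 C_\P}N^{-\be^2/2})^{2p} = e^{-2\pi p\be^2 C_\P}N^{-p\be^2}$ cancels exactly the $\frac{\be^2}{2}\cdot 2p\cdot\log N$ and the $C_\P$ contributions from the $2p$ diagonal terms, leaving $e^{\pi\be^2\sum_j\wt\Gg(y_j,y_j) + o(1)}$ with $o(1)$ deterministic and uniform on $\M$. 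This is the claimed identity.

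\textbf{Main obstacle.} The computational heart is elementary Gaussian algebra; the only genuinely delicate point is bookkeeping the $\log N$ divergences and the constant $C_\P$ so that the renormalization built into \eqref{U} exactly cancels the diagonal self-energy, uniformly in the spatial variables $y_j$. This requires the uniform (in $x$) convergence statement of Lemma~\ref{LEM:GN3} rather than mere pointwise convergence, which is why that lemma was proved with an $L^\infty(\M)$ estimate; the $o(1)$ in the final formula is precisely the error term there, times $\pi\be^2(2p)$, and is deterministic and uniform on $\M$. A minor additional care is that the off-diagonal terms $(\P_N\otimes\P_N)\Gg(y_j,y_k)$ with $y_j\ne y_k$ remain finite as $N\to\infty$ (they converge to $\Gg(y_j,y_k)$) so they can be kept unrenormalized, exactly as stated; no further manipulation of those terms is needed at this stage. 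One should also note that the exponential moment formula applies verbatim since $\P_N$ is a smoothing operator, so $\P_N\<1>_\gm(t,\cdot)$ is a genuine (smooth) Gaussian field with finite variance, and all expectations are finite for each fixed $N$.
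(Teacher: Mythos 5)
Your proposal is correct and follows essentially the same route as the paper: a mode-by-mode Gaussian/It\^o computation for (i), and for (ii) the Gaussian exponential moment formula combined with the equal-time case of (i) and Lemma~\ref{LEM:GN3} to cancel the $\log N$ and $C_\P$ self-energy contributions, leaving the uniform deterministic $o(1)$. The only slip is the spurious factor $2\pi$ in front of the stochastic-integral contribution in your prose for (i); the mode-wise identity you actually use is the correct one without it, so the conclusion is unaffected.
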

\begin{proof}
Writing $X_\gm$ as the random variable \eqref{GFF} with $n_n\sim\NN(0,1)$ independent of $B_{n,\gm}$ in \eqref{Bn}, we first compute
\begin{align*}
&\G_{N_1,N_2}(t_1,t_2,x_1,x_2)\\
&=\int_{H^s_0(\M,\gm)}\E\Big[\P_{N_1}\Big(e^{\frac{t_1}{4\pi}\Dlg}X_\gm(x_1)+\int_0^{t_1}e^{\frac{t_1-t}{4\pi}\Dlg}dW_\gm(t,x_1)\Big)\\
&\qquad\times\P_{N_2}\Big(e^{\frac{t_2}{4\pi}\Dlg}X_\gm(x_2)+\int_0^{t_2}e^{\frac{t_2-t}{4\pi}\Dlg}dW_\gm(t,x_2)\Big)\Big]d\mu_\gm(X_\gm)\\
&=\sum_{n_1,n_2\ge 1}e^{-N_1^{-2}\ld_{n_1}^2}e^{-N_2^{-2}\ld_{n_2}^2}\varphi_{n_1}(x_1)\varphi_{n_2}(x_2)\E\Big[\Big(e^{-\frac{t_1}{4\pi}\ld_{n_1}^2}\frac{\sqrt{2\pi}h_{n_1}}{\ld_{n_1}}+\int_0^{t_1}e^{-\frac{t_1-t}{4\pi}\ld_{n_1}^2}dB_{n_1,\gm}(t)\Big)\\
&\qquad\times\Big(e^{-\frac{t_2}{4\pi}\ld_{n_2}^2}\frac{\sqrt{2\pi}h_{n_2}}{\ld_{n_2}}+\int_0^{t_1}e^{-\frac{t_2-t}{4\pi}\ld_{n_2}^2}dB_{n_2,\gm}(t)\Big)\Big]\\
&=\sum_{n\ge 1}e^{-N_1^{-2}\ld_{n}^2}e^{-N_2^{-2}\ld_{n}^2}\varphi_{n}(x_1)\varphi_{n}(x_2)\Big[\frac{2\pi}{\ld_n^2} e^{-\frac{t_1+t_2}{4\pi}\ld_n^2}+\int_0^{\min(t_1,t_2)}e^{-\frac{t_1+t_2-2t}{4\pi}\ld_n^2}dt\Big]
\end{align*}
where we used that $h_n\sim\NN(0,1)$ are independent, and independent of the standard Brownian motions $B_{n,\gm}$, with Ito's isometry. The last integral can be computed as
\begin{align*}
\int_0^{\min(t_1,t_2)}e^{-\frac{t_1+t_2-2t}{4\pi}\ld_n^2}dt &= \frac{2\pi}{\ld_n^2}\big[e^{\frac{\min(t_1,t_2)-\max(t_1,t_2)}{4\pi}\ld_n^2}-e^{-\frac{t_1+t_2}{4\pi}\ld_n^2}\big],
\end{align*}
and plugging this identity in the previous one, we end up with
\begin{align*}
\G_{N_1,N_2}(t_1,t_2,x_1,x_2)&=\sum_{n\ge 1}e^{-N_1^{-2}\ld_{n}^2}e^{-N_2^{-2}\ld_{n}^2}\varphi_{n}(x_1)\varphi_{n}(x_2)\frac{2\pi}{\ld_n^2}e^{\frac{\min(t_1,t_2)-\max(t_1,t_2)}{4\pi}\ld_n^2}.
\end{align*}
This shows \eqref{covar} in view of \eqref{Green} and \eqref{PN}.

As for Lemma~\ref{LEM:covar}~(ii), note that by Lemma \ref{LEM:GN3} we have
\begin{align*}
e^{-\pi\be^2C_\P}N^{-\frac{\be^2}2} = e^{-\frac{\be^2}2\s_N(x)+\pi\be^2\wt\Gg(x,x)+o(1)}
\end{align*}
for any $x\in\M$, where $\s_N$ is as in \eqref{sN}.

Thus, with the definition \eqref{U} of $\U_N$, we can compute for any $p,N\in\N$, $t\ge 0$ and $y_1,...,y_{2p}\in\M$:
\begin{align*}
&\int_{H^s_0(\M,\gm)}\E\Big[\prod_{j=1}^{2p}\U_N(t,y_j)\Big]d\mu_\gm(X_\gm)\\
& = \int_{H^s_0(\M,\gm)}\E\Big[e^{-2p\pi\be^2C_\P}N^{-2p\frac{\be^2}2}e^{\sum_{j=1}^{2p}\big[\be \P_N\<1>_\gm(t,y_j)+2\pi\be\sum_{\ell=1}^L a_\ell(\P_N\otimes\P_N)\Gg(x_\ell,y_j)\big]}\Big]d\mu_\gm(X_\gm)\\
&=e^{\sum_{j=1}^{2p}\big[-\frac{\be^2}2\s_N(y_j)+\pi\be^2\wt\Gg(y_j,y_j)+o(1)\big]}e^{2\pi\be \sum_{j=1}^{2p}\sum_{\ell=1}^La_\ell(\P_N\otimes\P_N)\Gg(x_\ell,y_j)}e^{\frac{\be^2}2\E\big|\sum_{j=1}^{2p}\P_N\<1>_\gm(t,y_j)\big|^2}\intertext{where we used the prevous identity and that $(\P_N\<1>_\gm(t,y_1),...,\P_N\<1>_\gm(t,y_{2p}))$ is a Gaussian vector. Thus using \eqref{covar} and the definition of $\s_N$ \eqref{sN}, we continue with}
&=e^{\sum_{j=1}^{2p}\pi\be^2\wt\Gg(y_j,y_j)+o(1)}e^{2\pi\be \sum_{j=1}^{2p}\sum_{\ell=1}^La_\ell(\P_N\otimes\P_N)\Gg(x_\ell,y_j)}e^{2\pi\be^2\sum_{j<k}(\P_N\otimes\P_N)\Gg(y_j,y_k)}.
\end{align*}
This finishes the proof of Lemma~\ref{LEM:covar}~(ii).
\end{proof}

For the solution $\<1>_\gm$ of the linear stochastic heat equation, we have the following classical result.
\begin{lemma}\label{LEM:SC}
The process $\<1>_\gm$ defined in \eqref{lp} is stationary and belongs almost surely to $C(\R_+;H^s_0(\M))$ for any $s<0$.
\end{lemma}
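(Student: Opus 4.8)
The plan is to establish the two claimed properties—stationarity and almost sure continuity with values in $H^s_0(\M)$—by direct computation with the series representation \eqref{GFF} and \eqref{lp}, treating each Fourier mode $B_{n,\gm}$ as an Ornstein--Uhlenbeck process and applying a Kolmogorov-type continuity criterion to the resulting Gaussian field. First I would note that projecting \eqref{lp} onto the eigenfunction $\varphi_n$ gives
\begin{align*}
\langle\<1>_\gm(t),\varphi_n\rangle_\gm = e^{-\frac{t}{4\pi}\ld_n^2}\frac{\sqrt{2\pi}\,h_n}{\ld_n} + \int_0^t e^{-\frac{t-t'}{4\pi}\ld_n^2}\,dB_{n,\gm}(t'),
\end{align*}
which is, for each fixed $n$, a stationary OU process: indeed the initial datum is $\NN(0,\tfrac{2\pi}{\ld_n^2})$, precisely the invariant law of the OU process with generator $\tfrac1{4\pi}\ld_n^2$ and noise intensity matched by It\^o isometry. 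Since the modes are independent across $n$, the law of $\<1>_\gm(t)$ in $H^s_0(\M)$ is independent of $t$, which is stationarity; more generally the finite-dimensional distributions are invariant under time translation, as one checks from the covariance formula \eqref{covar} of Lemma~\ref{LEM:covar}~(i) with $N_1=N_2=\infty$ (i.e. $\P_{N}=\Id$), giving $\E[\langle\<1>_\gm(t_1),\varphi_n\rangle\langle\<1>_\gm(t_2),\varphi_m\rangle] = \delta_{nm}\tfrac{2\pi}{\ld_n^2}e^{-\frac{|t_1-t_2|}{4\pi}\ld_n^2}$.

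Next, for the continuity statement, I would fix $s<0$ and $T>0$ and estimate, for $0\le t_1\le t_2\le T$,
\begin{align*}
\E\big\|\<1>_\gm(t_2)-\<1>_\gm(t_1)\big\|_{H^s(\M)}^2 = \sum_{n\ge 1}\jb{\ld_n}^{2s}\,\E\big|\langle\<1>_\gm(t_2)-\<1>_\gm(t_1),\varphi_n\rangle\big|^2.
\end{align*}
Using the covariance identity above, the $n$-th summand equals $\jb{\ld_n}^{2s}\cdot\tfrac{2\pi}{\ld_n^2}\big(2 - 2e^{-\frac{|t_2-t_1|}{4\pi}\ld_n^2}\big)$, and the elementary bound $1-e^{-x}\le x^{\theta}$ for $x\ge 0$ and any $\theta\in[0,1]$ lets me extract a factor $|t_2-t_1|^{\theta}\ld_n^{2\theta}$. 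Choosing $\theta$ small enough that $2s-2+2\theta<-1$—possible since $s<0$, so any $\theta<\tfrac12$ with $\theta< \tfrac12-s$ works, and this also needs $2s-2+2\theta<-1$, i.e. $\theta<\tfrac12+ s$... more carefully I just need $\theta\in(0,1)$ with $2s-2+2\theta<-1$, which holds for $\theta$ small since $2s-2<-2<-1$—and invoking Weyl's law \eqref{Weyl} to get $\ld_n^2\sim n$, the series $\sum_n \ld_n^{2s-2+2\theta}$ converges. Hence $\E\|\<1>_\gm(t_2)-\<1>_\gm(t_1)\|_{H^s}^2 \lesssim |t_2-t_1|^{\theta}$. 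Since $\<1>_\gm(t_2)-\<1>_\gm(t_1)$ is a Gaussian random variable in the Hilbert space $H^s_0(\M)$, Gaussian hypercontractivity upgrades this to $\E\|\<1>_\gm(t_2)-\<1>_\gm(t_1)\|_{H^s}^{2k}\lesssim_k |t_2-t_1|^{k\theta}$ for all $k\ge 1$, and taking $k$ large enough that $k\theta>1$, the Kolmogorov--Chentsov continuity theorem for Banach-space-valued processes yields a version of $\<1>_\gm$ with almost surely continuous paths in $H^s_0(\M)$ on $[0,T]$; letting $T\to\infty$ along integers gives the claim on $\R_+$. Finally I would note $\<1>_\gm(t)\in H^s_0(\M)$ (mean zero) for every $t$ because the sum in \eqref{GFF} and the stochastic convolution both omit the $n=0$ mode.

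The only genuinely delicate point—and the one I would present carefully rather than wave at—is the interchange of expectation and the infinite sum over $n$ together with the justification that the series representation \eqref{lp} indeed defines an $H^s_0(\M)$-valued process (as opposed to only a distribution), which is exactly the content of the finiteness of $\sum_n \jb{\ld_n}^{2s}\ld_n^{-2}\sim\sum_n n^{s-1}<\infty$ for $s<0$ via Weyl's law; everything else is routine. I would also remark that the result and proof are completely standard (cf., e.g., \cite{DPD,ORW}), so a brief indication along the above lines suffices.
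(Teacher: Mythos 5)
Your proof is correct and rests on the same ingredients as the paper's: the mode-by-mode covariance (the $N_1=N_2=\infty$ case of Lemma~\ref{LEM:covar}~(i)), the increment bound $\E\|\<1>_\gm(t_2)-\<1>_\gm(t_1)\|_{H^s}^2\lesssim|t_2-t_1|^{\theta}$, Gaussian hypercontractivity to upgrade to high moments, and Kolmogorov's continuity criterion. The one organizational difference is that the paper never works with the untruncated series directly: it estimates increments of the \emph{differences} $(\P_{N_1}-\P_{N_2})\<1>_\gm$, gaining an extra factor $\min(N_1,N_2)^{-\eps}$, and concludes that $\{\P_N\<1>_\gm\}$ is Cauchy in $L^p(\mu_\gm\otimes\Prob;C([0,T];H^s_0(\M)))$; this yields the continuity of the limit \emph{and} the convergence of the smooth approximations $\P_N\<1>_\gm$ in $C_TH^s_0$ in one stroke, which is convenient later when $\U_N$ is built from $\P_N\<1>_\gm$. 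Your route instead treats \eqref{lp} directly as a stationary Gaussian field (each mode a stationary OU process) and produces a continuous modification; that answers the lemma as stated, but does not by itself give the $C_TH^s_0$-convergence of the truncations that the paper's version packages in.

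One small correction: the summability condition you quote is off. With $\ld_n^2\sim cn$ by Weyl's law, $\sum_n\ld_n^{2s-2+2\theta}\sim\sum_n n^{s-1+\theta}$ converges iff $s-1+\theta<-1$, i.e.\ $\theta<-s$, which in the $\ld_n$-exponent reads $2s-2+2\theta<-2$, not $<-1$. Your conclusion survives because you only need \emph{some} $\theta\in(0,1)$, and any $0<\theta<-s$ works since $s<0$; but the threshold as written ($2s-2+2\theta<-1$) would admit values of $\theta$ for which the series diverges, so state it as $\theta<-s$.
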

\begin{proof}
Since 
\begin{align*}
\P_N\<1>_\gm = e^{\frac{t}{4\pi}\Dlg}(\P_NX_\gm) + \int_0^te^{\frac{t-t'}{4\pi}\Dlg}d(\P_NW_\gm)(t').
\end{align*}
 is Gaussian and stationary in view of Lemma~\ref{LEM:covar}~(i), it is enough to construct $\<1>_\gm$ as the limit of $\P_N\<1>_\gm$. Take any $s<0$, $p\ge 2$, $t,\dl>0$ and $N_1,N_2\in\N$. Write 
\begin{align*}
\<1>_{N_1-N_2} \deff (\P_{N_1}-\P_{N_2})\<1>_\gm,
\end{align*} 
which in view of Lemma~\ref{LEM:covar}~(i) has covariance function
\begin{align}\label{cov2}
\G_{N_1-N_2}(t_1,t_2,x_1,x_2)= 2\pi e^{\frac{t_2-t_1}{2\pi}\Dlg}\big((\P_{N_1}-\P_{N_2})\otimes(\P_{N_1}-\P_{N_2})\big)\Gg(x_1,x_2).
\end{align}
Then using Minkowski's inequality with $p\ge 2$ we can compute
\begin{align*}
&\Big\|\<1>_{N_1-N_2}(t+\dl)-\<1>_{N_1-N_2}(t)\Big\|_{L^{p}(\mu_\gm\otimes\Prob)H^s_0(\M)}\\ &\le \Big\|(1-\Dlg)^{\frac{s}2}\big[\<1>_{N_1-N_2}(t+\dl)-\<1>_{N_1-N_2}(t)\big]\Big\|_{L^2(\M)L^{p}(\mu_\gm\otimes\Prob)}\intertext{Using next that $\<1>_{N_1-N_2}$ is a Gaussian process, we can continue with}
&\les \Big\|(1-\Dlg)^{\frac{s}2}\big[\<1>_{N_1-N_2}(t+\dl)-\<1>_{N_1-N_2}(t)\big]\Big\|_{L^2(\M)L^{2}(\mu_\gm\otimes\Prob)}\\
&= \bigg\{\sum_{n\ge 0}\jb{\ld_n}^{2s}\int_{H^s_0(\M)}\E\Big|\langle\varphi_n,\big[\<1>_{N_1-N_2}(t+\dl)-\<1>_{N_1-N_2}(t)\big]\rangle\Big|^2d\mu_\gm\bigg\}^{\frac12}\\
&= \bigg\{\sum_{n\ge 0}\jb{\ld_n}^{2s}\langle\varphi_n\otimes\varphi_n,\G_{N_1-N_2}(t+\dl,t+\dl)-2\G_{N_1-N_2}(t+\dl,t) + \G_{N_1-N_2}(t,t)\rangle\bigg\}^{\frac12}\intertext{where $\langle\cdot,\cdot\rangle$ denotes the usual inner product in either $L^2(\M)$ or $L^2(\M\times\M)$. Thus in view of \eqref{cov2} and using the mean value theorem, we can continue with}
& =\bigg\{\sum_{n\ge 1}\jb{\ld_n}^{2s}\frac{4\pi}{\ld_n^2} \big(1-e^{-\frac{\dl}{2\pi}\ld_n^2}\big)\big(e^{-N_1^{-2}\ld_n^2}-e^{-N_2^{-2}\ld_n^2}\big)^2\bigg\}^{\frac12}\\
&\les \bigg\{\sum_{n\ge 1}\ld_n^{2s-2}\min\big(1,\dl \ld_n^2\big)\min\big(1,|N_1^{-2}-N_2^{-2}|\ld_n^2\big)\bigg\}^{\frac12}\\&\les \dl^{\frac{\eps}2}\min(N_1,N_2)^{-\eps}\big\{\sum_{n\ge 1}\ld_n^{2s-2+4\eps}\big\} \les \dl^{\frac{\eps}2}\min(N_1,N_2)^{-\eps}
\end{align*}
for $0<\eps\ll 1$ small enough, since $s<0$. In particular, this shows that
\begin{align*}
\int_{H^s_0(\M)}\E\big\|\<1>_{N_1-N_2}(t+\dl)-\<1>_{N_1-N_2}(t)\big\|_{H^s_0(\M)}^p \les \dl^{p\frac{\eps}2}\min(N_1,N_2)^{-p\eps},
\end{align*}
so that we can conclude from Kolmogorov's continuity criterion (see e.g. \cite[Theorem 8.2]{Bass}) that for any $T>0$, there exists $p\ge 1$ large enough such that $\{\P_N\<1>_\gm\}_N$ is a Cauchy sequence in $L^p(\mu_\gm\otimes\Prob;C([0,T];H^s_0(\M)))$ and in particular $\<1>_\gm \in C([0,T];H^s_0(\M))$ almost surely. This proves Lemma \ref{LEM:SC}.
\end{proof}

We now move on to the proof of Proposition \ref{PROP:U}. Similarly to \cite{ORW}, it will follow from the following set of estimates.
\begin{lemma}\label{LEM:UEst}
Assume that $a_\ell\in\R$, $\be >0$ and $Q=\frac{2}{\be}+\frac{\be}2$ satisfy the assumptions \eqref{A1}-\eqref{Seiberg1}-\eqref{Seiberg2b}.
Then, for any $0<\eps\ll1$ and $T>0$, there exists $C>0$ such that for any $t\in [0,T]$, any $x\in\M\setminus\{x_1,...,x_L\}$ and any $N,N_1,N_2\in\N$, $M\in 2^{\Z_{\ge -1}}$, the following statements hold:

\smallskip

\noi
\textup{(i)} We have 
\begin{align*}
\int_{H^s_0(\M,\gm)}\E \big[ | \U_N (t,x)| \big]d\mu_\gm \le C \prod_{\ell=1}^\ell\dg(x_\ell,x)^{-\be a_\ell^+}\in L^1([0,T]\times\M)
\end{align*}
where $a_\ell^+=\max(a_\ell,0)$;

\smallskip

\noi
\textup{(ii)} Let $p$ be an even integer, $0<\al<2$ and $0<(p-1)\be^2<2\min(1,\al)$ with $(p-1)\be^2+2\be a_{\ell_\textup{max}}^+<4$, then
 \begin{align}
 \int_{H^s_0(\M,\gm)}\E \Big[ \left|\Q_M\U_N(t,x) \right|^{p} \Big]d\mu_\gm \le C  M^{p(\al-\eps)}f_{\al-\eps,\{x_\ell\}}(x)^{\frac{p}2}, 
\label{momentpunc}
 \end{align}
  where $f_{\al-\eps,\{x_\ell\}}$ is given by
\begin{align}\label{fj}
f_{\al-\eps,\{x_\ell\}}(x) = &\sum_{\substack{\ell_1,\ell_2=1\\\ell_1\neq \ell_2}}^L\big(1+ \dg(x_{\ell_1},x)^{\al-\eps-\be a_{\ell_1}^+}\big)\big(1+ \dg(x_{\ell_2},x)^{\al-\eps-\be a_{\ell_2}^+}\big) \notag\\
&+ \sum_{\ell=1}^L \dg(x_\ell,x)^{2\al-2\eps-(p-1)\be^2-2\be a_\ell^+}
\end{align}
with $a_{\ell_\textup{max}}=\max_{\ell=1,...,L}a_\ell$;
\smallskip

\noi
\textup{(iii)} Given any $0<\be^2<2\min(1,\al)$ for $0<\al<2$ with $\be^2+2\be a_{\ell_\textup{max}}^+<4$,
 then it holds
\begin{align*}
\int_{H^s_0(\M,\gm)}\E \Big[ \big| \Q_M\big(\U_{N_1}(t,x)-\U_{N_2}(t,x)\big) \big|^{2} \Big]d\mu_\gm\le C 
M^{2(\al-\eps)}\big|N_1^{-2}-N_2^{-2}\big|^{-\wt\eps}f_{\al-\eps-2\wt\eps,\{x_\ell\}}(x)
\end{align*}

\noi
for some small $0<\wt\eps\ll\eps\ll 1$.\\
\textup{(iv)} If $\psi_1,\psi_2\in\S(\R)$ satisfy $\psi_1(0)=\psi_2(0)$, then
\begin{align}
\int_{H^s_0(\M,\gm)}\E \Big[ \big| \Q_M\big(\U_{N,1}(t,x)-\U_{N,2}(t,x)\big) \big|^{2} \Big]d\mu_\gm\le C 
M^{p\al-\eps}N^{-\wt\eps}f_{\al-\eps-2\wt\eps,\{x_\ell\}}(x),
\label{momentdiff2}
\end{align}
where 
\begin{align*}
\U_{N,j} = e^{-\pi\be^2C_{\psi_j}}N^{-\frac{\be^2}2}e^{\be\psi_j(-N^{-2}\Dlg)\<1>_\gm+\sum_{\ell=1}^L\be a_\ell2\pi(\psi_j\otimes\psi_j)(-N^{-2}\Dlg)\Gg(x_\ell,x)}.
\end{align*}
\end{lemma}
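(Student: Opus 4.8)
The plan is to prove Lemma~\ref{LEM:UEst} by reducing everything to explicit computations with the covariance functions in Lemma~\ref{LEM:covar} together with the pointwise bounds on regularizations of the Green's function established in Section~\ref{SEC:background}. The key point is that $\U_N(t,x)$ (and $\Q_M\U_N(t,x)$) are functionals of a Gaussian field, so all moments reduce to Gaussian computations. For part (i), I would use Lemma~\ref{LEM:covar}~(ii) with $p$ a half-integer, i.e. directly compute $\E[\U_N(t,x)]$: since $\U_N$ is the exponential of a Gaussian plus deterministic terms, $\E[\U_N(t,x)] = e^{\frac{\be^2}{2}\s_N(x) + \pi\be^2\wt\Gg(x,x) - \frac{\be^2}{2}\s_N(x) + o(1)}\,e^{2\pi\be\sum_\ell a_\ell (\P_N\otimes\P_N)\Gg(x_\ell,x)}$, where I have used Lemma~\ref{LEM:GN3} to cancel the renormalization constant against the variance $\s_N(x)$. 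Then Corollary~\ref{COR:GN} gives $(\P_N\otimes\P_N)\Gg(x_\ell,x) \le -\frac1{2\pi}\log(\dg(x_\ell,x) + N^{-1}) + O(1) \le -\frac1{2\pi}\log\dg(x_\ell,x) + O(1)$, which yields the bound $\prod_\ell \dg(x_\ell,x)^{-\be a_\ell}$ when $a_\ell > 0$ (and an $O(1)$ bound when $a_\ell \le 0$, using the lower bound on $\Gg$ from Lemma~\ref{LEM:Green}~(ii)), hence $\prod_\ell \dg(x_\ell,x)^{-\be a_\ell^+}$; integrability over $\M$ follows from the integrability of $\dg(x_\ell,\cdot)^{-\be a_\ell^+}$ which holds precisely because $\be a_\ell^+ < 2$ under \eqref{Seiberg2b}.

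For part (ii), I would first expand $\Q_M\U_N(t,x)$ via its kernel, so that $\E|\Q_M\U_N(t,x)|^p$ becomes a $p$-fold spatial integral of the $p$-point covariance of $\U_N$ against a product of Littlewood--Paley kernels $\Q_M(x,y_j)$. Using Lemma~\ref{LEM:covar}~(ii), the $p$-point correlation factorizes into $e^{\pi\be^2\sum \wt\Gg(y_j,y_j)}\cdot e^{2\pi\be^2\sum_{j<k}(\P_N\otimes\P_N)\Gg(y_j,y_k)}\cdot e^{2\pi\be\sum_{j,\ell}a_\ell(\P_N\otimes\P_N)\Gg(x_\ell,y_j)}$. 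Applying Corollary~\ref{COR:GN} to all these Green's function terms turns the correlation into (a constant times) $\prod_{j<k}(\dg(y_j,y_k)+N^{-1})^{-2\pi\be^2/(2\pi)} \prod_{j,\ell}(\dg(x_\ell,y_j)+N^{-1})^{-\be a_\ell^+}$, i.e. a product of negative powers of pairwise distances. Dropping the $N^{-1}$ (which only helps) and the kernel decay $\Q_M(x,y_j) \les M^2\jb{M\dg(x,y_j)}^{-A}$, one gets a multilinear singular integral of exactly the type controlled by the (geometric) Brascamp--Lieb inequality, Lemma~\ref{LEM:BL}. The heuristic is: each pair $(j,k)$ contributes a factor $\dg(y_j,y_k)^{-\be^2}$ after symmetrizing the exponent $(p-1)\be^2$ across the $p$ variables is not quite it --- rather one uses the power $\be^2$ on each pair with the constraint being that the total homogeneity works out; using Besov-space localization this is where the $M^{p(\al - \eps)}$ factor and the auxiliary function $f_{\al-\eps,\{x_\ell\}}$ in \eqref{fj} emerge, the two terms in \eqref{fj} corresponding respectively to the case where two different insertions $x_{\ell_1}, x_{\ell_2}$ are ``close'' to two of the $y_j$'s, and the case where a single insertion dominates and one must pay the extra $(p-1)\be^2$ from the mutual interaction of the $y_j$'s.

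For parts (iii) and (iv), I would run a standard difference argument. Writing $\U_{N_1} - \U_{N_2}$ as a telescoping/integrated expression, one has an elementary inequality $|e^{a} - e^{b}| \le |a-b|(e^a + e^b)$, so $\E|\Q_M(\U_{N_1} - \U_{N_2})(t,x)|^2$ is controlled by a second moment in which one of the factors involves the difference of the Gaussian fields $\psi_1(-N_1^{-2}\Dlg)\<1>_\gm - \psi_2(-N_2^{-2}\Dlg)\<1>_\gm$ (resp. the difference of regularizations with $\psi_1 \ne \psi_2$), together with the difference of the deterministic Green's function pieces $(\P_{N_1}\otimes\P_{N_2})\Gg - (\P_{N_j}\otimes\P_{N_j})\Gg$. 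The decay in $|N_1^{-2} - N_2^{-2}|$ (resp.\ in $N$) then comes from Lemma~\ref{LEM:GN1}~\eqref{GN2} (resp.\ Lemma~\ref{LEM:GN4}~\eqref{GN3}): the ``$\min$'' structure there, when interpolated ($\min\{A, B\cdot \text{dist}^{-1}\} \les A^{1-\theta}(B\,\text{dist}^{-1})^\theta$ for small $\theta = \wt\eps$), produces a factor $|N_1^{-2} - N_2^{-2}|^{\wt\eps}$ (resp. $N^{-\wt\eps}$) at the cost of worsening the singularity exponent by $2\wt\eps$, which is exactly why $f$ appears with parameter $\al - \eps - 2\wt\eps$ in the statements; after extracting this gain one repeats the Brascamp--Lieb estimate of part (ii).

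The main obstacle is part (ii): correctly setting up the multilinear singular integral so that the Brascamp--Lieb inequality (Lemma~\ref{LEM:BL}, with the specific exponent $\frac1{2p-1}$) applies, and carefully bookkeeping the Besov localization to extract the sharp power $M^{p(\al - \eps)}$ with the right function $f_{\al-\eps,\{x_\ell\}}$ --- in particular identifying which pairings of the integration variables $y_j$ with the insertion points $x_\ell$ are the dominant ones and verifying that the exponent constraints $(p-1)\be^2 < 2\min(1,\al)$ and $(p-1)\be^2 + 2\be a_{\ell_\textup{max}}^+ < 4$ are precisely what is needed for the relevant integrals $\int_\M \dg(x_\ell,y)^{-\be a_\ell^+ - \text{(something)}}\,d\Vg(y)$ and $\int_\M \dg(x_\ell,y)^{-(p-1)\be^2 - 2\be a_\ell^+ + \cdots}\,d\Vg(y)$ to converge. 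Once this combinatorial/analytic core is in place for part (ii), parts (i), (iii), (iv) are comparatively routine perturbations of it.
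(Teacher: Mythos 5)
Your outline for parts (i) and (ii) follows the paper's own route: (i) is the direct Gaussian computation combined with Lemma~\ref{LEM:covar}, Lemma~\ref{LEM:Green} and the upper bound on $(\P_N\otimes\P_N)\Gg$, and (ii) is the kernel expansion of $\Q_M\U_N$, the $2p$-point correlation of Lemma~\ref{LEM:covar}~(ii), the Green's-function bounds, the Brascamp--Lieb inequality with exponent $\frac1{2p-1}$, and then the case-by-case singular-integral analysis producing $f_{\al-\eps,\{x_\ell\}}$. You leave that case analysis (the bulk of the work) unexecuted, but the tools and the role of the hypotheses are correctly identified; the bookkeeping you were unsure about is simply to keep the full pairwise factor $e^{2\pi\be^2(\P_N\otimes\P_N)\Gg(y_j,y_k)}$ on each pair while distributing the insertion and kernel factors with exponent $\frac1{2p-1}$, so that after Brascamp--Lieb each two-dimensional integral carries $\dg(y,z)^{-(p-1)\be^2}$.

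The genuine gap is in parts (iii) and (iv). The pathwise inequality $|e^a-e^b|\le |a-b|(e^a+e^b)$ cannot produce the decay in $\min(N_1,N_2)$ (resp.\ in $N$). Writing $\U_{N_j}=e^{A_j}$ with the renormalization constants absorbed into $A_j$, the difference $A_1-A_2$ contains the Gaussian field $\be(\P_{N_1}-\P_{N_2})\<1>_\gm(t,y)$, whose variance equals $2\pi\be^2\big[(\P_{N_1}-\P_{N_2})\otimes(\P_{N_1}-\P_{N_2})\big]\Gg(y,y)\sim \be^2\log(N_2/N_1)$, together with the difference of Wick constants $\tfrac{\be^2}2\log(N_2/N_1)$: neither is small (they are $O(1)$ even for $N_2\sim N_1$ and grow as $N_1,N_2$ separate), so after taking expectations your bound is comparable to, or larger than, the second moment of $\U_N$ itself, with no factor $N^{-\wt\eps}$, and a telescoping sum does not converge either. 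The required smallness comes from a cancellation that the pathwise inequality destroys: one must first expand $\int\E\big|\Q_M(\U_{N_1}-\U_{N_2})(t,x)\big|^2d\mu_\gm$ completely into the three covariance terms $H_{N_i}(y_1)H_{N_j}(y_2)e^{2\pi\be^2(\P_{N_i}\otimes\P_{N_j})\Gg(y_1,y_2)}$, and only then apply the mean value theorem to the deterministic differences $(\P_{N_1}\otimes\P_{N_1})\Gg-(\P_{N_1}\otimes\P_{N_2})\Gg$ and $(\P_{N_1}\otimes\P_{N_1})\Gg-(\P_{N_2}\otimes\P_{N_2})\Gg$ (and to the insertion terms in $H_{N_1}-H_{N_2}$), which are genuinely small off the diagonal by Lemma~\ref{LEM:GN1}~\eqref{GN2}; interpolating the min there yields the factor $N_1^{-c\wt\eps}\dg^{-\wt\eps}$, after which the integrals are estimated as in (ii) (for the second moment no Brascamp--Lieb is needed). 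Note that the mixed regularization $(\P_{N_1}\otimes\P_{N_2})\Gg$ you invoke only ever appears at this covariance level, not pathwise, which is a sign that your argument is implicitly the covariance computation and should be carried out as such; the same applies to (iv) with Lemma~\ref{LEM:GN4} in place of Lemma~\ref{LEM:GN1}.
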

\begin{proof}
The proof of Lemma \ref{LEM:UEst} follows from the same argument as for \cite[Proposition 3.2]{ORW}. Indeed, (i) is a straightforward computation: using Lemmas \ref{LEM:covar}, \ref{LEM:Green}, \ref{LEM:GN1} and \ref{LEM:GN4}, we indeed obtain
\begin{align*}
\int_{H^s_0(\M,\gm)}\E \big[ | \U_N (t,x)| \big]d\mu_\gm &\le C e^{2\pi\be\sum_{\ell=1}^L a_\ell(\P_N\otimes\P_N)\Gg(x_\ell,x)}\\&\le C \prod_{\ell=1}^L e^{\be a_\ell^+2\pi\big(\Gg(x_\ell,x)+(N^2\Vg(\M))^{-1}\big)}\\
&\le C\prod_{\ell=1}^L e^{-\be a_\ell^+\log\big(\dg(x_\ell,x)\big)},
\end{align*}
where we used that both $(N^2\Vg(\M))^{-1}$ and $\wt\Gg$ in Lemma \ref{LEM:Green} are bounded (uniformly in $N$). We also used that the metric is smooth and that $\M$ is compact. This shows (i).

 As for (ii), recall that the kernel $\K_M$ of $\Q_M$ has been defined in \eqref{K1} with $\psi$ as in \eqref{QM}, so that with Lemma \ref{LEM:covar} we can compute for any $p=2q\in\N$
\begin{align*}
&\int_{H^s_0(\M,\gm)}\E\Big[\Big|\Q_M\U_N(t,x)\Big|^{2q}\Big]d\mu_\gm\\
&= \int_{\M}\cdots \int_{\M} \int_{H^s_0(\M,\gm)}\E\Big[\prod_{j=1}^{2q}\U_N(t,y_j)\Big]d\mu_\gm\prod_{j=1}^{2q}\K_M(x,y_j)d\Vg(y_j)\\
&=\int_{\M}\cdots \int_{\M} e^{2\pi\be^2 \sum_{j<k}(\P_N\otimes\P_N)\Gg(y_j,y_k)}e^{\pi\be^2\sum_{j=1}^{2p}\wt\Gg(y_j,y_j)+o(1)}\\
&\qquad\times \prod_{j=1}^{2q}e^{2\pi\be\sum_{\ell=1}^La_\ell (\P_N\otimes\P_N)\Gg(x_\ell,y_j)}\K_M(x,y_j)d\Vg(y_j)\\
&\le C\int_{\M^{2q}} \prod_{j<k}\Big[e^{2\pi\be^2(\P_N\otimes\P_N)\Gg(y_j,y_k)} e^{\frac{2\pi\be}{(2q-1)}\sum_{\ell=1}^La_\ell \big((\P_N\otimes\P_N)\Gg(x_\ell,y_j)+(\P_N\otimes\P_N)\Gg(x_\ell,y_k)\big)}\\
&\qquad\times |\K_M(x,y_j)|^{\frac1{2q-1}}|\K_M(x,y_k)|^{\frac1{2q-1}}\Big]d\Vg(\vec{y}),\intertext{where $\vec{y} = (y_1,...,y_{2q}) \in\M^{2q}$ and $\Vg(\vec{y})$ is the corresponding product measure on $\M^{2q}$. We can then use the Brascamp-Lieb inequality given by Lemma \ref{LEM:BL} with the smoothness of $\gm$ and the compactness of $\M$ to continue with}
&\les \prod_{j<k}\Big(\int_\M\int_\M e^{(2q-1)2\pi\be^2(\P_N\otimes\P_N)\Gg(y_j,y_k)} e^{2\pi\be\sum_{\ell=1}^La_\ell \big((\P_N\otimes\P_N)\Gg(x_\ell,y_j)+(\P_N\otimes\P_N)\Gg(x_\ell,y_k)\big)}\\
&\qquad\qquad\times |\K_M(x,y_j)\K_M(x,y_k)|d\Vg(y_j)d\Vg(y_k)\Big)^{\frac1{2q-1}}\intertext{Then, using Lemmas \ref{LEM:Green}, \ref{LEM:GN1} and \ref{LEM:PM}, we can bound by symmetry this last term by}
&\les \Big(\int_\M\int_\M \dg(y,z)^{-(2q-1)\be^2}\prod_{\ell=1}^L\dg(x_\ell,y)^{-\be a_\ell^+}\dg(x_\ell,z)^{-\be a_\ell^+}\\
&\qquad\qquad\times M^2\jb{M\dg(x,y)}^{-A}M^2\jb{M\dg(x,z)}^{-A}d\Vg(y)d\Vg(z)\Big)^{q}
 \end{align*}
 for any $A>0$. In particular, using that $p=2q$ and taking $A=2-\al+\eps>0$ for some $0<\eps\ll 1$ and using that $M^2\jb{M\dg(x,y)}^{-A}\les M^{\al-\eps}\dg(x,y)^{\al-2-\eps}$,  (ii) will be established once we show that the double integrals
 \begin{align*}
& \int_\M\int_\M \dg(x,y)^{\al-2-\eps}\dg(x,z)^{\al-2-\eps} \dg(y,z)^{-(p-1)\be^2}\\
&\qquad\qquad\times\prod_{\ell=1}^L\dg(x_\ell,y)^{-\be a_\ell^+}\dg(x_\ell,z)^{-\be a_\ell^+}d\Vg(y)d\Vg(z)
 \end{align*}
 are bounded by $f_{\al-\eps,\{x_\ell\}}(x)$ as in \eqref{fj}.
 
To bound this last double integral, first note that we only need to consider one of the singularities $\dg(x_\ell,y)^{-\be a_\ell^+}$ and similarly for $z$. Indeed, if $0<r\ll\min_{\ell\neq k}\dg(x_\ell,x_k)$ and $B_\ell$ is the ball of radius $r$ around $x_\ell$, we can bound the previous integrals with
\begin{align}
&\sum_{\ell,k=1}^Lr^{-\sum_{\ell'\neq \ell}\be a_{\ell'}-\sum_{k'\neq k}\be a_{k'}}\int_{B_\ell}\int_{B_k}\dg(x,y)^{\al-2-\eps}\dg(x,z)^{\al-2-\eps}\notag\\
&\qquad\times\dg(x_\ell,y)^{-\be a_\ell^+}\dg(x_k,z)^{-\be a_k^+}\dg(y,z)^{-(p-1)\be^2}d\Vg(y)d\Vg(z)\notag\\
&+2\sum_{\ell=1}^Lr^{-\sum_{\ell'\neq \ell}\be a_{\ell'}-\sum_k \be a_k^+}\int_{B_\ell}\int_{\M\setminus(\cup_k B_k)}\dg(x,y)^{\al-2-\eps}\dg(x,z)^{\al-2-\eps}\notag\\
&\qquad\qquad\times\dg(x_\ell,y)^{-\be a_\ell^+}\dg(y,z)^{-(p-1)\be^2}d\Vg(y)d\Vg(z)\notag\\
&+r^{-2\sum_{\ell=1}^L\be a_\ell^+}\int_{\M\setminus(\cup_\ell B_\ell)}\int_{\M\setminus(\cup_\ell B_\ell)}\dg(x,y)^{\al-2-\eps}\dg(x,z)^{\al-2-\eps}\dg(y,z)^{-(p-1)\be^2}d\Vg(y)d\Vg(z)\notag\\
& =\1 + \II + \III.
\label{U1}
\end{align}

We first deal with the integrals of $\III$ in \eqref{U1}. In the case $\dg(x,y)\ll \dg(x,z)\sim\dg(y,z)$ we bound them with
\begin{align*}
&\int_{\M}\dg(x,z)^{\al-2-\eps-(p-1)\be^2}\int_{\dg(x,y)\ll\dg(x,z)}\dg(x,y)^{\al-2-\eps}d\Vg(y)d\Vg(z)\\
&\les\int_{\M}\dg(x,z)^{2\al-2-\eps-(p-1)\be^2}d\Vg(z)\les 1
\end{align*}
uniformly in $x\in\M$, where in the first step we used that $\al>0$ with $0<\eps\ll 1$ and in the last one that $2\al>(p-1)\be^2$. The case $\dg(x,z)\ll\dg(x,y)\sim\dg(y,z)$ is handled similarly by symmetry, and for the case $\dg(y,z)\les \dg(x,y)\sim\dg(x,z)$ we have the bound
\begin{align*}
&\int_{\M}\dg(x,y)^{2\al-4-2\eps}\int_{\dg(y,z)\les\dg(x,y)}\dg(y,z)^{-(p-1)\be^2}d\Vg(z)d\Vg(y)\\
&\les\int_{\M}\dg(x,y)^{2\al-2-2\eps-(p-1)\be^2}\les 1,
\end{align*}
where this time we used in the first step that $(p-1)\be^2<2$.

We now turn to the terms of $\1$ in \eqref{U1}, and for each $\ell,k=1,...,L$ we estimate
\begin{align*}
\int_{B_\ell}\int_{B_k}\dg(x,y)^{\al-2-\eps}\dg(x,z)^{\al-2-\eps}\dg(x_\ell,y)^{-\be a_\ell^+}\dg(x_k,z)^{-\be a_k^+}\dg(y,z)^{-(p-1)\be^2}d\Vg(y)d\Vg(z).
\end{align*}
\textbf{Case 1: if $\ell\neq k$.} In this case the integrals can be bounded by
\begin{align}\label{RHS1}
&r^{-(p-1)\be^2}\Big(\int_{B_\ell}\dg(x,y)^{\al-2-\eps}\dg(x_\ell,y)^{-\be a_\ell^+}d\Vg(y)\Big)\Big(\int_{B_k}\dg(x,z)^{\al-2-\eps}\dg(x_k,z)^{-\be a_k^+}d\Vg(z)\Big).
\end{align}
From \cite[Proposition 4.12]{Aubin}, we have that\begin{align*}
\int_{B_\ell}\dg(x,y)^{\al-2-\eps}\dg(x_\ell,y)^{-\be a_\ell^+}d\Vg(y) \les \begin{cases}
1\text{ if }\al> \be a_\ell^++\eps\\
1+\big|\log\big(\dg(x,x_\ell)\big)\big| \text{ if }\al=\be a_\ell^++\eps,\\
\dg(x,x_\ell)^{\al-\be a_\ell^+-\eps} \text{ if }\al < \be a_\ell^++\eps,
\end{cases}
\end{align*}
and similarly for the integral in $z$. In all three cases, the terms \eqref{RHS1} are then bounded by the first terms in the right-hand side of \eqref{fj}. \\
\textbf{Case 2: if $\ell=k$.} We now need to bound
\begin{align}\label{integral}
\int_{B_\ell}\int_{B_\ell}\dg(x,y)^{\al-2-\eps}\dg(x,z)^{\al-2-\eps}\dg(x_\ell,y)^{-\be a_\ell^+}\dg(x_\ell,z)^{-\be a_\ell^+}\dg(y,z)^{-(p-1)\be^2}d\Vg(y)d\Vg(z).
\end{align}
To estimate these integrals we look at the following regions:
\begin{align*}
&\Rg_1 \deff \big\{y\in B_\ell,~\dg(x,y)\ll \dg(x_\ell,y)\sim\dg(x,x_\ell)\big\},\\
&\Rg_2 \deff \big\{y\in B_\ell,~\dg(x_\ell,y)\ll \dg(x,y)\sim\dg(x,x_\ell)\big\},\\
&\Rg_3 \deff \big\{y\in B_\ell,~\dg(x,x_\ell)\les \dg(x,y)\sim\dg(x_\ell,y)\big\},
\end{align*}
and for $i_1,i_2\in\{1,2,3\}$ we define the subregion of $B_\ell\times B_\ell$ as
\begin{align*}
\Rg_{i_1,i_2}\deff \Rg_{i_1}\times \Rg_{i_2}.
\end{align*}
In particular note that $B_\ell\times B_\ell = \cup_{i_1,i_2=1}^3 \Rg_{i_1,i_2}$.\\

\noi
\textbf{Contribution of $\Rg_{1,1}$:} In this region we can bound \eqref{integral} with
\begin{align}\label{integral11}
\dg(x,x_\ell)^{-2\be a_\ell^+}\int_{\Rg_1}\int_{\Rg_1}\dg(x,y)^{\al-2-\eps}\dg(x,z)^{\al-2-\eps}\dg(y,z)^{-(p-1)\be^2}d\Vg(z)d\Vg(y).
\end{align}
Proceeding then as for the second line of \eqref{U1}, in the case $\dg(x,y)\les \dg(x,z)\sim \dg(y,z)$ we get the bound
\begin{align*}
&\dg(x,x_\ell)^{-2\be a_\ell^+}\int_{\Rg_1}\dg(x,z)^{\al-2-\eps-(p-1)\be^2}\int_{\dg(x,y)\les \dg(x,z)}\dg(x,y)^{\al-2-\eps}d\Vg(y)d\Vg(z)\\
&\les \dg(x,x_\ell)^{-2\be a_\ell^+}\int_{\Rg_1}\dg(x,z)^{2\al-2-2\eps-(p-1)\be^2}d\Vg(z)\\
&\les \dg(x,x_\ell)^{-2\be a_\ell^++2\al-2\eps-(p-1)\be^2}
\end{align*}
where in the last step we used the condition $2\al>(p-1)\be^2$ along with the definition of $\Rg_1$. The case $\dg(x,z)\les \dg(x,y)\sim \dg(y,z)$ is treated similarly by exchanging the roles of $y$ and $z$, and in the case $\dg(y,z)\les \dg(x,y)\sim \dg(x,z)$ we can bound \eqref{integral11} with
\begin{align*}
&\dg(x,x_\ell)^{-2\be a_\ell^+}\int_{\Rg_1}\dg(x,z)^{2\al-4-2\eps}\int_{\dg(y,z)\les \dg(x,z)}\dg(y,z)^{-(p-1)\be^2}d\Vg(y)d\Vg(z)\\
&\les\dg(x,x_\ell)^{-2\be a_\ell^+}\int_{\Rg_1}\dg(x,z)^{2\al-2-2\eps-(p-1)\be^2}d\Vg(z)\\
&\les \dg(x,x_\ell)^{-2\be a_\ell^++2\al-2\eps-(p-1)\be^2},
\end{align*} 
where in the first step we used the condition $(p-1)\be^2<2$ and in the second step we used that $2\al>(p-1)\be^2$.\\

\noi
\textbf{Contribution of $\Rg_{1,2}$:} In this region, note that we have
\begin{align*}
\dg(y,z)\ge \dg(x,x_\ell)-\dg(x,y)-\dg(x_\ell,z)\gtrsim \dg(x,x_\ell).
\end{align*}
Thus we estimate \eqref{integral} with
\begin{align*}
&\dg(x,x_\ell)^{\al-2-\eps-\be a_\ell^+-(p-1)\be^2}\int_{\Rg_1}\int_{\Rg_2}\dg(x,y)^{\al-2-\eps}\dg(x_\ell,z)^{-\be a_\ell^+}d\Vg(z)d\Vg(y)\\
&\les \dg(x,x_\ell)^{2\al-2\eps-2\be a_\ell^+-(p-1)\be^2}
\end{align*}
by using that $\al-2-\eps>-2$ and $\be a_\ell^+<2$.\\

\noi
\textbf{Contribution of $\Rg_{1,3}$:} In this region we have in particular $\dg(x,y)\ll \dg(x,z)$ so that $\dg(y,z)\sim\dg(x,z)$. Thus the contribution of this region in \eqref{integral} is bounded by
\begin{align*}
\dg(x,x_\ell)^{-2\be a_\ell^+}\int_{\Rg_1}\int_{\Rg_3}\dg(x,y)^{\al-2-\eps}\dg(x,z)^{\al-2-\eps-(p-1)\be^2}d\Vg(z)d\Vg(y).
\end{align*}
Since for any fixed $z$ it holds
\begin{align*}
\int_{\dg(x,y)\ll \dg(x,z)}\dg(x,y)^{\al-2-\eps}d\Vg(y)\les \dg(x,z)^{\al-\eps},
\end{align*}
we get the final bound
\begin{align*}
\dg(x,x_\ell)^{-2\be a_\ell^+}\int_{\dg(x,x_\ell)\les\dg(x,z)}\dg(x,z)^{2\al-2-2\eps-(p-1)\be^2}d\Vg(z) \les \dg(x,x_\ell)^{2\al-2\eps-2\be a_\ell^+-(p-1)\be^2}
\end{align*}
since $2\al>(p-1)\be^2$.\\

\noi
\textbf{Contribution of $\Rg_{2,2}$:} By symmetry, the contribution of this region can be estimated exactly as for $\Rg_{1,1}$ up to exchanging the roles of $x$ and $x_\ell$, and $\al-2-\eps$ with $-\be a_\ell^+$. Indeed, we can bound \eqref{integral} with
\begin{align*}
\dg(x,x_\ell)^{2\al-4-2\eps}\int_{\Rg_2}\int_{\Rg_2}\dg(x_\ell,y)^{-\be a_\ell^+}\dg(x_\ell,z)^{-\be a_\ell^+}\dg(y,z)^{-(p-1)\be^2}d\Vg(z)d\Vg(y).
\end{align*}
As above, in the case $\dg(x_\ell,y)\les \dg(x_\ell,z)\sim \dg(y,z)$ we get the bound
\begin{align*}
&\dg(x,x_\ell)^{2\al-4-2\eps}\int_{\Rg_2}\dg(x_\ell,z)^{-\be a_\ell^+-(p-1)\be^2}\int_{\dg(x_\ell,y)\les \dg(x_\ell,z)}\dg(x_\ell,y)^{-\be a_\ell^+}d\Vg(y)d\Vg(z)\\
&\les \dg(x,x_\ell)^{2\al-4-2\eps}\int_{\Rg_2}\dg(x_\ell,z)^{2-2\be a_\ell^+-(p-1)\be^2}d\Vg(z)\\
&\les \dg(x,x_\ell)^{-2\be a_\ell^++2\al-2\eps-(p-1)\be^2}
\end{align*}
where we used the conditions $\be a_\ell^+<2$ and $2\be a_\ell^++(p-1)\be^2<4$. The case $\dg(x_\ell,z)\les \dg(x_\ell,y)\sim \dg(y,z)$ is treated similarly by exchanging the roles of $y$ and $z$, and in the case $\dg(y,z)\les \dg(x_\ell,y)\sim \dg(x_\ell,z)$ we can bound the integral by
\begin{align*}
&\dg(x,x_\ell)^{2\al-4-2\eps}\int_{\Rg_2}\dg(x_\ell,z)^{-2\be a_\ell^+}\int_{\dg(y,z)\les \dg(x_\ell,z)}\dg(y,z)^{-(p-1)\be^2}d\Vg(y)d\Vg(z)\\
&\les\dg(x,x_\ell)^{2\al-4-2\eps}\int_{\Rg_2}\dg(x_\ell,z)^{2-2\be a_\ell^+-(p-1)\be^2}d\Vg(z)\\
&\les \dg(x,x_\ell)^{-2\be a_\ell^++2\al-2\eps-(p-1)\be^2},
\end{align*} 
where in the first step we used the condition $(p-1)\be^2<2$ and in the second step we used that $2\be a_\ell^++(p-1)\be^2<4$.\\

\noi
\textbf{Contribution of $\Rg_{2,3}$: } This is similar to the contribution of $\Rg_{1,3}$ above, up to exchanging $x$ with $x_\ell$ and $\al-2-\eps$ with $-\be a_\ell^+$ as for $\Rg_{2,2}$.\\

\noi
\textbf{Contribution of $\Rg_{3,3}$: } In this last case, we can bound \eqref{integral} with
\begin{align*}
\int_{\Rg_3}\int_{\Rg_3}\dg(x,y)^{\al-2-\eps-\be a_\ell^+}\dg(x,z)^{\al-2-\eps-\be a_\ell^+}\dg(y,z)^{-(p-1)\be^2}d\Vg(y)d\Vg(z).
\end{align*}
In the case $\dg(x,y)\les \dg(x,z)\sim \dg(y,z)$, these integrals reduce to
\begin{align*}
&\int_{\Rg_3}\int_{\Rg_3}\mathbf{1}_{\{\dg(x,y)\les \dg(x,z)\}}\dg(x,z)^{\al-2-\eps-\be a_\ell^+-(p-1)\be^2}\dg(x,y)^{\al-2-\eps-\be a_\ell^+}d\Vg(y)d\Vg(z).
\end{align*}
If $\al - \be a_\ell - (p-1)\be^2<0$, we integrate in $z$ first to get the bound
\begin{align*}
\int_{\Rg_3}\dg(x,y)^{2\al-2-2\eps -2\be a_\ell^+-(p-1)\be^2}d\Vg(y) \les 1+ \dg(x,x_\ell)^{2\al-2\eps-2\be a_\ell^+-(p-1)\be^2}
\end{align*}
where in the last step we used the definition of $\Rg_3$. On the other hand, if $\al-\be a_\ell^++ - (p-1)\be^2\ge 0$ then in particular $\al-\be a_\ell^+\ge 0$ and thus we can integrate in $y$ first to obtain the bound
\begin{align*}
\int_{\Rg_3}\dg(x,z)^{2\al-2-2\eps -2\be a_\ell^+-(p-1)\be^2}d\Vg(z) \les 1+ \dg(x,x_\ell)^{2\al-2\eps-2\be a_\ell^+-(p-1)\be^2}.
\end{align*}
In the last case $\dg(y,z)\les \dg(x,y)\sim\dg(x,z)$, the integrals reduce to
\begin{align*}
\int_{\Rg_3}\int_{\Rg_3}\dg(x,y)^{2\al-4-2\eps-2\be a_\ell^+}\dg(y,z)^{-(p-1)\be^2}d\Vg(y)d\Vg(z)
\end{align*}
and integrating in $z$ first using that $(p-1)\be^2<2$ we finally get the bound
\begin{align*}
\int_{\Rg_3}\dg(x,y)^{2\al-2-2\eps-2\be a_\ell^+-(p-1)\be^2}d\Vg(y)\les 1+ \dg(x,x_\ell)^{2\al-2\eps-2\be a_\ell^+-(p-1)\be^2}.
\end{align*}
By symmetry of \eqref{integral} between $y$ and $z$, the contribution of the remaining regions is estimated similarly. 

At last, it remains to deal with the integrals of $\II$ in \eqref{U1}. But we can estimate
\begin{align*}
&\int_{B_\ell}\int_{\M\setminus(\cup_k B_k)}\dg(x,y)^{\al-2-\eps}\dg(x,z)^{\al-2-\eps}\dg(x_\ell,y)^{-\be a_\ell^+}\dg(y,z)^{-(p-1)\be^2}d\Vg(y)d\Vg(z)\\
&\les \dg(x,x_\ell)^{2\al-2-2\eps-\be a_\ell^+-(p-1)\be^2}
\end{align*}
by similar computations as above. Note that this requires the constraint $\be a_\ell^++(p-1)\be^2<4$, which is weaker than the constraint used to bound the integrals of $\III$ in \eqref{U1}. All in all, this leads to \eqref{fj}.

Finally, Lemma \ref{LEM:UEst} (iii) follows from similar computations as in the proof of \cite[Proposition 1.1]{ORSW}: indeed, we have
\begin{align}
&\int_{H^s_0(\M,\gm)}\E \Big[ \big|\Q_M\big(\U_{N_1}(t,x)-\U_{N_2}(t,x)\big) \big|^{2} \Big]d\mu_\gm\notag\\ &= \int_{\M}\int_{\M} \K_M(x,y_1)\K_M(x,y_2)\Big\{H_{N_1}(y_1)H_{N_1}(y_2)e^{2\pi\be^2(\P_{N_1}\otimes\P_{N_1})\Gg(y_1,y_2)}\notag\\
&\qquad\qquad-2H_{N_1}(y_1)H_{N_2}(y_2)e^{2\pi\be^2(\P_{N_1}\otimes\P_{N_2})\Gg(y_1,y_2)}\label{U2}\\
&\qquad\qquad+H_{N_2}(y_1)H_{N_2}(y_2)e^{2\pi\be^2(\P_{N_2}\otimes\P_{N_2})\Gg(y_1,y_2)}\Big\}d\Vg(y_1)d\Vg(y_2),\notag
\end{align}
where we wrote
\begin{align}
H_N(y)=e^{2\pi\be\sum_{\ell=1}^L a_\ell(\P_N\otimes\P_N)\Gg(x_\ell,y)}e^{\pi\be^2\wt\Gg(y,y)+o(1)}.
\label{HN}
\end{align}
We deal with
\begin{align*}
&H_{N_1}(y_1)H_{N_1}(y_2)e^{2\pi\be^2(\P_{N_1}\otimes\P_{N_1})\Gg(y_1,y_2)}-H_{N_1}(y_1)H_{N_2}(y_2)e^{2\pi\be^2(\P_{N_1}\otimes\P_{N_2})\Gg(y_1,y_2)}\\
&=\Big(H_{N_1}(y_2)-H_{N_2}(y_2)\Big)H_{N_1}(y_1)e^{2\pi\be^2(\P_{N_1}\otimes\P_{N_1})\Gg(y_1,y_2)}\\
&\qquad\qquad + H_{N_1}(y_1)H_{N_2}(y_2)\Big(e^{2\pi\be^2(\P_{N_1}\otimes\P_{N_1})\Gg(y_1,y_2)}-e^{2\pi\be^2(\P_{N_1}\otimes\P_{N_2})\Gg(y_1,y_2)}\Big)\\
&=\1+\II.
\end{align*}
Using the mean value theorem, Lemma \ref{LEM:GN1} by interpolating between the two bounds in \eqref{GN2}, and Lemma \ref{LEM:Green} with the definition of $H_N$ \eqref{HN}, we have
\begin{align*}
\big|\1\big|&\les \sum_{\ell=1}^L\Big|(\P_{N_1}\otimes\P_{N_1})\Gg(x_\ell,y_2)-(\P_{N_2}\otimes\P_{N_2})\Gg(x_\ell,y_2)\Big|\\
&\qquad\times\Big(\prod_{k=1}^L\dg(x_k,y_2)^{-\be a_k^+}\Big)H_{N_1}(y_1)e^{2\pi\be^2(\P_{N_1}\otimes\P_{N_1})\Gg(y_1,y_2)}\\
&\les \sum_{\ell=1}^LN_1^{-\frac{\wt\eps}4}\dg(x_\ell,y_2)^{-\wt\eps}\Big(\prod_{k=1}^L\dg(x_k,y_2)^{-\be a_k^+}\dg(x_k,y_1)^{-\be a_k^+}\Big)\dg(y_1,y_2)^{-\be^2}\\
&\les N_1^{-\frac{\wt\eps}4}\Big(\prod_{\ell=1}^L\dg(x_\ell,y_2)^{-\wt\eps-\be a_\ell^+}\dg(x_\ell,y_1)^{-\be a_\ell^+}\Big)\dg(y_1,y_2)^{-\be^2},
\end{align*}
for any $0<\wt\eps\ll\eps\ll 1$ and $N_1\le N_2$. Similarly, we use again Lemmas \ref{LEM:Green} and \ref{LEM:GN1} to bound
\begin{align*}
\big|\II\big|&\les \Big|(\P_{N_1}\otimes\P_{N_1})\Gg(y_1,y_2)-(\P_{N_1}\otimes\P_{N_2})\Gg(y_1,y_2)\Big|\\
&\qquad\times \Big(\prod_{\ell=1}^L\dg(x_k,y_2)^{-\be a_\ell^+}\dg(x_k,y_1)^{-\be a_\ell^+}\Big)\dg(y_1,y_2)^{-\be^2}\\
&\les \Big(\prod_{\ell=1}^L\dg(x_k,y_2)^{-\be a_\ell^+}\dg(x_k,y_1)^{-\be a_\ell^+}\Big)N_1^{-\frac{\wt\eps}4}\dg(y_1,y_2)^{-\wt\eps-\be^2}\\
&=N_1^{-\frac{\wt\eps}4}\Big(\prod_{\ell=1}^L\dg(x_k,y_2)^{-\be a_\ell^+}\dg(x_k,y_1)^{-\be a_\ell^+}\Big)\dg(y_1,y_2)^{-\wt\eps-\be^2}
\end{align*}
for some $0<\wt\eps\ll\eps\ll1$.

 Plugging these bounds into \eqref{U2} and proceeding as for (ii), we get
 \begin{align*}
 &\int_{H^s_0(\M,\gm)}\E \Big[ \big| \Q_M\big(\U_{N_1}(t,x)-\U_{N_2}(t,x)\big) \big|^{2} \Big]d\mu_\gm\\
 &\les M^{2(\al - \eps)}N_1^{-\frac{\wt\eps}4}\int_{\M}\int_{\M}\dg(x,y_1)^{\al-2-\eps}\dg(x,y_2)^{\al-2-\eps}\\
 &\qquad\qquad\times\Big(\prod_{\ell=1}^L\dg(x_k,y_2)^{-\wt\eps-\be a_\ell^+}\dg(x_k,y_1)^{-\wt\eps-\be a_\ell^+}\Big)\dg(y_1,y_2)^{-\wt\eps-\be^2}d\Vg(y_1)d\Vg(y_2)\\
 &\les M^{2(\al - \eps)}N_1^{-\frac{\wt\eps}4}f_{\al-\eps-2\wt\eps,\{x_\ell\}}(x).
 \end{align*}
  This finally establishes Lemma \ref{LEM:UEst} (iii). The estimate \eqref{momentdiff2} of Lemma \ref{LEM:UEst} (iv) then follows from the same computations as above up to using Lemma \ref{LEM:GN4} in place of Lemma \ref{LEM:GN1}.
\end{proof}
\begin{proof}[Proof of Proposition \ref{PROP:U}]
Using Lemma \ref{LEM:UEst}, we can finally prove Proposition \ref{PROP:U}. Let $\be,a_\ell,Q$ satisfy \eqref{A1}-\eqref{Seiberg1}-\eqref{Seiberg2b} and $\be^2+2\be a_{\ell_\textup{max}}^+<4$.

We begin by treating the case $1<p\le 2$. Let $\al(p)\in [0,2)$ satisfy the assumptions of Proposition~\ref{PROP:U}. Let also $0<\wt\eps\ll\eps\ll 1$ and $0<\dl\ll 1$, and $\theta\in [0,1]$ be such that $\frac1p = 1-\theta+\frac{\theta}2$, i.e. $\theta = 2\frac{p-1}{p}$. We define $\al=\frac{\al(p)-(1-\theta)\dl}{\theta}$. Then we see that, with the assumptions on $\al(p)$ and taking $\dl$ sufficiently small, we have that $\al\in (0,2)$ satisfies
\begin{align*}
\al>\max\big\{\frac{\be^2}2,\frac{\be^2}2+\be a_{\ell_\textup{max}}^+-1\big\}.
\end{align*} 
Thus we can use Fubini's theorem and Lemma~\ref{LEM:UEst}~(iii) to bound for any $T>0$ and $N_1,N_2\in\N$:
\begin{align*}
&\big\|\U_{N_1}-\U_{N_2}\big\|_{L^2(\mu_\gm\otimes\Prob)L^2_TB^{-\al}_{2,2}(\M)}\\
& = \bigg\{\sum_{M\in 2^{\Z_{\ge -1}}}M^{-2\al}\int_0^T\int_{\M}\int_{H^s_0(\M,\gm)} \E\Big[\big|\Q_M\big(\U_{N_1}-\U_{N_2}\big)(t,x)\big|^2\Big]d\mu_\gm d\Vg(x)dt\bigg\}^{\frac12}\\
&\le C\bigg\{\sum_{M\in 2^{\Z_{\ge -1}}}M^{-2\al+2(\al-\eps)}\min(N_1,N_2)^{-2\wt\eps}\int_0^T\int_{\M}f_{\al-\eps-2\wt\eps,\{x_\ell\}}(x) d\Vg(x)dt\bigg\}^{\frac12}\intertext{where $f_{\al-\eps-2\wt\eps,\{x_\ell\}}$ is as in \eqref{fj}. With the properties of $\al$, we have in particular $f_{\al-\eps-2\wt\eps,\{x_\ell\}}\in L^1(\M)$. Thus we can sum on $M$ to continue with}
&\le CT^{\frac12}\min(N_1,N_2)^{-\wt\eps}\big\|f_{\al-\eps-2\wt\eps,\{x_\ell\}}\big\|_{L^1(\M)}^{\frac12}.
\end{align*}
This shows that $\{\U_N\}$ is a Cauchy sequence in $L^2(\mu_\gm\otimes\Prob;L^2([0,T];B^{-\al}_{2,2}(\M)))$, thus converging to $\U$ in this space. In particular, there exists a subsequence $\{N_k\}$ such that $M^{-\al}\Q_M\U_{N_k}$ converges to $M^{-\al}\Q_M\U$ for almost every $(X_\gm,\om,t,x,M)\in H^s_0(\M)\times\O\times[0,T]\times \M\times 2^{\Z_{\ge -1}}$. Then, using Fatou's lemma and Fubini's theorem with Lemma~\ref{LEM:PM} and Lemma~\ref{LEM:UEst}~(i), we get for any $0<\dl\ll 1$:
\begin{align*}
\big\|\U\big\|_{L^1(\mu_\gm\otimes\Prob)L^1_TB^{-\dl}_{1,1}(\M)} &\le \liminf_k \sum_{M\in 2^{\Z_{\ge -1}}}M^{-\dl}\int_0^T\int_{\M}\int_{H^s_0(\M,\gm)}\E|\Q_M\U_{N_k}(t,x)|d\mu_\gm d\Vg(x)dt\\
&\les \sum_{M\in 2^{\Z_{\ge -1}}}M^{-\dl}\int_0^T\int_{\M}\int_{\M}M^2\jb{M\dg(x,y)}^{-A}\wt f_{\{x_\ell\}}(y)d\Vg(y) d\Vg(x)dt
\end{align*}
for any $A>0$, where $\wt f_{\{x_\ell\}}$ denotes the right-hand side of the estimate in Lemma~\ref{LEM:UEst}~(i). In particular $\wt f_{\{x_\ell\}}\in L^1(\M)$, so that the last term above is finite. This shows that we also have $\U\in L^1(\mu_\gm\otimes\Prob;L^1([0,T];B^{-\dl}_{1,1}(\M)))$ for any $0<\dl\ll 1$. Interpolating with $\U\in L^2(\mu_\gm\otimes\Prob;L^2([0,T];B^{-\al}_{2,2}(\M)))$, we finally get that 
\begin{align*}
\U\in L^p(\mu_\gm\otimes\Prob;L^p([0,T];B^{-(1-\theta)\dl-\theta\al}_{p,p}(\M))))
\end{align*}
which concludes the proof of Proposition~\ref{PROP:U} in the case $1<p\le 2$ by definition of $\al$ and $\theta$.

At last, we discuss the case $p=2m> 2$ an even integer. Assume that $\be^2<(2m-1)^{-1}$ and $(2m-1)\be^2+2\be a_\ell^+<4$. Note that under \eqref{al} we have now
\begin{align*}
\al(2m)>\max\big\{\be a_\ell^+ -\frac2{m},(2m-1)\frac{\be^2}2,(2m-1)\frac{\be^2}2+\be a_{\ell_\textup{max}}^+ -\frac1{m}\big\}.
\end{align*}
Thus in particular $f_{\al(2m)-\eps,\{x_\ell\}} \in L^{m}(\M)$ in view of \eqref{fj}, and the same computations as above, using now Lemma~\ref{LEM:UEst}~(ii), show that $\{\U_N\}$ is uniformly bounded in $L^{2m}(\mu_\gm\otimes\Prob;L^{2m}([0,T];B^{-\al(2m)}_{2m,2m}(\M)))$ for any $T>0$, from which we conclude that $\U$ also belongs to this class. This proves Proposition~\ref{PROP:U}.
%
\end{proof}
\begin{remark}\label{REM:U}\rm~\\
\textup{(i)} As mentioned in the introduction, the condition \eqref{Seiberg2b} is more restrictive than the usual second Seiberg bound \eqref{Seiberg2} usually assumed for the construction of the LQG measure. However in our situation, the bound \eqref{Seiberg2b} is even required for the weakest bound $\sup_N\int\E\|\U_N\|_{L^1([0,T]\times\M)}d\mu_\gm<\infty$. Moreover our argument for the proof of Theorem \ref{THM:GWP} requires $\U_N$ to be bounded in $L^2([0,T];H^{(-1)+}(\M))$, and the computation of the second moment in Lemma \ref{LEM:UEst} (ii) explicitly requires the constraint \eqref{Seiberg2b}.\\
\textup{(ii)} Finally, note that the construction of $\U$ in Proposition \ref{PROP:U} follows from the estimates in Lemma \ref{LEM:UEst} above, and in particular Lemma \ref{LEM:UEst} (iv) shows that $\U$ is  independent of the choice of the approximation by a multiplier $\psi\in \S(\R)$ with $\psi(0)=1$. Note that in \cite{DKRV,GRV}, only regularization by circle averaging is considered. This regularization procedure is however not covered by our results.
\end{remark}

\subsection{Construction of the LQG measure}\label{SUBS:Gibbs}

We now turn to the convergence properties of the truncated measure $\rho_{N,\gm}$ in \eqref{LQGN}. In order to prove Theorem \ref{THM:LQG}, we need several technical lemmas. These are mainly a unified adaptation of \cite{DKRV,DRV,GRV}, though our regularization is different from those works.

In the remaining of this section, we change of viewpoint and fix a realisation of $X_\gm$ on $(\O,\Prob)$ as in \eqref{GFF}, independent of $\xi_\gm$. A first observation is that $\U_N$ and its limit $\U$ given in Proposition \ref{PROP:U} are random \emph{positive} distributions, and thus $(\U_N)_{|t=0}$ and $\U_{|t=0}$ can be identified with random Radon measures on $\M$. Thus let us define the truncated {\it Liouville measure} as the random measure given by
\begin{align}
\Y_N(B) &\deff \int_{B}e^{-\pi\be^2C_\P}N^{-\frac{\be^2}2}e^{\be \P_NX_\gm(x)}H_N(x)d\Vg(x)\notag\\
& = \int_B\U_N(0,x)d\Vg(x)
\label{YN}
\end{align}
for any Borel set $B\subset \M$, where we redefine the function
\begin{align}\label{HN2}
H_N(x)\deff e^{2\pi\be\sum_{\ell=1}^L a_\ell (\P_N\otimes\P_N)\Gg(x_\ell,x)}.
\end{align}
We also define the closely related random measure
\begin{align}\label{XN}
\X_N(B) \deff \int_{B}e^{\be \P_NX_\gm(x)-\frac{\be^2}2\s_N(x,\gm)}d\Vg(x),
\end{align}
with $\s_N(x,\gm)$ as in \eqref{sN}.

We first recall some basic facts about Gaussian processes, namely Kahane's convexity inequality.
\begin{lemma}\label{LEM:Kahane}
Let $\{X_j\}_{j=1,...,n}$ and $\{Y_j\}_{j=1,...,n}$ be two centred Gaussian vectors such that
\begin{align*}
\E\big[X_jX_k\big]\le \E\big[Y_jY_k\big]
\end{align*}
for any $j,k=1,...,n$. Then for all positive numbers $p_j$ and any convex function $F:\R\to\R$ with at most polynomial growth, it holds
\begin{align*}
\E\Big[F\Big(\sum_{j=1}^np_je^{X_j-\frac12\E[X_j^2]}\Big)\Big] \le \E\Big[F\Big(\sum_{j=1}^np_je^{Y_j-\frac12\E[Y_j^2]}\Big)\Big]
\end{align*}
\end{lemma}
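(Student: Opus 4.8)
The plan is to run the classical Gaussian interpolation argument. First I would reduce to the case where $F\in C^2(\R)$ with $F''\ge 0$ and $F,F',F''$ of at most polynomial growth: a general convex $F$ of at most polynomial growth is a limit (along mollifications $F_\eps=F*\eta_\eps$) of such functions with polynomial bounds uniform in $\eps$, and since each random variable $\sum_j p_j e^{X_j-\frac12\E[X_j^2]}$ (and its analogue with $Y$) has finite moments of all orders, dominated convergence passes the inequality to the limit. I would also realize $\{X_j\}$ and $\{Y_j\}$ on a common probability space as \emph{independent} Gaussian vectors; this changes neither side of the asserted inequality, which only involves the marginal laws.

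Next, for $t\in[0,1]$ set $Z_j(t)=\sqrt{t}\,X_j+\sqrt{1-t}\,Y_j$, so that $\E[Z_j(t)Z_k(t)]=t\,\E[X_jX_k]+(1-t)\,\E[Y_jY_k]$, and define
\[
\varphi(t)=\E\Big[F\Big(\textstyle\sum_{j=1}^n U_j(t)\Big)\Big],\qquad U_j(t)=p_j\,e^{Z_j(t)-\frac12\E[Z_j(t)^2]},\qquad S(t)=\sum_{j=1}^n U_j(t).
\]
The goal is $\varphi(1)\le\varphi(0)$, for which it suffices to show $\varphi'(t)\le 0$ on $(0,1)$ (continuity of $\varphi$ on $[0,1]$ follows from the polynomial growth of $F$ and Gaussian moment bounds). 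Differentiating under the expectation — justified again by the polynomial growth of $F,F'$ and uniform integrability of the relevant Gaussian functionals on compact subintervals of $(0,1)$ — and using $\frac{d}{dt}Z_j=\frac1{2\sqrt t}X_j-\frac1{2\sqrt{1-t}}Y_j$ together with $\frac{d}{dt}\E[Z_j^2]=\E[X_j^2]-\E[Y_j^2]$, one obtains
\[
\varphi'(t)=\E\Big[F'(S)\sum_{j=1}^n U_j\Big(\tfrac1{2\sqrt t}X_j-\tfrac1{2\sqrt{1-t}}Y_j-\tfrac12\big(\E[X_j^2]-\E[Y_j^2]\big)\Big)\Big].
\]

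The key step is Gaussian integration by parts (Stein's identity) for the jointly Gaussian vector $(X_1,\dots,X_n,Y_1,\dots,Y_n)$, using that $X$ and $Y$ are independent. Since $\partial_{X_k}U_j=\sqrt t\,\delta_{jk}U_j$ and $\partial_{Y_k}U_j=\sqrt{1-t}\,\delta_{jk}U_j$, hence $\partial_{X_k}S=\sqrt t\,U_k$ and $\partial_{Y_k}S=\sqrt{1-t}\,U_k$, one computes
\[
\E[X_j F'(S)U_j]=\sqrt t\sum_{k}\E[X_jX_k]\,\E[F''(S)U_jU_k]+\sqrt t\,\E[X_j^2]\,\E[F'(S)U_j],
\]
and the analogous identity with $X$ replaced by $Y$ and $\sqrt t$ by $\sqrt{1-t}$. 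Substituting these into the expression for $\varphi'(t)$, the terms proportional to $\E[X_j^2]-\E[Y_j^2]$ cancel exactly — this is also where the apparent singularities $t^{-1/2}$ and $(1-t)^{-1/2}$ disappear — leaving
\[
\varphi'(t)=\frac12\sum_{j,k=1}^n\big(\E[X_jX_k]-\E[Y_jY_k]\big)\,\E\big[F''(S)\,U_jU_k\big].
\]
Since $F''\ge 0$, $p_j>0$ (so $U_jU_k\ge 0$), and $\E[X_jX_k]-\E[Y_jY_k]\le 0$ by hypothesis, every summand is $\le 0$; hence $\varphi'(t)\le 0$, so $\varphi(1)\le\varphi(0)$, which is the claim. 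The only genuine subtleties are the $C^2$-regularization of $F$ and the justification of differentiating under the expectation and of the integration-by-parts formula; both are routine given the polynomial growth assumption and the fact that exponentials of Gaussian variables have moments of all orders, so I would dispatch them briefly.
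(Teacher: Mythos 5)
Your proposal is correct: the interpolation $Z_j(t)=\sqrt{t}\,X_j+\sqrt{1-t}\,Y_j$ with independent copies, the Gaussian integration by parts yielding $\varphi'(t)=\frac12\sum_{j,k}\big(\E[X_jX_k]-\E[Y_jY_k]\big)\E[F''(S)U_jU_k]\le 0$ (with the variance terms cancelling thanks to the normalization $e^{X_j-\frac12\E[X_j^2]}$), and the mollification/limiting step are all sound. The paper does not prove the lemma itself but simply cites \cite[Corollary A.2]{RV}, and your argument is precisely the standard proof behind that citation, so there is nothing further to compare.
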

\begin{proof}
See for example \cite[Corollary A.2]{RV}.
\end{proof}

Next we state the existence of negative moments for $\X_N$. Note that taking $a_\ell=0$ for any $\ell$ and repeating the arguments of the proof of Proposition~\ref{PROP:U}, we have that 
\begin{align*}
\!:\,e^{\be X_\gm(x)}\!:\, \deff \lim_{N\to\infty} e^{\be \P_NX_\gm - \frac{\be^2}2\s_N}
\end{align*}
 is well-defined, where the convergence holds in $L^p(\O;B^{-\al(p)}_{p,p}(\M))$ for any $p\ge 1$, with $\al(p)$ as in \eqref{al} (with $a_\ell=0$).
\begin{lemma}\label{LEM:negmoment}
Let $0<\be^2<2$, and let $\X_N$ be defined as in \eqref{XN}. Then for any $a>0$, there exists $C>0$ such that for any $y_0\in\M$, any $0<r\ll\i(\M)$ and any $N\in\N$, it holds
\begin{align*}
\E\Big[\X_N\big(B(y_0,r)\big)^{-a}\Big]\le C.
\end{align*}
Moreover, we have the convergence 
\begin{align*}
\E\Big[\X_N\big(B(y_0,r)\big)^{-a}\Big]\too \E\Big[\X\big(B(y_0,r)\big)^{-a}\Big]
\end{align*}
 as $N\to\infty$, where
\begin{align*}
\X\big(B(y_0,r)\big) \deff \int_{B(y_0,r)}\!:\,e^{\be X_\gm(x)}\!:\,d\Vg(x).
\end{align*}
\end{lemma}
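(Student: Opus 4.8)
\textbf{Proof strategy for Lemma~\ref{LEM:negmoment}.}
The plan is to establish the uniform (in $N$) bound on $\E[\X_N(B(y_0,r))^{-a}]$ via the classical Molchan-type argument based on Kahane's convexity inequality (Lemma~\ref{LEM:Kahane}), exactly as in \cite{DKRV,DRV,GRV,Molchan}, and then to upgrade the uniform bound to convergence. First I would reduce matters to a comparison with an exactly scale-invariant Gaussian field. Using the two-sided bound on the regularized Green's function from Corollary~\ref{COR:GN}, namely
\begin{align*}
\Big|2\pi(\P_N\otimes\P_N)\Gg(x,y)+\log\big(\dg(x,y)+N^{-1}\big)\Big|\le C
\end{align*}
uniformly in $N$ and in $(x,y)$, the covariance of the Gaussian field $\be\P_NX_\gm$ is, up to an additive $O(1)$, equal to $-\be^2\log(\dg(x,y)+N^{-1})$. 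Kahane's inequality (Lemma~\ref{LEM:Kahane} applied with $F(t)=t^{-a}$, which is convex on $(0,\infty)$ after a routine truncation/monotone-convergence argument to handle the polynomial-growth hypothesis and the negative power) then lets me compare $\E[\X_N(B(y_0,r))^{-a}]$ both above and below with the corresponding quantity for a reference log-correlated field on $\R^2$, e.g. an exactly $\star$-scale-invariant field or a Gaussian with covariance $\be^2\log_+\frac{1}{|x-y|}$, whose negative moments of all orders are known to be finite and scale like a power of $r$ uniformly in the cutoff; here the restriction $0<\be^2<2$ is exactly what guarantees non-degeneracy of the GMC and finiteness of all negative moments (this is the $L^2$-phase, so well inside the range where negative moments exist).

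Concretely, the steps are: (1) fix geodesic normal coordinates around $y_0$ so that $B(y_0,r)$ becomes (comparable to) a Euclidean ball, and record that $\Vg$ is comparable to Lebesgue measure there; (2) write $\s_N(x,\gm)=2\pi(\P_N\otimes\P_N)\Gg(x,x)+O(1)=\log N+O(1)$ using Lemma~\ref{LEM:GN3}, so that $e^{\be\P_NX_\gm-\frac{\be^2}{2}\s_N}$ is a genuine normalized (Wick-ordered) exponential with covariance $-\be^2\log(\dg(x,y)+N^{-1})+O(1)$ by Corollary~\ref{COR:GN}; (3) apply Lemma~\ref{LEM:Kahane} with the convex function $F_\eps(t)=(t+\eps)^{-a}$ (then let $\eps\downarrow0$ by monotone convergence) to sandwich $\X_N(B(y_0,r))$ between constant multiples of the partition function of the reference cutoff-GMC on a fixed Euclidean ball; (4) invoke the standard fact (e.g. \cite{RV,Molchan}) that for $0<\be^2<2$ these reference negative moments are bounded uniformly in the cutoff, which yields the claimed constant $C$ independent of $y_0$, $r$ (for $r\ll\i(\M)$) and $N$. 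The dependence on $r$ is irrelevant for the statement since we only claim a bound; if one wanted the sharp power of $r$ one would additionally use the exact scaling of the reference field.

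For the convergence statement, I would argue as follows. From Proposition~\ref{PROP:U} (with all $a_\ell=0$) and the remark preceding this lemma, $e^{\be\P_NX_\gm-\frac{\be^2}{2}\s_N}\to\;:\!e^{\be X_\gm}\!:$ in $L^p(\O;B^{-\al(p)}_{p,p}(\M))$; in particular, after passing to a subsequence, $\X_N(B(y_0,r))\to\X(B(y_0,r))$ almost surely (testing the convergence in the negative Besov space against $\mathbf 1_{B(y_0,r)}$, or more robustly against a smooth bump and then approximating the indicator, using that the limit is a positive measure so no cancellation occurs). Since $t\mapsto t^{-a}$ is continuous on $(0,\infty)$ and $\X(B(y_0,r))>0$ a.s. (again because it is a nondegenerate subcritical GMC mass of a ball, $0<\be^2<2$), we get $\X_N(B(y_0,r))^{-a}\to\X(B(y_0,r))^{-a}$ a.s. along the subsequence. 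Finally, the uniform bound from the first part gives, for any $b>1$,
\begin{align*}
\sup_N\E\Big[\X_N\big(B(y_0,r)\big)^{-ab}\Big]<\infty,
\end{align*}
so the family $\{\X_N(B(y_0,r))^{-a}\}_N$ is uniformly integrable, and Vitali's convergence theorem upgrades the a.s. convergence to $L^1$, hence convergence of expectations; a subsequence-of-subsequences argument removes the restriction to a subsequence.

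\textbf{Main obstacle.} The only genuinely delicate point is step (3)--(4): justifying the application of Kahane's inequality to the \emph{negative} power $t\mapsto t^{-a}$ (which is convex but unbounded and of non-polynomial growth near $0$) and then importing the uniform-in-cutoff finiteness of negative moments for the reference log-correlated field. The truncation $F_\eps(t)=(t+\eps)^{-a}$ makes $F_\eps$ convex, decreasing and bounded (hence of polynomial growth), so Lemma~\ref{LEM:Kahane} applies verbatim; monotone convergence as $\eps\downarrow 0$ then transfers the sandwich to $F(t)=t^{-a}$. The remaining input, finiteness of all negative moments of subcritical GMC uniformly in the regularization, is classical in the range $0<\be^2<2$ (see \cite{Molchan,RV}); one must only check that the error terms $O(1)$ in the covariance comparison are controlled uniformly in $N$, which is precisely the content of Corollary~\ref{COR:GN} together with Lemma~\ref{LEM:GN3}.
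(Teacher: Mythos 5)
Your proposal is correct in outline, but it takes a genuinely different route from the paper for the core uniform bound. The paper does not reduce to a known Euclidean statement: it reruns Molchan's argument on the manifold. Concretely, after passing to geodesic normal coordinates it subdivides a cube of side comparable to $r$ into $K^2$ subcubes, discretizes the GMC mass by Riemann sums, and uses Kahane's inequality (Lemma~\ref{LEM:Kahane}) together with the two-sided bound of Corollary~\ref{COR:GN} to compare $\E\big[F\big(\X_N(B(y_0,r))\big)\big]$, for decreasing convex $F$, with $\E\big[F\big(\sum_{k=1}^{K^2}(8K)^{-2}e^{\be h_k-\frac{\be^2}{2}C_K}\X_{N,k}\big)\big]$ built from independent copies of $X_\gm$ and i.i.d.\ Gaussians $h_k$; this yields the recursive inequality $\varphi(t^2)<q(t)\big[t^{-p}+\varphi(t)^2\big]$ for the Laplace transform $\varphi(t)=\E[e^{-t\X_N(B(y_0,r))}]$, hence polynomial decay of $\varphi$, a first small negative moment uniform in $N$, and then all negative moments by iterating the Kahane bound with $x\mapsto x^{-a}$ and Young's inequality; the convergence of the moments is then obtained from the uniform bounds, the $L^2$ convergence of $\X_N(B(y_0,r))$ (computed directly from the covariances), the mean value theorem and Cauchy--Schwarz. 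You instead perform a single Kahane comparison (with the same $O(1)$-absorbing independent-Gaussian trick, justified by Corollary~\ref{COR:GN} and Lemma~\ref{LEM:GN3}) against a Euclidean cutoff log-correlated reference field and then quote uniform-in-cutoff negative moments of subcritical GMC as classical, and you obtain convergence via a.s.\ subsequential convergence from Proposition~\ref{PROP:U} plus uniform integrability and Vitali. This reduction is legitimate and shorter, but be aware that \cite{Molchan} treats cascades and \cite{RV} states negative moments for the \emph{limit} measure rather than uniformly over arbitrary regularizations; to make your step (4) airtight you should either choose a $\star$-scale-invariant reference whose cutoffs are martingale approximations, so that conditional Jensen transfers the limit's negative moments to the cutoffs, or rerun Molchan's recursion for the reference field -- at which point you are essentially reproducing the paper's self-contained argument. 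Two further small points: Lemma~\ref{LEM:Kahane} is stated for finite Gaussian vectors, so the Riemann-sum discretization (done explicitly in the paper, using continuity of $\P_NX_\gm$ for fixed $N$) should be recorded; and your fix for testing the negative-regularity convergence against $\mathbf{1}_{B(y_0,r)}$ (smooth bumps plus positivity, since the indicator need not lie in the dual Besov space when $\be^2$ is close to $2$) is indeed needed.
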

\begin{proof}
First, note that for any $B\subset\M$, $\X(B)$ as in Lemma \ref{LEM:negmoment} is a well-defined random variable. Indeed, thanks to the positivity of $e^{\be \P_NX_\gm(x)-\frac{\be^2}2\s_N(x)}$ it holds $0<\X_N(B)\le\X_N(\M)$ and this last term is in $L^2(\mu_\gm)$, uniformly in $N\in\N$. This follows by taking $a_\ell=0$ for any $\ell$, so that we have
\begin{align*}
\E\Big[\X_N(\M)^{2}\Big]&= \int_{\M}\int_{\M}e^{2\pi\be^2(\P_N\otimes\P_N)\Gg(x,y)}d\Vg(x)d\Vg(y) \\
&\les \int_{\M}\int_{\M}\dg(x,y)^{-\be^2}d\Vg(x)d\Vg(y) <\infty
\end{align*}
uniformly in $N\in\N$ since $0<\be^2<2$.

 As for the convergence of $\X_N(B)$, we have
\begin{align*}
\E\big|\X_{N_1}(B)-\X_{N_2}(B)\big|^2 &= \int_{B}\int_{B}\Big[e^{2\pi\be^2(\P_{N_1}\otimes\P_{N_1})\Gg(x,y)}-2e^{2\pi\be^2(\P_{N_1}\otimes\P_{N_2})\Gg(x,y)}\\
&\qquad\qquad+e^{2\pi\be^2(\P_{N_2}\otimes\P_{N_2})\Gg(x,y)}\Big]d\Vg(x)d\Vg(y)
\end{align*}
which converges to 0 as $N_1,N_2\to \infty$ by similar (simpler) computations as for Lemma \ref{LEM:UEst} (iii).

To prove Lemma \ref{LEM:negmoment}, we then follow the argument in \cite[Proposition 4]{Molchan}. It is of a general nature: provided that one has an inequality in law of the type
\begin{align*}
\X_N(B) \ge \sum_{k=1}^{K^2} \om_k\X_{N,k}(B)
\end{align*}
for some independent copies $\X_{N,k}(B)$ of $\X_N(B)$ and some independent random variables $\om_k$ admitting negative moments, then this argument implies that $\X_N(B)$ has in turn negative moments. Here, as in \cite{RV}, we will rely on Kahane's inequality in Lemma \ref{LEM:Kahane} and the main computations from the proof of Proposition 4(a) in \cite{Molchan} will then apply. 

Let us then take geodesic normal coordinates $\kk$ centred at $y_0\in\M$, and $0<r\ll \i(\M)$. We define $\Cb\subset\R^2$ to be the cube of side-length $r$ centred around 0; in particular we have $\Cb \subset B(0,r)=\kk^{-1}\Big(B(y_0,r)\Big)\subset 2\Cb$. Let us take an integer $K\in\N$ and denote by $\Cb_k$, $k=1,...,K^2$ the partition of $\Cb$ in cubes $\Cb_k$ of side-length $K^{-1}r$ centred around some $z_k\in\Cb$. We also define the smaller cubes $\Cb_k'$ centred around $z_k$ with side-length $(4K)^{-1}r$. Since $e^{\be \P_NX_\gm - \frac{\be^2}2\s_N}$ is positive, and since $\Vg$ is equivalent to the Lebesgue measure in $\kk$, we have
\begin{align}
\X_N\big(B(y_0,r)\big)&\ge \X_N\big(\kk^{-1}(\Cb)\big)\ge C\int_{\Cb}e^{\kk_\star\big[\be \P_NX_\gm - \frac{\be^2}2\s_N\big](z)}dz\notag\\
& =\sum_{k=1}^{K^2}\int_{\Cb_k}e^{\kk_\star\big[\be \P_NX_\gm - \frac{\be^2}2\s_N\big](z)+c}dz \ge \sum_{k=1}^{K^2}\int_{\Cb_k'}e^{\kk_\star\big[\be \P_NX_\gm - \frac{\be^2}2\s_N\big](z)+c}dz\notag\\
& = \sum_{k=1}^{K^2}(8K)^{-2}\int_{2\Cb}e^{\kk_\star\big[\be \P_NX_\gm - \frac{\be^2}2\s_N\big](z_k+(8K)^{-1}z)+c}dz.
\label{B1}
\end{align}

Next, we discretize the integral on $2\Cb$ in order to use Lemma \ref{LEM:Kahane}. Thus for any $J\in\N$ we subdivide again $2\Cb = \sqcup_{j=1}^{J^2}\Cb_{j}$ into cubes of side-length $J^{-1}r$ centred at some $\wt z_j\in 2\Cb$. Since for any fixed $N\in\N$, $\kk_\star\big[\be\P_NX_\gm-\frac{\be^2}2\s_N\big]$ has almost surely continuous paths, for any $k=1,...,K^2$ the following convergence holds almost surely:
\begin{align}\label{B1b}
\int_{2\Cb}e^{\kk_\star\big[\be \P_NX_\gm - \frac{\be^2}2\s_N\big](z_k+(8K)^{-1}z)+c}dz = \lim_{J\to\infty} \sum_{j=1}^{J^2}J^{-2}e^{\kk_\star\big[\be \P_NX_\gm - \frac{\be^2}2\s_N\big](z_k+(8K)^{-1}\wt z_j)+c}.
\end{align}

Now from the same computation as in Lemma \ref{LEM:covar} it holds for any $k_1,k_2=1,...,K^2$ and $j_1,j_2=1,...,J^2$:
\begin{align*}
&\E\Big[\big(\kk_\star\P_NX_\gm(z_{k_1}+(8K)^{-1}\wt z_{j_1})+c\big)\big(\kk_\star\P_NX_\gm(z_{k_2}+(8K)^{-1}\wt z_{j_2})+c\big)\Big]\\
&= 2\pi(\P_N\otimes\P_N)\Gg\Big(\kk\big(z_{k_1}+(8K)^{-1}\wt z_{j_1}\big),\kk\big(z_{k_2}+(8K)^{-1}\wt z_{j_2}\big)\Big)+c.
\end{align*}
From the two-sided bound of Corollary \ref{COR:GN}, we can bound this term with
\begin{align*}
-\log\big(\big|(z_{k_1}-z_{k_2})+(8K)^{-1}(\wt z_{j_1}-\wt z_{j_2})\big|+N^{-1}\big) + C_K
\end{align*}
for some constant $C_K>0$ independent of $J$ and $N$. 

In the case $k_1=k_2$, we have then (using Corollary \ref{COR:GN} again)
\begin{align*}
&\E\big[\kk_\star\P_NX_\gm(z_{k_1}+(8K)^{-1}\wt z_{j_1})\kk_\star\P_NX_\gm(z_{k_1}+(8K)^{-1}\wt z_{j_2})\big]\\
 &\qquad\le -\log\big(\big|\wt z_{j_1}-\wt z_{j_2}\big|+8KN^{-1}\big) + \log K + C_K\\
 &\qquad\le -\log\big(\big|\wt z_{j_1}-\wt z_{j_2}\big|+N^{-1}\big) + C_K\\
&\qquad\le \E\big[\kk_\star\P_NX_{\gm,k_1}(\wt z_{j_1})\kk_\star\P_NX_{\gm,k_1}(\wt z_{j_2})\big]+C_K
\end{align*}
where $X_{\gm,k}$, $k=1,...,K^{2}$ are independent copies of $X_\gm$, and $C_K>0$ is some constant independent of $N,J\in\N$.

On the other hand, in the case $k_1\neq k_2$, we have by choice of $z_k$ and $\wt z_j$ that $|z_{k_1}-z_{k_2}|\ge K^{-1}r$ and $|\wt z_{j_1}-\wt z_{j_2}|\le 2\sqrt{2}r$, which in turn implies
\begin{align*}
&-\log\big(\big|(z_{k_1}-z_{k_2})+(8K)^{-1}(\wt z_{j_1}-\wt z_{j_2})\big|+N^{-1}\big)\\
&\qquad\le -\log\big(\big||z_{k_1}-z_{k_2}|-(8K)^{-1}|\wt z_{j_1}-\wt z_{j_2}|\big|+N^{-1}\big)\\
&\qquad\le -\log\big(cK^{-1}+N^{-1}\big) \le \log K + C.
\end{align*}

Hence we arrive at
\begin{align}\label{B2}
&\E\Big[\big(\kk_\star\P_NX_\gm(z_{k_1}+(8K)^{-1}\wt z_{j_1})+c\big)\big(\kk_\star\P_NX_\gm(z_{k_2}+(8K)^{-1}\wt z_{j_2})+c\big)\Big]\notag\\
&\qquad\le \E\big[\kk_\star\P_NX_{\gm,k_1}(\wt z_{j_1})\kk_\star\P_NX_{\gm,k_2}(\wt z_{j_2})\big] + C_K
\end{align}
for some constant $C_K>0$ independent of $N,J\in\N$. If we then take some independent $h_k\sim\mathcal{N}(0,C_K)$ (and independent of $X_\gm$ and $(X_{\gm,k})_{k=1,...,K^2}$), we can then use \eqref{B1}-\eqref{B1b}-\eqref{B2} with Kahane's inequality in Lemma \ref{LEM:Kahane} to get for any decreasing and convex function $F : \R_+\to\R$,
\begin{align}\label{B3}
\E\Big[F\Big(\X_N\big(B(y_0,r)\big)\Big)\Big] &\le \E\Big[F\Big(\sum_{k=1}^{K^2}(8K)^{-2}\int_{\Cb}e^{\kk_\star\big[\be \P_NX_\gm - \frac{\be^2}2\s_N\big](z_k+(8K)^{-1}z)+c}dz\Big)\Big]\notag\\
&= \lim_{J\to\infty}\E\Big[F\Big(\sum_{k=1}^{K^2}\sum_{j=1}^{J^2}(8KJ)^{-2}e^{\kk_\star\big[\be \P_NX_\gm - \frac{\be^2}2\s_N\big](z_k+(8K)^{-1}\wt z_j)+c}\Big)\Big]\notag\\
&\le \lim_{J\to\infty}\E\Big[F\Big(\sum_{k=1}^{K^2}\sum_{j=1}^{J^2}(8KJ)^{-2}e^{\kk_\star\big[\be \P_NX_{\gm,k} - \frac{\be^2}2\s_N\big](\wt z_j)+\be h_k-\frac{\be^2}2C_K}\Big)\Big]\notag\\
&=\E\Big[F\Big(\sum_{k=1}^{K^2}(8K)^{-2}\int_{2\Cb}e^{\kk_\star\big[\be \P_NX_{\gm,k} - \frac{\be^2}2\s_N\big]+\be h_k-\frac{\be^2}2C_K}dz\Big)\Big]\notag\\
&\le \E\Big[F\Big(c\sum_{k=1}^{K^2}(8K)^{-2}\int_{B(y_0,r)}e^{\be \P_NX_{\gm,k} - \frac{\be^2}2\s_N+\be h_k-\frac{\be^2}2C_K}d\Vg\Big)\Big].
\end{align}

We can now prove the existence of negative moments for $\X_N\big(B(y_0,r)\big)$, uniformly in $N\in\N$. First, we will prove that $\E\Big[\X_N\big(B(y_0,r)\big)^{-\eps}\Big]<\infty$ uniformly in $N\in\N$ for some $0<\eps\ll 1$. To do so, let us introduce the moment-generating function
\begin{align*}
\varphi(t) \deff \E\Big[e^{-t\X_N\big(B(y_0,r)\big)}\Big],~~t\ge 0.
 \end{align*}
Note that by Fubini's theorem
\begin{align*}
\int_0^{\infty}t^{\eps -1}\varphi(t)dt = \G(\eps)\E\Big[\X_N\big(B(y_0,r)\big)^{-\eps}\Big],
\end{align*}
where $\G$ is the Gamma function. Since $\X_N\big(B(y_0,r)\big)$ is almost surely positive, we always have $\varphi(t)\le 1$ and that $\varphi$ is decreasing with $\varphi(t)\to 0$ as $t\to+\infty$. Hence it is enough to prove that $\varphi(t) \le ct^{-\eps'}$ for some $c>0$, $\eps'>\eps$ and $t\ge t_0$ large enough.

From \eqref{B3} with the decreasing convex function $x\mapsto e^{-tx}$ for any $t>0$ and the independence of $h_k$ and $X_k$ with Jensen's inequality, the inequality of arithmetic and geometric means, and Jensen's inequality again, we then have
\begin{align}\label{B4}
\varphi(t) &\le \prod_{k=1}^{K^2}\E\Big[\exp\Big(-ct(8K)^{-2}e^{\be h_k-\frac{\be^2}2C_K}\int_{B(y_0,r)}e^{\be \P_NX_{\gm,k} - \frac{\be^2}2\s_N}d\Vg\Big)\Big]\notag\\
&\le \sum_{k=1}^{K^2}K^{-2}\bigg(\E\Big\{\E\Big[\exp\Big(-ct(8K)^{-2}e^{\be h_k-\frac{\be^2}2C_K}\int_{B(y_0,r)}e^{\be \P_NX_{\gm,k} - \frac{\be^2}2\s_N}d\Vg\Big)\Big|h_k\Big]\Big\}\bigg)^{K^2}\notag\\
&\le \sum_{k=1}^{K^2}K^{-2}\E\Big[\varphi(ct(8K)^{-2}e^{\be h_k-\frac{\be^2}2C_K})^{K^2}\Big] = \E\Big[\varphi\big(ct(8K)^{-2}e^{\be h_1-\frac{\be^2}2C_K}\big)^{K^2}\Big].
\end{align}
In particular, using that $\varphi(t)\le 1$ for any $t>0$, we deduce from \eqref{B4} that
\begin{align*}
\varphi(t^2)&\le \E\Big[\mathbf{1}\big(\{ct(8K)^{-2}e^{\be h_1-\frac{\be^2}2C_K}<1\}\big)\varphi(ct^2(8K)^{-2}e^{\be h_1-\frac{\be^2}2C_K})^{K^2}\Big]\\
&\qquad + \E\Big[\mathbf{1}(ct(8K)^{-2}e^{\be h_1-\frac{\be^2}2C_K}\ge 1)\varphi(ct^2(8K)^{-2}e^{\be h_1-\frac{\be^2}2C_K})^{K^2}\Big]\\
&< \Prob\Big(e^{\be h_1-\frac{\be^2}2C_K}<\frac{(8K)^2}{ct}\Big) + \varphi(t)^{K^2}.
\end{align*}
Using then Chebychev's inequality, we can bound for any $p\ge 1$
\begin{align*}
\Prob\Big(e^{\be h_1-\frac{\be^2}2C_K}<\frac{(8K)^2}{ct}\Big) &\le \frac{(8K)^{4p}}{(ct)^{2p}}\E\big[e^{-2p(\be h_1-\frac{\be^2}2C_K)}\big] = \frac{(8K)^{4p}}{(ct)^{2p}}e^{\frac{\be^2}2C_K2p(2p+1)}\\
&\le C(K,p) t^{-2p}.
\end{align*}
All in all, provided that $K\ge 2$ we arrive at
\begin{align*}
\varphi(t^2) < q(t)\big[t^{-p}+\varphi(t)^2\big]
\end{align*}
where $q(t) = C(K,p)(t^{-p}+\varphi(t)^{K^2-2})\to 0$ as $t\to+\infty$. This is precisely the estimate at the bottom of p. 687 in \cite{Molchan}, and from there the same computations apply and give
\begin{align*}
\varphi(t)\le ct^{-\eps'}
\end{align*}
for some $0<\eps'\ll 1$ and all $t\ge t_0$ large enough. This shows that
\begin{align}\label{B5}
\sup_{N\in\N}\E\big[\X_N(B(y_0,r))^{-\eps}\big]<\infty
\end{align}
for some $0<\eps<\eps'\ll 1$. For some arbitrary $a>0$, we then use \eqref{B3} again with the decreasing convex function $x\mapsto x^{-a}$ and Young's inequality with the independence of $h_k$ and $X_{\gm,k}$ to get
\begin{align*}
\E\big[\X_N(B(y_0,r))^{-a}\big]&\le \E\Big[\Big(\sum_{k=1}^{K^2}(8K)^{-2}e^{\be h_k-\frac{\be^2}2C_K}\int_{B(y_0,r)}e^{\be\P_NX_{\gm,k}-\frac{\be^2}2\s_N}d\Vg\Big)^{-a}\Big]\\
&\les \prod_{k=1}^{K^2}\E\big[\big(e^{\be h_k-\frac{\be^2}2C_K}\big)^{-aK^{-2}}\big]\prod_{k=1}^{K^2}\E\Big[\Big(\int_{B(y_0,r)}e^{\be\P_NX_{\gm,k}-\frac{\be^2}2\s_N}d\Vg\Big)^{-aK^{-2}}\Big]\\
&\les\Big(\E\Big[\X_N(B(y_0,r))^{-aK^{-2}}\Big]\Big)^{K^2}.
\end{align*}
We can then iterate the inequality above $n$ times so that $aK^{-2n}<\eps$ and conclude from \eqref{B5}. This shows that all the negative moments are bounded uniformly in $N\in\N$.

The convergence of the negative moments then follows from their boundedness and the convergence of $\X_N(B(y_0,r))$ in $L^2(\mu_\gm)$: indeed using the mean value theorem and Cauchy-Schwarz inequality, we have
\begin{align*}
&\Big|\E\big[\X_{N_1}(B(y_0,r))^{-a}\big]-\E\big[\X_{N_2}(B(y_0,r))^{-a}\big]\Big|\\
&\les \E\Big[\big|\X_{N_1}(B(y_0,r))-\X_{N_2}(B(y_0,r))\big|\big(\X_{N_1}(B(y_0,r))^{-a-1}+\X_{N_2}(B(y_0,r))^{-a-1}\Big]\\
&\les \E\Big|\X_{N_1}(B(y_0,r))-\X_{N_2}(B(y_0,r))\Big|^2\Big(\E\big[\X_{N_1}(B(y_0,r))^{-2a-2}\big]+\E\big[\X_{N_2}(B(y_0,r))^{-2a-2}\big]\Big)\\
&\les \E\Big|\X_{N_1}(B(y_0,r))-\X_{N_2}(B(y_0,r))\Big|^2\too 0
\end{align*}
as $N_1,N_2\to\infty$. This proves Lemma \ref{LEM:negmoment}.
\end{proof}

We also show the following estimates on the random variables $\Y_N(B)$ defined in \eqref{YN} above.

\begin{lemma}\label{LEM:ZNM}
Suppose that the assumption \eqref{A1} and \eqref{Seiberg2b} hold. Then for any $a>0$ it holds
\begin{align*}
\lim_{N\to\infty}\E\Big[\Y_N(\M)^{-a}\Big] = \E\Big[\Y(\M)^{-a}\Big]<\infty,
\end{align*}
where 
\begin{align*}
\Y(\M) \deff \int_{\M} \U(0,x)d\Vg(x)
\end{align*}
is independent of the choice of $\psi\in\S(\R)$ with $\psi(0)=1$ used to define $\P_N=\psi(-N^{-2}\Dlg)$.
\end{lemma}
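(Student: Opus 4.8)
The plan is to reduce the negative-moment estimate for $\Y_N(\M)$ to the one for the insertion-free chaos $\X_N$ obtained in Lemma \ref{LEM:negmoment}, and then to upgrade uniform bounds into convergence by a uniform integrability argument. First I would fix a geodesic ball $B_0=B(y_0,r_0)$ with $r_0\ll\i(\M)$ whose closure avoids all the punctures $x_1,\dots,x_L$. On $B_0$ the distances $\dg(x_\ell,\cdot)$ are bounded away from $0$, so Corollary \ref{COR:GN} (equivalently Lemma \ref{LEM:GN1}) gives $(\P_N\otimes\P_N)\Gg(x_\ell,x)\ge -C$ uniformly in $N$ and $x\in B_0$, whence $H_N\ge c_0>0$ on $B_0$ uniformly in $N$. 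On the other hand, exactly as in the proof of Lemma \ref{LEM:covar}~(ii), Lemma \ref{LEM:GN3} gives
\begin{align*}
e^{-\pi\be^2 C_\P}N^{-\frac{\be^2}{2}} = e^{-\frac{\be^2}{2}\s_N(x)+\pi\be^2\wt\Gg(x,x)+o(1)}
\end{align*}
with the error term deterministic and uniform on $\M$; since $\wt\Gg$ is bounded, for all $N$ large enough and all $x\in B_0$ one has $\U_N(0,x)\ges e^{\be\P_NX_\gm(x)-\frac{\be^2}{2}\s_N(x)}$, so that $\Y_N(\M)\ge\int_{B_0}\U_N(0,x)\,d\Vg(x)\ges \X_N(B_0)$. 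For each of the finitely many remaining values of $N$ the same inequality holds with a positive $N$-dependent constant, using smoothness of $\P_NX_\gm$, $H_N$ and $\s_N$ for fixed $N$.

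Combined with Lemma \ref{LEM:negmoment}, which states that $\sup_N\E[\X_N(B_0)^{-b}]<\infty$ for every $b>0$, this yields $\sup_N\E[\Y_N(\M)^{-b}]<\infty$ for every $b>0$; in particular $\E[\Y_N(\M)^{-a}]<\infty$ for each $N$. Passing to the limit in $\Y_N(\M)\ges\X_N(B_0)$ — either along the $\mu_\gm$-a.s.\ convergent subsequence extracted in the proof of Proposition \ref{PROP:U}, or at the level of the weak limits of the associated random measures — also gives $\Y(\M)\ges\X(B_0)>0$ $\mu_\gm$-a.s., the strict positivity of $\X(B_0)$ being a consequence of the finiteness of all its negative moments in Lemma \ref{LEM:negmoment}.

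To obtain the actual convergence I would first observe that at $t=0$ the chaos $\U_N(0,\cdot)$ is a function of $X_\gm$ alone, so that the argument proving Proposition \ref{PROP:U} (together with Lemma \ref{LEM:UEst}~(i) and (iii), with the time variable removed) shows $\U_N(0,\cdot)\to\U(0,\cdot)$ in $L^1(\mu_\gm;B^{-\dl}_{1,1}(\M))$ for every $0<\dl\ll1$. Pairing against the constant $1\in B^{\dl}_{\infty,\infty}(\M)$ then forces $\Y_N(\M)=\int_\M\U_N(0,x)\,d\Vg(x)\to\Y(\M)$ in $L^1(\mu_\gm)$, hence in $\mu_\gm$-probability; here it is essential that only the weak integrability afforded by \eqref{Seiberg2b} is used, rather than $L^2(\mu_\gm)$ bounds, which would fail in general. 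Since $\Y(\M)>0$ a.s., the continuous mapping theorem upgrades this to $\Y_N(\M)^{-a}\to\Y(\M)^{-a}$ in probability, while the second step applied with $b=a(1+\eps)$ shows that $\{\Y_N(\M)^{-a}\}$ is bounded in $L^{1+\eps}(\mu_\gm)$ and therefore uniformly integrable. Combining these facts gives $\E[\Y_N(\M)^{-a}]\to\E[\Y(\M)^{-a}]<\infty$. Finally, the independence of $\Y(\M)=\int_\M\U(0,x)\,d\Vg(x)$ — and hence of the limit — on the choice of $\psi\in\S(\R)$ with $\psi(0)=1$ is inherited directly from the corresponding statement for $\U$ in Proposition \ref{PROP:U} (via Lemma \ref{LEM:UEst}~(iv)).

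I expect the main obstacle to be the reduction in the first step: one must control the insertion weight $H_N$ from below on a puncture-free region uniformly in $N$ while simultaneously converting the renormalizing constant $N^{-\be^2/2}e^{-\pi\be^2 C_\P}$ into the pointwise variance $\s_N$ up to uniformly bounded errors. A secondary, more bookkeeping-type difficulty is ensuring both the strict positivity of $\Y(\M)$ and the $L^1(\mu_\gm)$ — rather than $L^2(\mu_\gm)$ — convergence of $\Y_N(\M)$, the $L^1$ setting being forced by the Seiberg-type bound \eqref{Seiberg2b}.
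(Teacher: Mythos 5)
Your proposal is correct in outline, and its first half coincides with the paper's proof: both reduce the negative moments of $\Y_N(\M)$ to those of the insertion-free chaos on a puncture-free ball, using the bounds on $(\P_N\otimes\P_N)\Gg(x_\ell,\cdot)$ away from the punctures to control $H_N$ there, Lemma \ref{LEM:GN3} to convert $e^{-\pi\be^2C_\P}N^{-\be^2/2}$ into $e^{-\frac{\be^2}{2}\s_N+O(1)}$, and then Lemma \ref{LEM:negmoment}. (Minor point: if some $a_\ell<0$ you also need the upper bound on $(\P_N\otimes\P_N)\Gg(x_\ell,\cdot)$, not just the lower one, but Corollary \ref{COR:GN} gives both on the puncture-free region.) Where you genuinely diverge is the limiting step: the paper argues via the mean value theorem, Cauchy--Schwarz and the $L^2(\mu_\gm)$-convergence of $\Y_N(\M)$, exactly the quantitative route you avoid, whereas you combine convergence in probability of $\Y_N(\M)$, almost sure positivity of $\Y(\M)$, the continuous mapping theorem, and uniform integrability of $\Y_N(\M)^{-a}$ coming from the uniformly bounded negative moments (a Vitali-type argument). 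Your route is softer and, as you observe, does not require second moments of $\Y_N(\M)$, which under \eqref{A1} and \eqref{Seiberg2b} alone may indeed be infinite since $\be^2+2\be a^+_{\ell_\textup{max}}\ge 4$ is allowed; the paper's own wording quietly leans on that $L^2$ bound, so this is a genuine gain in robustness.

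The one caveat concerns your justification of the input $\Y_N(\M)\to\Y(\M)$ in $L^1(\mu_\gm)$: you cite Lemma \ref{LEM:UEst}~(iii), which is precisely a second-moment difference estimate whose hypotheses include $\be^2+2\be a^+_{\ell_\textup{max}}<4$, so as written you reintroduce the condition you set out to avoid. To make the $L^1$ (or in-probability) convergence run under \eqref{A1} and \eqref{Seiberg2b} alone, you should localize: on small balls $B(x_\ell,\eps)$ the uniform first-moment bound of Lemma \ref{LEM:UEst}~(i) makes the contribution $O(\eps^{2-\be a^+_\ell})$ uniformly in $N$, while away from the punctures $H_N$ is bounded and the difference estimate reduces to the insertion-free one used in Lemma \ref{LEM:negmoment}. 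With that amendment your argument is complete, and the $\psi$-independence is inherited from Lemma \ref{LEM:UEst}~(iv) as you say, matching the paper.
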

\begin{proof}
This is similar to \cite[Lemma 3.3]{DKRV}. Indeed, since the density is almost surely positive, we have that for any subset $B\subset \M$, $\Y_N(\M)^{-a}\le \Y_N(B)^{-a}$. In particular if we take $B$ to be any ball such that $x_\ell\notin B$ for any $\ell=1,...,L$, we have that $H_N$ in \eqref{HN2} is bounded from below on $B$ by some positive constant (depending on $B$), so we can estimate
\begin{align*}
\Y_N(B)^{-a} \les_B \X_N(B)^{-a}.
\end{align*}
The finiteness of the negative moments of the Gaussian multiplicative chaos given by Lemma \ref{LEM:negmoment} then ensures that $\E\Big[\Y_N(B)^{-a}\Big]$ is uniformly bounded.

Finally, the convergence of $\E\Big[\Y_N(\M)^{-a}\Big]$ follows from the previous step along with the mean value theorem, Cauchy-Schwarz inequality, and the convergence in $L^2(\O)$ of $\Y_N(\M)$ as in the proof of Lemma \ref{LEM:negmoment}. The independence on the choice of multiplier follows similarly. This proves Lemma~\ref{LEM:ZNM}.
\end{proof}

With  Lemma \ref{LEM:ZNM} at hand, we can finally give the proof of our first main result.

\begin{proof}[Proof of Theorem \ref{THM:LQG}]
The proof of Theorem \ref{THM:LQG} is a straightforward adaptation of the proofs of \cite[Theorem 4.3]{DRV} and \cite[Theorem 3.2]{DKRV}. In order to compare with our approach in Proposition \ref{PROP:U}, we detail the argument nonetheless.

\textbf{Case 1: if $\gm=\gm_0$.} We begin by treating the case where the metric has constant curvature. Recall from \eqref{LQGN} that $d\rho_{N,\gm_0} (X_0,\cj X)=R_N(X_0+\cj X)d\mu_0(X_0)d\cj X$ where, as in \eqref{RN}, the density $R_N$ is given by
\begin{align*}
&R_N (X_0+\cj X)\\ 
&\qquad= \Xi\exp\bigg\{\sum_{\ell=1}^L\Big(a_\ell\P_N\big(X_0+\cj X\big)(x_\ell) -\frac{a_\ell^2}{2}\big(\log N+2\pi C_\P\big)\Big)\notag\\
&\qquad\qquad -\frac{Q}{4\pi}\int_\M \Rg_0 (X_0+\cj X)dV_0 - \nu \int_{\M}e^{-\pi\be^2C_\P}N^{-\frac{\be^2}2}e^{\be \P_N(X_0+\cj X)}dV_0\bigg\}\\
&\qquad= \Xi\exp\bigg\{\sum_{\ell=1}^L\Big(a_\ell\P_NX_0(x_\ell)+a_\ell \cj X -\frac{a_\ell^2}{2}\big(\s_N(x_\ell)-2\pi\wt G_0(x_\ell,x_\ell)+o(1)\big)\Big)\\
&\qquad\qquad-\chi(\M)Q\cj X - \nu \int_{\M}e^{-\pi\be^2C_\P}N^{-\frac{\be^2}2}e^{\be \P_N(X_0+\cj X)}dV_0\bigg\}.
\end{align*} 
with $\s_N$ as in \eqref{sN} and where $o(1)$ is deterministic and uniform on $\M$. This follows by using Lemma \ref{LEM:GN3} as well as Gauss-Bonnet theorem \eqref{GB} with the fact that $X_0$ has mean zero.

We first show the convergence of the partition function $\ZZ_N(\gm_0)$ in \eqref{LQGN}. Let us define
\begin{align*}
\GG_{N,0}&\deff \int_{H^s_0(\M,\gm_0)} \bigg(\prod_{\ell=1}^Le^{a_\ell \P_NX_0(x_\ell)-\frac{a_\ell^2}{2} \big(\s_N(x_\ell)-2\pi\wt G_0(x_\ell,x_\ell)+o(1)\big)}\bigg)d\mu_0(X_0) \\
 &= \Big(\prod_{\ell=1}^Le^{2\pi\frac{a_\ell^2}2\wt G_0(x_\ell,x_\ell)+o(1)}\Big)e^{2\pi\sum_{\ell< k}a_\ell a_kG_0(x_\ell,x_k)+o(1)}>0.
\end{align*}
Note that for any $N\in\N$, the shift $2\pi \sum_{\ell=1}^L a_\ell(\P_N\otimes\Id)G_0(x_\ell,x)$ is smooth and with mean zero, and so in particular it belongs to the Cameron-Martin space $H^1_0(\M,\gm_0)$ of $\mu_0$. Thus by using Cameron-Martin theorem (see e.g. \cite[Proposition 2.26]{DPZ}), the Gaussian process 
\begin{align}
\wt X_0 = X_0 - 2\pi\sum_{\ell=1}^L a_\ell(\P_N\otimes\Id)G_0(x_\ell,x)
 \label{Girsanov}
\end{align}
has the same law as $X_0$ under the new probability measure
\begin{align*}
e^{-2\pi\sum_{\ell<k}a_\ell a_k(\P_N\otimes\P_N)G_0(x_\ell,x_k)}\exp\Big(\sum_{\ell=1}^La_\ell\P_NX_0(x_\ell) -\frac{a_\ell^2}{2}\s_N(x_\ell)\Big)d\mu_0.
\end{align*}
Indeed, in view of \eqref{GFF}, we can compute\footnote{Note that, in view of the white noise expansion \eqref{GFF} of the GFF, we indeed use $\big\{\frac{\varphi_n}{\sqrt{2\pi}\ld_n}\big\}_{n\ge 1}$ as an orthonormal basis of the Cameron-Martin space. This explains the coefficient $\frac1{2\pi}$ in the $H^1_0(\M,\gm_0)$ inner product.}
\begin{align*}
&\Big\langle X_0,2\pi \sum_{\ell=1}^L a_\ell(\P_N\otimes\Id)G_0(x_\ell,x)\Big\rangle_{H^1_0} - \frac12\big\|2\pi \sum_{\ell=1}^L a_\ell(\P_N\otimes\Id)G_0(x_\ell,x)\big\|_{H^1_0}^2\\
&= 2\pi\sum_{\ell=1}^La_\ell \sum_{n\ge 1}\frac{\ld_n^2}{2\pi}\frac{\sqrt{\ld_n}h_n(\om)}{\ld_n} e^{-N^{-2}\ld_n^2}\frac{\varphi_n(x_\ell)}{\ld_n^2}\\
&\qquad\qquad - \frac{(2\pi)^2}2\sum_{\ell,k=1}^La_\ell a_k\sum_{n\ge 1}\frac{\ld_n^2}{2\pi}e^{-2N^{-2}\ld_n^2}\frac{\varphi_n(x_\ell)}{\ld_n^2}\frac{\varphi_n(x_k)}{\ld_n^2}\\
&=\sum_{\ell=1}^La_\ell\P_NX_0(x_\ell)-2\pi\sum_{\ell,k=1}^L\frac{a_\ell a_k}2(\P_N\otimes\P_N)G_0(x_\ell,x_k).
\end{align*}
In particular for any continuous and bounded $F:\H^s_0\to\R$ the Cameron-Martin theorem implies\footnote{see also \eqref{CM} below.}
\begin{align*}
&\int_{H^s_0(\M,\gm_0)}F(X_0)e^{\sum_{\ell=1}^La_\ell\P_NX_0(x_\ell)-2\pi\sum_{\ell,k=1}^L\frac{a_\ell a_k}2(\P_N\otimes\P_N)G_0(x_\ell,x_k)}d\mu_0(X_0)\\
& = \int_{H^s_0(\M,\gm_0)}F\big(X_0+2\pi\sum_{\ell=1}^La_\ell(\P_N\otimes\Id)G_0(x_\ell,x)\big)d\mu_0(X_0).
\end{align*}
Therefore the partition function of $\rho_{N,\gm_0}$ can be expressed as
\begin{align*}
\ZZ_N(\gm_0)&=\int_{H^s_0(\M,\gm_0)}\int_{\R} R_N (X_0+\cj X)d\mu_0(X_0)d\cj X\notag\\
 &=\GG_{N,0}\Xi\int_{H^s_0(\M,\gm_0)}\int_{\R}\exp\bigg\{\Big[\sum_{\ell=1}^La_\ell-\chi(\M)Q\Big]\cj X-\nu e^{\be \cj X}\Y_N(\M)\bigg\}d\mu_0(X_0)d\cj X
 \end{align*}
 with $\Y_N(\M)$ as in \eqref{YN}. With the change of variable 
 \begin{align*}
 \cj X\mapsto \tau &\deff \nu e^{\be \cj X}\Y_N(\M),
  \end{align*}
   we continue with
 \begin{align}\label{bd-RN(X,m)}
\ZZ_N(\gm_0) &= \GG_{N,0}\Xi\be^{-1} \nu^{\be^{-1}\big[\chi(\M)Q)-\cj a\big]}\Big[\int_0^{+\infty}\tau^{\be^{-1}\big[\cj a -\chi(\M)Q\big]-1}e^{-\tau}d\tau\Big]\notag\\
&\qquad\times\Big[\int_{H^s_0(\M,\gm_0)} \Y_N(\M)^{\be^{-1}\big[\chi(\M)Q-\cj a\big]} d\mu_0\Big],
\end{align}
where we set $\cj a = \sum_{\ell=1}^La_\ell$.

The integral in $\tau$ in \eqref{bd-RN(X,m)} is $\G\big(\be^{-1}[\cj a -\chi(\M)Q]\big)$ which is finite under the assumption that the first Seiberg bound \eqref{Seiberg1} holds, and the integral with respect to $\mu_0$ converges by Lemma \ref{LEM:ZNM}. Since the $x_\ell$'s are all different, the constant $\GG_{N,0}$ also converges to the constant
\begin{align*}
\GG_0\deff \Big(\prod_{\ell=1}^Le^{2\pi\frac{a_\ell^2}2\wt G_0(x_\ell,x_\ell)}\Big)e^{2\pi\sum_{\ell< k}a_\ell a_kG_0(x_\ell,x_k)}<\infty,
\end{align*}
which shows that the whole partition function $\ZZ_{N}(\gm_0)$ converges to a non trivial real number.

Moreover, for any $F\in C_b(H^s_0(\M)\oplus\R)$, we can use the same argument as in the proof of \cite[Theorem 3.2]{DKRV}: we have
\begin{align*}
&\int_{H^s_0(\M,\gm_0)}\int_{\R} F(X_0+\cj X)R_N (X_0,\cj X)d\mu_0(X_0)d\cj X\\
&=\GG_{N,0}\Xi\be^{-1}\nu^{\be^{-1}\big[ \chi(\M)Q-\cj a\big]}\int_0^{+\infty}\tau^{\be^{-1}\big[\cj a- \chi(\M)Q\big]-1}e^{-\tau}\\
&\qquad\times\int_{H^s_0(\M,\gm_0)}F\Big(X_0+\be^{-1}\ln\frac{\tau}{\Y_N(\M)}+2\pi\sum_{\ell=1}^La_\ell(\P_N\otimes\P_N)G_0(x_\ell,x)\Big)\\
&\qquad\times\Y_N(\M)^{\be^{-1}\big[\chi(\M)Q-\cj a\big]} d\mu_0(X_0)d\tau.
\end{align*}
Then we note that $(\P_N\otimes\P_N)G_0(x_\ell,\cdot)$ converges to $G_0(x_\ell,\cdot)$ in $H^s_0(\M)$ and that $0<\Y_N(\M)<\infty$ almost surely and $\Y_N(\M)\to \Y(\M)$ in probability from the proof of Lemma \ref{LEM:ZNM}. Since the term with $F$ is then almost surely uniformly bounded and the integral too in view of the previous step, we can use dominated convergence to conclude that the last term above converges.

\textbf{Case 2: general metric.} In the case of a general metric $\gm = e^{f_0}\gm_0$ as in \eqref{conform}, we proceed as in \cite[Subsection 3.5]{DKRV}: we first make the change of variable $\cj X' = \cj X + \jb{X_\gm}_0$ and then use Lemma \ref{LEM:Gconf} to get
\begin{align*}
&\int_{H^s_0(\M,\gm)}\int_{\R} F\big(X_\gm+\cj X\big)R_N (X_\gm,\cj X)d\mu_\gm(X_\gm)d\cj X\\
&=\Xi(\gm)\int_{H^s_0(\M,\gm)}\int_{\R} F\big(X_\gm-\jb{X_\gm}_0+\cj X)\exp\bigg\{- \frac{Q}{4\pi}\int_{\M}\Rg_\gm\big(X_\gm-\jb{X_\gm}_0+\cj X\big)d\Vg\\
&\qquad +\sum_{\ell=1}^L\Big(a_\ell\P_{N,\gm}\big(X_\gm- \jb{X_\gm}_0+\cj X\big)(x_\ell) - \frac{a_\ell^2}2\big(\log N+2\pi C_\P)\Big)\\
&\qquad\qquad - \nu\int_{\M}e^{-\pi\be^2C_\P}N^{-\frac{\be^2}2}e^{\be \P_{N,\gm}\big(X_\gm-\jb{X_\gm}_0 +\cj X\big)}d\Vg\bigg\} d\mu_\gm(X_\gm)d\cj X\\
&=\Xi(\gm)\int_{H^s_0(\M,\gm)}\int_{\R} F\big(X_0+\cj X)\exp\Big\{ - \frac{Q}{4\pi}\int_{\M}\Rg_\gm\big(X_0+\cj X\big)d\Vg\\
&\qquad\times+\sum_{\ell=1}^L\Big(a_\ell\P_{N,\gm}\big(X_0+\cj X\big)(x_\ell) - \frac{a_\ell^2}2\big(\log N +2\pi C_\P\big)\Big) \\
&\qquad\qquad- \nu\int_{\M}e^{-\pi\be^2C_\P}N^{-\frac{\be^2}2}e^{\be \P_{N,\gm}\big(X_0 +\cj X\big)}d\Vg\Big\} d\mu_0(X_0)d\cj X
\end{align*}
where we write $\P_{N,\gm} = e^{N^{-2}\Dlg}$ to emphasize the dependence in the metric.

Then by Lemmas \ref{LEM:GN3} and \ref{LEM:GN5} we have
\begin{align*}
&\int_{H^s_0(\M,\gm)}\int_{\R} F\big(X_\gm+\cj X\big)R_N (X_\gm,\cj X)d\mu_\gm(X_\gm)d\cj X\\
&=\Xi(\gm)\int_{H^s_0(\M,\gm)}\int_{\R} F\big(X_0+\cj X)\exp\bigg\{-\frac{Q}{4\pi}\int_{\M}\Rg_\gm X_0d\Vg\\
&\qquad+\big[\cj a-\chi(\M)Q\big]\cj X+\sum_{\ell=1}^L\Big[a_\ell\P_{N,\gm}X_0(x_\ell) \\
&\qquad- \frac{a_\ell^2}2\big(2\pi(\P_{N,\gm}\otimes\P_{N,\gm})G_0(x_\ell,x_\ell)-\frac12 f_0(x_\ell)-2\pi\wt G_0(x_\ell,x_\ell)+o(1)\big)\Big]\\
&\qquad - \nu e^{\be\cj X}\int_{\M}e^{-\pi\be^2C_\P}N^{-\frac{\be^2}2}e^{\be \P_{N,\gm}X_0+f_0}dV_0\bigg\} d\mu_0(X_0)d\cj X
\end{align*}
where we used again that $\cj X$ is constant and Gauss-Bonnet's formula \eqref{GB}.

To deal with the curvature term $\int \Rg_\gm X_0d\Vg$, we apply again Cameron-Martin's theorem so that 
\begin{align*}
X_0(x) + \frac{Q}{2}\int_{\M}\Rg_\gm(y)G_0(x,y)d\Vg(y)
\end{align*}
is still a mass-less GFF under
\begin{align*}
\exp\Big\{-\frac{Q}{4\pi}\int_{\M}\Rg_\gm X_0d\Vg-\frac12\int \Big|-\frac{Q}{4\pi}\int_{\M}\Rg_\gm X_0d\Vg\Big|^2d\mu_0(X_0)\Big\}d\mu_0.
\end{align*}
In view of \eqref{conform}, the shift in $X_0$ can be computed as
\begin{align*}
\frac{Q}{2}\int_{\M}\Rg_\gm(y)G_0(x,y)d\Vg(y)&=\frac{Q}{2}\int_{\M}G_0(x,y)\big(\Rg_0-\Dl_0f_0(y))dV_0(y)\\
& = \frac{Q}2\big(f_0(x)-\jb{f_0}_0\big)
\end{align*}
by using that $\Rg_0$ is constant, that $G_0(x,\cdot)$ has mean zero, and \eqref{Green2}. Similarly, we also have
\begin{align*}
\int_{H^s_0(\M,\gm)} \Big|\int_{\M}\Rg_\gm X_0d\Vg\Big|^2d\mu_0(X_0)&=2\pi\int_{\M}\int_{\M}\Rg_\gm(x)\Rg_\gm(y)G_0(x,y)d\Vg(x)d\Vg(y)\\
&=2\pi\int_{\M}\big(\Rg_0-\Dl_0f_0(x)\big)\big(f_0(x)-\jb{f_0}_0\big)dV_0(x)\\
&=2\pi\int_{\M}\big|\nabla_0f_0\big|^2dV_0 .
\end{align*}
With the change of variable $\cj X\mapsto \cj X + \frac{Q}2\jb{f_0}_0$, this yields
\begin{align*}
&\int_{H^s_0(\M,\gm)}\int_{\R} F\big(X_\gm+\cj X\big)R_N (X_\gm,\cj X)d\mu_\gm(X_\gm)d\cj X\\
&=\Xi(\gm)\int_{H^s_0(\M,\gm)}\int_{\R} F\Big(X_0-\frac{Q}2f_0+\cj X\Big)e^{\frac{Q^2}{16\pi}\int_{\M}|\nabla_0f_0|^2dV_0+\frac{Q^2}2\chi(\M)\jb{f_0}_0-\sum_{\ell=0}^L\big(\frac{Qa_\ell}2-\frac{a_\ell^2}4\big)f_0(x_\ell)+o(1)}\\
&\qquad\times\exp\bigg\{\sum_{\ell=1}^L\Big[a_\ell\P_{N,0}X_0(x_\ell) - \frac{a_\ell^2}2\big(2\pi(\P_{N,\gm}\otimes\P_{N,\gm})G_0(x_\ell,x_\ell)-2\pi\wt G_0(x_\ell,x_\ell)+o(1)\big)\Big]\\
&\qquad +\big[\cj a -  \chi(\M)Q\big]\cj X - \nu e^{\be\cj X}\int_{\M}e^{-\pi\be^2C_\P}N^{-\frac{\be^2}2}e^{\be \P_{N,\gm}X_0 + (1-\be\frac{Q}2)f_0+o(1) }dV_0\bigg\} d\mu_0(X_0)d\cj X.
\end{align*}
Proceeding then as in the previous case and replacing $X_0$ by the shifted field
\begin{align*}
\wt X_0 = X_0 - 2\pi\sum_{\ell=1}^La_\ell(\P_{N,\gm}\otimes\Id)G_0(x_\ell,x)
\end{align*}
which, by virtue of the Cameron-Martin theorem, has the same law as $X_0$ under the new measure
\begin{align*}
e^{-2\pi\sum_{\ell<k}a_\ell a_k(\P_{N_\gm}\otimes\P_{N,\gm})G_0(x_\ell,x_k)}\exp\Big(\sum_{\ell=1}^La_\ell\P_{N,\gm}X_0(x_\ell) -\frac{a_\ell^2}{2}2\pi(\P_{N,\gm}\otimes\P_{N,\gm})G_0(x_\ell,x_\ell)\Big)d\mu_0,
\end{align*}
we get
\begin{align*}
&\int_{H^s_0(\M,\gm)}\int_{\R} F\big(X_\gm+\cj X\big)R_N (X_\gm,\cj X)d\mu_\gm(X_\gm)d\cj X\\
&=\Xi(\gm)\GG_{N,0}(1+o(1))\int_{H^s_0(\M,\gm)}\int_{\R} F\Big(X_0-\frac{Q}2f_0+\cj X+2\pi\sum_{\ell=1}^La_\ell(\P_{N,\gm}\otimes\P_{N,\gm})G_0(x_\ell,x)\Big)\\
&\qquad\times e^{\frac{Q^2}{16\pi}\int_{\M}|\nabla_0f_0|^2dV_0+\frac{Q^2}2\chi(\M)\jb{f_0}_0-\sum_{\ell=0}^L\big(\frac{Qa_\ell}2-\frac{a_\ell^2}4\big)f_0(x_\ell)+o(1)}\exp\bigg\{\big[\cj a -  \chi(\M)Q\big]\cj X\\
&\qquad - \nu e^{\be\cj X}\int_{\M}e^{-\pi\be^2C_\P}N^{-\frac{\be^2}2}e^{\be \P_{N,\gm}X_0 +2\pi\sum_{\ell=1}^La_\ell(\P_{N,\gm}\otimes\P_{N,\gm})G_0(x_\ell,x)}\\
&\qquad\qquad\times e^{(1-\be\frac{Q}2)f_0+o(1) }dV_0\bigg\} d\mu_0(X_0)d\cj X,
\end{align*}
with again $o(1)$ being deterministic.

Using the fact that the estimate of Lemma \ref{LEM:UEst} (iii) also holds for
\begin{align*}
\U_{N,\gm,0}\deff e^{-\pi\be^2C_\P}N^{-\frac{\be^2}2}e^{\be\P_{N,\gm}\<1>_0 +2\pi\sum_{\ell=1}^La_\ell(\P_{N,\gm}\otimes\P_{N,\gm})G_0(x_\ell,x)}
\end{align*} 
and using also Lemma \ref{LEM:GN5}, we find that
\begin{align*}
\U_{N,\gm,0} \too e^{\frac{\be^2}4f_0}\U
\end{align*}
as $N\to\infty$ in the same topology as in Proposition \ref{PROP:U}. With the identity $\frac{\be^2}4+1-\be\frac{Q}2 = 0$ (by definition of $Q$), and the same argument as in the proof of Lemma \ref{LEM:ZNM}, we deduce that
\begin{align*}
\int_{\M}\U_{N,\gm,0}(0)e^{(1-\be\frac{Q}2)f_0+o(1) }dV_0 \too \Y(\M)
\end{align*}
as $N\to\infty$ in $L^2(\mu_0)$. This shows that
\begin{align*}
\int_{H^s_0(\M,\gm)}\int_{\R} F\big(X_\gm+\cj X\big)R_N (X_\gm,\cj X)d\mu_\gm(X_\gm)d\cj X
\end{align*}
converges to
\begin{align*}
&\Xi(\gm)e^{\frac{Q^2}{16\pi}\int_{\M}|\nabla_0f_0|^2dV_0+\frac{Q^2}2\chi(\M)\jb{f_0}_0-\sum_{\ell=0}^L\big(\frac{Qa_\ell}2-\frac{a_\ell^2}4\big)f_0(x_\ell)}\\
&\qquad\qquad\times\Xi(\gm_0)^{-1}\int_{H^s_0(\M,\gm_0)}\int_{\R} F\big(X_0+\cj X-\frac{Q}2f_0\big)d\rho_{\{a_\ell,x_\ell\},\gm_0}(X_0,\cj X).
\end{align*}

Finally, the identity
\begin{align*}
\frac{Q^2}2\chi(\M)\jb{f_0}_0 = \frac{Q^2}{16\pi}\int_{\M}\Rg_0f_0dV_0
\end{align*}
given by Gauss-Bonnet \eqref{GB}, as well as the use of \eqref{Xi3} to simplify $\Xi(\gm)\Xi(\gm_0)^{-1}$ shows \eqref{anomaly} and concludes the proof of Theorem \ref{THM:LQG}.
\end{proof}

\section{Proof of Theorem \ref{THM:GWP}}\label{SEC:GWP}
We know turn to the construction of the dynamics preserving the correlations \eqref{correlation2}.

\subsection{Construction of the dynamics}
Recall that we look for a solution of \eqref{heat2} under the form \eqref{DPD} with $z$ as in \eqref{z} and where $v_N$ solves \eqref{vN}. In particular, note that $z\in C(\R_+;C^{\infty}(\M))$, $\Prob$-almost surely (and for any $\cj X\in\R$). In this subsection, we thus focus on the equation:
\begin{align}
\begin{cases}
\dt v_N -\frac1{4\pi}\Dlg v_N + \frac12\nu\be \P_N\Big[e^{\be \P_Nz} e^{\be \P_Nv_N} \U_N\Big]  = 0\\
v|_{t=0}=0,
\end{cases}
\label{v1}
\end{align}

\noi
where  $z $ is a given {\it deterministic} function in $C(\R_+;C^{\infty}(\M))$, $\U_N$ is a given {\it deterministic} positive space-time distribution which converges to $\U$ in $L^2([0,T];H^{-1+\eps}(\M))$ for any $T>0$ and $0<\eps\ll1$, $\P_N$ is as in \eqref{PN},
and $\nu > 0$. In this case, recall from \cite{ORW} that the ``sign-definite structure" allows us to rewrite the equation \eqref{v1} as
\begin{align}
\begin{cases}
\dt v_N -\frac1{4\pi}\Dlg v_N + \frac12\nu\be \P_N\Big[e^{\be \P_Nz} \NN(\be \P_Nv_N)\U_N\Big]  = 0\\
v|_{t=0}=0,
\end{cases} 
\label{v2}
\end{align}

\noi
where $\NN$ is a suitable smooth, bounded and Lipschitz nonlinearity. 
Indeed, this follows by writing~\eqref{v1} in the Duhamel formulation
\begin{align}
v_N(t,x)=-\frac12\nu\be\int_0^t\int_{\M} P_\gm(t-t',x,y) \P_N\Big[e^{\be \P_Nz} e^{\be \P_Nv_N}\U_N \Big](t',y)d\Vg(y)dt',
\label{v3}
\end{align}
and by using that the heat kernel $P_\gm$ is positive, which also implies that $\P_N$ \eqref{PN} is positivity preserving, and that $\U_N$ is a positive distribution with $\nu>0$, from which we infer on \eqref{v3} that $\be v_N\le 0$. In particular if $\NN\in\S(\R)$ satisfies $\NN(x)\equiv e^x$ for $x\le 0$ we see that \eqref{v3} is then equivalent to \eqref{v4}. We also consider the limit equation
 \begin{align}
\label{v4}
\begin{cases}
\dt v -\frac1{4\pi}\Dlg v +  \frac 12 \nu \be
\Big[e^{\be z }  \NN(\be v) \U\Big]=0\\
v_N|_{t = 0} = 0
\end{cases}
\end{align}
which is again equivalent to \eqref{v} due to the positivity of $\U$ and $\nu$.
\begin{proposition}\label{PROP:flow}
Let  $T>0$, $0<\eps\ll 1$, $z\in C([0,T]; C^2(\M))$
and  $\U\in L^2([0,T];H^{-1+\eps}(\M))$
be a positive distribution.
Suppose that a sequence $\{\U_N\}_{N \in \N}$ 
of smooth non-negative functions
converges to  
$\U$ in $L^2([0,T];H^{-1+\eps}(\M))$.
Then, for $0<\dl\ll \eps$, the Cauchy problem \eqref{v2} is well-posed in $\XX_T^\dl$ for all $N\in\N$, and the corresponding solution $v_N$ converges to 
a limit $v$ in $\XX_T^\dl$. Furthermore, 
 the limit $v$ is the unique solution to~\eqref{v4}
 in the energy class $\XX_T^0$.
\end{proposition}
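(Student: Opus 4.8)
The plan is to run a Duhamel fixed point argument for \eqref{v2}, exploiting that $\NN$ is smooth, nonnegative, bounded and globally Lipschitz, so that \eqref{v2} is only mildly nonlinear: the only genuinely rough object is the fixed \emph{deterministic} positive distribution $\U_N$, multiplied by the bounded continuous weight $e^{\be\P_N z}\NN(\be\P_N v_N)$. I would start from the Duhamel formulation
\begin{align*}
v_N(t) = -\frac{\nu\be}{2}\int_0^t e^{\frac{t-t'}{4\pi}\Dlg}\P_N\Big[e^{\be\P_N z}\,\NN(\be\P_N v_N)\,\U_N\Big](t')\,dt',
\end{align*}
and observe that, since $P_\gm$ and hence $\P_N$ is positivity preserving by Lemma~\ref{LEM:heatker}~(i), $\U_N\ge0$, $e^{\be\P_N z}>0$ and $\NN\ge0$, any solution automatically satisfies $\be v_N\le0$; because $\NN(x)=e^x$ for $x\le0$, such a solution then solves \eqref{v1} as well, and the same remark relates \eqref{v4} to \eqref{v}. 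The ``sign-definite structure'' of \cite{ORW} is precisely this observation, and it is the reason why the regularization is chosen to be $\P_N=e^{N^{-2}\Dlg}$.

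For the well-posedness of \eqref{v2} in $\XX_T^\dl$ (see \eqref{energyZ}), the two nonlinear estimates needed are: first, the ``same-point'' bound, where the positivity of $\U_N$ and $\|e^{\be\P_N z}\NN(\be\P_N v_N)\|_{L^\infty(\M)}\les_z\|\NN\|_{L^\infty}$ let one invoke Lemma~\ref{LEM:posprod} to control $e^{\be\P_N z}\NN(\be\P_N v_N)\U_N$ in $L^2([0,T];H^{-1+\eps}(\M))$ uniformly in $v_N$ and in $N$; second, the Lipschitz bound for differences, where one writes $\NN(\be\P_N v_N^{(1)})-\NN(\be\P_N v_N^{(2)})=\be\P_N(v_N^{(1)}-v_N^{(2)})\int_0^1\NN'(\theta\be\P_N v_N^{(1)}+(1-\theta)\be\P_N v_N^{(2)})\,d\theta$, bounds $\NN(\be\P_N v_N)$ and the integral above in $H^{1-\dl}(\M)$ by the fractional chain rule Lemma~\ref{LEM:FC}~(i), and then estimates the product against $\U_N\in H^{-1+\eps}$ by the Besov product law Lemma~\ref{LEM:Besov}~(iii), which is admissible exactly in the window $1-\eps<1-\dl<1$, i.e. for $0<\dl\ll\eps$. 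Feeding these into Schauder's estimate Lemma~\ref{LEM:heatker}~(ii), whose time weight is integrable precisely in this regime, shows that the Duhamel map is a contraction on a ball of $\XX_{T'}^\dl$ for small $T'$; since the a priori bound produced by $\|\NN\|_{L^\infty}$ does not depend on the size of the data, the local existence time is uniform and, iterating over finitely many subintervals, one obtains a global solution $v_N\in\XX_T^\dl$.

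To prove the convergence $v_N\to v$, set $\eta_N=v_N-v$ and subtract the two Duhamel formulations; the forcing difference splits as
\begin{align*}
&(\P_N-\Id)\big[e^{\be\P_N z}\NN(\be\P_N v_N)\U_N\big] + \big(e^{\be\P_N z}-e^{\be z}\big)\NN(\be\P_N v_N)\U_N + e^{\be z}\big(\NN(\be\P_N v_N)-\NN(\be\P_N v)\big)\U_N\\
&\qquad + e^{\be z}\big(\NN(\be\P_N v)-\NN(\be v)\big)\U_N + e^{\be z}\NN(\be v)(\U_N-\U).
\end{align*}
The third term is the contraction term, bounded by $C\|\eta_N\|_{\XX^\dl}\|\U_N\|_{L^2_TH^{-1+\eps}}$ and absorbed by shrinking $T'$ and iterating; the first tends to $0$ since $(\P_N-\Id)\to0$ in operator norm from $H^{-1+\eps}$ to $H^{-1+\eps-\eta}$ with $\eta\ll\dl$, while $e^{\be\P_N z}\NN(\be\P_N v_N)\U_N$ is uniformly bounded in $L^2_TH^{-1+\eps}$; the second tends to $0$ because $\P_N z\to z$ in $C([0,T];C^2(\M))$ (as $z\in C([0,T];C^2)$ and $\P_N\to\Id$ in $C^2$-operator norm, by Lemma~\ref{LEM:PM} and Lemma~\ref{LEM:heatker}~(i)), together with Lemma~\ref{LEM:posprod}; the fourth tends to $0$ by $\P_N v\to v$ in a slightly weaker energy space and the Lipschitz property of $\NN$; and the fifth tends to $0$ by the hypothesis $\U_N\to\U$ in $L^2_TH^{-1+\eps}$ and the Besov product law against the fixed $C^2$ weight. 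Hence $\|\eta_N\|_{\XX_T^\dl}\to0$, and passing to the limit in the Duhamel equation shows that $v$ solves \eqref{v4}.

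Finally, for uniqueness in the larger class $\XX_T^0$ the spare regularity is gone, so I argue by an energy estimate, which is again where the sign-definite structure is essential. Given two solutions $v,\wt v\in\XX_T^0$ of \eqref{v4}, both satisfy $\be v\le0$ and $\be\wt v\le0$; set $w=v-\wt v$ and pair the difference equation with $w$ in $L^2(\M,\gm)$ (legitimate since $\XX_T^0$ provides enough regularity and time-integrability for all the pairings), obtaining
\begin{align*}
\frac12\frac{d}{dt}\|w\|_{L^2(\M)}^2 + \frac1{4\pi}\|\nabla_\gm w\|_{L^2(\M)}^2 = -\frac{\nu\be}{2}\int_{\M}\big(\NN(\be v)-\NN(\be\wt v)\big)\,w\,e^{\be z}\,d\U.
\end{align*}
Since $\NN$ agrees with the increasing function $e^{x}$ on $(-\infty,0]$ and $\be v,\be\wt v\le0$, the function $(\NN(\be v)-\NN(\be\wt v))w$ is pointwise nonnegative, while $e^{\be z}>0$ and $\U$ is a positive Radon measure, so the right-hand side is $\le0$; thus $\|w(t)\|_{L^2(\M)}^2$ is nonincreasing and, as $w(0)=0$, we conclude $w\equiv0$. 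I expect the main obstacle to be the product estimates for $\NN(\be\P_N v_N)\U_N$ and its differences: the whole construction lives in a narrow regularity window, with $v_N$ only slightly above $H^1$ and $\U_N$ only slightly below $H^{-1}$, which forces a careful combination of Lemma~\ref{LEM:posprod}, Lemma~\ref{LEM:Besov}~(iii) and Lemma~\ref{LEM:FC} and a check that the Schauder time weight remains integrable; justifying that the energy pairing above is well defined for merely $\XX_T^0$ solutions is the other delicate point.
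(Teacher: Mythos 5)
Your skeleton (Duhamel formulation, sign-definite reduction to the bounded Lipschitz nonlinearity $\NN$, Schauder estimates, positivity product estimate, energy argument for uniqueness) matches the paper's strategy, but three steps have genuine gaps. First, your convergence argument is circular: you set $\eta_N=v_N-v$ and subtract ``the two Duhamel formulations'', which presupposes the existence of a solution $v$ of \eqref{v4} in $\XX_T^\dl$ --- but that existence is precisely what must be produced, and your fixed point is only carried out for \eqref{v2} with smooth $\U_N$; concluding at the end that ``passing to the limit shows $v$ solves \eqref{v4}'' assumes what is to be proved. The paper instead gets $v$ by compactness: the uniform bounds \eqref{R2}, \eqref{R2b} together with the bound \eqref{R3} on $\dt v_N$ give boundedness in $\wt\XX^{2\dl}_T$, Aubin--Lions yields a subsequence converging in $\XX^\dl_T$, the limit is identified as a solution via the decomposition \eqref{Dv0}, and convergence of the whole sequence follows from uniqueness. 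Your scheme could be repaired by proving that $\{v_N\}$ is Cauchy (running your five-term decomposition on $v_{N_1}-v_{N_2}$), but as written the logic does not close. Second, your Lipschitz estimate for the contraction puts the $\NN$-difference in $H^{1-\dl}$ by the fractional chain rule and multiplies against $\U_N\in H^{-1+\eps}$ using Lemma \ref{LEM:Besov}~(iii); with two $L^2$-based factors that product law only lands in $B^{-1+\eps}_{1,1}$, and the Schauder weight needed to return from this $L^1$-based space to any space of regularity $\ge 1-\dl$ is $t^{-\frac{(1-\dl)-(-1+\eps)}{2}-\frac12}$, which is not integrable, so the contraction does not close. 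The difference term must instead be handled through the positivity of $\U_N$: write it via the mean value theorem as $\be\P_N(v^{(1)}-v^{(2)})$ times a bounded continuous factor and apply Lemma \ref{LEM:posprod}, keeping the estimate $L^2$-based --- this is what the paper does in \eqref{R1b} and \eqref{Dv4}, reserving the $B^{-1+\eps}_{1,1}$-based bound for term $\III$ of \eqref{Dv0}, where only convergence in a weak norm is needed.

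Third, for uniqueness in $\XX_T^0$ you pair the difference equation with $w$ directly at the limit level and assert the pairing is legitimate. This is exactly the delicate point and it is not supplied by $\XX_T^0$ regularity alone: $w$ and $\NN(\be v_1)-\NN(\be v_2)$ are not continuous in $x$ (only $C_TL^2\cap L^2_TH^1$ in dimension two), so neither Lemma \ref{LEM:posprod} nor the Besov product law with the available time integrability shows that $e^{\be z}\big(\NN(\be v_1)-\NN(\be v_2)\big)\U$ lies in $L^2_TH^{-1}$, and consequently the identity $\frac{d}{dt}\frac12\|w\|_{L^2}^2=\langle\dt w,w\rangle$ together with the sign of its right-hand side remains formal. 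The paper sidesteps this by performing the energy computation on the smooth truncated solutions $v_{N,j}$ of \eqref{v2}, where $\En_N$ is differentiable and the positivity of $\U_N$ rigorously gives $\frac{d}{dt}\En_N\le 0$, and then passing to the limit $\En_N\to\En$ uniformly; accordingly the uniqueness actually proved there is within the class of limits of solutions of \eqref{v2}. Your stronger claim of unconditional uniqueness in $\XX_T^0$ would require an additional argument giving meaning to the nonlinear term and to the energy identity at that low regularity.
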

Here the spaces $\XX_T^s$ are defined for any $s\in\R$  by
\begin{align}\label{energyZ}
\XX_T^s \deff C([0,T];H^s (\M))\cap L^2([0,T];H^{1+s}(\M)).
\end{align}

Using Proposition \ref{PROP:U}, we se that the condition $\U\in L^2([0,T];H^{-1+\eps}(\M))$ for some (small) $\eps>0$ gives the condition \eqref{A2} in Theorem \ref{THM:GWP}, and the convergence of $\U_N$ to $\U$ in $L^2\big(\mu_\gm\otimes\Prob;L^2([0,T];H^{-1+\eps}(\M))\big)$ implies the convergence in measure of $v_N$ to $v$ in \eqref{DPD}. Thus Theorem \ref{THM:GWP} (i) will be established once we prove Proposition \ref{PROP:flow}.

\begin{proof}[Proof of Proposition \ref{PROP:flow}]
Let us define the nonlinear operator
\begin{align*}
\Phi_N = \Phi_{\P_Nz,\U_N}\deff v_N\mapsto -\frac12\nu\be\int_0^te^{\frac{t-t'}2\Dlg}\P_N\big[e^{\be \P_Nz}\NN(\be \P_Nv)\U_N\big](t')dt'
\end{align*}
and similarly for $\Phi = \Phi_{z,\U}$. In the following we fix $T>0$, $z\in C([0,T];C^{\infty}(\M))$ and smooth positive functions $\U_N$ satisfying $\U_N\to \U$ in $L^2([0,T];H^{-1+\eps}(\M))$ for some $0<\eps\ll 1$ and some positive distribution $\U\in L^2([0,T];H^{-1+\eps}(\M))$. We mainly follow the argument in \cite[Section 5]{ORW}.

\textbf{Step 1: global well-posedness of \eqref{v2}.}
For $0<\tau\le T$, we estimate for any $N\in\N$ and any $v_N\in C([0,\tau]\times\M)$:
\begin{align}
\big\|\Phi_N(v_N)\big\|_{C_{\tau,x}} &\les\bigg\|\int_0^t(t-t')^{-\frac12}\Big\|\P_N\big[e^{\be \P_Nz}\NN(\be \P_Nv_N)\U_N\big](t')\Big\|_{L^2 }dt'\bigg\|_{L^{\infty}_\tau}\notag\\
&\les \tau^{\frac12-}\Big\|e^{\be \P_Nz}\NN(\be \P_Nv_N)\U_N\Big\|_{L^{\infty}_\tau L^2 }\notag\\
&\les \tau^{\frac12-}e^{C \|z\|_{L^{\infty}_{T,x}}}\|\U_N\|_{L^{\infty}_T L^2 }
\label{R1}
\end{align}
where we used Schauder's estimate \eqref{Schauder} and Young's inequality with the boundedness of $\NN$ and the uniform boundedness of $\P_N : L^p(\M)\to L^p(\M)$ for any $p$. Note that for fixed $N\in\N$, $\U_N$ is smooth, so that the right-hand side of \eqref{R1} is indeed finite.

We can also estimate similarly for $v_N,w_N\in C([0,\tau]\times\M)$
\begin{align}
&\|\Phi_N(v_N)-\Phi_N(w_N)\|_{C_{\tau,x}}\notag \\
&\les \bigg\|\int_0^t (t-t')^{-\frac12}
\Big\|\P_N\Big[e^{\be \P_Nz}\big[\NN(\be \P_Nv_N)-\NN(\be \P_Nw_N)\big]\U_N \Big)(t')\Big\|_{L^2 }dt'\bigg\|_{L^{\infty}_\tau}\notag\\
&\les \tau^{\frac12-}e^{C \|z\|_{L^{\infty}_{T,x}}}\big\|\NN(\be \P_Nv_N)-\NN(\be \P_Nw_N)\big\|_{L^{\infty}_{\tau,x}}\|\U_N\|_{L^{\infty}_T L^2 }\notag\\
&\les\tau^{\frac12-}e^{C \|z\|_{L^{\infty}_{T,x}}}\|v_N-w_N\|_{L^{\infty}_{\tau,x}}\|\U_N\|_{L^{\infty}_T L^2 }
 \label{R1b}
\end{align}
where in the last step we used the mean value theorem and wrote 
\begin{align}
\NN(\be \P_Nv_N)-\NN(\be  \P_Nw_N) = \be \P_N(v_N -  w_N) \int_0^1\NN'\big(\theta \be \P_Nv_N+(1-\theta)\be\P_N w_N\big)d\theta
\label{F2}
\end{align}
with $\NN'$ bounded. From the estimates \eqref{R1} and \eqref{R1b} we deduce that for $\tau_N=\tau_N\big(\|z\|_{L^{\infty}_{T,x}},\|\U_N\|_{L^{\infty}_TL^2 }\big)>0$, $\Phi_N$ is a contraction on a ball of $C([0,\tau_N]\times\M)$ and thus admits a unique fixed point $v_N$ in this ball, which is the unique solution of \eqref{v2} on $[0,\tau_N^*)$ where $0<\tau_N^*\le T$ is the maximal time of existence of $v_N$. Moreover the estimate \eqref{R1} shows that $\|v_N\|_{C_{\tau,x}}$ stays bounded as $\tau\nearrow \tau_N^\star$, so that we can iterate the fixed-point argument to obtain $\tau_N^\star = T$.

\textbf{Step 2: convergence of $v_N$.} Let $0<\dl \ll \eps$. 
Then, proceeding as above and using the Schauder estimate (Lemma~\ref{LEM:heatker}), Lemma~\ref{LEM:posprod}, 
and Young's inequality with the boundedness of $\NN$, we have
\begin{align}
\begin{split}
\|v_N\|_{C_TH^{2\dl} } 
&\les \bigg\|\int_0^t (t-t')^{-\frac{1+2\dl-\eps}{2}}
\big\|\P_N\big(e^{\be \P_Nz}\NN(\be \P_Nv_N)\U_N \big)(t')\big\|_{H^{-1+\eps} }dt'\bigg\|_{L^{\infty}_T}\\
&\les \big\|e^{\be \P_Nz}\NN(\be \P_Nv_N)\big\|_{L^{\infty}_{T,x}}
\bigg\|\int_0^t (t-t')^{-\frac{2+2\dl-\eps}{2}}\|\U_N(t')\|_{H^{-1+\eps} }dt'\bigg\|_{L^{\infty}_T}\\
 & \les e^{C\|z\|_{L^{\infty}_{T,x}}} \|\U_N\|_{L^2_T H^{-1+\eps} },
\end{split}
\label{R2}
\end{align}

\noi
uniformly in $N \in \N$. We can bound similarly

\begin{align}
\begin{split}
\|v_N\|_{L^2_TH^{1+2\dl} } 
&\les \bigg\|\int_0^t (t-t')^{-\frac{2+2\dl-\eps}{2}}
\big\|\P_N\big(e^{\be \P_Nz}\NN(\be \P_Nv_N)\U_N \big)(t')\big\|_{H^{-1+\eps} }dt'\bigg\|_{L^{2}_T}\\
&\les \big\|e^{\be \P_Nz}\NN(\be \P_Nv_N)\big\|_{L^{\infty}_{T,x}}
\bigg\|\int_0^t (t-t')^{-\frac{2+2\dl-\eps}{2}}\|\U_N(t')\|_{H^{-1+\eps} }dt'\bigg\|_{L^2_T}\\
 & \les e^{C\|z\|_{L^{\infty}_{T,x}}} \|\U_N\|_{L^2_T H^{-1+\eps} },
\end{split}
\label{R2b}
\end{align}

and 

\begin{align}
\begin{split}
\|\dt v_N \|_{L^2_TH^{-1+2\dl} } &= \Big\|\tfrac12\Dlg v_N 
-\tfrac{1}{2}\nu \be  \P_N\big[e^{\be\P_Nz}\NN(\be \P_Nv_N)\U_N\big] \Big\|_{L^2_TH^{-1+2\dl} }\\
&\les \|v_N\|_{L^2_TH^{1+2\dl} }+\big\| e^{\be \P_Nz}\NN(\be \P_Nv_N)\U_N\big\|_{L^2_TH^{-1+\eps} }\\
& \les 
e^{C\| z\|_{L^{\infty}_{T,x}}} \big\| \U_N\big\|_{L^2_TH^{-1+\eps} }, 
\end{split}
\label{R3}
\end{align}

\noi
uniformly in $N \in \N$.

For any $s \in \R$, we define
  $\wt\XX^{s}_T$ by 
\begin{align*}
\wt\XX^{s}_T
& = 
 \big\{v \in 
 \XX^s_T : \dt v\in L^2([0,T];H^{-1+s}(\M))\big\}.
\end{align*} 
Then we deduce from \eqref{R2}, \eqref{R2b}, and \eqref{R3}
along with the convergence of $\U_N$
to $\U$ in $L^2([0,T];H^{-1+\eps}(\M))$, that $\{v_N \}_{N \in \N}$
is bounded in $\wt\XX^{2\dl}_T$. Moreover, it follows from  Rellich's lemma and the Aubin-Lions lemma (see e.g. \cite[Corollary 4 on p.\,85]{Simon})
that 
the embedding of $\wt\XX^{2\dl}_T\subset \XX^\dl_T$ is compact.
Hence, 
there exists a subsequence $\{v_{N_k}\}_{k \in \N}$
converging to some limit $ v$
in $\XX_T^\dl$.

Next, we show that the limit $ v$ satisfies \eqref{v4}. Since $v_N$ satisfies \eqref{v2}, it is enough to prove the convergence of 
 $\Phi_{N_k}(v_{N_k})$ to $\Phi( v)$ in $\D'([0,T]\times\M)$. We thus estimate
\begin{align}
&\|\Phi_{N_k}(v_{N_k})-\Phi( v)\|_{L^1_TB^{-1 + \eps}_{1,1}}\notag\\
&\qquad \les \bigg\|
\int_0^t e^{\frac{t-t'}2\Dlg}\big[\P_{N_k}-\Id\big]\big[e^{\be \P_{N_k}z}\NN(\be \P_{N_k}v_{N_k})\U_{N_k}\big](t')dt'\bigg\|_{L^1_{T}H^{-1+ \eps}}\notag\\
&\qquad\qquad +\bigg\|
\int_0^t e^{\frac{t-t'}2\Dlg}\big[\big(e^{\be \P_{N_k}z}-e^{\be z}\big)\NN(\be \P_{N_k}v_{N_k})\U_{N_k}\big](t')dt'\bigg\|_{L^1_{T}H^{-1+ \eps}}
\notag\\
&\qquad\qquad+\bigg\|
\int_0^t e^{\frac{t-t'}2\Dlg}\big[e^{\be z}\NN(\be \P_{N_k}v_{N_k})(\U_{N_k}-\U)\big](t')dt'\bigg\|_{L^1_{T}B^{-1+ \eps}_{1,1}}
\notag\\
&\qquad\qquad +\bigg\|\int_0^te^{\frac{t-t'}2\Dlg}\big[e^{\be z}\big(\P_{N_k}v_{N_k}-v\big)\cj\NN(\P_{N_k}v_{N_k},v)
\U \big](t') dt'\bigg\|_{L^1_{T}H^{-1 + \eps}}\notag\\
&\qquad=: \1+\II+\III+\IV,
\label{Dv0}
\end{align}
where we used \eqref{F2} and wrote
\begin{align*}
\cj \NN(\P_{N_k}v_{N_k},v)\deff \int_0^1\NN'\big(\theta\be \P_{N_k}v_{N_k}+(1-\theta)\be v\big)d\theta.
\end{align*}

For the first term, note that we have for any $f\in C^{\infty}(\M)$ and any $N\in\N$
\begin{align*}
\Big\|\big[\P_N-\Id\big]f\Big\|_{H^{-1}}^2 & \les \sum_{n\ge 0}\big|e^{-N^{-2}\ld_n^2}-1\big|^2\jb{\ld_n}^{-2}\langle f,\varphi_n\rangle_\gm^2 \les N^{-2\eps}\sum_{n\ge 0}\jb{\ld_n}^{-2+2\eps}\langle f,\varphi_n\rangle_\gm^2\\
&\les N^{-2\eps}\|f\|_{H^{-1+\eps}}^2.
\end{align*}
Therefore the previous remark with Schauder estimate \eqref{Schauder}, Young's inequality and Lemma \ref{LEM:posprod} yield
\begin{align}
\1 &\les N_k^{-\eps}\bigg\|\int_0^t(t-t')^{-\frac{\eps}2}\Big\|\big[e^{\be \P_{N_k}z}\NN(\be \P_{N_k}v_{N_k})\U_{N_k}\big](t')\Big\|_{H^{-1+\eps} }dt'\bigg\|_{L^1_T}\notag\\
&\les N_k^{-\eps}e^{C\|z\|_{L^{\infty}_{T,x}}}\big\|\U_{N_k}\big\|_{L^2_TH^{-1+\eps} }.
\label{Dv1}
\end{align}

\noi
Similarly, we bound
\begin{align}
\II &\les \Big\|\big(e^{\be \P_{N_k}z}-e^{\be z}\big)\NN(\be \P_{N_k}v_{N_k})\U_{N_k}\Big\|_{L^1_TH^{-1+\eps} }\notag\\
&\les \big\|e^{\be \P_{N_k}z}-e^{\be z}\big\|_{L^{\infty}_{T,x}}\big\|\U_{N_k}\big\|_{L^2_TH^{-1+\eps} }\notag\\
&\les N_k^{-\eps}\|z\|_{L^{\infty}_TH^{1+2\eps} }e^{C\|z\|_{L^{\infty}_{T,x}}}\big\|\U_{N_k}\big\|_{L^2_TH^{-1+\eps} }
\label{Dv2}
\end{align}
where we used the mean value theorem, Sobolev inequality and the same argument as above to bound
\begin{align*}
\big\|e^{\be \P_{N_k}z}-e^{\be z}\big\|_{L^{\infty}_{T,x}} & \les \big\|\big[\P_{N_k}-\Id\big]z\big\|_{L^{\infty}_{T,x}}e^{C\|z\|_{L^{\infty}_{T,x}}}\\
&\les \big\|\big[\P_{N_k}-\Id\big]z\big\|_{L^{\infty}_TH^{1+\eps} }e^{C\|z\|_{L^{\infty}_{T,x}}}\\
&\les N_k^{-\eps}\|z\|_{L^{\infty}_TH^{1+2\eps} }e^{C\|z\|_{L^{\infty}_{T,x}}}.
\end{align*}

As for $\III$, we use again the Schauder estimate \eqref{Schauder} with Young's inequality, but due to the lack of positivity of $\U_{N_k}-\U$ we use the product estimate of Lemma \ref{LEM:Besov} (iii) in place of Lemma \ref{LEM:posprod} to get
 \begin{align}
 \begin{split}
\III &\les 
\big\|e^{\be z}\NN(\be \P_{N_k}v_{N_k})(\U_{N_k}-\U)\big\|_{L^1_T B^{-1+\eps}_{1,1} }\\
& \les \big\|e^{\be z}\NN(\be \P_{N_k}v_{N_k})\big\|_{L^2_TH^{1-\frac{\eps}2} }\|\U_{N_k}-\U\|_{L^2_TH^{-1+\eps} }\\
&\les \big\|e^{\be z}\|_{L^{\infty}_T B^{1-\frac{\eps}2}_{\infty,2}}\big\|\NN(\be \P_{N_k}v_{N_k})\big\|_{L^2_TH^{1-\frac{\eps}2} }\|\U_{N_k}-\U\|_{L^2_TH^{-1+\eps} }.
\end{split}
\label{R4}
\end{align}

\noi
Using the fractional chain rule of Lemma \ref{LEM:FC} (i) for $\A(u)=\NN(u)-1$, we bound
\begin{align}
\begin{split}
\big\|\NN(\be \P_{N_k}v_{N_k})\big\|_{L^2_TH^{1-\eps} }
&\les T^{\frac12}+\big\| \A(\be \P_{N_k}v_{N_k})\big\|_{L^2_{T}H^{1-\eps} }\les T^{\frac12}+\big\| v_{N_k}\big\|_{L^2_{T}H^{1-\eps} }\\
&\le C(T) \big(1+\|v_{N_k}\|_{\XX_T^\dl}\big).
\end{split}
\label{R5}
\end{align}

\noi
We also use the fractional chain rule of Lemma \ref{LEM:FC} (ii) to estimate for some large but finite $p_\eps$
\begin{align}
\begin{split}
\big\|e^{\be z}\|_{L^{\infty}_T B^{1-\frac{\eps}2}_{\infty,2}} &\les \big\|e^{\be z}\|_{L^{\infty}_T B^{1-\frac{\eps}4}_{p_\eps,2}} \les 1+ \big\|e^{2\be z}\big\|_{L^{\infty}_{T,x}}\|z\|_{L^{\infty}_TB^{1-\frac{\eps}8}_{\infty,2}}.
\end{split}
\label{R6}
\end{align}

Combining \eqref{R4}, \eqref{R5}, and \eqref{R6} we thus obtain
\begin{align}
\III \les e^{C \|z\|_{L^{\infty}_{T,x}}}\big(1+\|z\|_{L^{\infty}_TC^1 }\big)\big(1 +\|v_{N_k}\|_{\XX_T^\dl}\big)\|\U_{N_k}-\U\|_{L^2_TH^{-1+\eps} }.
\label{Dv3}
\end{align}

For the last term $\IV$ in \eqref{Dv0}, 
we use the Schauder estimate of Lemma \ref{LEM:heatker} and the product estimate\footnote{Note that $v(t)$ is indeed continuous in $x$ for almost every $t\in [0,T]$ by the Sobolev embedding $\XX_T^\dl \subset L^2([0,T];C(\M))$.} of Lemma \ref{LEM:posprod} with H\"older's inequality  and the boundedness of $\cj\NN$ to bound
\begin{align}
\begin{split}
\IV 
&\les \big\| e^{\be z}(\P_{N_k}v_{N_k}- v)\cj\NN(\P_{N_k}v_{N_k}, v)\U\big\|_{L^1_TH^{-1+\eps} }
\\
&\les \big\|e^{\be z}(\P_{N_k}v_{N_k}- v)\cj\NN(\P_{N_k}v_{N_k}, v)\big\|_{L^2_TL^{\infty} }
\|\U\|_{L^2_TH^{-1+\eps} }
\\
& \les
 e^{C\|z\|_{L^{\infty}_{T,x}}}\big(\|v_{N_k}- v\|_{L^2_TL^{\infty} }+\big\|\big[\P_{N_k}-\Id\big]v_{N_k}\|_{L^2_TH^{1+\frac\dl2} }\big)\|\cj\NN(v_{N_k}, v)\|_{L^{\infty}_{T,x}}
\|\U\|_{L^2_TH^{-1+\eps} }\\
&\les e^{C\|z\|_{L^{\infty}_{T,x}}}\big(\|v_{N_k}- v\|_{\XX_T^\dl}+N_k^{-\frac\dl2}\|v_{N_k}\|_{\XX_T^\dl}\big)\|\U\|_{L^2_TH^{-1+\eps} }.
\end{split}
\label{Dv4}
\end{align}

Owing to the convergence of $v_{N_k}$ to $ v$ in $\XX_T^\dl$
and $\U_{N_k}$ to $\U$
in $L^2([0,T];H^{-1+\eps}(\M))$, we see from \eqref{Dv0}, \eqref{Dv1}, \eqref{Dv2}, \eqref{Dv3} and \eqref{Dv4} that 
$\Phi_{N_k}(v_{N_k})$ converges to $\Phi( v)$ in $L^1([0,T];H^{-1 + \eps}(\M))$, and a fortiori in $\D'([0,T]\times\M)$.
Since $v_{N_k}  = \Phi_N(v_{N_k})$, 
we can conclude that 
\[  v = \lim_{k \to \infty}v_{N_k}
=\lim_{k \to \infty} \Phi_{N_k}(v_{N_k}) = \Phi( v) ,
\]
as equalities between functions in $\XX_T^\dl$.
This proves existence of a solution $v$
to \eqref{v3}
in $\XX_T^\dl \subset \XX_T^0$.

For the uniqueness in $\XX_T^0$, we proceed via an energy estimate: if $v_1, v_2 \in \XX_T^0$ are two solutions to \eqref{v3} which are limits in $\XX_T^0$ of solutions $v_{N,1},v_{N,2}$ to \eqref{v2}, noting $w=v_1-v_2$ we see that $w$ satisfies
\begin{align*}
\dt w -\frac1{4\pi}\Dlg w + \frac12\nu\be  e^{\be z}\big(\NN(\be v_1)-\NN(\be v_2)\big) \U = 0.
\end{align*}

\noi
We consider the energy of $w$:
\[\En(t) \deff \frac12\|w(t)\|_{L^2 }^2 + \frac1{4\pi}\int_0^t\|\nabla_\gm w(t')\|_{L^2 }^2dt'\geq 0,\]
and similarly for the energy $\En_N(t)$ of $w_N = v_{N,1}-v_{N,2}$. In particular, since $v_{N,j}\to v_j$ in $\XX_T^0$, we have that $\En_N\to \En$ in $C([0,T])$ for any $T>0$. 

\noi
Since $v_{N,j}$,$ j=1,2$, solve \eqref{v2} and that $\U_N$ and $z$ are smooth, we see that we have for fixed $N\in\N$ that $v_{N,j}\in C^1(\R_+,C^{\infty}(\M))$. Thus the energy functional $\En_N(t)$
is a well-defined differentiable function.
Moreover, since $\be v_{N,j}\leq 0$, $j = 1, 2$, we have again $\NN(\be v_{N,j}) = e^{\be v_{N,j}}$, so that
\[\cj\NN(v_{N,1},v_{N,2}) = \int_0^1\exp\big(\theta \be v_{N,1}+(1-\theta)\be v_{N,2}\big)d\theta\geq 0.\] 
This implies that
\begin{align*}
\frac{d}{dt}\En_N(t)& = \int_{\M}w_N(t)
\big(\dt w_N(t)-\frac1{4\pi}\Dlg w_N(t)\big)d\Vg\\
&= -\frac12\nu\be^2\int_{\M}w_N(t)^2  e^{\be{z}}\cj\NN(v_{N,1},v_{N,2})\U_N(t)d\Vg\\& \leq 0.
\end{align*}
Since $w_N(0) = 0$, 
we conclude that $\En_N(t) = 0$ for any $t \geq 0$. From the uniform convergence $\En_N\to \En$, we conclude that $\En(t)=0$ for any $t\ge 0$,
and $v_1 \equiv v_2$.
This finally proves uniqueness in the energy space $\XX_T^0$, and also convergence of the whole sequence $\{v_N\}_{N \in \N}$ in $\XX_T^\dl \subset \XX_T^0$.
\end{proof}

In particular, Proposition \ref{PROP:flow} implies that the solution
\begin{align*}
u_N = \<1>_\gm + z + v_N
\end{align*}
to \eqref{heat2} converges in measure to $u = \<1>_\gm + z + v$ in $\XX_T^\dl$, which proves Theorem~\ref{THM:GWP}~(i). Note that $u$ is then unique in the class
\begin{align*}
\<1>_\gm + z + \XX_T^\dl \subset C([0,T];H^s_0(\M)\oplus\R).
\end{align*}

\subsection{Invariance of the LQG measure}
\label{SUBSEC:Gibbs1}
In this subsection, 
 we finally conclude the proof of Theorem \ref{THM:GWP} (ii). We begin by proving the invariance of the truncated Gibbs measure $\rho_{N,\gm}$ \eqref{LQGN} under the flow $\wt u_N$ of \eqref{heat1} given by \eqref{Girsanov2}, where $u_N$ denotes the solution of \eqref{heat2} constructed in the previous subsection.
 \begin{lemma}\label{LEM:inv}
For any $N\in\N$, any $t\ge 0$ and any $F\in C_b(H^s_0(\M)\oplus\R)$, it holds
\begin{align*}
\int_{H^s_0(\M,\gm)}\int_\R \E\Big[F\big(\wt u_N(t,X_\gm,\cj X,\om)\big)\Big]d\rho_{N,\gm}(X_\gm,\cj X) = \int_{H^s_0(\M,\gm)}\int_\R F\big(X_\gm+\cj X\big)d\rho_{N,\gm}(X_\gm,\cj X).
\end{align*} 
 \end{lemma}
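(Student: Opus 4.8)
The plan is to establish invariance of the truncated Gibbs measure $\rho_{N,\gm}$ under the flow $\wt u_N$ of \eqref{heat1} in the standard way, by exhibiting \eqref{heat1} as a (finite-dimensional-up-to-Gaussian-tail) Langevin-type dynamics whose stationary distribution is precisely $\rho_{N,\gm}$. First I would reformulate the problem in terms of the variable $u_N$ solving \eqref{heat2} rather than $\wt u_N$: by the change of variables \eqref{Girsanov2} and the discussion around \eqref{sN}, it suffices to prove that the measure
\begin{align*}
d\wt\rho_{N,\gm}(u_0) \deff \wt\ZZ_N^{-1}(\gm)\exp\Big(\sum_{\ell=1}^La_\ell\P_NX_\gm(x_\ell)-\tfrac{a_\ell^2}{2}\s_N(x_\ell)\Big)\,\wt R_N(u_0)\,d\mu_\gm(X_\gm)\otimes d\cj X,
\end{align*}
which is the push-forward of $\rho_{N,\gm}$ under \eqref{Girsanov2} and is absolutely continuous with respect to $d\mu_\gm\otimes d\cj X$ (as noted after \eqref{correlation4}), is invariant under the flow of \eqref{heat2}. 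Then transferring back to $\wt u_N$ and $\rho_{N,\gm}$ is immediate since \eqref{Girsanov2} is a fixed deterministic shift, hence a measurable bijection on $H^s_0(\M)\oplus\R$ intertwining the two flows.

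The heart of the argument is then a Bismut--Elworthy / Gibbs-measure invariance computation for \eqref{heat2}, which I would carry out by Galerkin truncation. Fix $K\in\N$ and let $\pi_K$ denote the spectral projector onto $\mathrm{Vect}\{\varphi_n : \ld_n\le K\}$. Replace \eqref{heat2} by its finite-dimensional projection (projecting the noise, the nonlinearity and the drift by $\pi_K$), which is an SDE on $\R^{\dim\pi_K}$ of the form $d\vec{u} = b_K(\vec u)\,dt + \sqrt{2}\,d\vec B$, where the drift $b_K$ is the gradient (in the $\mu_\gm$-weighted sense, i.e. with respect to the quadratic form $\frac1{4\pi}\|\nabla_\gm\cdot\|_{L^2}^2$ together with the flat direction $\cj X$) of $-\wt E_{N,K}$, the projection of the renormalized energy $\wt E_N$ from the excerpt. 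One checks directly that the finite-dimensional Gibbs measure $\ZZ_{N,K}^{-1}e^{-\wt E_{N,K}}\prod du_n$ is invariant for this SDE — this is the classical fact that $dX = -\tfrac12\nabla V(X)\,dt + d B$ preserves $e^{-V}\,dx$, applied with the correct linear change of variables diagonalizing $-\Dlg$ (here the zero mode $\cj X$ needs the separate treatment already built into \eqref{z}, since it carries Lebesgue measure, not a Gaussian). Global well-posedness and the needed moment bounds for the truncated SDE follow from the sign-definite structure exactly as in the proof of Proposition~\ref{PROP:flow} (the nonlinearity is of the form $\P_N[e^{\be\P_N\cdot}\cdot]$ with a positive multiplier), so there is no blow-up.

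The remaining step is to pass to the limit $K\to\infty$. I would show that the solutions $\vec u^{(K)}$ of the Galerkin system converge, in $C([0,T];H^s_0(\M)\oplus\R)$ and in measure with respect to the initial data and the noise, to the solution $u_N$ of \eqref{heat2}; this uses the same decomposition \eqref{DPD}, the convergence $\pi_K\<1>_\gm\to\<1>_\gm$ and $\pi_KX_\gm\to X_\gm$ (Lemma~\ref{LEM:SC} and its proof), the convergence of the truncated GMC objects (a finite-$N$, $K\to\infty$ analogue of Proposition~\ref{PROP:U}, which is easier since $N$ is fixed and $\P_N$ already smooths everything), and the stability of the fixed-point map in Proposition~\ref{PROP:flow}. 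Simultaneously, the finite-dimensional Gibbs measures $\ZZ_{N,K}^{-1}e^{-\wt E_{N,K}}\prod du_n$ converge weakly to $\wt\rho_{N,\gm}$ — this is again a fixed-$N$ version of the arguments in Subsection~\ref{SUBS:Gibbs}, where now one only needs the $L^1$ and uniform-integrability bounds on $\U_N$ from Lemma~\ref{LEM:UEst}~(i)--(ii) rather than a limit in $N$. Combining these two convergences with the exact invariance at each finite $K$, and using that $F\in C_b$, yields
\begin{align*}
\int\E\big[F(\wt u_N(t))\big]d\rho_{N,\gm} = \lim_{K\to\infty}\int\E\big[F(\pi_K\text{-flow at }t)\big]d(\text{Galerkin Gibbs})_K = \lim_{K\to\infty}\int F\,d(\text{Galerkin Gibbs})_K = \int F\,d\rho_{N,\gm}.
\end{align*}

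The main obstacle I anticipate is the joint convergence in the $K\to\infty$ limit: one needs convergence of the Galerkin flows and of the Galerkin Gibbs measures \emph{simultaneously}, i.e. convergence of the space-time law of $(\text{initial data},\text{noise},\text{solution})$ under the Galerkin Gibbs measure to that under $\wt\rho_{N,\gm}$, not merely marginal convergences. The cleanest way around this is to note that the initial data law and the noise law converge (the former by the weak convergence of the Galerkin Gibbs measures together with a change-of-variables putting everything as a density against $\mu_\gm\otimes d\cj X$, the latter being $K$-independent once we project only inside the equation and keep the full noise, as in the Remark after Theorem~\ref{THM:GWP}), and the solution map is continuous in measure by Proposition~\ref{PROP:flow}; then the continuous mapping theorem gives joint convergence of $(\text{data},\text{solution at time }t)$. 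Everything else is routine given the machinery already developed in Sections~\ref{SEC:background}--\ref{SEC:GWP}.
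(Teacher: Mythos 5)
Your proposal is correct and follows essentially the same route as the paper: a spectral Galerkin truncation reducing matters to a finite-dimensional gradient (Langevin) SDE whose Gibbs measure is preserved (checked via the generator and integration by parts), together with a linear Ornstein--Uhlenbeck evolution on the complementary modes preserving the Gaussian factor, and then a passage to the limit using convergence in law of the truncated flows combined with weak convergence of the truncated measures. The only cosmetic difference is that you first remove the insertions via the deterministic shift \eqref{Girsanov2} and argue on \eqref{heat2}, whereas the paper runs the same computation directly on the $\Pi_M$-truncated version of \eqref{heat1}, keeping the insertion terms inside the finite-dimensional energy $E_{N,M}$, which is harmless since $\Pi_M\P_N\dl_{x_\ell}$ is smooth for fixed $N$.
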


\begin{proof}
Fix $N\in\N$. For any $M\in\N$, let $\Pi_M$ be the projection on the space $\mathrm{Vect}\big\{\varphi_n, n=0,...,d_M\big\}\simeq \R^{d_M}$, with $d_M = \#\{n\ge 0, \ld_n \le M\}$. Similarly as in \eqref{heat1}, we then look at the dynamics
\begin{multline}\label{heat4}
\dt \wt u_{N,M} =\frac1{4\pi}\Dlg \wt u_{N,M}  -\frac{Q}{8\pi}\Rg_\gm+ \frac12\nu\be e^{-\pi\be^2C_\P}N^{-\frac{\be^2}2}\Pi_M\P_Ne^{\be \Pi_M\P_N\wt u_{N,M}}\\+\frac12\sum_{\ell=1}^La_\ell\Pi_M\P_N\dl_{x_\ell}+ \xi_\gm,
\end{multline}
with initial data $\wt u_0$ randomly distributed by $\rho_{N,M,\gm}$, where
\begin{align*}
d\rho_{N,M,\gm}(X_\gm,\cj X) = d\rho_{N,M,\gm}^F\otimes \big[\Id-\Pi_M\big]_\star \mu_\gm.
\end{align*}
Here the finite dimensional measure is given by
\begin{align*}
d\rho_{N,M,\gm}^F \deff \ZZ_{N,M}^{-1}e^{-E_{N,M}(U_0,...,U_{d_M-1})}\prod_{n=0}^{d_M-1}dU_n
\end{align*}
for the truncated energy
\begin{align*}
E_{N,M}(U_0,...,U_{d_M-1}) &= \sum_{n=0}^{d_M-1}\bigg\{\frac1{4\pi} \ld_n^2e^{-2N^{-2}\ld_n^2}U_n^2 + \frac{Q}{4\pi}U_n\langle\Rg_\gm,\varphi_n\rangle\gm-\sum_{\ell=1}^La_\ell U_0\\
&\qquad+\nu e^{-2N^{-2}\ld_n^2} \int_{\M}e^{-\pi\be^2C_\P}N^{-\frac{\be^2}2}e^{\be\sum_{m=0}^{d_M-1}e^{-2N^{-2}\ld_m^2}U_m\varphi_m}d\Vg\\
&\qquad-\sum_{\ell=1}^La_\ell e^{-2N^{-2}\ld_n^2}U_n\varphi_n(x_\ell)-\frac{a_\ell^2}2\big(\log N +2\pi C_\P\big)\bigg\}.
\end{align*}

For each fixed $N\in\N$, the dynamics \eqref{heat4} is defined in $C([0,T];H^s_0(\M)\oplus\R)$ for any $T>0$ by a standard argument since the nonlinearity is Lipschitz and the deterministic source terms are smooth. Moreover it holds $\wt u_{N,M}(t)\to \wt u_N(t)$ as $M\to\infty$ in law in $H^s_0(\M)\oplus\R$ for any $t\ge 0$.

We thus have for any $t\ge 0$
\begin{align*}
&\int_{H^s_0(\M,\gm)}\int_\R \E\Big[F\big(\wt u_N(t,X_\gm,\cj X,\om)\big)\Big]d\rho_{N,\gm}(X_\gm,\cj X)\\
&\qquad = \lim_{M\to\infty}\int_{H^s_0(\M,\gm)}\int_\R \E\Big[F\big(\wt u_{N,M}(t,X_\gm,\cj X,\om)\big)\Big]d\rho_{N,M,\gm}(X_\gm,\cj X)\\
&\qquad = \lim_{M\to\infty}\int_{H^s_0(\M,\gm)}\int_\R \E\Big[F\big(\Pi_M\wt u_{N,M}(t,X_\gm,\cj X,\om)+(1-\Pi_M)\wt u_{N,M}(t,X_\gm,\cj X,\om)\big)\Big]\\
&\hspace*{5cm}\times d\rho_{N,M,\gm}^F(X_\gm,\cj X)\otimes\big[\Id-\Pi_M\big]_\star\mu_\gm(X_\gm).
\end{align*}

We see that $\wt u_{N,M}^{\perp}=(\Id-\Pi_M)\wt u_{N,M}$ solves the \emph{linear} stochastic equation
\begin{align*}
\dt \wt u_{N,M}^\perp -\frac1{4\pi}\Dlg \wt u_{N,M}^\perp = (\Id-\Pi_M)\xi_\gm
\end{align*}
with initial data distributed by $\big[\Id-\Pi_M\big]_\star\mu_\gm$, so that $\big[\Id-\Pi_M\big]_\star\mu_\gm$ is the unique invariant measure for $\wt u_{N,M}^\perp$ (see e.g. \cite[Section 11.3]{DPZ}).

On the other hand, we can write $\Pi_M\wt u_{N,M}(t) = \sum_{n=0}^{d_M-1}U_n(t)\varphi_n$, so that $U_n$ solves the system of SDEs
\begin{align}\label{SDE}
dU_n = -\frac12\frac{\partial}{\partial U_n}E_{N,M}(U_0,...,U_{d_M-1})dt + dB_n, \hspace*{1cm}n=0,...,d_M-1,
\end{align}
with $B_n$ as in \eqref{Bn}.

The infinitesimal generator for \eqref{SDE} is given by
\begin{align*}
\L_{N,M}f(U_0,...,U_{d_M-1}) &= \sum_{n=0}^{d_M-1}-\frac12\frac{\partial}{\partial U_n}E_{N,M}(U_0,...,U_{d_M-1})\frac{\partial}{\partial U_n}f(U_0,...,U_{d_M-1})\\
&\qquad\qquad + \frac12\frac{\partial^2}{\partial (U_n)^2}f(U_0,...,U_{d_M-1})
\end{align*}
for any test function $f\in C^2_0(\R^{d_M})$. In particular, we have by integrating by parts
\begin{align*}
\int_{\R^{d_M}}\L_{N,M}fd\rho_{N,M,\gm}^F &= \int_{\R^{d_M}}\Big[\sum_{n=0}^{d_M-1}-\frac12\frac{\partial}{\partial U_n}E_{N,M}\frac{\partial}{\partial U_n}f + \frac12\frac{\partial^2}{\partial (U_n)^2}f\Big]e^{-E_{N,M}}dU\\
&=\sum_{n=0}^{d_M-1}-\frac12\int_{\R^{d_M}}\frac{\partial}{\partial U_n}E_{N,M}\frac{\partial}{\partial U_n}fe^{-E_{N,M}}dU\\
&\qquad\qquad + \frac12\int_{\R^{d_M}}\frac{\partial}{\partial U_n}E_{N,M}\frac{\partial}{\partial U_n}fe^{-E_{N,M}}dU\\
&=0.
\end{align*}
This shows that $\Pi_M\wt u_{N,M}$ also leaves $\rho_{N,M,\gm}^F$ invariant. 

All in all, we deduce that
\begin{align*}
&\int_{H^s_0(\M,\gm)}\int_\R \E\Big[F\big(\wt u_N(t,X_\gm,\cj X,\om)\big)\Big]d\rho_{N,\gm}(X_\gm,\cj X)\\
&\qquad = \lim_{M\to\infty}\int_{H^s_0(\M,\gm)}\int_\R \E\Big[F\big(\Pi_M\wt u_{N,M}(t,X_\gm,\cj X,\om)+(1-\Pi_M)\wt u_{N,M}(t,X_\gm,\cj X,\om)\big)\Big]\\
&\hspace*{5cm}\times d\rho_{N,M,\gm}^F(X_\gm,\cj X)\otimes\big[\Id-\Pi_M\big]_\star\mu_\gm(X_\gm)\\
&\qquad = \lim_{M\to\infty}\int_{H^s_0(\M,\gm)}\int_\R F\big(\Pi_M(X_\gm+\cj X)+(1-\Pi_M)X_\gm\big)\\
&\hspace*{5cm}\times d\rho_{N,M,\gm}^F(X_\gm,\cj X)\otimes\big[\Id-\Pi_M\big]_\star\mu_\gm(X_\gm)\\
&\qquad=\int_{H^s_0(\M,\gm)}\int_\R F\big(X_\gm+\cj X\big)d\rho_{N,\gm}(X_\gm,\cj X).
\end{align*}
\end{proof}

To conclude the proof of Theorem \ref{THM:GWP} (ii), we observe that by convergence in measure of $v_N$ in $\XX_T^\dl$ given by Theorem \ref{THM:GWP} (i) and definition of $\wt u_N$ and $u_N$, we have the convergence in law $\wt u_N(t)\to\wt u(t)$ in $H^s_0(\M)\oplus\R$ for any $t\ge 0$. With the weak convergence of $\rho_{N,\gm}$ to $\rho_{\{a_\ell,x_\ell\},\gm}$ given by Theorem \ref{THM:LQG} along with the invariance of $\rho_{N,\gm}$ given by Lemma \ref{LEM:inv}, it thus holds
\begin{align*}
&\int_{H^s_0(\M,\gm)}\int_{\R} \E\Big[ F\big(\wt u(t,X_\gm,\cj X,\om)\big)\Big]d\rho_{\{a_\ell,x_\ell\},\gm}(X_\gm,\cj X)\\&\qquad = \lim_{N\to\infty}\int_{H^s_0(\M,\gm)}\int_{\R} \E\Big[ F\big(\wt u_N(t,X_\gm,\cj X,\om)\big)\Big]d\rho_{N,\gm}(X_\gm,\cj X)\\
&\qquad= \lim_{N\to\infty}\int_{H^s_0(\M,\gm)}\int_{\R} F\big(X_\gm+\cj X\big)d\rho_{N,\gm}(X_\gm,\cj X)\\
&\qquad=\int_{H^s_0(\M,\gm)}\int_\R F(X_\gm+\cj X)d\rho_{\{a_\ell,x_\ell\},\gm}(X_\gm,\cj X).
\end{align*}

This concludes the proof of Theorem~\ref{THM:GWP}.

\begin{remark}\label{REM:delta}\rm
Strictly speaking, the solution given by Theorem \ref{THM:GWP} is not a strong solution (in the probability sense) of the original SPDE \eqref{heat1} since we did a change of probability space by the Girsanov transform \eqref{Girsanov2}. It would be very interesting though to be able to deal directly with the singular equation \eqref{heat1}.
 \end{remark}

\begin{remark}\label{REM:LQGwave}\rm
It would be interesting to also establish the analogue of Theorem \ref{THM:GWP} for the \emph{canonical} stochastic quantization of the LQG measure, namely prove global well-posedness of the stochastic {\it damped wave} equation
\begin{align*}
(\dt^2 -\Dlg + \dt) \wt u_N +\frac{Q}{8\pi}\Rg_\gm + \frac12\nu\be e^{-\pi\be^2C_\P}N^{-\frac{\be^2}2}\P_N\Big\{e^{\be \P_N\wt u_N}\Big\}\\
 =\frac12\sum_{\ell=1}^La_\ell \P_N\dl_{x_\ell} + \sqrt{2}\xi_\gm,
\end{align*}
 and invariance of the LQG measure (coupled with the white noise measure on $\dt \wt u$) under this flow. Although the \emph{local} well-posedness should follow from a straightforward adaptation of \cite[Theorem 1.6]{ORW} combined with the arguments of the present paper, it is not clear to us how to apply Bourgain's invariant measure argument to globalize this dynamics as in \cite[Section 6]{ORW}. Indeed, the argument in \cite{BO94,BO96} seems to require the densities $R_N$ to be in $L^p(d\mu_\gm\otimes d\cj X)$ uniformly in $N\in\N$ for some finite $p>1$, which does not hold since the use of Girsanov transform for $(R_N )^p$, $p>1$, produces a divergent constant $e^{p(p-1)\frac{\s_N}{2}\sum_{\ell=1}^La_\ell}$. 
\end{remark}

\appendix

\section{Remarks on the LQG and $\exp(\Phi)_2$ measures with negative cosmological constant}\label{SEC:A}
Theorem \ref{THM:LQG} discusses the construction of the  $L$-points correlations in the case where the cosmological constant $\nu$ is positive (the free case $\nu=0$ just corresponding to $d\rho_{\{a_\ell,x_\ell\},\gm} = d\mu_\gm\otimes d\cj X$). As is clear from \eqref{bd-RN(X,m)}, in the case $\nu<0$, the change of variable 
\begin{align*}
\cj X\mapsto\tau = -\nu e^{\be \cj X}\int_{\M}e^{\be\sum_{\ell=1}^La_\ell\P_N^2G_0(x_\ell,x)+\be\P_NX_0(x) -\frac{\be^2}2 \s_N(x;\gm_0)}dV_0(x)
\end{align*}
now gives the expression for the truncated partition function
\begin{align*}
\ZZ_N &= \GG_{N,0}\Xi\be^{-1} (-\nu)^{\be^{-1}\big[2\pi \chi(\M)Q)-\cj a\big]}\int_{0}^{\infty}\tau^{\be^{-1}\big[\cj a -2\pi \chi(\M)Q\big]-1}e^{+\tau}d\tau\notag\\&
 \qquad \times\int \Y_N(\M)^{\be^{-1}\big[2\pi \chi(\M)Q-\cj a\big]} d\mu_0(X_0)\\
 &=+\infty,
\end{align*}
with or without the Seiberg bounds. This proves that there are no well-defined correlation functions in this case.

In \cite{ORW}, we looked at the closely related $\exp(\Phi)_2$ (or H\o egh-Krohn \cite{Hoegh}) measure on the flat torus $\M=\T^2$ given by 
\begin{align*}
d\wt\rho =\lim_{N\to\infty} e^{-\nu \int_{\M}e^{\be \P_N\wt X - \frac{\be^2}2\wt\s_N}d\Vg}d\wt\mu(\wt X) ,
\end{align*}
where now $\wt\mu$ is the \emph{massive} Gaussian free field, i.e. under $\wt\mu$ we have
\begin{align}\label{mGFF}
\wt X(x) = \sum_{n\ge 0}\frac{h_n(\om)}{\sqrt{1+\ld_n^2}}\varphi_n(x)
\end{align}
for $h_n\sim\mathcal{N}(0,1)$ on $(\O,\Prob)$, and $\wt\s_N(x) = \E\big[|\P_N\wt X(x)|^2\big]$. In the defocusing case $\nu>0$ on $\M=\T^2$ with the flat metric\footnote{Note that in the massive case there is no conformal invariance anymore. However our proof of the construction of $\wt\rho$ (without curvature term compared to \eqref{SL}-\eqref{LQG}) readily extends to any compact surface $(\M,\gm)$ through the same arguments as in this work.}, we proved that the measure $\wt\rho$ above is indeed well-defined as a by-product of the construction of the GMC $:e^{\be \wt X}:$ and the integrability of the whole density which follows in this case from the trivial bound $e^{-\nu \int_{\M}:e^{\be \wt X}:d\Vg} \le 1$ when $\nu>0$. Namely, since the zero-th Fourier mode of $\wt X$ is now normally distributed (compared to \eqref{GFF}), the partition function (i.e. with $L=0$) is always finite. As for the focusing case $\nu<0$, we prove the following result.
\begin{proposition}\label{PROP:exp}
Let $\nu<0$. Then for any $\be\neq 0$, the H\o egh-Krohn partition function is not finite. More precisely, for any smooth approximations $\wt X_N$ of $\wt X$ such that $\wt X_N$ is still centred and Gaussian with variance $\wt\s_N + O(1)$, it holds
\begin{align*}
\sup_{N\in\N}\E\Big[\exp\Big(-\nu\int_{\M}e^{\be \wt X_N(x)-\frac{\be^2}2\wt\s_N}d\Vg(x)\Big)\Big] = +\infty.
\end{align*}
\end{proposition}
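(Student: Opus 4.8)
The plan is to show that the exponential moment of $-\nu\int_\M e^{\be\wt X_N-\frac{\be^2}2\wt\s_N}\,d\Vg$ diverges by producing, on a large-probability event, a lower bound on $\int_\M e^{\be\wt X_N-\frac{\be^2}2\wt\s_N}\,d\Vg$ that grows fast enough to beat the Gaussian cost of the fluctuation that produced it. The key point is that $\wt\s_N\sim\frac1{2\pi}\log N\to\infty$, so the renormalization subtracts a diverging constant; by forcing $\wt X_N$ to be large (of size comparable to $\be\wt\s_N$) on a macroscopic region, one can make the renormalized density exponentially large in $\wt\s_N$, i.e. polynomially large in $N$ with an arbitrarily large power, while the probabilistic price of such a deviation is only $e^{-c\wt\s_N}=N^{-c'}$. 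Since $-\nu>0$, the convexity (Jensen) bound $\exp(-\nu\int\cdots)\ge$ (value of the integrand) then forces the expectation to blow up.

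Concretely, I would proceed as follows. First, fix a small geodesic ball $B=B(x_0,r)$ with $0<r\ll\i(\M)$ and let $h_B$ denote the first (lowest-frequency) coefficient $h_0$ in the expansion \eqref{mGFF}; since $\varphi_0$ is a nonzero constant, on the event $\{h_0>A\}$ the contribution of the zeroth mode to $\wt X_N$ on $B$ is $\ge cA$ uniformly. Second, on this event, bound from below
\begin{align*}
\int_\M e^{\be\wt X_N(x)-\frac{\be^2}2\wt\s_N(x)}\,d\Vg(x)\ge \int_B e^{\be\wt X_N(x)-\frac{\be^2}2\wt\s_N(x)}\,d\Vg(x),
\end{align*}
and split $\wt X_N=h_0\varphi_0+\wt X_N^\perp$, where $\wt X_N^\perp$ is the remaining (mean-zero on the constants) Gaussian part, independent of $h_0$. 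Choosing $A=A(N)$ growing like $\frac{\be}2\wt\s_N/(c|\be|)$, i.e. $A(N)\sim\kappa\log N$ for a suitable $\kappa>0$, the deterministic factor $e^{\be c A(N)}$ essentially cancels (in the regime we need, dominates) $e^{-\frac{\be^2}2\wt\s_N}$; more precisely one arranges $\be c A(N)-\frac{\be^2}2\sup_{x\in B}\wt\s_N(x)\ge \theta\log N$ for any prescribed large $\theta$, using Lemma \ref{LEM:GN3} (with $\psi_j$ replaced by the approximation defining $\wt X_N$, which by hypothesis has variance $\wt\s_N+O(1)$) to control $\wt\s_N$ uniformly on $B$. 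Third, the remaining integral $\int_B e^{\be\wt X_N^\perp}\,d\Vg$ is a.s. positive and bounded below in probability uniformly in $N$ by a fixed constant on an event of probability bounded away from $0$ (e.g. by Markov/Paley--Zygmund applied to the GMC-type quantity $\int_B e^{\be\wt X_N^\perp-\frac{\be^2}2\wt\s_N}\,d\Vg$, whose first two moments are controlled uniformly in $N$ exactly as in the proof of Proposition \ref{PROP:U} and Lemma \ref{LEM:negmoment}, using $0<\be^2<2$; if $\be^2\ge 2$ one instead localizes to an even smaller ball so that the relevant moment integral converges, at the cost of adjusting $r=r(N)$ and absorbing the loss into $\theta$). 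Finally, intersecting with $\{h_0>A(N)\}$, whose probability is $\gtrsim e^{-\frac12 A(N)^2}=N^{-\kappa^2/2+o(1)}$, and using $-\nu>0$, Jensen's inequality gives
\begin{align*}
\E\Big[\exp\Big(-\nu\int_\M e^{\be\wt X_N-\frac{\be^2}2\wt\s_N}\,d\Vg\Big)\Big]\ge \exp\Big(-\nu\, c'\, N^{\theta}\Big)\cdot \Prob\big(h_0>A(N)\big)\cdot c''\gtrsim \exp\big(-\nu c' N^\theta\big) N^{-\kappa^2/2},
\end{align*}
which tends to $+\infty$ as $N\to\infty$ for any fixed $\theta>0$ (indeed the first factor alone is super-polynomial). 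Taking the supremum over $N$ then yields the claim.

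The main obstacle is the lower bound in the third step: controlling $\int_B e^{\be\wt X_N^\perp-\frac{\be^2}2\wt\s_N}\,d\Vg$ from below uniformly in $N$ when $\be^2$ is large, since then the second moment of this quantity diverges and Paley--Zygmund does not apply directly. The remedy is to shrink the ball $B=B(x_0,r_N)$ with $r_N\to 0$ slowly enough that the renormalized Gaussian integral over $B_{r_N}$ still has comparable first and second moments (using the logarithmic covariance structure from Corollary \ref{COR:GN}), while $r_N$ goes to zero slowly enough that $\log(1/r_N)=o(\log N)$, so that the extra negative power of $r_N$ coming from $\Vg(B_{r_N})$ only changes $\theta$ by a sublinear-in-$\log N$ amount and the divergence of the deterministic factor $N^\theta$ survives. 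A secondary, purely bookkeeping point is to verify that the hypothesis ``$\wt X_N$ centred Gaussian with variance $\wt\s_N+O(1)$'' together with the earlier lemmas suffices to make the cancellation $\be c A(N)-\frac{\be^2}2\wt\s_N$ explicit and uniform on the chosen ball; this is exactly the content of Lemma \ref{LEM:GN3} and its proof, so no new estimate is needed, only a careful application. Everything else is a routine combination of Gaussian tail bounds, the moment computations already carried out for the GMC, and Jensen's inequality exploiting the sign $-\nu>0$.
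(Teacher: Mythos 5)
Your route is genuinely different from the paper's: the paper produces the divergence by a Cameron--Martin shift along a deterministic $H^1$ function $f_N$ supplied by the \emph{failure of the Moser--Trudinger inequality} below the critical constant $\frac1{16\pi}$, followed by Jensen's inequality, yielding the lower bound $e^{-\nu CN}$; you instead exploit only the Gaussian zero mode of the massive GFF, forcing it to be of size $\sim\log N$ so that the renormalized density on a ball is polynomially large, and then beat the Gaussian tail cost by the resulting double-exponential gain. This mechanism is sound in principle and is in some sense more elementary (no Moser--Trudinger input): for multiplier-type regularizations it even shows that the expectation is infinite for each fixed $N$, since $\E\big[\exp\big(c\,e^{\be\varphi_0 h_0}\big)\big]=+\infty$ for any $c>0$. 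Both arguments share the implicit assumption that the chosen shift (constant, resp.\ $f_N$) interacts cleanly with the approximation $\wt X_N$, which is fine for the regularizations considered in the paper.

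There is, however, a genuine gap in your third step. You need a lower bound on $\int_B e^{\be\wt X_N^\perp-\frac{\be^2}2\wt\s_N}d\Vg$ on an event of controlled probability, and you claim this probability is bounded away from zero via Paley--Zygmund, with the caveat that for $\be^2\ge 2$ one ``localizes to an even smaller ball so that the relevant moment integral converges.'' That remedy does not work as stated: the second-moment divergence comes from the diagonal singularity $\dg(y,y')^{-\be^2}$, which is local and is not cured by shrinking the ball; quantitatively, for $\be^2>2$ the Paley--Zygmund ratio over $B(x_0,r_N)$ behaves like $r_N^2 N^{2-\be^2}$, so it tends to zero no matter how $r_N$ is chosen, and shrinking the ball only makes it worse. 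The outline can still be salvaged --- either by keeping track of this polynomially small probability (which is still overwhelmed by the gain $\exp(|\nu|N^\theta)$), or, more cleanly, by dispensing with Paley--Zygmund altogether: restrict to $\{h_0>A(N)\}$, use independence of $h_0$ and $\wt X_N^\perp$, and apply Jensen to the conditional expectation, so that only the first moment $\E\big[e^{\be\wt X_N^\perp(x)}\big]e^{-\frac{\be^2}2\wt\s_N(x)}=O(1)$ is needed, valid for every $\be\neq0$. As written, though, the $\be^2\ge2$ case is not covered by your argument. Two smaller points: with $A(N)\sim\kappa\log N$ the tail probability is $e^{-\frac{\kappa^2}2(\log N)^2}$, not $N^{-\kappa^2/2+o(1)}$ (harmless, since the double-exponential gain still dominates), and for $\be<0$ one should of course condition on $\{h_0<-A(N)\}$.
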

Note that here this holds for a large class of approximations $\wt X_N$, including the one considered in Section \ref{SEC:background}.
\begin{proof}
We begin by recalling that the Cameron-Martin space for $\wt\mu$ is given by $H^1(\M,\gm)$; in particular, Cameron-Martin's theorem (see e.g. \cite[Proposition 2.26]{DPZ}) states that for $\wt X$ as in \eqref{mGFF} and any $f\in H^1(\M,\gm)$, $\wt X - f$ is also a massive GFF under $e^{\langle f,\wt X\rangle_{H^1} - \frac12\|f\|_{H^1}^2}d\Prob$. In particular, for any $F\in C_b(H^s(\M))$, it holds
\begin{align}\label{CM}
\E\Big[F(\wt X)\Big] &= \E_f\Big[F(\wt X)e^{\frac12\|f\|_{H^1}^2-\langle f,\wt X\rangle_{H^1}}\Big] = \E\Big[F(\wt X+f)e^{\frac12\|f\|_{H^1}^2-\langle f, \wt X+f\rangle_{H^1}}\Big],
\end{align} 
where $\E_f$ is the expectation associated with $\Prob_f \deff e^{\langle f,\wt X\rangle_{H^1} - \frac12\|f\|_{H^1}^2}\Prob$.

We also recall the following Moser-Trudinger's inequality \cite[Theorems 2.46 and 2.50]{Aubin}: for $\theta>0$, there exists $C>0$ that for all $f\in H^1(\M)$,
\begin{align*}
\int_{\M}e^{f}d\Vg \le Ce^{\theta \|f\|_{H^1}^2}
\end{align*}
\emph{if and only if} $\theta\ge \frac1{16\pi}$. Thus for any $\be\neq 0$ and $N\in\N$, there exists $f_N\in H^1(\M)$ such that
\begin{align}\label{MT}
\int_{\M}e^{\be f_N}d\Vg \ge Ne^{\frac{\be^2}{10^5}\|f_N\|_{H^1}^2}.
\end{align}
Thus, using \eqref{CM}, we have for any $N\in\N$
\begin{align*}
&\E\Big[\exp\Big(-\nu\int_{\M}e^{\be \wt X_N-\frac{\be^2}2\wt\s_N}d\Vg\Big)\Big]\\
 &= \E\Big[\exp\Big(-\frac12\|f_N\|_{H^1}^2-\jb{f_N,\wt X_N}_{H^1}-\nu\int_{\M}e^{\be \wt X_N+\be f_N-\frac{\be^2}2\wt\s_N^2}d\Vg\Big)\Big]\intertext{Using Jensen's inequality, that $\wt X_N$ is centered and that $\E[e^{\be\wt X_N - \frac{\be^2}2\wt\s_N}]\sim 1$, along with \eqref{MT} and that $\nu<0$, we then continue with}
 &\ge \exp\Big[\E\Big(-\frac12\|f_N\|_{H^1}^2-\jb{f_N,\wt X_N}_{H^1}-\nu\int_{\M}e^{\be \wt X_N+\be f_N-\frac{\be^2}2\wt\s_N^2}d\Vg\Big)\Big]\\
 &\ge \exp\Big[-\frac12\|f_N\|_{H^1}^2-\nu\int_{\M}e^{\be f_N+c}d\Vg\Big]\ge \exp\Big[-\frac12\|f_N\|_{H^1}^2-\nu CNe^{\frac{\be^2}{10^5}\|f_N\|_{H^1}^2}\Big]
\end{align*}
for some $C>0$.

Note that, since $\nu<0$, the function $x\in [0,\infty)\mapsto (-\nu)CNe^{\frac{\be^2}{10^5}x}-\frac12x$ is strictly increasing on $[0,\infty)$ for any $N$ large enough (depending on $\be$), so that we have for  $N$ large enough
\begin{align*}
\E\Big[\exp\Big(-\nu\int_{\M}e^{\be \wt X_N-\frac{\be^2}2\wt\s_N}d\Vg\Big)\Big] & \ge \exp\big[-\nu CN\big]\too \infty
\end{align*}
as $N\to\infty$. This proves Proposition \ref{PROP:exp}.
\end{proof}

\begin{ackno}\rm
The authors are very grateful to R\'emi Rhodes and Vincent Vargas for pointing out the relevance of studying the conformal equation \eqref{heat0} and for interesting discussions on LCFT which motivated the writing of this paper. They are also very grateful to Christophe Garban, R\'emi Rhodes, Vincent Vargas and Younes Zine for helpful comments on a previous version of this work.

\noi
T.O.~was supported by the European Research Council (grant no.~637995 ``ProbDynDispEq''
and grant no.~864138 ``SingStochDispDyn"). T.R.~was supported by the DFG through the CRC 1283 ``Taming uncertainty and profiting from randomness
and low regularity in analysis, stochastics and their applications.''

\end{ackno}

\end{document}